\documentclass[11pt]{article}
\usepackage{amsfonts}

\usepackage{graphics}
\usepackage{indentfirst}
\usepackage{cite}
\usepackage{latexsym}
\usepackage{amsmath,amsthm}
\usepackage{amssymb}
\usepackage[dvips]{epsfig}
\usepackage{amscd}
\usepackage{mathrsfs}

\usepackage{color}
\newtheorem{theorem}{Theorem}[section]
\newtheorem{remark}{Remark}[section]

\newtheorem{definition}{Definition}[section]
\newtheorem{lemma}[theorem]{Lemma}

\newcommand{\n}{\rho}

\newcommand{\mr}{\mathbb{R}}
\newcommand{\lm}{\lambda}

\renewcommand{\div}{ {\rm div }  }

\newcommand{\na}{\nabla }

\newcommand{\pa}{\partial}

\newcommand{\bt}{\begin{theorem}}
\newcommand{\bl}{\begin{lemma}}
\newcommand{\el}{\end{lemma}}
\newcommand{\et}{\end{theorem}}
\newcommand{\ga}{\gamma}

\newcommand{\OM}{\Omega}

\newcommand{\curl}{{\rm curl} }

\newcommand{\de}{\delta}

\newcommand{\la}{\label}
\newcommand{\si}{\sigma}

\newcommand{\om}{\Omega}
\newcommand{\ol}{\overline}

\newcommand{\bn}{\begin{eqnarray}}
\newcommand{\en}{\end{eqnarray}}
\newcommand{\bnn}{\begin{eqnarray*}}
\newcommand{\enn}{\end{eqnarray*}}

\newcommand{\bnnn}{\begin{eqnarray*}}
\newcommand{\ennn}{\end{eqnarray*}}
\newcommand{\ben}{\begin{enumerate}}
\newcommand{\een}{\end{enumerate}}

\newcommand{\T}{\mathbb{T}}

\newcommand{\ba}{\begin{aligned}}
\newcommand{\ea}{\end{aligned}}
\newcommand{\be}{\begin{equation}}
\newcommand{\ee}{\end{equation}}

\def\p{\partial}
\def\norm[#1]#2{\|#2\|_{#1}}

\def\lam{\lambda}
\def\ep{\varepsilon}

\def\o{\omega}
\def\r{\mathbb{R}}
\def\rr{\mathbb{R}^2}
\def\rrr{\mathbb{R}^3}

\makeatletter      
\@addtoreset{equation}{section}
\makeatother       

\title{Global Existence and Large-Time Behavior of Strong Solutions to the Two-Dimensional Compressible Nematic Liquid Crystal Flows with Large Initial Data and Vacuum}

\date{}

\author{$\text{Qinghao L{\small EI}}^{a,b}, \text{Lu W{\small{ANG}}}^{b}\thanks{Email addresses:  leiqinghao22@mails.ucas.ac.cn (Q. H. Lei), wlu1130@163.com (L. Wang) }$\\
a. School of Mathematical Sciences,\\ University of Chinese Academy of Sciences,
Beijing 100049, P. R. China;\\
b. Institute of Applied Mathematics,\\ Academy of Mathematics and Systems Science, \\
Chinese Academy of Sciences, Beijing 100190, P. R. China}

\begin{document}
\maketitle

\begin{abstract}
In this paper, we investigate the global existence and large-time behavior of strong solutions to two-dimensional compressible nematic liquid crystal flows in the periodic domain or in a bounded simply connected domain.
The shear viscosity is assumed to be a positive constant, while the bulk viscosity is given by $\lambda(\rho)=\rho^\beta$ with $\beta>4/3$.
For initial data allowing vacuum, we establish the global existence and uniqueness of strong solutions without any restrictions on the size of the initial data.
In particular, no geometric angle condition is imposed on the initial orientation field.
Moreover, we derive a time-uniform upper bound for the density and establish the exponential decay of the strong solutions toward equilibrium. \\
\par\textbf{Keywords:} Compressible nematic liquid crystal flows; Global strong solutions; Large-time behavior; Large initial data; Vacuum
\end{abstract}

\section{Introduction and main results}
We study the two-dimensional compressible nematic liquid crystal flows
which read as follows:
\be\ba\la{nlckv}
\begin{cases}
  \rho_t + \div(\rho u) = 0, \\
(\n u)_t + \div(\n u\otimes u) -\mu \Delta u 
-\na ( (\mu + \lm) \div u) + \na P = - \na d \cdot \Delta d, \\
d_t + u \cdot \na d = \Delta d + |\na d|^2 d, \\
|d|=1,
\end{cases}
\ea\ee
where $t \ge 0$ is time, $x \in \OM \subset \rr$ is the spatial coordinate,
$\n=\n(x,t)$, $u(x,t)=(u^1(x,t),u^2(x,t))$ and $d=(d^1(x,t),d^2(x,t))$
represent the density, velocity and macroscopic molecular orientation of the liquid crystal material of the compressible flow, respectively.
The pressure $P$ is given by
\be\ba\la{i1}
P=R \n^\ga,
\ea\ee
with constants $R>0$ and $\ga>1$.
The shear viscosity coefficient $\mu$ and the bulk viscosity coefficient $\lam$ satisfy the following hypothesis:
\be\la{i2}
0<\mu = \text{constant},\quad \lam(\n)=b \n^\beta,
\ee
where $b$ and $\beta$ are positive constants.
Without loss of generality, we assume that $R=b=1$.

The system is supplemented with the initial data
\be\la{i30}
\n(x,0)=\n_0(x),\quad \n u(x,0)= \n_0 u_0(x), \quad d(x,0)=d_0(x),\quad x\in \OM,
\ee
together with one of the following two types of boundary conditions:

(1) Periodic boundary conditions.
In this case,
\be\la{zqbjtj}\ba
\OM=\T^2 \triangleq \mr^2/\mathbb{Z}^2, \ \text{and} \  (\n,u,d) \ \text{is 1-periodic in each spatial direction};
\ea\ee

(2) Navier-slip and Neumann boundary conditions.
In this case, $\OM$ is a simply connected bounded smooth domain in $\rr$, and
\be\la{yjybjtj}\ba
u \cdot n = 0,\quad \curl u = -A u \cdot n^\bot, \quad \frac{\p d}{\p n}=0 \ \text{ on } \ \p \OM,
\ea\ee
where $A$ is a non-negative smooth function on the boundary, $n=(n_1,n_2)$ denotes the unit outer normal vector of the boundary $\partial \Omega$, and $n^\bot$ is the unit tangential vector on $\partial \Omega$ denoted by
\be\ba\nonumber
n^\bot\triangleq (n_2,-n_1).
\ea\ee
In addition, we extend $n$, $n^\bot$, and $A$ smoothly to $\overline\Omega$.

The system (\ref{nlckv}) is a simplified hydrodynamic model for liquid crystal flows, originally introduced by Ericksen \cite{EJL} and Leslie \cite{LFM} in the 1960s.
Mathematically, it is a strongly coupled system comprising the compressible Navier-Stokes equations and the harmonic map heat flow.
In particular, when $d$ is a constant unit vector, the system (\ref{nlckv})--(\ref{i30}) reduces to the compressible Navier-Stokes system.
There is an extensive literature on the strong solvability of the multidimensional compressible Navier-Stokes equations with constant viscosity coefficients.
The first global classical solutions were obtained by Matsumura-Nishida \cite{MN1} for initial data close to a non-vacuum equilibrium in the $H^s$-norm.
Subsequently, Hoff \cite{H1,H3} investigated the problem for discontinuous initial data.
For arbitrarily large initial data, Lions \cite{L2} (see also Feireisl \cite{F,FNP}) proved the global existence of finite-energy weak solutions provided that the adiabatic exponent $\ga$ is suitably large.
More recently, Huang-Li-Xin \cite{HLX2} and Li-Xin \cite{LX2} established the global existence and uniqueness of classical solutions to the three-dimensional and two-dimensional Cauchy problems for initial data with small total energy but possibly large oscillations and vacuum.
Their results were later extended by Cai-Li \cite{CL} to general bounded domains subject to slip boundary conditions for the velocity field.
In contrast, there are few results on the global existence of strong solutions without restrictions on the size of the initial data.
For the case where $d$ is a constant unit vector, the system (\ref{nlckv})--(\ref{i30}) corresponds to the model introduced by Vaigant-Kazhikhov \cite{VK}, who established the existence and uniqueness of global strong solutions for large initial data with density away from vacuum in rectangular domains under the assumption that $\beta>3$.
Subsequently, Jiu-Wang-Xin \cite{JWX1} generalized this result to allow for initial vacuum in a periodic domain.
Recently, Huang-Li \cite{HL2,HL3} (see also \cite{JWX2}) relaxed the crucial condition from $\beta>3$ to $\beta>\frac{4}{3}$ by applying new techniques based on commutator theory and blow-up criteria in periodic domains or in the whole space, allowing the density to vanish.
More recently, Fan-Li-Li \cite{FLL} established the global existence of strong solutions for $\beta>\frac{4}{3}$ in general bounded simply connected domains where the velocity field satisfies the Navier-slip boundary conditions.
Later, Fan-Li-Wang \cite{FLW} derived a time-independent upper bound for the density and the exponential decay of global strong solutions in both periodic and bounded simply connected domains under the assumption that $\beta>\frac{4}{3}$.

Returning to the compressible nematic liquid crystal system (\ref{nlckv}), there is a huge literature concerning the global existence of solutions with constant viscosity coefficients.
In particular, the global existence of weak solutions for large initial data was established in \cite{JJW1,JJW2,LLW}, provided that the initial orientation field satisfies a geometric angle condition.
The local existence and uniqueness of strong solutions were proved by Huang-Wang-Wen \cite{HWW}, where the initial density is allowed to vanish in open subsets.
Hu-Wu \cite{HW} established the global existence and uniqueness of strong solutions in critical Besov spaces for the three-dimensional Cauchy problem, under the assumption that the initial data are close to an equilibrium state $(1,0,e)$ for some constant vector $e \in \mathbb{S}^2$.
For initial data close to a non-vacuum equilibrium state in $H^s(\rrr)$ with $s \ge 3$, Gao-Tao-Yao \cite{GTY} established the global existence and long-time behavior of classical solutions.
Recently, by employing ideas from \cite{HLX2,LX2}, Li-Xu-Zhang \cite{LXZ} and Wang \cite{WT} obtained the global existence and uniqueness of classical solutions to (\ref{nlckv}) for the three-dimensional and two-dimensional Cauchy problems, respectively.
These results hold for initial data with possibly large oscillations and vacuum, provided the initial energy is sufficiently small.
Later, for general three-dimensional bounded domains with slip boundary conditions for the velocity field and Neumann boundary conditions for the orientation field, Liu-Zhong \cite{LZ} applied the methodology of Cai-Li \cite{CL} to establish the global existence and uniqueness of classical solutions under the condition that the initial energy is small enough.

More recently, building upon the framework developed by Huang-Li \cite{HL3} and Fan-Li-Li \cite{FLL} for the compressible Navier-Stokes equations, Zhong-Zhou \cite{ZZ,ZZ2} established the global existence and uniqueness of strong solutions to the system (\ref{nlckv}) in a simply connected bounded smooth domain and in the whole space with vacuum far-field density, under the assumptions that $\beta>\frac{4}{3}$ and that the initial orientation field satisfies the geometric condition
\be\la{jhtj}\ba
d_{02} \ge \ep_0 \ \text{ for some positive } \ep_0>0.
\ea\ee
The aim of this paper is to establish the global existence and large-time behavior of strong solutions in the periodic domain or in a bounded simply connected domain with large initial data and vacuum, without imposing the geometric condition (\ref{jhtj}), provided that $\beta>\frac{4}{3}$.

Before stating the main results, we first explain the notations
and conventions used throughout this paper. We define
\be\ba\nonumber
\na^\bot \triangleq (\p_2,-\p_1),\quad
\int f dx \triangleq \int_{\OM} fdx,\quad
\ol{f} \triangleq \frac{1}{|\OM|}\int f dx,
\ea\ee
and let $\na d \odot \na d$ denote the matrix whose
$(i,k)$-th entry is $\p_i d \cdot \p_k d$.

For a positive integer $s$ and $1\leq r\leq \infty$, we denote the standard Lebesgue and Sobolev spaces as follows:
\be\ba\nonumber
\begin{cases}
L^r =L^r(\OM),\quad W^{s,r} =W^{s,r}(\OM),\quad H^s =W^{s,2}, \\
\tilde{H}^1 =\{v \in H^1(\Omega )\vert v \cdot n=0, \curl v = -A v \cdot n^\bot \,\,\,\text{on}\,\,\, \partial\Omega \}.
\end{cases}
\ea\ee
The material derivative is defined by
\be\ba\nonumber
\frac{D}{Dt}f=\dot{f} \triangleq f_t + u\cdot\na f.
\ea\ee
We define the effective viscous flux $G$ and the vorticity $\o$ as
\be\ba\la{gw}
G \triangleq (2\mu + \lam)\div u - (P-P(\ol{\n})), \quad \o \triangleq \na^\bot \cdot u = \pa_2 u^1 - \pa_1 u^2.
\ea\ee
We now give the definition of strong solutions to (\ref{nlckv}).
\begin{definition}
If all derivatives involved in \eqref{nlckv} for $(\n,u,d)$ are regular distributions,
and equations \eqref{nlckv} hold almost everywhere in 
$\OM \times(0,T)$, then $(\n,u,d)$ is called a strong solution.
\end{definition}

Our first result concerns the global existence and exponential decay of strong solutions in the periodic domain.
\begin{theorem}\la{thpp}
Let $\OM = \T^2$.
Assume that
\be\la{pbg}\ba
\beta>\frac{4}{3}, \quad \ga>1,
\ea\ee
and that the initial data $(\n_0,u_0,d_0)$ satisfy, for some $q>2$,
\be\la{pssol1}\ba
& \n_0 \in W^{1,q}(\T^2), \quad \n_0 \ge 0, \quad \ol{\n}_0 \triangleq \int_{\T^2} \n_0 dx > 0, \\
& u_0 \in H^1(\T^2), \quad \nabla d_{0} \in H^{1}(\T^2),\quad |d_{0}|=1.
\ea\ee
Then the problem \eqref{nlckv}--\eqref{zqbjtj} admits a unique global
strong solution $(\n,u,d)$
in $\T^2 \times (0,\infty)$ satisfying for any $0<T<\infty$,
\be\la{pssol2}\ba
\begin{cases}
\rho\in C([0,T];W^{1,q} ), \quad \n_t\in L^\infty(0,T;L^2), \\ 
u, \na d \in L^\infty(0,T; H^1) \cap L^{(q+1)/q}(0,T; W^{2,q}), \\ 
\sqrt{t} u, \sqrt{t} \na d \in L^2(0,T; W^{2,q}) \cap L^\infty(0,T;H^2), \\
\sqrt{t} u_t, \sqrt{t} \na d_t, \sqrt{t} \na^3 d \in L^2(0,T;H^1), \\
\n u \in C([0,T];L^2), \quad \sqrt{\n} u_t, \na d_t, \na^3 d \in L^2(\T^2 \times(0,T)).
\end{cases}
\ea\ee
Moreover, the global solution $(\n,u,d)$ satisfies the following properties:

1) (Uniform boundedness) There exists a positive constant $C$ depending only on
$\ga$, $\beta$, $\mu$, $\ol{\n}_0$, $\| \n_0 \|_{L^\infty}$, $\| u_0 \|_{H^1}$, and $\| \na d_0 \|_{H^1}$ such that for any $0<T<\infty$,
\be\la{pup}\ba
\sup_{0 \le t \le T} \| \n(\cdot,t) \|_{L^\infty} \le C.
\ea\ee

2) (Exponential decay) For any $p \in [1,\infty)$, there exist positive constants
$C$ and $\alpha_0$ depending only on
$p$, $\ga$, $\beta$, $\mu$, $\ol{\n}_0$, $\| \n_0 \|_{L^\infty}$, $\| u_0 \|_{H^1}$, and $\| \na d_0 \|_{H^1}$ such that for any $1 \le t <\infty$,
\be\la{ped}\ba
\| \n - \ol{\n_0}\|_{L^p} + \| \na u \|_{L^p}
+ \| \na d - \mathbf{k} \otimes d^\bot \|^2_{H^2}
+ \| d_t + (m_0 \cdot \mathbf{k}) d^\bot \|^2_{H^1}
\le C e^{-\alpha_0 t},
\ea\ee
where
\be\la{ped3}\ba
& k_i \triangleq \int_0^1 \left( -d_0^2 \p_i d_0^1 + d_0^1 \p_i d_0^2 \right) (x+se_i) ds \quad i = 1,2, \\
& d^\bot \triangleq (-d^2,d^1), \quad
m_0 \triangleq \frac{1}{\ol{\n}_0} \int \n_0 u_0 dx, \quad \mathbf{k} \triangleq (k_1,k_2),
\ea\ee
with
\be\nonumber\ba
e_1 \triangleq (1,0), \quad e_2 \triangleq (0,1).
\ea\ee
\end{theorem}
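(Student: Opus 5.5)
We combine local well-posedness, a continuation (blow-up) criterion, and a priori estimates that stay uniform in time. Local existence and uniqueness of a strong solution in the class \eqref{pssol2} is obtained as in Huang-Wang-Wen \cite{HWW}, adapted to $\lambda(\rho)=\rho^\beta$ with initial vacuum as in Huang-Li \cite{HL3}. Global existence then reduces to one claim: if $T^*<\infty$ is the maximal time, $\sup_{0\le t<T^*}\|\rho\|_{L^\infty}<\infty$ together with an $L^\infty(0,T^*;H^1)$ bound on $\nabla d$ forces the higher norms in \eqref{pssol2} to stay bounded. So the core is a time-independent bound $\sup_t\|\rho\|_{L^\infty}\le C$.

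\emph{Energy and orientation estimates.} The energy identity gives $\int(\tfrac12\rho|u|^2+G(\rho)+\tfrac12|\nabla d|^2)dx+\int_0^T(\mu\|\nabla u\|_{L^2}^2+\|(\mu+\lambda)^{1/2}{\rm div}\,u\|_{L^2}^2+\|\Delta d+|\nabla d|^2d\|_{L^2}^2)dt\le E_0$. Since we do not assume the angle condition \eqref{jhtj}, we do not pass to a scalar angle. Instead we control $\nabla d$ in $H^1$ through the two-dimensional structure. Write $d=(\cos\theta,\sin\theta)$ locally; then $d^\bot\cdot\nabla d$ is globally defined and periodic up to the constant vector $\mathbf{k}$, with $\nabla d=(d^\bot\cdot\nabla d)\otimes d^\bot$. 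Let $\mathbf{w}\triangleq d^\bot\cdot\nabla d$. It satisfies a \emph{linear} parabolic system $\mathbf{w}_t+\nabla(u\cdot\mathbf{w})=\nabla{\rm div}\,\mathbf{w}$, and its curl vanishes. The cubic term $|\nabla d|^2d$ therefore disappears, and we obtain $L^4$ and $H^1$ estimates on $\nabla d$ in terms of $\|\nabla u\|_{L^2}$ only, with no smallness. This replaces the angle condition and also explains the form of the limits $\mathbf{k}\otimes d^\bot$ and $-(m_0\cdot\mathbf{k})d^\bot$ in \eqref{ped}.

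\emph{Density upper bound.} Here we follow Huang-Li \cite{HL3} and Fan-Li-Wang \cite{FLW}. Using $G$ and $\omega$ from \eqref{gw}, we write the momentum equation as $\rho\dot u=\nabla G+\mu\nabla^\bot\omega-\nabla d\cdot\Delta d$. The extra stress $\nabla d\cdot\Delta d={\rm div}(\nabla d\odot\nabla d-\tfrac12|\nabla d|^2I)$ is absorbed into a modified flux, and it is controlled by the orientation estimates above. We then introduce $\theta(\rho)=\int_0^\rho\frac{2\mu+\lambda(s)}{s}ds$ and the transport identity $\frac{D}{Dt}\theta(\rho)+P-\overline P=-G$. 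Writing $G=\frac{D}{Dt}(\Delta^{-1}{\rm div}(\rho u))+[u_i,R_{ij}](\rho u_j)$ and using commutator estimates, we bound $\|\rho\|_{L^\infty}$ via a weighted $L^p$ estimate of $\rho^{1/2}u$ (for $p$ up to order $\rho^{\beta/2}$-weighted), exactly where $\beta>4/3$ is needed.

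The main obstacle is making this uniform in $T$. First we need exponential decay of the energy dissipation, using Poincaré-type inequalities with the conserved momentum $m_0$ and the topological constant $\mathbf{k}$. Then the Zlotnik-type ODE lemma applied to $\theta(\rho)$ gives a bound independent of time, as in \cite{FLW}.

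\emph{Higher regularity and decay.} With $\rho$ bounded, standard estimates on $\|\nabla u\|_{L^2}$, $\|\sqrt\rho\dot u\|_{L^2}$ and $\|\nabla^2 d\|_{L^2}$ follow, with $t$-weights for the $\sqrt t$ terms. The $W^{1,q}$ bound on $\rho$ comes from the Beale-Kato-Majda type estimate on $\|\nabla u\|_{L^\infty}$. This closes the continuation criterion, and uniqueness follows as in \cite{HL3}. For \eqref{ped}, we combine a Lyapunov functional built from the relative energy $\int\rho|u-m_0|^2+G(\rho|\bar\rho_0)+|\mathbf{w}-\mathbf{k}|^2$ with the Bogovskii correction $\int\rho u\cdot\mathcal B(\rho-\bar\rho_0)$, giving exponential decay. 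The higher-order quantities and $L^p$ norms then follow by interpolation with the uniform bounds on $[1,\infty)$.
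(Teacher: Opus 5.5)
\textbf{The orientation part is the paper's idea; the gap is in how you make the density bound uniform in $T$.} Your $\mathbf{w}=d^\bot\cdot\nabla d$ is exactly the paper's $\nabla\theta=\nabla\phi+\mathbf{k}$, and your $\mathbf{w}$-equation is \eqref{pp58}. The curl-free structure with conserved mean $\mathbf{k}$ is what turns the dissipation $\|\Delta d+|\nabla d|^2d\|_{L^2}=\|{\rm div}\,\mathbf{w}\|_{L^2}$ into an $L^2_tH^1_x$ bound on $\mathbf{w}-\mathbf{k}$. Your lift-free formulation is a slightly cleaner way of saying the same thing.

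You propose to prove exponential decay of the energy first and then apply Zlotnik's lemma. That decay is not available before $\sup_t\|\rho\|_{L^\infty}$ is known.
\begin{itemize}
\item Poincar\'e's inequality controls $\sqrt\rho(u-m_0)$ and $\mathbf{w}-\mathbf{k}$ by the dissipation, but not the pressure potential $\int H(\rho,\bar\rho)\,dx$.
\item For the pressure potential you need the $\nabla(-\Delta)^{-1}(\rho-\bar\rho)$ (Bogovskii) multiplier. It produces $\int\lambda(\rho)\,{\rm div}\,u\,(\rho-\bar\rho)\,dx$, which can be absorbed into the energy only if $\lambda(\rho)$ is bounded. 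This is precisely why Lemma \ref{ppel} invokes \eqref{pp08}.
\item Without that bound you are left with a remainder like $\int(\rho-M_1)_+^{2\beta(\gamma+1)/(\gamma-1)}dx$. It is time-integrable but not dominated by $E(t)$, so no inequality $E'+\alpha E\le 0$ follows.
\item Closing instead with an $R_T$-dependent rate makes $\int_0^T A_1^2\,dt$ grow with $R_T$. That growth enters $\sup_t\log(e+A_1^2)$, the bound on $\sup_t\int\rho|u-m_0|^{2+\nu}dx$, and $N_0$ in Zlotnik's lemma. It destroys the exponent comparison $\beta>1+\beta/4+\epsilon$, which is where $\beta>4/3$ is used.
\end{itemize}

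\textbf{What is needed instead.} The paper, following \cite{FLW}, uses space-time bounds independent of both $T$ and $R_T$:
\begin{itemize}
\item $\sup_t\|\rho\|_{L^p}+\int_0^T\!\int(\rho-M_1)_+^p\,dx\,dt\le C$ (Lemma \ref{ppl3});
\item $\int_0^T\!\int(\rho+1)^{\gamma-1}(\rho-\bar\rho)^2\,dx\,dt\le C$ (Lemma \ref{ppl4}). This comes from testing the momentum equation with $\nabla(-\Delta)^{-1}(\rho-\bar\rho)$, using momentum conservation to control $u-m_0$, and splitting $\lambda(\rho)$ with $(\rho-M_1)_+$.
\end{itemize}
These give $\int_0^T A_1^2\,dt\le C$, hence $\sup_t\log(e+A_1^2)+\int_0^T A_2^2/(e+A_1^2)\,dt\le CR_T^{1+\epsilon}$.

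In Zlotnik's lemma one then has $N_0=CR_T^{1+\beta/4+\epsilon}$ and a linearly growing part $N_1=CR_T$. The linear growth comes from $P(\bar\rho)$, $\bar G$, and the constant parts of the bounds on $F_1$ and $F_2$. Time-uniformity comes from absorbing $N_1$ by the pressure through $\bar\zeta=\theta((CR_T)^{1/\gamma})$, at a cost of only $R_T^{\beta/\gamma}$; it does not come from decay. Exponential decay is proved afterwards, as a consequence of the uniform density bound.

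(Minor: $\int_0^\rho\frac{2\mu+\lambda(s)}{s}\,ds$ diverges at $0$; use $2\mu\log\rho+\rho^\beta/\beta$.)
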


The second result establishes the global existence and exponential decay of strong solutions in bounded domains.
\begin{theorem}\la{thpb}
Let $\OM \subset \rr$ be a simply connected bounded smooth domain.
Assume that \ref{pbg} holds and that the initial data $(\n_0,u_0,d_0)$ satisfy, for some $q>2$,
\be\la{ssol1}\ba
0 \le \n_0 \in W^{1,q},\quad u_0 \in \tilde{H}^1,
\quad \nabla d_{0} \in H^{1},\quad |d_{0}|=1,
\quad (n \cdot \na d_0)|_{\p \OM}=0.
\ea\ee
Then the problem \eqref{nlckv}--\eqref{i30}, \eqref{yjybjtj} admits a unique global
strong solution $(\n,u,d)$
in $\OM \times (0,\infty)$ satisfying for any $0<T<\infty$,
\be\la{ssol2}\ba
\begin{cases}
\rho\in C([0,T];W^{1,q} ), \quad \n_t\in L^\infty(0,T;L^2), \\ 
u, \na d \in L^\infty(0,T; H^1) \cap L^{(q+1)/q}(0,T; W^{2,q}), \\ 
\sqrt{t} u, \sqrt{t} \na d \in L^2(0,T; W^{2,q}) \cap L^\infty(0,T;H^2), \\
\sqrt{t} u_t, \sqrt{t} \na d_t, \sqrt{t} \na^3 d \in L^2(0,T;H^1), \\
\n u \in C([0,T];L^2), \quad \sqrt{\n} u_t, \na d_t, \na^3 d \in L^2(\OM \times(0,T)).
\end{cases}
\ea\ee
Moreover, the global solution $(\n,u,d)$ satisfies the following properties:

1) (Uniform boundedness) There exists a positive constant $C$ depending only on
$\ga$, $\beta$, $\mu$, $\| \n_0 \|_{L^\infty}$, $\| u_0 \|_{H^1}$, $\| \na d_0 \|_{H^1}$, $A$, and $\OM$ such that for any $0<T<\infty$,
\be\la{up}\ba
\sup_{0 \le t \le T} \| \n(\cdot,t) \|_{L^\infty} \le C.
\ea\ee

2) (Exponential decay) For any $p \in [1,\infty)$, there exist positive constants
$C$ and $\alpha_0$ depending only on
$p$, $\ga$, $\beta$, $\mu$, $\| \n_0 \|_{L^\infty}$, $\| u_0 \|_{H^1}$, $\| \na d_0 \|_{H^1}$, $A$, and $\OM$ such that for any $1 \le t <\infty$,
\be\la{ed}\ba
\| \n(\cdot,t)-\ol{\n_0}\|_{L^p} + \| \na u(\cdot,t) \|_{L^p}
+ \| \na d(\cdot,t) \|_{H^2} + \| \na d_t(\cdot,t) \|_{L^2} \le C e^{-\alpha_0 t}.
\ea\ee
\end{theorem}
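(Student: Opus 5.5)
The plan is to follow the Huang--Li / Fan--Li--Li / Fan--Li--Wang framework for the Kazhikhov--Vaigant model. The main new point is that without the angle condition \eqref{jhtj} the orientation field must be controlled by a time-uniform estimate of $\|\na d\|_{L^2}$ and $\|\na^2 d\|_{L^2}$ that uses only the energy dissipation and the Neumann condition. We take the local existence of a strong solution (as in Huang--Wang--Wen, adapted to the slip boundary \eqref{yjybjtj}) and a Serrin-type blow-up criterion in terms of $\|\n\|_{L^\infty}$ as given. Global existence then reduces to a time-independent a priori bound $\sup_{t}\|\n\|_{L^\infty}\le C$, followed by higher-order estimates and exponential decay.

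\textbf{Basic energy and orientation estimates.}
\begin{enumerate}
\item Multiply the momentum equation by $u$ and the $d$-equation by $-(\Delta d+|\na d|^2 d)$. Using $|d|=1$ and $\p d/\p n=0$, this gives the energy identity
\[
\frac{d}{dt}E(t)+\mu\|\curl u\|_{L^2}^2+\int(2\mu+\lam)(\div u)^2+\int_{\p\OM}A|u|^2+\|\Delta d+|\na d|^2 d\|_{L^2}^2=0 .
\]
\item On a bounded domain the Neumann condition gives $\int \na d=0$. We need Poincaré-type control of $\na d$ by the tension field $\Delta d+|\na d|^2d$ in the regime where $\|\na d\|_{L^2}$ is bounded and the energy is dissipated. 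The idea: for times after some $T_0$, the dissipated quantity is small on average. For maps into $\mathbb S^1$ in 2D, write locally $d=(\cos\theta,\sin\theta)$. Then the $d$-equation becomes the scalar transport--heat equation $\theta_t+u\cdot\na\theta=\Delta\theta$, and the tension field equals $\Delta\theta\, d^\bot$.
\item Since $\OM$ is simply connected (respectively, on $\T^2$ up to the constant winding vector $\mathbf k$), the lift exists globally. This explains the term $\mathbf k\otimes d^\bot$ in \eqref{ped}: $\na d=\na\theta\, d^\bot$ with $\na\theta-\mathbf k$ periodic.
\item This lift is the key device that removes the angle condition. It turns the problem into one with a scalar $\theta$ satisfying a maximum principle and linear parabolic estimates, so $\|\na d\|_{L^2}=\|\na\theta\|_{L^2}$ is controlled by $\|\Delta\theta\|_{L^2}$ via Poincaré.
\end{enumerate}

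\textbf{Density bound and higher-order estimates.}
\begin{enumerate}
\item With $\theta$ in hand, run the Huang--Li scheme. Estimate $\sup_t\log(1+\|\n\|_{L^\infty})$ together with the $L^p$ norms of $\n$ via the transport equation for $\theta(\n)=2\mu\log\n+\beta^{-1}\n^\beta$. This equation reads
\[
\frac{D}{Dt}\theta(\n)+P-\ol P=-G ,
\]
and, on $\T^2$, $G$ contains the commutator $[u_i,R_iR_j](\n u_j)$.
\item The liquid-crystal stress $\na d\cdot\Delta d=\div(\na d\odot\na d-\tfrac12|\na d|^2 I)$ enters $G$ and $\o$ as an extra elliptic source, bounded by $\|\na d\|_{L^4}^2$. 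It is absorbed using the $\theta$-estimates from the previous step and Gagliardo--Nirenberg.
\item The bounded-domain case follows Fan--Li--Li: use the Navier-slip representation of $G$ and $\o$ and pull the boundary terms back through $u\cdot n=0$. The condition $\beta>4/3$ enters exactly as in Huang--Li, in the $\int \n|u|^{2+\ep}$ estimate.
\item To make the bound time-uniform we follow Fan--Li--Wang, using the exponential decay of $\|\na u\|_{L^2}^2+\|\na^2\theta\|_{L^2}^2$ obtained from the dissipation. The decay of $\na\theta$ on the bounded domain, and of $\na\theta-\mathbf k$ on $\T^2$, is forced by Poincaré. The decay of the kinetic term follows from Poincaré and the slip boundary term, or, on $\T^2$, after subtracting the conserved momentum $m_0$. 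The phase $d_t+(m_0\cdot\mathbf k)d^\bot$ in \eqref{ped} comes from $\theta_t\to -m_0\cdot\mathbf k$.
\item Standard $t$-weighted estimates for $\sqrt t\,\dot u$ and $\sqrt t\,\na d_t$ then give \eqref{pssol2}/\eqref{ssol2}, together with $\na\n\in L^q$ via the Beale--Kato--Majda type inequality. Uniqueness follows by a Lagrangian or $H^{-1}$-type energy comparison.
\end{enumerate}

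I expect the main obstacle to be the uniform-in-time density bound (the Huang--Li commutator/$\beta>4/3$ step, now coupled to $d$). In particular, it requires making sure the $\|\na d\|_{L^4}^4$ contribution in the $\dot u$ estimate is integrable in time without the angle condition. I would first try to close it using the $\theta$-formulation, with parabolic $L^p$ maximal regularity for $\Delta\theta$ driven by $u\cdot\na\theta$.
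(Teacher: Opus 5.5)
Your architecture matches the paper's: the lift $d=(\cos\theta,\sin\theta)$ on the simply connected $\Omega$, the identity tension field $=\Delta\theta\,d^\bot$, and then the Fan--Li--Li/Fan--Li--Wang scheme closed by Zlotnik's lemma. But the step that makes the density bound time-uniform is circular as you state it.

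\textbf{Time-uniformity of the density bound.} You want uniformity from exponential decay of $\|\nabla u\|_{L^2}^2+\|\nabla^2\theta\|_{L^2}^2$ \emph{obtained from the dissipation}. The energy identity only gives $\int_0^\infty(\|\nabla u\|_{L^2}^2+\|\Delta\theta\|_{L^2}^2)\,dt<\infty$. Decay needs the potential energy $\int H(\rho,\bar\rho)\,dx$ to be controlled by the dissipation, via the test function $\mathcal{B}[\rho-\bar\rho]$. Closing that inequality pointwise in time uses $\|\rho\|_{L^\infty}\le C$: it is needed to bound $\int\rho^\beta\,\mathrm{div}\,u\,(\rho-\bar\rho)\,dx$ by $C\|\nabla u\|_{L^2}\|\rho-\bar\rho\|_{L^2}$ and to have $H(\rho,\bar\rho)\sim(\rho-\bar\rho)^2$. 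Without it you only get a source of the form $\int(\rho-\tilde M_1)_+^{k}\,dx$, which is integrable in time but not decaying. This is why the paper proves decay (Lemma \ref{bpel}) only after Lemma \ref{bpl10}.

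What actually gives $T$-independence is the space--time bound $\int_0^T\!\int(\rho+1)^{\gamma-1}(\rho-\bar\rho)^2\,dx\,dt\le C$ of Lemma \ref{bpl4}. It comes from the Bogovskii test combined with $\sup_t\|\rho\|_{L^p}+\int_0^T\!\int(\rho-\tilde M_1)_+^p\,dx\,dt\le C$ (Lemma \ref{bpl3}), and it yields $\int_0^T B_1^2\,dt\le C$. That is what keeps the constants in $\sup_t\log(e+B_1^2)+\int_0^T B_2^2/(e+B_1^2)\,dt\le CR_T^{1+\ep}$, and in the bound for $\int_{t_1}^{t_2}(-G(x(t),t))\,dt$, independent of $T$. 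The part growing linearly in $t_2-t_1$ is then absorbed by the pressure damping $g=-P$ in Zlotnik's lemma.

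\textbf{The orientation estimates.} These are misdirected in three ways.
\begin{enumerate}
\item Poincaré control of $\|\nabla\theta\|_{L^2}$ by $\|\Delta\theta\|_{L^2}$ matters only later, for decay, since $\|\nabla d\|_{L^2}$ is already bounded by the energy.
\item The Neumann condition does \emph{not} give $\int\nabla d\,dx=0$, because $\int\partial_i d\,dx=\int_{\partial\Omega}d\,n_i\,dS$.
\item What the angle condition was needed for, and what you never derive, is $\int_0^T\|\nabla^2 d\|_{L^2}^2\,dt\le C$.
\end{enumerate}
The last bound follows from three facts. First, $\partial_{ij}d=d^\bot\partial_{ij}\theta-d\,\partial_i\theta\,\partial_j\theta$. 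Second, $\partial_n\theta=-d^2\partial_nd^1+d^1\partial_nd^2=0$, so $\|\nabla^2\theta\|_{L^2}\le C\|\Delta\theta\|_{L^2}$. Third, Gagliardo--Nirenberg for the normal-trace-free field $\nabla\theta$ gives $\|\nabla\theta\|_{L^4}^4\le C\|\nabla\theta\|_{L^2}^2\|\nabla^2\theta\|_{L^2}^2$. This also gives $\int_0^T\|\nabla d\|_{L^4}^4\,dt\le C$ at once. So the obstacle you single out at the end is not the real one, and no maximum principle, maximal regularity, or \emph{small after $T_0$} argument is needed.

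\textbf{The missing $L^p$ bound on $\nabla d$.} What is genuinely absent from your plan is an $R_T$-independent bound $\sup_t\|\nabla d\|_{L^p}\le C$ for $p>2$. Test $\nabla d_t-\Delta\nabla d=-\nabla(u\cdot\nabla d)+\nabla(|\nabla d|^2d)$ with $p|\nabla d|^{p-2}\nabla d$. Use $\nabla d^j\cdot\nabla(n\cdot\nabla d^j)=0$ on $\partial\Omega$ to reduce the boundary term to $p\int_{\partial\Omega}|\nabla d|^{p-2}\partial_id^j\,\partial_in_k\,\partial_kd^j\,dS\le C\||\nabla d|^p\|_{W^{1,1}}$. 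Then close by Gr\"onwall with the integrable weight $\|\nabla^2 d\|_{L^2}^2+\|\nabla u\|_{L^2}^2$. This bound is used in the $K_1$ part of the representation of $G$ and in the $B_1$--$B_2$ estimates. Without it, those coupling terms can only be bounded through $\|\nabla^2 d\|_{L^2}\le B_1$, and $\sup_t B_1$ is controlled only by $\exp(CR_T^{1+\ep})$, which destroys the Zlotnik step.
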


As a consequence of the decay estimates \eqref{ped} and \eqref{ed}, we can further establish the large-time behavior of the spatial gradient of the density for the strong solution obtained in Theorems \ref{thpp} and \ref{thpb} when vacuum occurs initially, following the approach in \cite{CL,LX}.
\begin{theorem}\la{thbp2}
Let $\OM=\T^2$ or $\OM$ be a bounded smooth domain in $\rr$.
Assume that \ref{pbg} holds and the initial data satisfy \eqref{pssol1} or \eqref{ssol1}
Suppose, in addition, that there exists some point $x_0 \in \OM$ such that $\n_0(x_0)=0$.
Then for any $r>2$, there exists a positive constant $C$ depending only on
$r$, $\ga$, $\beta$, $\mu$, $\| u_0 \|_{H^1}$, $\| \rho_0 \|_{L^1\cap L^\infty}$, $\| \na d_0 \|_{H^1}$, and $\OM$ such that for any $t \ge 1$,
\be\la{pbu0}\ba
\| \na \n(\cdot ,t) \|_{L^r} \ge C e^{\alpha_0 \frac{r-2}{r} t }.
\ea\ee
\end{theorem}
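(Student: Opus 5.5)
We follow the argument of Cai--Li \cite{CL} and Li--Xin \cite{LX}, which combines the Lagrangian persistence of vacuum with the exponential decay of $\|\n-\ol{\n_0}\|_{L^p}$ in \eqref{ped} and \eqref{ed}. The idea is that a vacuum point is carried along the flow for all time, while away from it the density converges exponentially to the positive constant $\ol{\n_0}$. A Gagliardo--Nirenberg interpolation therefore forces $\na\n$ to grow.

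\emph{Step 1: vacuum persists.} Since $u\in L^1(0,T;W^{1,\infty})$ (from $u\in L^{(q+1)/q}(0,T;W^{2,q})$ with $q>2$), the particle path
\[
X'(t)=u(X(t),t),\qquad X(0)=x_0,
\]
is well defined and unique. In the bounded-domain case, $u\cdot n=0$ on $\p\OM$ keeps $X(t)$ in $\ol\OM$. Along this path the mass equation reads
\[
\frac{d}{dt}\n(X(t),t)=-\n\,\div u .
\]
Hence $\n(X(t),t)=\n_0(x_0)\exp\bigl(-\int_0^t\div u\bigr)=0$ for all $t\ge0$. Because $\n(\cdot,t)\in W^{1,q}\subset C(\ol\OM)$, this pointwise value is meaningful. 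Consequently
\[
\|\n(\cdot,t)-\ol{\n_0}\|_{C(\ol\OM)}\ge \ol{\n_0}>0 .
\]
Mass conservation gives $\ol{\n}(t)=\ol{\n_0}$, and $\ol{\n_0}>0$ whenever $\n_0\not\equiv0$. Here we use that $\ol{\n_0}$ is controlled by $\|\n_0\|_{L^1}$, which fixes the constant's dependence.

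\emph{Step 2: interpolation.} For $r>2$ in two dimensions, the Gagliardo--Nirenberg inequality on $\T^2$ or on the bounded smooth domain gives
\[
\|f\|_{C(\ol\OM)}\le C\|f\|_{L^2}^{\theta}\|\na f\|_{L^r}^{1-\theta}+C\|f\|_{L^2},\qquad \theta=\frac{r-2}{2r-2}.
\]
We apply it to $f=\n-\ol{\n_0}$. By \eqref{ped} or \eqref{ed} with $p=2$, $\|f\|_{L^2}\le Ce^{-\alpha_0 t}$, so for $t$ large the lower-order term is at most $\ol{\n_0}/2$. This yields
\[
\ol{\n_0}/2\le Ce^{-\alpha_0\theta t}\|\na\n\|_{L^r}^{1-\theta},
\qquad\text{i.e.}\qquad
\|\na\n\|_{L^r}\ge Ce^{\alpha_0\frac{\theta}{1-\theta}t}.
\]

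\emph{Step 3: matching the exponent.} Since $\theta/(1-\theta)=(r-2)/r$, Step 2 gives exactly the exponent in \eqref{pbu0}. The relation should be checked against the precise GN exponents; one can instead use the $L^p$ decay with general $p$ and take $p=2$ to get exactly $(r-2)/r$. To pass from large $t$ to all $t\ge1$, we note that on a bounded interval $[1,T_0]$ the same inequality, now with the $C\|f\|_{L^2}$ term kept, still gives a positive lower bound. This uses that $\|f\|_{L^2}\le C$ is bounded and that $\n$ is not identically zero near $X(t)$; alternatively, since $\|f\|_{C}\ge\ol{\n_0}$ and $\|f\|_C\le C\|\na f\|_{L^r}$ for mean-zero $f$, we get $\|\na\n\|_{L^r}\ge c$ directly. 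Adjusting constants then handles all $t\ge1$.

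\emph{Main obstacle.} The delicate point is Step 1. We must justify the Lagrangian argument at the regularity of strong solutions: $\na u$ is only $L^1_tL^\infty_x$, and $\n$ is continuous rather than $C^1$. We must also check that trajectories do not leave $\OM$ through the boundary in case (2), which follows from $u\cdot n=0$. We would handle this by mollifying or appealing to uniqueness of DiPerna--Lions/Lipschitz flows, exactly as in \cite{CL}.
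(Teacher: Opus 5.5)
Your proposal is correct and follows the same route the paper takes by deferring to Cai--Li \cite[Theorem 1.2]{CL}: vacuum persists along the particle path through $x_0$, and the Gagliardo--Nirenberg inequality $\bar\rho_0\le\|\rho-\bar\rho_0\|_{C(\bar\Omega)}\le C\|\rho-\bar\rho_0\|_{L^2}^{\frac{r-2}{2r-2}}\|\nabla\rho\|_{L^r}^{\frac{r}{2r-2}}$, combined with the $L^2$ decay in \eqref{ped} or \eqref{ed}, gives exactly the exponent $\alpha_0\frac{r-2}{r}$. Since $\rho-\bar\rho_0$ has zero mean by mass conservation, you can use the mean-zero form of this inequality (no lower-order term) from the start, which makes your split into large $t$ and a bounded interval unnecessary, along with the imprecise first alternative you offer in Step 3.
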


A few remarks are in order.

\begin{remark}\la{lrk1}
If the initial data $(\n_0,u_0,d_0)$ further satisfy higher regularity
and the compatibility condition:
\be\la{csol2}\ba
- \mu \Delta u_0 - \nabla( (\mu + \lm(\n_0) ) \div u_0 ) + \nabla P(\n_0) + \na d_0 \cdot \Delta d_0 =\n_0^{1/2}g,
\ea\ee
for some $g \in L^2$, then the strong solutions obtained in Theorems \ref{thpp} and \ref{thpb} become classical solutions for positive time.
The detailed proofs follow from arguments analogous to those in \cite{JWX1,LZZ,HL,HLX2}.
\end{remark}

\begin{remark}\la{lrk3}
Compared with the results of Zhong-Zhou \cite{ZZ2}, Theorem \ref{thpb} establishes the global existence of strong solutions without imposing the geometric condition \eqref{jhtj}.
Hence, our results generalize and improve upon those in \cite{ZZ}.
\end{remark}

\begin{remark}\la{lrk4}
It should be noted that it seems that $\beta>1$ is the critical case for the system \eqref{nlckv}--\eqref{i2} (see \cite{VK}).
Therefore, it would be interesting to study the case $1<\beta \le 4/3$, which is left for the future.
\end{remark}

\begin{remark}\la{lrk5}
When $\beta=0$, the system \eqref{nlckv}--\eqref{i30} reduces to the compressible nematic liquid crystal flows with constant viscosity coefficients.
For this system, Jiang-Jiang-Wang \cite{JJW1,JJW2} established the global existence of finite-energy weak solutions in two-dimensional bounded domains and in the whole space, under the assumptions that $\ga>1$ and that the initial orientation field satisfies the geometric condition \eqref{jhtj}.
The method used to derive the basic energy estimate allows us to remove the geometric condition \eqref{jhtj} and thereby extend their results.
\end{remark}

We now make some comments on the analysis of this paper.
First, the local existence and uniqueness of strong solutions can be established by arguments similar to those in \cite{LLL}.
To extend the local solutions globally in time, we need to derive global a priori estimates, where the key issue is to obtain the upper bound for the density.

Compared with the previous results \cite{ZZ,ZZ2}, in which the initial orientation field is assumed to satisfy the geometric condition (\ref{jhtj}), the main difficulty in removing this assumption lies in obtaining an $L^2(\OM \times (0,T))$ estimate for $\na^2 d$.
Specifically, the standard energy estimate provides only an $L^2(\OM \times (0,T))$ bound for $\Delta d + |\na d|^2 d$.
To obtain an estimate for $\na^2 d$, the arguments in \cite{ZZ,ZZ2} use the geometric condition (\ref{jhtj}) together with the constraint $|d|=1$ and apply the maximum principle to $d$, yielding
\be\la{gjgj}\ba
\| \Delta d + |\na d|^2 d \|^2_{L^2} \ge \frac{\bar{\o}}{2} ( \| \Delta d \|^2_{L^2} + \| \na d \|^4_{L^4} ), \  \text{ for some } \bar{\o} \in (0,1).
\ea\ee
This, combined with the standard energy estimate, gives the desired $L^2(\OM \times (0,T))$ estimate for $\na^2 d$.
However, without the geometric condition (\ref{jhtj}), inequality (\ref{gjgj}) can no longer be derived.
To overcome this difficulty, we fully exploit the condition $|d|=1$ by representing $d$ in polar coordinates as $d=(\cos \theta, \sin \theta )$ (see Lemmas \ref{jzb} and \ref{zqjzb}).
A direct calculation shows that
\be\nonumber\ba
|\Delta d + |\na d|^2 d|^2 = |\Delta \theta|^2, \quad  |\na d|^2 = |\na \theta|^2.
\ea\ee
Combining these identities with the standard energy estimate and the Gagliardo-Nirenberg inequality, we obtain the required $L^2(\OM \times (0,T))$ estimate for $\na^2 d$.

Moreover, in contrast to the compressible Navier-Stokes equations, the introduction of the liquid crystal director field $d$ presents new challenges arising from the strong coupling term $u \cdot \na d$ and the nonlinear terms $\na d \cdot \Delta d$ and $|\na d|^2 d$.
To handle these terms, we exploit the parabolic structure of the director equation $(\ref{nlckv})_3$ to derive a key $L^\infty(0,T;L^p)$ estimate for $\na d$, where $p>2$.
This estimate plays a crucial role in controlling the coupling terms throughout the analysis.

In the periodic domain case, although the director field $d$ satisfies the periodic boundary condition, when $d$ is represented in polar coordinates, we cannot guarantee that $\theta$ is also periodic.
In fact, we can only conclude that $\na \theta$ is periodic.
The lack of periodicity of $\theta$ prevents us from deriving a time-uniform $L^2(\T^2 \times (0,T))$ bound for $\na^2 d$, and hence from establishing a time-uniform upper bound for the density and the large-time behavior.
To overcome this difficulty, we observe that the function
\be\nonumber\ba
\phi \triangleq \theta - \mathbf{k} \cdot x,
\ea\ee
is periodic, where $\mathbf{k}=(k_1,k_2)$ is defined in (\ref{ped3}).
We therefore reformulate the system (\ref{nlckv}) in terms of $\phi$ and carry out the corresponding time-uniform a priori estimates.
Combining these estimates with an adaptation of the framework developed in \cite{HL3}, we derive the upper bound for the density and the required higher-order derivative estimates, provided that $\beta>4/3$.

In the bounded domain case, the Neumann boundary condition for $d$ implies the corresponding Neumann boundary condition for $\theta$:
\be\nonumber\ba
\frac{\p \theta}{\p n} = -d^2 \frac{\p d^1}{\p n} + d^1 \frac{\p d^2}{\p n} = 0 \quad \text{ on } \p \OM.
\ea\ee
This, together with the standard energy estimate, the Neumann elliptic estimates, and the Gagliardo-Nirenberg inequality, yields the desired time-uniform $L^2(\OM \times (0,T))$ bound for $\na^2 d$.
This bound also allows us to derive the required $L^\infty(0,T;L^p)$ estimate for $\na d$.
Moreover, the boundary terms require additional care.
On the one hand, the Neumann boundary condition $n \cdot \na d = 0$ on $\p \OM$ implies that, for any $j =1,2$,
\be\nonumber
\na d^j \cdot \na (n \cdot \na d^j) = 0 \quad \text{ on } \p \OM.
\ee
This identity is essential for controlling the boundary term arising in the $L^p$ estimate for $\na d$.
On the other hand, the higher-order energy estimates rely on the following two identities from \cite{CL}:
\be\la{bjds}\ba
u=(u \cdot n^{\bot})n^{\bot}, \quad (u \cdot \na) u \cdot n=-(u \cdot \na) n \cdot u \quad \text{ on } \p \om,
\ea\ee
which follow from the boundary condition $u \cdot n = 0$ on $\p \OM$.
Using these identities and following the approach in \cite{FLL,FLW}, we establish the upper bound for the density and the higher-order derivative estimates under the condition $\beta>4/3$.

The rest of this paper is organized as follows: Section 2 collects several known results and inequalities needed in the subsequent analysis.
Sections 3 and 4 are devoted to the derivation of the a priori estimates in the periodic domain $\OM=\T^2$ and in bounded simply connected domains, respectively.
Finally, Section 5 presents the proofs of our main results, Theorems \ref{thpp}--\ref{thbp2}.

\section{Preliminaries}
In this section, we recall some known facts and elementary inequalities that will be used frequently later.

First, we have the following local existence result for strong solutions, which can be proved by arguments similar to those in \cite{LLL}.
\begin{lemma}\la{lct}
Let $\beta \ge 1$ and $\ga>1$.
Assume the initial data $\left(\n_0,u_0,d_0 \right)$ satisfy \eqref{pssol1} or \eqref{ssol1}.
Then there exists a small time $T>0$ such that the problem \eqref{nlckv}--\eqref{i30} with the boundary conditions \eqref{zqbjtj} or \eqref{yjybjtj} admits a unique strong solution $(\n,u,d)$ in $\OM \times (0,T]$.
Moreover, the solution satisfies \eqref{pssol2} or \eqref{ssol2}.
\end{lemma}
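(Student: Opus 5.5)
The plan is the standard route for local well-posedness of degenerate compressible systems, in the spirit of \cite{LLL} (and of \cite{HWW} for the nematic coupling): regularize the vacuum, build solutions of a linearized problem by iteration, derive estimates independent of the regularization, and pass to the limit. Concretely, I would first replace $\n_0$ by $\n_0^\delta=\n_0+\delta$ with $\delta>0$. Then, for a given pair $(v,\tilde d)$ in the regularity class \eqref{pssol2} (respectively \eqref{ssol2}) on a short interval $[0,T_*]$, I would solve three decoupled linear problems:
\begin{itemize}
\item the transport equation $\n_t+\div(\n v)=0$ with datum $\n_0^\delta$, whose solution lies in $C([0,T];W^{1,q})$ by characteristics since $\na v\in L^1(0,T;W^{1,q})\subset L^1(0,T;L^\infty)$;
\item the linear momentum equation with coefficients frozen at this $\n$, namely $\n u_t+\n v\cdot\na u-\mu\Delta u-\na((\mu+\lm(\n))\div u)=-\na P-\na\tilde d\cdot\Delta\tilde d$, which is uniformly parabolic because $\n\ge\delta e^{-\int\|\div v\|_\infty}>0$;
\item the linear heat equation $d_t-\Delta d=-v\cdot\na d+|\na\tilde d|^2\tilde d$.
\end{itemize}
In the bounded domain the momentum and heat equations are taken with the Navier-slip and Neumann conditions \eqref{yjybjtj}. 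These linear problems are solvable by Galerkin methods or classical parabolic theory. In the bounded case I would use the elliptic estimates for the Lamé-type operator under slip conditions, i.e. $\|\na u\|_{W^{1,p}}\le C(\|\div u\|_{W^{1,p}}+\|\curl u\|_{W^{1,p}})$, as in \cite{CL,FLL}.

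The second step is the set of a priori estimates for the linearized map, uniform in $\delta$. For the density I would use $\|\n\|_{W^{1,q}}$ via Gronwall with $\int_0^t\|\na v\|_{W^{1,q}}$. For the velocity I would estimate $\|\na u\|_{L^2}$ by multiplying the momentum equation by $u_t$, which gives control of $\|\sqrt\n u_t\|_{L^2_tL^2}$. Then I would get the $\sqrt t$-weighted bounds on $\sqrt\n u_t$ and $\na u_t$ by differentiating in time, avoiding the compatibility condition through the weight $t$. The $W^{2,q}$ bound would come from the elliptic problem $-\mu\Delta u-\na((\mu+\lm)\div u)=F$. Here the term $\na\lm(\n)\div u$ is handled using $\n\in W^{1,q}$ and $\beta\ge1$, which ensures $\lm(\n)=\n^\beta\in W^{1,q}$ even at vacuum; this is exactly where $\beta\ge1$ enters. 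For the director field I would use parabolic $L^2$ estimates for $\na d$ in $H^1$, then $\na^3 d$ and $\na d_t$, plus a $\sqrt t$-weighted higher estimate. Choosing $T_*$ small relative to the data norms makes the map send a ball $B_{M,T_*}$ into itself. Note that $L^{(q+1)/q}(0,T;W^{2,q})$ follows by interpolating the unweighted $H^2$-type energy with the $\sqrt t$-weighted $L^2W^{2,q}$ bound, using $\int_0^T t^{-(q+1)/(2q)\cdot\ldots}$ integrability.

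The third step is contraction, which I would carry out in a lower norm: $\|\sqrt\n(u-u')\|_{L^\infty L^2}+\|\na(u-u')\|_{L^2L^2}+\|\n-\n'\|_{L^\infty L^2}+\|\na(d-d')\|_{L^\infty L^2\cap L^2H^1}$. This yields a fixed point, which is the solution for $\delta>0$. I would then let $\delta\to0$ using the uniform bounds and Aubin–Lions compactness. The constraint $|d|=1$ is recovered a posteriori: $\phi=|d|^2-1$ satisfies $\phi_t+u\cdot\na\phi-\Delta\phi=2|\na d|^2\phi$ with zero data (and Neumann boundary condition), so $\phi\equiv0$. This is why the fixed point should be run for the equation with $|\na d|^2 d$ without projecting. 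Uniqueness follows from the same weighted $L^2$ difference estimate. The term $\int(\n-\n')\dot u\cdot(u-u')$ is handled via $\|\n-\n'\|_{L^2}$ combined with $\sqrt t\|\na\dot u\|_{L^2}\in L^2$, using a Hardy-type or $t^{-1/2}$ trick as in \cite{LLL}.

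I expect the main obstacle to be the vacuum-uniform higher-order estimate for $u$, i.e. controlling $\sqrt t\,\na u_t$ and $\|\na u\|_{W^{1,q}}$ without the compatibility condition \eqref{csol2}. The time weight must absorb the lack of initial regularity, and the density-dependent bulk viscosity produces the term $\lm(\n)_t\div u$, which must be estimated through $\n_t\in L^\infty L^2$. In the bounded domain the boundary integrals arising from these estimates are handled with the identities \eqref{bjds}.
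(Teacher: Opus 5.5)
Your outline (regularizing the vacuum by $\rho_0+\delta$, $\delta$-uniform time-weighted estimates that avoid the compatibility condition \eqref{csol2}, compactness as $\delta\to0$, recovery of $|d|=1$ from the equation for $|d|^2-1$, and weighted $L^2$ uniqueness via the $t^{-1/2}$ trick) is the standard \cite{LLL}-type argument that the paper invokes for Lemma~\ref{lct} without further detail, so it matches the intended proof. Two points need tightening. First, the $L^{(q+1)/q}(0,T;W^{2,q})$ bound must come from interpolating $\|\rho u_t\|_{L^q}$ between $\|\sqrt{\rho}\,u_t\|_{L^2}$ and $\sqrt{t}\,\|\nabla u_t\|_{L^2}$ before applying the elliptic estimate, as in \eqref{bpg29}, because the $L^2(0,T;H^2)$ bound and the $\sqrt{t}$-weighted $L^2(0,T;W^{2,q})$ bound by themselves give no $L^r(0,T;W^{2,q})$ bound with $r\ge1$. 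Second, in the contraction step the term $\int(\rho-\rho')\,u'_t\cdot(u-u')\,dx$ yields, via the $t^{-1/2}$ trick, only the factor $(\int_0^T t\|\nabla u'_t\|_{L^2}^2\,dt)^{1/2}$, which your ball bounds control but do not make small, so you should close the contraction for each fixed $\delta>0$ with $\delta$-dependent bounds (for instance $\|u'_t\|_{L^2}\le C\delta^{-1/2}\|\sqrt{\rho'}\,u'_t\|_{L^2}$) on a $\delta$-dependent interval and then reach a common interval $[0,T]$ by the usual continuation argument driven by your $\delta$-uniform estimates.
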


The following Gagliardo-Nirenberg inequalities (see \cite{NI,TG}) will be used frequently.
\begin{lemma}\la{gn1}
Assume that $\OM=\T^2$ or $\OM$ is a bounded smooth domain in $\rr$.
For $2<p<\infty$, there exists a positive constant $C$ depending only on $\OM$ such that for any $f \in H^1(\OM)$,
\be\ba\la{gn11}
\| f \|_{L^p} \le Cp^{1/2}\| f \|^{2/p}_{L^2} \| f \|^{1-2/p}_{H^1}.
\ea\ee
Moreover, the term $\| f \|_{H^1}$ on the right-hand side of \eqref{gn11} can be replaced by $\| \na f \|_{L^2}$ if $\ol{f}=0$ or $f \cdot n|_{\p \OM}=0$.
\end{lemma}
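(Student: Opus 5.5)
We need to prove the Gagliardo–Nirenberg inequality with the explicit $p^{1/2}$ constant, in two dimensions, on $\T^2$ or on a bounded smooth domain, including the variants with $\ol f=0$ or $f\cdot n|_{\p\OM}=0$.

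The plan is to reduce to the whole plane and use a sharp Sobolev/Trudinger-type bound with explicit $p$-dependence.

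\emph{Step 1: whole-plane inequality.} I would first show that for $g\in H^1(\rr)$ and $2<p<\infty$,
\be\nonumber
\|g\|_{L^p(\rr)}\le Cp^{1/2}\|g\|_{L^2(\rr)}^{2/p}\|\na g\|_{L^2(\rr)}^{1-2/p}.
\ee
One route uses Fourier analysis: Hausdorff--Young gives $\|g\|_{L^p}\le\|\hat g\|_{L^{p'}}$. Then split $|\xi|\le R$ and $|\xi|>R$ and apply Hölder with the weights $1$ and $|\xi|^{-1}$:
\be\nonumber
\|\hat g\|_{L^{p'}}\le \|\hat g\|_{L^2}\,\big\|1_{|\xi|\le R}\big\|_{L^{r}}+\||\xi|\hat g\|_{L^2}\,\big\||\xi|^{-1}1_{|\xi|>R}\big\|_{L^{r}},\qquad \frac1r=\frac1{p'}-\frac12=\frac12-\frac1p.
\ee
The second factor equals $\big(2\pi\int_R^\infty s^{1-r}ds\big)^{1/r}=\big(2\pi R^{2-r}/(r-2)\big)^{1/r}$, with $r=2p/(p-2)$ and $r-2=4/(p-2)$. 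This gives a constant of order $(p/4)^{1/r}R^{2/r-1}\lesssim p^{1/2}R^{-2/p}$, since $1/r\le 1/2$.

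The first factor is $CR^{2/r}=CR^{1-2/p}$. Optimizing in $R$ by choosing $R=\|\na g\|_{L^2}/\|g\|_{L^2}$ yields the claim with constant $Cp^{1/2}$.

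Alternatively, one could apply Ladyzhenskaya-type interpolation to $|g|^{p/2}$ iteratively, but the Fourier route tracks the $p$-dependence cleanly. Checking that the $p$-dependence is exactly $p^{1/2}$, and not worse, is the main computational point I expect to need care.

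\emph{Step 2: transfer to $\OM$.} For a bounded smooth $\OM$, I would use a bounded extension operator $E:H^1(\OM)\to H^1(\rr)$ that is also bounded $L^2\to L^2$, for example Stein's extension or reflection with a cutoff. Applying Step 1 to $Ef$ gives
\be\nonumber
\|f\|_{L^p(\OM)}\le Cp^{1/2}\|f\|_{L^2}^{2/p}\|f\|_{H^1}^{1-2/p}.
\ee
For $\T^2$, I would multiply the periodic extension by a cutoff equal to $1$ on $[0,1]^2$ and supported in $[-1,2]^2$. The $L^2$ and $H^1$ norms of the product are then controlled by those of $f$ on $\T^2$.

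\emph{Step 3: replacing $\|f\|_{H^1}$ by $\|\na f\|_{L^2}$.} If $\ol f=0$, the Poincaré inequality gives $\|f\|_{L^2}\le C\|\na f\|_{L^2}$. If $f\cdot n=0$ on $\p\OM$ (for vector fields $f$), I would use the inequality $\|f\|_{L^2}\le C\|\na f\|_{L^2}$. This holds because a constant vector field tangent to the whole boundary of a bounded domain must vanish, since the normal $n$ spans all directions. The inequality then follows by the standard compactness (Rellich) argument by contradiction. In both cases $\|f\|_{H^1}\le C\|\na f\|_{L^2}$, which gives the last assertion of Lemma \ref{gn1}.

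The main obstacle is Step 1's explicit $p^{1/2}$ growth; Steps 2–3 are standard.
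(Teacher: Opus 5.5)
Your proof is correct, but it takes a different route from the paper. The paper gives no argument and simply cites Nirenberg \cite{NI} and Talenti \cite{TG}.

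The standard argument behind those references gets the factor $p^{1/2}$ from the asymptotics of Talenti's sharp Sobolev constant. In two dimensions the best constant in $\|g\|_{L^{2s/(2-s)}}\le C(s)\|\nabla g\|_{L^s}$ grows like $(2-s)^{-1/2}$ as $s\to 2^-$, i.e.\ like the square root of the target exponent. With H\"older's inequality this gives $\|g\|_{L^p}\le Cp^{1/2}\|g\|_{H^1}$, and the interpolated form then follows by Nirenberg's argument (or by scaling on $\mathbb{R}^2$).

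Your Hausdorff--Young/frequency-splitting computation replaces this sharp-constant input by an elementary calculation. Your high-frequency factor is exactly $\bigl(\pi(p-2)/2\bigr)^{1/2-1/p}R^{-2/p}\le Cp^{1/2}R^{-2/p}$. So the point you flagged as delicate is already settled by your own formula, and optimizing in $R$ produces the interpolated form directly.

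What your route buys is a short, self-contained proof in which the origin of the $p^{1/2}$ growth is transparent. What it gives up is generality: it rests on Plancherel and is tied to the $L^2$-based space $H^1$, whereas the Sobolev route extends to $W^{1,s}$-type estimates.

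Your Steps 2 and 3 are standard and correct. These are the extension operator bounded on both $L^2$ and $H^1$, the cutoff of the periodic extension on $\mathbb{T}^2$, Poincar\'e--Wirtinger for $\overline{f}=0$, and the Rellich contradiction argument using that the outward normal of a bounded domain takes every direction. Both Poincar\'e-type inequalities use connectedness of $\Omega$, which is implicit in ``domain''.
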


The following elliptic estimate plays an important role in deriving the time-uniform estimates in bounded domains.
Its proof can be found in \cite{ADN,GT}.
\begin{lemma}\la{tygj}
Let $\OM$ be a bounded smooth domain in $\rr$.
Then for any $f\in H^{2}(\OM)$ satisfying $\frac{\p f}{\p n}=0$ on $\p \OM$, there exists a positive constant $C$ depending only on $\OM$ such that
\be\la{tygj1}\ba
\| \na^2 f \|_{L^2} \le C \| \Delta f \|_{L^2}.
\ea\ee
\end{lemma}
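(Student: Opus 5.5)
This is the classical $H^2$ Neumann estimate, and the plan is to prove it by integration by parts plus a standard boundary curvature identity, closed by a lower-order term that we then remove.

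First, by density it suffices to treat smooth $f$ with $\partial f/\partial n=0$ on $\partial\Omega$. Since the inequality only involves $\nabla f$ and higher derivatives, we may subtract the mean and assume $\bar f=0$.

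\textbf{Step 1 (Hessian in terms of the Laplacian).} Integrating by parts twice gives
\begin{equation*}
\int_\Omega|\nabla^2 f|^2dx=\int_\Omega|\Delta f|^2dx+\int_{\partial\Omega}\bigl(\partial_i f\,\partial_i\partial_j f\,n_j-\partial_j f\,\partial_i\partial_i f\,n_j\bigr)dS.
\end{equation*}
The second boundary term vanishes because $n\cdot\nabla f=0$.

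\textbf{Step 2 (the remaining boundary term).} The first boundary term equals $\int_{\partial\Omega}\nabla f\cdot\nabla(\nabla f\cdot n)\,dS-\int_{\partial\Omega}\nabla f\cdot\nabla n\cdot\nabla f\,dS$. On $\partial\Omega$ we have $\nabla f=(\nabla f\cdot n^\bot)n^\bot$, so $\nabla f$ is tangential. The function $\nabla f\cdot n$ vanishes on $\partial\Omega$, hence so does its tangential derivative, and the first integral is zero. This is the same identity the authors quote in the introduction. What remains is the curvature term
\begin{equation*}
-\int_{\partial\Omega}\kappa\,|\nabla f|^2dS,
\end{equation*}
which is bounded by $C\|\nabla f\|_{L^2(\partial\Omega)}^2$, with $C$ depending only on the smoothness of $\Omega$ (the extension of $n$ is smooth).

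\textbf{Step 3 (trace interpolation).} By the trace inequality,
\begin{equation*}
\|\nabla f\|_{L^2(\partial\Omega)}^2\le C\|\nabla f\|_{L^2}\|\nabla f\|_{H^1}\le \tfrac12\|\nabla^2 f\|_{L^2}^2+C\|\nabla f\|_{L^2}^2.
\end{equation*}
Absorbing the first term into the left side yields $\|\nabla^2 f\|_{L^2}\le C(\|\Delta f\|_{L^2}+\|\nabla f\|_{L^2})$.

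\textbf{Step 4 (removing the lower-order term).} Integrating by parts with the Neumann condition gives $\|\nabla f\|_{L^2}^2=-\int_\Omega f\Delta f\,dx\le\|f\|_{L^2}\|\Delta f\|_{L^2}$. Since $\bar f=0$, Poincar\'e's inequality gives $\|f\|_{L^2}\le C\|\nabla f\|_{L^2}$. Combining these, $\|\nabla f\|_{L^2}\le C\|\Delta f\|_{L^2}$, and inserting this into Step 3 completes the proof.

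The main obstacle is the boundary term in Step 2. It is handled by the tangential-derivative observation, which makes the second-derivative part of the boundary term vanish; after that, only a lower-order curvature term is left, and it is controlled by interpolation. As an alternative, one can simply cite the Agmon--Douglis--Nirenberg Neumann regularity estimate $\|f\|_{H^2}\le C(\|\Delta f\|_{L^2}+\|f\|_{L^2})$ and then run Step 4.
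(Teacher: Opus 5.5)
Your proof is correct, and it takes a different route from the paper. The paper gives no argument of its own and simply cites the general elliptic regularity theory of \cite{ADN,GT}. That theory yields the Neumann estimate $\|f\|_{H^2}\le C(\|\Delta f\|_{L^2}+\|f\|_{L^2})$. The lower-order term is then removed by applying this estimate to $f-\bar f$ and using exactly your Step~4, which the paper leaves implicit; your closing ``alternative'' is therefore the paper's route made complete.

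Your main argument is instead a self-contained Reilly/Grisvard-type integration by parts. It combines the Hessian--Laplacian identity, the vanishing of $\nabla f\cdot\nabla(n\cdot\nabla f)$ on $\partial\Omega$ (the same mechanism the paper uses for $\nabla d$ in \eqref{bp24}), and control of the leftover curvature term by the multiplicative trace inequality.

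What your route buys is transparency. The constant depends only on $\sup_{\partial\Omega}|\kappa|$, the trace constant and the Poincar\'e constant. For convex $\Omega$ (where $\kappa\ge 0$ with the outward normal) the boundary term has a favorable sign, giving $\|\nabla^2 f\|_{L^2}\le\|\Delta f\|_{L^2}$ with constant $1$ and no Poincar\'e step.

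What the cited route buys is generality: $W^{k,p}$ and higher-order versions for general elliptic operators, with no density argument done by hand. Yours is specific to $L^2$ and relies on approximating $f$ in $H^2$ by smooth functions with $\partial_n f=0$. That density is standard and not circular: correct a smooth approximant using a bounded lifting $H^{1/2}(\partial\Omega)\to H^2(\Omega)$ of the normal trace.

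One bookkeeping point in Step~3: the trace bound is multiplied by $\sup|\kappa|$ before absorption. Young's inequality should therefore produce $\varepsilon\|\nabla^2 f\|_{L^2}^2$ with $\varepsilon\sup|\kappa|\le\frac12$, rather than a fixed $\frac12$. This affects only the constant.
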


We will frequently use the following div-curl estimates (see \cite{AJ,MD,WWV}) to bound $\na u$.
\begin{lemma}\la{dc}
Let $k \ge 0$ be an integer and $1<p<\infty$, and let $\OM$ be a simply connected bounded domain in $\rr$ with $C^{k+1,1}$ boundary $\p \OM$.
Then there exists a positive constant $C$ depending only on $k$, $p$, and $\OM$ such that for every $u\in W^{k+1,p}(\OM)$ with $u \cdot n=0$ on $\p \OM$, the following estimate holds:
\be\ba\la{dc1}
\| \na u \|_{W^{k,p}} \le C\left( \|\div u\|_{W^{k,p}} +\| \curl u \|_{W^{k,p}} \right).
\ea\ee
\end{lemma}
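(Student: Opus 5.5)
The plan is to prove Lemma \ref{dc} by a Hodge-type decomposition of $u$ into a gradient part and a $\na^\bot$ part. Each part is controlled by standard $L^p$ elliptic regularity for the Laplacian. Simple connectedness is used only to show that the remainder vanishes.

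\emph{Step 1 (gradient part).} Let $\varphi$ solve the Neumann problem
\be\nonumber
\Delta\varphi=\div u \ \text{ in } \OM,\qquad \frac{\p\varphi}{\p n}=0 \ \text{ on } \p\OM,\qquad \ol{\varphi}=0 .
\ee
The compatibility condition holds because $\int\div u\,dx=\int_{\p\OM}u\cdot n\,dS=0$. By the $W^{k+2,p}$ regularity theory of Agmon--Douglis--Nirenberg \cite{ADN}, which requires a $C^{k+1,1}$ boundary, we get $\|\na\varphi\|_{W^{k+1,p}}\le C\|\div u\|_{W^{k,p}}$.

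\emph{Step 2 ($\na^\bot$ part).} Let $\psi$ solve the Dirichlet problem
\be\nonumber
\Delta\psi=\curl u \ \text{ in } \OM,\qquad \psi=0 \ \text{ on } \p\OM .
\ee
With $\na^\bot=(\p_2,-\p_1)$ one checks that $\curl(\na^\bot\psi)=\p_2\p_2\psi+\p_1\p_1\psi=\Delta\psi$ and $\div(\na^\bot\psi)=0$. Also $\curl\na\varphi=0$. Dirichlet $L^p$ theory gives $\|\na^\bot\psi\|_{W^{k+1,p}}\le C\|\curl u\|_{W^{k,p}}$. Moreover, $\na^\bot\psi\cdot n$ is, up to sign, the tangential derivative of $\psi$ along $\p\OM$, so it vanishes on the boundary.

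\emph{Step 3 (the remainder vanishes).} Set $w=u-\na\varphi-\na^\bot\psi$. Then $\div w=0$, $\curl w=0$, and $w\cdot n=0$ on $\p\OM$. Since $\div w=0$ and $\OM$ is simply connected, $w=\na^\bot h$ for some stream function $h$. The condition $w\cdot n=0$ says that $h$ is constant along $\p\OM$, and $\p\OM$ is connected because $\OM$ is simply connected. From $\curl w=0$ we get $\Delta h=0$, so by uniqueness for the Dirichlet problem $h$ is constant and $w\equiv0$.

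Combining the three steps,
\be\nonumber
\|\na u\|_{W^{k,p}}\le \|\na^2\varphi\|_{W^{k,p}}+\|\na\na^\bot\psi\|_{W^{k,p}}\le C\left(\|\div u\|_{W^{k,p}}+\|\curl u\|_{W^{k,p}}\right).
\ee

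The main obstacle is Step 3. This is exactly where simple connectedness enters: on a multiply connected domain, harmonic vector fields tangent to the boundary form a nontrivial finite-dimensional space, and the estimate fails without a lower-order term. The $L^p$ elliptic estimates in Steps 1 and 2 are quoted directly from \cite{ADN,GT}. If one wants to avoid constructing the stream function, an alternative for Step 3 is to integrate by parts to get $\|w\|_{L^2}^2=0$ using a harmonic potential, but the stream-function argument above is the one I would try first.
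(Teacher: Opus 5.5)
Your proof is correct. It takes a genuinely different route from the paper, which gives no argument for Lemma \ref{dc} and simply imports it from \cite{AJ,MD,WWV}, where the estimate is obtained in greater generality, including three-dimensional and nonsmooth domains.

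Your decomposition $u=\na\varphi+\na^\bot\psi$ reduces everything to scalar Neumann and Dirichlet $L^p$ theory, a reduction special to two dimensions. In return it gives the slightly stronger bound on $\|u\|_{W^{k+1,p}}$, and it needs no compactness step to discard a lower-order term. It also makes clear that simple connectedness enters only through the uniqueness argument in Step 3.

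Two minor points of rigor:
\begin{enumerate}
\item The Agmon--Douglis--Nirenberg estimates are formulated for boundaries of class $C^{k+2}$. Under the sharp $C^{k+1,1}$ hypothesis, the Neumann estimate in Step 1 should be quoted from a source such as Grisvard's monograph. This does not matter for the paper, which only uses smooth $\OM$.
\item In Step 3, $w$ is a priori only in $W^{1,p}$. Since $\div w=\curl w=0$, it is harmonic in $\OM$, so the stream function can be built as in Lemma \ref{jzb}. It satisfies $h\in W^{2,p}\subset C(\ol{\OM})$, and the maximum principle then forces $h$ to be constant.
\end{enumerate}

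The alternative you mention is marginally shorter: from $\curl w=0$ write $w=\na g$, so that $\Delta g=0$ in $\OM$ and $\p g/\p n=0$ on $\p\OM$. This avoids having to argue that $\p\OM$ is connected.
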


Let $\mathcal{H}^1(\T^2)$ and $\mathcal{BMO}(\T^2)$ denote the usual Hardy and BMO spaces.
Given a function $b$, define the commutator
\be\ba\la{p4}
[b,R_iR_j](f) \triangleq bR_i\circ R_j(f) - R_i \circ R_j(bf),\quad i,j=1,2,
\ea\ee
where $R_i = (-\Delta)^{-\frac{1}{2}}\pa_i$ is the usual Riesz transform on $\T^2$.

Now, we present the following well-known commutator estimates established by Coifman-Rochberg-Weiss \cite{CRW} and Coifman-Meyer \cite{CM}.
\begin{lemma}\la{jhzyl}
Let $b,f\in C^\infty(\T^2)$.
Then for $p\in (1,\infty)$, there is a positive constant $C$ depending only on $p$ such that
\be\ba\la{jhz1}
\|[b,R_iR_j](f)\|_{L^p}\leq C \| b\|_{\mathcal{BMO}}\|f\|_{L^p}.
\ea\ee
Moreover, for $p,q,r \in (1,\infty)$ with $\frac{1}{r} = \frac{1}{p}+ \frac{1}{q}$, there exists a positive constant $C$ depending only on $p$, $q$, and $r$ such that
\be\ba\la{jhz2}
\|\na[b,R_iR_j](f)\|_{L^r}\leq C \|\na b\|_{L^p}\|f\|_{L^q}.
\ea\ee
\end{lemma}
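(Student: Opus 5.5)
The plan is to deduce both estimates from the corresponding whole-space results on $\mr^2$: the Coifman--Rochberg--Weiss theorem for \eqref{jhz1}, and the Calderón commutator / Coifman--Meyer bilinear multiplier theorem for \eqref{jhz2}. The transfer to $\T^2$ goes through a decomposition of the periodic kernel of $R_iR_j$. On $\T^2$ the operator $R_iR_j$ is the Fourier multiplier with symbol $-\xi_i\xi_j/|\xi|^2$ for $\xi\in 2\pi\mathbb{Z}^2\setminus\{0\}$, and $0$ at $\xi=0$. Its kernel is $K_{per}(x)=\sum_{m\in\mathbb{Z}^2}K(x+m)$, understood in the principal-value sense and modulo constants, where $K$ is the homogeneous Calderón--Zygmund kernel of $R_iR_j$ on $\mr^2$. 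Fix a cutoff $\chi\in C_c^\infty$ equal to $1$ near the closed unit cell $[-1/2,1/2]^2$ and supported in $(-1,1)^2$. I would first show that $K_{per}=\chi K + S$ on the fundamental cell, with $S$ smooth. Then $R_iR_j=T_{loc}+\mathcal{S}$, where $T_{loc}$ is the truncated Calderón--Zygmund operator with kernel $\chi K$ and $\mathcal{S}$ has a $C^\infty$ periodic kernel.

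For \eqref{jhz1}, the smoothing part is direct. We have $[b,\mathcal{S}]f(x)=\int (b(x)-b(y))S(x-y)f(y)\,dy$. Subtracting the mean of $b$ and applying Hölder together with the John--Nirenberg inequality $\|b-\ol{b}\|_{L^{p'}}+\|b-\ol b\|_{L^p}\le C\|b\|_{\mathcal{BMO}}$ gives the bound $C\|b\|_{\mathcal{BMO}}\|f\|_{L^p}$. For the local part, I would regard $b$ and $f$ as periodic functions on $\mr^2$. Let $\eta\in C_c^\infty$ equal $1$ on $[-3/2,3/2]^2$. For $x$ in the unit cell, $T_{loc}$ only sees values in a bounded neighbourhood, so $[b,T_{loc}]f(x)=[b,T_{loc}](\eta f)(x)$. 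Since $T_{loc}$ is itself a Calderón--Zygmund operator on $\mr^2$, CRW gives $\|[b,T_{loc}](\eta f)\|_{L^p(\mr^2)}\le C\|b\|_{\mathcal{BMO}(\mr^2)}\|\eta f\|_{L^p}$. Finally, the BMO norm on $\mr^2$ of the periodic extension is bounded by a constant times the BMO norm on $\T^2$: a cube of side larger than $1$ is covered by boundedly many cells up to doubling, and small cubes reduce to the torus.

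For \eqref{jhz2}, I would differentiate the commutator. With $T=R_iR_j$,
$$\p_k[b,T]f=(\p_k b)\,Tf - T\big((\p_k b)f\big)+[b,T](\p_k f).$$
The first two terms are bounded by $C\|\na b\|_{L^p}\|f\|_{L^q}$ by Hölder and the $L^r$-boundedness of $T$. The third term is the Calderón-commutator-type operator whose kernel behaves like $(b(x)-b(y))\na K(x-y)$. On $\mr^2$, Coifman--Meyer gives $\|[b,T]\p_k f\|_{L^r}\le C\|\na b\|_{L^p}\|f\|_{L^q}$. To transfer this to $\T^2$, I would split $T=T_{loc}+\mathcal S$ as before. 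The smoothing term $[b,\mathcal S]\p_k f$ is handled by integrating by parts in $y$, using that the kernel is smooth, and controlling $\|b-\ol b\|$ by Poincaré in terms of $\|\na b\|_{L^p}$. For the local term, I would localize with $\eta$ and replace $b$ by $\eta(b-\ol b)$, so that its gradient is controlled by $\|\na b\|_{L^p}$ via Poincaré.

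I expect the main obstacle to be this localization/transference step for the second estimate, namely keeping the bound in terms of $\|\na b\|_{L^p}$ alone after cutting off. The difficulty is that the cutoff creates terms with $\na\eta\,(b-\ol b)$. My first attempt would be to absorb these using Poincaré together with Sobolev embedding on the cell. If that fails, the fallback is to prove \eqref{jhz2} directly on $\T^2$ via periodic Littlewood--Paley paraproduct decomposition, imitating the Coifman--Meyer argument.
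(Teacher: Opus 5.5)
Your argument is correct, but it takes a genuinely different route from the paper. The paper gives no proof at all: it quotes Coifman--Rochberg--Weiss and Coifman--Meyer, as Huang--Li do. Those are whole-space theorems, so the passage to $\mathbb{T}^2$ is taken for granted there. Your proposal supplies exactly that transference: a local Calder\'on--Zygmund piece that on the unit cell only sees $\eta f$, a smoothing remainder, and the comparison $\|b\|_{\mathcal{BMO}(\mathbb{R}^2)}\le C\|b\|_{\mathcal{BMO}(\mathbb{T}^2)}$ for the periodic extension. The citation buys brevity; your route buys an actual proof of the periodic statement, with constants depending only on the exponents.

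Two remarks tighten it.

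First, the splitting is cleanest through the periodic Green's function. The function $G_{per}(x)+\frac{1}{2\pi}\log|x|$ is smooth on a neighbourhood of the closed cell. This avoids the conditionally convergent lattice sum, and the $\delta$-part of $\partial_i\partial_j G_{per}$ is a multiple of the identity, which drops out of every commutator. You can also write $T_{loc}$ as the whole-space $R_iR_j$ (up to that multiple of the identity) minus the operator with kernel $(1-\chi)K$. That kernel is smooth and bounded for $x$ in the cell and $y\in\mathrm{supp}\,\eta$, so it is handled like $\mathcal{S}$. Then you only need the whole-space theorems for $R_iR_j$ itself, not for a truncated operator.

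Second, the step you flag as the main obstacle is not one. Since $\nabla(\eta(b-\overline{b}))=\eta\nabla b+(b-\overline{b})\nabla\eta$, Poincar\'e alone gives $\|\nabla(\eta(b-\overline{b}))\|_{L^p(\mathbb{R}^2)}\le C\|\nabla b\|_{L^p(\mathbb{T}^2)}$.
\begin{itemize}
\item There is one term you did not mention, coming from $\eta\,\partial_k f=\partial_k(\eta f)-(\partial_k\eta)f$. It is harmless: $\|[\tilde b,T]h\|_{L^r}\le C\|\tilde b\|_{L^p}\|h\|_{L^q}$ by H\"older and the $L^q$, $L^r$ boundedness of $T$, again with Poincar\'e for $\tilde b=\eta(b-\overline{b})$.
\item In the smoothing part the exponents cooperate, since $r<p$, $q'<p$ and $q>p'$. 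So Poincar\'e suffices there too.
\end{itemize}
Neither Sobolev embedding nor a periodic paraproduct argument is needed.
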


The following Brezis-Wainger inequality (see \cite{BW,E}) will be used to derive the upper bound for the density in the periodic domain.
\begin{lemma}\la{BWI}
For $2<q<\infty$, there exists some positive constant $C$ depending only on $q$
such that for any $v \in W^{1,q}(\T^2)$,
\be\la{bwi}\ba
\| v \|_{L^\infty(\T^2)} \le C \| \na v \|_{L^2(\T^2)} \log^{1/2}(e+\| \na v \|_{L^q(\T^2)}) + C \| v \|_{L^2(\T^2)} + C.
\ea\ee
\end{lemma}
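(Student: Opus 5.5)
The Brezis--Wainger inequality \eqref{bwi} is a classical result (see \cite{BW,E}), and I would prove it by the standard logarithmic Sobolev argument on $\T^2$. Write $v=\ol{v}+w$ with $\ol{w}=0$. The mean part is harmless, since $|\ol{v}|\le \|v\|_{L^1}\le \|v\|_{L^2}$ because $|\T^2|=1$. So it suffices to show
\[
\|w\|_{L^\infty}\le C\|\na w\|_{L^2}\log^{1/2}(e+\|\na w\|_{L^q})+C
\]
for mean-zero $w\in W^{1,q}(\T^2)$.

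I would expand $w=\sum_{k\neq0}\hat w(k)e^{2\pi i k\cdot x}$ and split the frequencies at a threshold $N\ge 1$ to be chosen later. This gives $w=w_{\le N}+w_{>N}$.

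\emph{Low frequencies.} By Cauchy--Schwarz,
\[
\|w_{\le N}\|_{L^\infty}\le\sum_{0<|k|\le N}|\hat w(k)|\le \Big(\sum_{0<|k|\le N}|k|^{-2}\Big)^{1/2}\Big(\sum|k|^2|\hat w(k)|^2\Big)^{1/2}\le C(\log (e+N))^{1/2}\|\na w\|_{L^2}.
\]
The logarithm appears exactly here, because $\sum_{0<|k|\le N}|k|^{-2}\sim\log N$ in two dimensions.

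\emph{High frequencies.} Here I would use the $W^{1,q}$ norm, with $q>2$, to gain a negative power of $N$. Define $w_{>N}$ through a smooth Littlewood--Paley cutoff $(1-\chi(D/N))w$. Then $w_{>N}=\sum_{j}\Delta_j w$ over dyadic blocks $2^j\gtrsim N$. Bernstein's inequality on the torus gives
\[
\|\Delta_j w\|_{L^\infty}\le C2^{2j/q}\|\Delta_j w\|_{L^q}\le C2^{j(2/q-1)}\|\na w\|_{L^q}.
\]
The second step uses the uniform $L^q$ boundedness of $2^{j}\Delta_j\na^{-1}$ on mean-zero functions. Summing the geometric series, which converges since $q>2$, yields
\[
\|w_{>N}\|_{L^\infty}\le C N^{2/q-1}\|\na w\|_{L^q}.
\]
If I use a smooth cutoff for the low part as well, the low-frequency Cauchy--Schwarz bound above still holds with a harmless constant.

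\emph{Choice of $N$.} Take $N=(e+\|\na w\|_{L^q})^{1/(1-2/q)}$. The high part is then bounded by $C$, and $\log(e+N)\le C_q\log(e+\|\na w\|_{L^q})$. Combining the two pieces with the mean bound gives \eqref{bwi}.

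The only delicate point is the high-frequency Bernstein/multiplier step. That is where $q>2$ is essential, and where one needs the constants to be independent of $N$. Periodic Littlewood--Paley theory handles this via transference from $\mathbb{R}^2$. An alternative is to use the Sobolev embedding $W^{1,q}\hookrightarrow C^{0,1-2/q}$ and compare $w$ with its mollification at scale $1/N$. This gives $\|w-w*\eta_{1/N}\|_{L^\infty}\le CN^{-(1-2/q)}\|\na w\|_{L^q}$ directly, together with $\|w*\eta_{1/N}\|_{L^\infty}\le C\log^{1/2}(e+N)\|\na w\|_{L^2}$ by the same Fourier computation. I would use this mollifier version, since it avoids multiplier theory entirely.
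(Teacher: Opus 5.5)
Your proof is correct. The paper gives no argument for this lemma; it is quoted from \cite{BW,E}. So you are supplying a self-contained proof of a cited result, not reconstructing anything in the text.

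\textbf{Comparison with the usual argument.} The standard physical-space proof starts from a Riesz-potential bound $|w(x)|\le C\int |x-y|^{-1}|\na w(y)|\,dy$ for the mean-zero part. It splits this integral at a radius $\epsilon$ and applies H\"older twice:
\begin{itemize}
\item with $\|\na w\|_{L^q}$ on $|x-y|<\epsilon$, which gives a factor $\epsilon^{1-2/q}$;
\item with $\|\na w\|_{L^2}$ on $\epsilon<|x-y|<1$, where $\int |y|^{-2}\,dy\sim\log(1/\epsilon)$ produces the $\log^{1/2}$.
\end{itemize}
It then optimizes over $\epsilon$. Your mollifier version performs the same balancing with $\epsilon=1/N$, but treats the two halves differently. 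Morrey's inequality replaces the inner H\"older estimate, and the lattice sum $\sum_{0<|k|\le N}|k|^{-2}\sim\log N$ replaces the annulus integral.

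On $\T^2$ your route is arguably cleaner, and it makes the source of the logarithm transparent. The potential-kernel argument, by contrast, needs no Fourier series, only a local representation formula. It therefore adapts more readily to other domains and to higher-dimensional variants.

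\textbf{One step to spell out.} In the Fourier estimate for $w*\eta_{1/N}$, the multiplier $\hat\eta(k/N)$ is not compactly supported. The bound is therefore not literally ``the same'' computation as for the sharp cutoff. You also need the tail estimate
\[
\sum_{|k|>N}|k|^{-2}|\hat\eta(k/N)|^2\le C N^2\sum_{|k|>N}|k|^{-4}\le C,
\]
which follows from the decay $|\hat\eta(\xi)|\le C(1+|\xi|)^{-1}$. This is immediate, but it should be written down.
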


To estimate $\| \na u\|_{L^{\infty}}$ and $\| \na \n\|_{L^{q}}$, we require the following Beale-Kato-Majda type inequality; its proof can be found in \cite{K,BKM,CL}.
\begin{lemma}\la{bkm}
Let $2<q<\infty$.
Assume that either $\OM$ is a bounded smooth domain in $\rr$ and
$u \in \left\{ W^{2,q}(\Omega ) \big| u \cdot n=0, \curl u = -A u \cdot n^\bot \,\,\,\text{on}\,\,\, \partial\Omega \right\}$
or $\OM = \T^2$ and $u \in W^{2,q}(\T^2)$.
Then there exists a positive constant $C$ depending only on $q$ and $\OM$ such that
\be\ba\la{bkm1}
\|\na u\|_{L^\infty} \le C \left( \|\div u \|_{L^\infty}
+ \|\curl u\|_{L^\infty} \right) \log \left(e+ \|\na^2 u\|_{L^q} \right)+ C\|\na u\|_{L^2}+C.
\ea\ee
\end{lemma}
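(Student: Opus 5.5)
We would prove Lemma \ref{bkm} by the classical Beale--Kato--Majda argument, adapted to the two boundary settings. The plan is to control $\na u$ pointwise by $\div u$ and $\curl u$ through singular integral operators, and then to use the logarithmic refinement of the $L^\infty$ bound for such operators.

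\emph{Periodic case.} On $\T^2$ we write $u-\ol u$ via the Helmholtz decomposition:
\begin{equation*}
\na u = \na(-\Delta)^{-1}\na \div u - \na(-\Delta)^{-1}\na^\bot \o .
\end{equation*}
Each component of $\na u$ is therefore a combination of $R_iR_j\div u$ and $R_iR_j\o$, plus the harmless mean-zero normalisation. It then suffices to show that for a Calder\'on--Zygmund operator $T=R_iR_j$ and mean-zero $f\in W^{1,q}$,
\begin{equation*}
\|Tf\|_{L^\infty}\le C\|f\|_{L^\infty}\log(e+\|\na f\|_{L^q})+C\|f\|_{L^2}+C .
\end{equation*}
To prove this, we represent $Tf(x)$ through its kernel $K(x-y)$, with $|K(z)|\lesssim |z|^{-2}$ and cancellation on spheres, and split the integral into three regions.
\begin{itemize}
\item On $|x-y|<\de$, we use the cancellation to replace $f(y)$ by $f(y)-f(x)$. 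Morrey's inequality gives $|f(y)-f(x)|\le C|x-y|^{1-2/q}\|\na f\|_{L^q}$, so this piece is bounded by $C\de^{1-2/q}\|\na f\|_{L^q}$.
\item On $\de\le|x-y|\le 1$, the piece is at most $C\|f\|_{L^\infty}\log(1/\de)$.
\item On the far region, the smooth periodic remainder of the kernel gives at most $C\|f\|_{L^2}$.
\end{itemize}
Choosing $\de=\min\{1,(e+\|\na f\|_{L^q})^{-q/(q-2)}\}$ yields the inequality. We then apply it with $f=\div u$ and $f=\o$, using $\|\na \div u\|_{L^q}+\|\na\o\|_{L^q}\le C\|\na^2u\|_{L^q}$ and $\|\div u\|_{L^2}+\|\o\|_{L^2}\le C\|\na u\|_{L^2}$. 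This gives \eqref{bkm1}.

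\emph{Bounded domain case.} Here the Riesz-transform representation is not available, and localization is the main obstacle. We would follow \cite{CL}.
\begin{enumerate}
\item Since $\OM$ is simply connected and $u\cdot n=0$, we write $u=\na\varphi+\na^\bot\psi$. The potential $\varphi$ solves a Neumann problem $\Delta\varphi=\div u$, $\p_n\varphi=0$. The stream function $\psi$ solves a Dirichlet problem $\Delta\psi=\o$, $\psi=0$ on $\p\OM$.
\item We express $\na^2\varphi$ and $\na^2\psi$ through the Green functions of these two problems. The Green functions satisfy $|\na_x^2 G(x,y)|\le C|x-y|^{-2}$ and the needed H\"older-type regularity. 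Their principal part near the boundary can be handled by flattening the boundary and reflecting, so the same three-region argument goes through. The cancellation near $x$ is replaced by integrating $\na^2G$ over a small ball, which costs a bounded error by standard potential estimates.
\item The slip condition $\curl u=-Au\cdot n^\bot$ is not needed for the decomposition itself. It serves only to ensure that $u\in W^{2,q}$ is compatible with the boundary traces used in the Green-function argument. Lower-order terms are bounded by $C\|\na u\|_{L^2}+C$ via Lemma \ref{dc}.
\end{enumerate}

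The key step is the uniform control of the second derivatives of the Green kernels up to the boundary, with the logarithmic cutoff at scale $\de$ applied uniformly in $x\in\ol\OM$. Once that is in place, the estimate follows exactly as in the periodic case.
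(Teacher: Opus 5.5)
Your proposal is correct and is essentially the standard argument behind the references \cite{K,BKM,CL}, which the paper cites in place of a proof. On $\T^2$ this is the $\de$-scale kernel splitting for $R_iR_j$; in the bounded case it is the decomposition $u=\na\varphi+\na^\bot\psi$ with Green-function bounds uniform up to $\p\OM$.

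Two small remarks. First, the slip condition $\curl u=-Au\cdot n^\bot$ plays no role at all in 2D, since your argument uses only $u\cdot n=0$. Second, the lemma as stated does not assume $\OM$ simply connected, so you should add a step for the general case. There $w=u-\na\varphi-\na^\bot\psi$ is a tangential harmonic field lying in a finite-dimensional space, so $\|\na w\|_{L^\infty}\le C\|\na w\|_{L^2}\le C\|\na u\|_{L^2}$, which is absorbed into the right-hand side of \eqref{bkm1}.
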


We now state the following ``inversion'' of the divergence operator in bounded domains; see \cite{CL} for details.
\begin{lemma}\la{iod}
Let $\OM$ be a bounded smooth domain.
For $1<p<\infty$, there exists a bounded linear operator $\mathcal{B}$ given by
\be\ba\nonumber
\mathcal{B}:\left\{f \in L^p(\OM) :  \int_\Omega fdx=0 \right\}&\rightarrow (W^{1,p}_0(\OM))^2,
\ea\ee
such that $v=\mathcal{B}(f)$ solves
\be\la{iod1}\ba
\begin{cases}
\mathrm{div}v=f&\ \textnormal{ in }\Omega,\\
v=0&\ \textnormal{ on }\partial\Omega.
\end{cases}
\ea\ee
Moreover, the operator has the following properties:

(1) For $1<p<\infty$, there exists a positive constant $C$ depending on $\Omega$ and $p$ such that
\bnn
\|\mathcal{B}(f)\|_{W^{1,p}}\leq C(p)\|f\|_{L^p}.
\enn

(2) If $f=\mathrm{div}h$, for some $h\in L^p$ with $h\cdot n=0$ on $\partial\Omega$, then $v=\mathcal{B}(f)$ is a weak solution to the problem \eqref{iod1} and satisfies
\bnn
\|\mathcal{B}(f)\|_{L^p}\leq C(p) \| h \|_{L^p}.
\enn
\end{lemma}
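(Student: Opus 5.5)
The operator $\mathcal{B}$ is the classical Bogovskii operator, and we would construct it in two stages: first on domains star-shaped with respect to a ball, then on a general bounded smooth domain by gluing.

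\emph{Step 1: local formula.} Assume first that $\OM$ is star-shaped with respect to a ball $B$. Fix $\omega\in C_c^\infty(B)$ with $\int\omega\,dx=1$. For $f\in C_c^\infty(\OM)$ with $\int f\,dx=0$, set
\be\nonumber
\mathcal{B}f(x)=\int_{\OM} f(y)\,\frac{x-y}{|x-y|^2}\int_{|x-y|}^\infty \omega\Big(y+r\frac{x-y}{|x-y|}\Big)r\,dr\,dy .
\ee
A direct computation, using the zero mean of $f$, gives $\div \mathcal{B}f=f$. Star-shapedness ensures that $\mathcal{B}f$ has compact support in $\OM$.

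\emph{Step 2: the $W^{1,p}$ bound (1).} Differentiating the formula writes $\na\mathcal{B}f$ as $f\,K_0$ plus a Calder\'on--Zygmund singular integral of $f$. The kernel is homogeneous of degree $-2$ in $x-y$, smooth in the angular variable, with mean-zero angular part, plus a weakly singular remainder. The Calder\'on--Zygmund theorem then gives $\|\na\mathcal{B}f\|_{L^p}\le C(p)\|f\|_{L^p}$ for $1<p<\infty$. The Poincar\'e inequality for $W_0^{1,p}$ supplies the lower-order part of the norm. Density of $C_c^\infty$ mean-zero functions extends $\mathcal{B}$ to all of $\{f\in L^p:\ \int f=0\}$, and the limit satisfies \eqref{iod1} in the weak sense with zero trace.

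\emph{Step 3: general domain.} A bounded smooth (indeed Lipschitz) domain is a finite union $\OM=\bigcup_{i=1}^N\OM_i$, where each $\OM_i$ is star-shaped with respect to a ball and the cover is ordered so that consecutive pieces overlap. Using a partition of unity $\{\chi_i\}$, we decompose
\be\nonumber
f=\sum_i f_i,\qquad \operatorname{supp} f_i\subset\OM_i,\qquad \int f_i\,dx=0,
\ee
where each $f_i$ is $\chi_i f$ corrected by smooth bump functions supported in the overlaps to transfer the means. This yields $\|f_i\|_{L^p}\le C\|f\|_{L^p}$. Extending each $\mathcal{B}_{\OM_i}f_i$ by zero and defining $\mathcal{B}f=\sum_i\mathcal{B}_{\OM_i}f_i$ proves (1).

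\emph{Step 4: the $L^p$ bound (2).} This is the step we expect to be the main obstacle. Suppose $f=\div h$ with $h\cdot n=0$ on $\p\OM$. On a single star-shaped piece, integrate by parts in $y$ in the Bogovskii formula. The boundary term vanishes, because $h\cdot n=0$ on $\p\OM$ and the kernel vanishes for $y$ near $\p\OM_i\cap\OM$ outside the relevant region. This gives $\mathcal{B}(\div h)=\int \na_y K(x,y)\,h(y)\,dy$, and $\na_y K$ is again a Calder\'on--Zygmund kernel, so $\|\mathcal{B}(\div h)\|_{L^p}\le C\|h\|_{L^p}$.

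The difficulty is that the decomposition of Step 3 must be compatible with the divergence structure. We would write
\be\nonumber
\chi_i\div h=\div(\chi_i h)-h\cdot\na\chi_i .
\ee
The first term is handled as above. The term $h\cdot\na\chi_i$, together with the mean-correction terms, is only in $L^p$; however, it is bounded by $\|h\|_{L^p}$, and by (1) its $\mathcal{B}$-image is controlled in $W^{1,p}\subset L^p$ by $C\|h\|_{L^p}$.

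One must also check that the mean corrections involve only the quantities $\int\chi_i\,\div h=-\int h\cdot\na\chi_i$, using $h\cdot n=0$. These are bounded by $C\|h\|_{L^p}$, which yields (2). As an alternative, (2) can be obtained by duality against $\mathcal{B}^*$, which maps $L^{p'}$ to $W^{1,p'}$.
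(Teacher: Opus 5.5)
The paper does not prove this lemma; it simply refers to \cite{CL}, where the result is the classical Bogovskii operator together with its estimate for data of the form $\div h$. Your outline (the explicit formula on pieces star-shaped with respect to a ball, Calder\'on--Zygmund bounds, gluing by a \emph{smooth} partition of unity with mean corrections, and the splitting $\chi_i\div h=\div(\chi_i h)-h\cdot\na\chi_i$) reconstructs that standard argument correctly.

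The one place to tighten is Step~4. It is the localized field $\chi_i h$, not the kernel, that vanishes near $\partial\Omega_i\cap\Omega$. Also, for $h\in L^p$ with only $\div h\in L^p$, you cannot simply assert that the boundary term vanishes. Justify it in one of two ways:
\begin{itemize}
\item by density of $C_c^\infty(\Omega_i)^2$ in $\{g\in L^p(\Omega_i):\div g\in L^p,\ g\cdot n=0\}$ in the graph norm, combined with continuity of $\mathcal{B}_{\Omega_i}$ from $L^p_0$ to $W^{1,p}_0$;
\item more directly, by your duality remark. The identity $\int \div h\,\psi\,dx=-\int h\cdot\na\psi\,dx$ for all $\psi\in W^{1,p'}$ is exactly the weak meaning of $h\cdot n=0$. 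Moreover, $\mathcal{B}^*$ maps $L^{p'}$ into $W^{1,p'}$ by the same Calder\'on--Zygmund bound on $\na_y$ of the kernel.
\end{itemize}
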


The following Zlotnik inequality (see \cite{ZAA}) will be used to derive the time-uniform upper bound for the density $\n$.
\begin{lemma}\la{zli}
Let the function $y(t) \in W^{1,1}(0,T)$ satisfy
\bnn
y'(t)= g(y)+h'(t) \textnormal{ on } [0,T], \quad y(0)=y_0, 
\enn
with $ g\in C(\r)$ and $h \in W^{1,1}(0,T).$ If $g(\infty)=-\infty$
and 
\bnn 
h(t_2)-h(t_1) \le N_0 +N_1(t_2-t_1),
\enn
for all $0 \le t_1<t_2\le T$
with some $N_0\ge 0$ and $N_1\ge 0$, then
\bnn
y(t)\le \max\left\{y_0,\overline{\zeta} \right\}+N_0<\infty
\textnormal{ on } [0,T],
\enn
where $\overline{\zeta}$ is a constant such
that 
\bnn
g(\zeta)\le -N_1 \quad \textnormal{ for }\quad \zeta\ge \overline{\zeta}.
\enn
\end{lemma}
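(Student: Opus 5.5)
The natural route is a direct comparison argument by continuity, with no further estimates needed. Since $y\in W^{1,1}(0,T)$, $y$ is absolutely continuous on $[0,T]$, and the equation $y'=g(y)+h'$ holds a.e. The hypothesis $g(\infty)=-\infty$ together with continuity of $g$ guarantees that a threshold $\overline{\zeta}$ with $g(\zeta)\le -N_1$ for all $\zeta\ge\overline{\zeta}$ exists. So the only thing to show is the bound itself. Set
\[
M_0\triangleq\max\{y_0,\overline{\zeta}\},
\]
and argue by contradiction: suppose there is $t_2\in(0,T]$ with $y(t_2)>M_0+N_0\ (\ge M_0)$.

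Next define $t_1\triangleq\sup\{s\in[0,t_2]: y(s)\le M_0\}$. This set is nonempty because $y(0)=y_0\le M_0$. By continuity $y(t_1)\le M_0$, and in fact $y(t_1)=M_0$ if $t_1>0$. Moreover $t_1<t_2$, and $y(s)>M_0\ge\overline{\zeta}$ for all $s\in(t_1,t_2]$. Hence $g(y(s))\le -N_1$ on $(t_1,t_2]$.

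Integrating the equation over $[t_1,t_2]$, which is legitimate by absolute continuity of $y$ and $h$, and using the assumed growth bound on increments of $h$, gives
\[
y(t_2)-y(t_1)=\int_{t_1}^{t_2} g(y(s))\,ds+h(t_2)-h(t_1)\le -N_1(t_2-t_1)+N_0+N_1(t_2-t_1)=N_0 .
\]
Therefore $y(t_2)\le y(t_1)+N_0\le M_0+N_0$, which contradicts the choice of $t_2$. Hence $y(t)\le \max\{y_0,\overline{\zeta}\}+N_0$ on all of $[0,T]$, as claimed.

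The only delicate point is the choice of $t_1$ as the \emph{last} time before $t_2$ at which $y$ sits at or below $M_0$. This choice ensures $y$ stays above $\overline{\zeta}$ on the whole interval $(t_1,t_2]$, so the linear growth $N_1(t_2-t_1)$ allowed for $h$ is exactly cancelled by the dissipation coming from $g$. Everything else is routine once absolute continuity is invoked.
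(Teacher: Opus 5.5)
Your argument is correct: taking $t_1$ as the last time in $[0,t_2]$ at which $y\le\max\{y_0,\overline{\zeta}\}$ and integrating the equation over $[t_1,t_2]$ is the standard comparison proof of this inequality. The paper gives no proof of its own; it quotes the result from Zlotnik \cite{ZAA}, whose argument is essentially the one you wrote.
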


Next, we show that any unit vector field defined on a simply connected domain in $\rr$ admits a polar representation.
\begin{lemma}\la{jzb}
Let $\OM \subset \rr$ be a simply connected domain.
Assume that $d=(d^1,d^2) \in C^2(\OM)$ with $|d|=1$.
Then there exists a function $\theta \in C^2(\OM)$ such that
\be\la{jzb1}\ba
d=(\cos \theta, \sin \theta).
\ea\ee
\end{lemma}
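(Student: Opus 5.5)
The plan is to build $\theta$ as a line integral of the closed $1$-form $\eta \triangleq -d^2\,\na d^1 + d^1\,\na d^2$. This form is the candidate for $\na\theta$, since $d=(\cos\theta,\sin\theta)$ forces $\na\theta = -\sin\theta\,\na\cos\theta + \cos\theta\,\na\sin\theta$.

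First, differentiating $|d|^2=1$ gives $d^1\p_i d^1 + d^2 \p_i d^2 = 0$. Next, $\eta \in C^1(\OM)$ because $d\in C^2$. Then we check that $\eta$ is curl-free:
\be\nonumber\ba
\p_1 \eta_2 - \p_2\eta_1 &= \p_1(-d^2\p_2 d^1 + d^1\p_2 d^2) - \p_2(-d^2\p_1 d^1 + d^1 \p_1 d^2) \\
&= 2\left(\p_1 d^1 \p_2 d^2 - \p_1 d^2 \p_2 d^1\right),
\ea\ee
since the second-derivative terms cancel by symmetry of mixed partials. This is twice the Jacobian determinant of $d:\OM\to\mathbb{S}^1$. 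At each point, $\p_1 d$ and $\p_2 d$ are both orthogonal to $d$, so they are parallel in $\rr$ and the determinant vanishes. Hence $\eta$ is closed.

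Since $\OM$ is simply connected (and a domain, hence path-connected), Poincaré's lemma gives a potential. Fix $x_*\in\OM$ and choose $\theta_*$ with $d(x_*)=(\cos\theta_*,\sin\theta_*)$. Define $\theta(x)=\theta_*+\int_{x_*}^x \eta$ along any $C^1$ path; the result is path-independent because closed forms on simply connected planar domains are exact (Green's theorem with homotopy invariance). Then $\theta\in C^1$ with $\na\theta=\eta\in C^1$, so $\theta\in C^2(\OM)$.

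To verify $d=(\cos\theta,\sin\theta)$, set $e=(\cos\theta,\sin\theta)$ and consider $f \triangleq d\cdot e$ and $g \triangleq d\cdot e^\bot$, where $e^\bot=(-\sin\theta,\cos\theta)$. Using $\na e = e^\bot \na\theta$ and $\na e^\bot = -e\,\na\theta$, together with the identity $\na d = d^\bot \eta$ (here $d^\bot=(-d^2,d^1)$; this follows from $\na d\perp d$ and $d^\bot\cdot\na d=\eta$), we obtain
\be\nonumber\ba
\na f = (d^\bot\cdot e)\,\eta + (d\cdot e^\bot)\,\na\theta = -g\,\eta + g\,\eta = 0,
\ea\ee
because $\na\theta = \eta$. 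A similar computation gives $\na g = 0$. Thus $f$ and $g$ are constant on the connected set $\OM$. At $x_*$ they equal $1$ and $0$ respectively, so $d=e$ everywhere.

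The only step needing care is the exactness of $\eta$, i.e. the vanishing of the Jacobian via $|d|=1$. The rest is routine.
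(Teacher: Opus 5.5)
Your proposal is correct and follows the paper's proof essentially step for step: the same closed form $-d^2\na d^1 + d^1\na d^2$, the same vanishing of its curl via $d\cdot\p_i d=0$, the same appeal to simple connectivity, and the same normalization at one point. Your verification through $f=d\cdot e$ and $g=d\cdot e^\bot$ is equivalent to the paper's argument, which shows that $|d-e|^2=2-2\,d\cdot e$ is constant; the constancy of $g$ is harmless but redundant, since $f\equiv 1$ together with $|d|=|e|=1$ already forces $d=e$.
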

\begin{proof}
Define the vector field
\be\nonumber\ba
Q = (Q_1,Q_2),
\ea\ee
where
\be\ba\nonumber
Q_1 \triangleq -d^2 \frac{\p d^1}{\p x_1} + d^1 \frac{\p d^2}{\p x_1}, \quad
Q_2 \triangleq -d^2 \frac{\p d^1}{\p x_2} + d^1 \frac{\p d^2}{\p x_2}.
\ea\ee
A direct computation shows that
\be\ba\la{jzb11}
\frac{\p Q_1}{\p x_2} - \frac{\p Q_2}{\p x_1}
= 2 \left( \frac{\p d^1}{\p x_2} \frac{\p d^2}{\p x_1} - \frac{\p d^1}{\p x_1} \frac{\p d^2}{\p x_2} \right).
\ea\ee
Since $|d|=1$, we have
\be\ba\la{jzb12}
d^1\frac{\p d^1}{\p x_1} + d^2\frac{\p d^2}{\p x_1} = 0, \quad
d^1\frac{\p d^1}{\p x_2} + d^2\frac{\p d^2}{\p x_2} = 0.
\ea\ee
Combining (\ref{jzb11}) with (\ref{jzb12}) and using $|d|=1$, we obtain
\be\ba\nonumber
\na^\bot \cdot Q = \frac{\p Q_1}{\p x_2} - \frac{\p Q_2}{\p x_1} = 0.
\ea\ee
Since $\OM$ is simply connected, Green's theorem (see \cite[Chapter 10]{RW}) guarantees the existence of a function $\theta \in C^2(\OM)$ such that
\be\ba\la{jzb13}
\na \theta = Q = -d^2 \na d^1 + d^1 \na d^2.
\ea\ee

It remains to verify that $\theta$ satisfies (\ref{jzb1}).
Since $\theta$ is determined by (\ref{jzb13}) only up to an additive constant, we may choose this constant such that
\be\ba\nonumber
d(x_0)=(\cos \theta(x_0), \sin \theta(x_0)),
\ea\ee
for some $x_0 \in \OM$.
Set
\be\ba\la{jzb14}
U_1 \triangleq d^1 - \cos \theta, \quad U_2 \triangleq d^2 - \sin \theta.
\ea\ee
Using (\ref{jzb13}), a straightforward calculation yields
\be\ba\nonumber
\na U_1 = \left( 1 - d^2 \sin \theta \right) \na d^1 + d^1 \sin \theta \na d^2, \quad
\na U_2 = d^2 \cos \theta \na d^1 + \left( 1 - d^1 \cos \theta \right) \na d^2.
\ea\ee
Consequently, we have
\be\ba\la{jzb15}
\frac{1}{2}\na \left( U^2_1 + U^2_2 \right) = U_1 \na U_1 + U_2 \na U_2
= \left( 1 - d^2 \sin \theta - d^1 \cos \theta \right) \left( d^1 \na d^1 + d^2 \na d^2 \right).
\ea\ee
In view of (\ref{jzb12}), the right-hand side of (\ref{jzb15}) vanishes identically.
Hence, $U^2_1 + U^2_2$ is constant in $\OM$.
Since $U_1(x_0)=U_2(x_0)=0$, it follows that
\be\nonumber\ba
U_1(x)=U_2(x)=0 \quad \text{ for all } x \in \OM.
\ea\ee
Combining this with (\ref{jzb14}) yields (\ref{jzb1}) and completes the proof.
\end{proof}

Finally, we establish the corresponding polar representation for unit vector fields on $\T^2$.
\begin{lemma}\la{zqjzb}
Assume that $d=(d^1,d^2) \in C^2(\T^2)$ with $|d|=1$.
Then there exist functions $\theta \in C^2(\rr)$ and $\phi \in C^2(\T^2)$ such that
\be\la{zqjzb02}\ba
d=(\cos \theta, \sin \theta),
\ea\ee
and
\be\la{zqjzb03}\ba
\theta(x) = \phi(x) + \mathbf{c} \cdot x,
\ea\ee
where $\mathbf{c}=(c_1,c_2)$ is a constant vector whose components are given by
\be\la{zqjzb01}\ba
c_i \triangleq \int_0^1 \left( -d^2 \p_i d^1 + d^1 \p_i d^2 \right) (x+se_i) ds \quad i = 1,2,
\ea\ee
with
\be\nonumber\ba
e_1 \triangleq (1,0), \quad e_2 \triangleq (0,1).
\ea\ee
\end{lemma}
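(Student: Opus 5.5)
We lift $d$ from the torus to the plane and then repeat the construction of Lemma \ref{jzb} on $\rr$. Let $\tilde d(x) \triangleq d(x \bmod \mathbb{Z}^2)$ be the $\mathbb{Z}^2$-periodic lift of $d$ to $\rr$. It lies in $C^2(\rr)$ and satisfies $|\tilde d|=1$. Since $\rr$ is simply connected, Lemma \ref{jzb} applies to $\tilde d$ on $\OM=\rr$. It gives $\theta \in C^2(\rr)$ with $\tilde d=(\cos\theta,\sin\theta)$ and
\be\nonumber\ba
\na\theta = Q \triangleq -\tilde d^2\na \tilde d^1 + \tilde d^1 \na \tilde d^2 .
\ea\ee
This proves (\ref{zqjzb02}). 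The point to note is that $Q$ is $\mathbb{Z}^2$-periodic, being built from periodic functions, whereas $\theta$ itself need not be periodic.

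Next we show that the increment $\theta(x+e_i)-\theta(x)$ is a constant equal to $c_i$. By the fundamental theorem of calculus along the segment from $x$ to $x+e_i$,
\be\nonumber\ba
\theta(x+e_i)-\theta(x) = \int_0^1 \p_i\theta(x+se_i)\,ds = \int_0^1 Q_i(x+se_i)\,ds ,
\ea\ee
which is exactly $c_i$ as written in (\ref{zqjzb01}) (evaluated at $x$). We must check that this quantity does not depend on $x$. Differentiate $g_i(x)\triangleq\theta(x+e_i)-\theta(x)$:
\be\nonumber\ba
\na g_i(x) = Q(x+e_i)-Q(x) = 0
\ea\ee
by periodicity of $Q$. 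Since $\rr$ is connected, $g_i$ is constant, so $c_i$ is a well-defined constant vector component. As a consistency check, $e^{i\theta}$ is periodic, so $c_i\in 2\pi\mathbb{Z}$; this is not needed for the lemma.

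Finally, set $\phi(x)\triangleq\theta(x)-\mathbf{c}\cdot x$, which is $C^2$ on $\rr$. Then
\be\nonumber\ba
\phi(x+e_i)-\phi(x) = c_i - \mathbf{c}\cdot e_i = 0 ,
\ea\ee
so $\phi$ is $\mathbb{Z}^2$-periodic and descends to a function in $C^2(\T^2)$. This gives (\ref{zqjzb03}).

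The argument has no real obstacle. The only steps needing care are justifying the application of Lemma \ref{jzb} on all of $\rr$ (it is stated for an arbitrary simply connected domain) and checking that the integral defining $c_i$ is independent of $x$, which is the gradient computation above.
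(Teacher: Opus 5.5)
Your proposal is correct and matches the paper's proof essentially step for step. You lift $d$ periodically to $\rr$, apply Lemma \ref{jzb} to obtain $\theta$ with periodic gradient $\na\theta=-d^2\na d^1+d^1\na d^2$, show that $\theta(x+e_i)-\theta(x)$ is a constant equal to $c_i$ (by periodicity of $\na\theta$ and the fundamental theorem of calculus), and then set $\phi=\theta-\mathbf{c}\cdot x$; the only differences from the paper are the order of those two middle steps and your (correct, unneeded) remark that $c_i\in 2\pi\mathbb{Z}$.
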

\begin{proof}
Since $d=(d^1,d^2) \in C^2(\T^2)$ with $|d|=1$, we can extend $d$ to $\rr$ such that $d=(d^1,d^2) \in C^2(\rr)$ and $|d|=1$.
Since $\rr$ is simply connected, Lemma \ref{jzb} implies that there exists $\theta \in C^2(\rr)$ such that
\be\la{zqjzb1}\ba
d=(\cos \theta, \sin \theta),
\ea\ee
and
\be\la{zqjzb2}\ba
\na \theta = -d^2 \na d^1 + d^1 \na d^2.
\ea\ee
Since $d$ is periodic, we obtain from (\ref{zqjzb2}) that $\na \theta$ is also periodic.
Hence, for any $x \in \rr$ and $i=1,2$,
\be\la{zqjzb3}\ba
\na( \theta(x+e_i) - \theta(x) ) = 0,
\ea\ee
which shows that $\theta(x+e_i) - \theta(x)$ is constant.

Moreover, the fundamental theorem of calculus and (\ref{zqjzb2}) give
\be\la{zqjzb4}\ba
\theta(x+e_i) - \theta(x) & = \int_0^1 \p_s \theta(x+se_i) ds = \int_0^1 \p_i \theta(x+se_i) ds \\
& = \int_0^1 \left( -d^2 \p_i d^1 + d^1 \p_i d^2 \right) (x+se_i) ds = c_i.
\ea\ee
Thus, $\mathbf{c}=(c_1,c_2)$ is a constant vector and
\be\la{zqjzb5}\ba
\theta(x+e_i) = \theta(x) + c_i.
\ea\ee
Define
\be\la{zqjzb6}\ba
\phi(x) \triangleq \theta(x) - \mathbf{c} \cdot x.
\ea\ee
Then, for any $x \in \rr$ and $i=1,2$,
\be\la{zqjzb7}\ba
\phi(x+e_i)
& = \theta(x+e_i) - \mathbf{c} \cdot (x+e_i)
= \theta(x) + c_i - \mathbf{c} \cdot x - c_i \\
& = \theta(x) - \mathbf{c} \cdot x = \phi(x).
\ea\ee
Combining (\ref{zqjzb6}) and (\ref{zqjzb7}), we obtain $\phi \in C^2(\T^2)$ and (\ref{zqjzb03}) holds, which finishes the proof of Lemma \ref{zqjzb}.
\end{proof}

\section{A priori estimates on $\T^2$}

In this section, we assume that (\ref{pbg}) holds and the initial data $(\n_0,u_0,d_0)$ satisfy (\ref{pssol1}) with $\n_0>0$.
Let $(\n,u,d)$ be the strong solution to (\ref{nlckv})--(\ref{zqbjtj}) on $\T^2 \times (0,T]$, provided by Lemma \ref{lct}.

Since $|d|=1$, from Lemma \ref{zqjzb}, we conclude that, for any fixed $t \in [0,T]$, there exist functions $\theta \in C^2(\rr)$ and $\phi \in C^2(\T^2)$ such that
\be\la{zqbh1}\ba
d(x,t)=(\cos \theta(x,t), \sin \theta(x,t)),
\ea\ee
and
\be\la{zqbh2}\ba
\theta(x,t) = \phi(x,t) + \mathbf{k}(t) \cdot x,
\ea\ee
where $\mathbf{k}(t)=(k_1(t),k_2(t))$ is a constant vector whose components are given by
\be\la{zqbh3}\ba
k_i(t) = \int_0^1 \left( -d^2 \p_i d^1 + d^1 \p_i d^2 \right) (x+se_i,t) ds \quad i = 1,2.
\ea\ee
We next verify that $\mathbf{k}(t)$ is independent of time.
By $|d|=1$, we have
\be\la{zqbh4}\ba
d \cdot \p_i d = 0, \quad d \cdot \p_t d = 0,
\ea\ee
which implies
\be\la{zqbh5}\ba
\p_i d^1 \p_t d^2 - \p_i d^2 \p_t d^1 = 0.
\ea\ee
Combining (\ref{zqbh4}) and (\ref{zqbh5}), we arrive at
\be\la{zqbh6}\ba
& \frac{d}{dt} \int_0^1 \left( -d^2 \p_i d^1 + d^1 \p_i d^2 \right) (x+se_i,t) ds \\
& = \int_0^1 \left( -\p_t d^2 \p_i d^1 + \p_t d^1 \p_i d^2 -d^2 \p_t \p_i d^1 + d^1 \p_t \p_i d^2 \right) (x+se_i,t) ds \\
& = \int_0^1 \left( \p_t d^2 \p_i d^1 - \p_t d^1 \p_i d^2 -d^2 \p_t \p_i d^1 + d^1 \p_t \p_i d^2 \right) (x+se_i,t) ds \\
& = \int_0^1 \p_i ( \p_t d^2 d^1 - \p_t d^1 d^2 ) (x+se_i,t) ds \\
& = \int_0^1 \p_s ( \p_t d^2 d^1 - \p_t d^1 d^2 ) (x+se_i,t) ds = 0,
\ea\ee
where in the last equality we have used the periodicity of $d$ and $d_t$.
Thus,
\be\la{zqbh6a}\ba
k_i(t) = k_i(0) = k_i, \quad i=1,2,
\ea\ee
with $k_i$ as defined in (\ref{ped3}).

Recall that
\be\la{zqbh7}\ba
d^\bot \triangleq (-d^2,d^1).
\ea\ee
A direct calculation yields
\be\la{zqbh8}\ba
d_t = d^\bot \theta_t, \quad \p_i d = d^\bot \p_i \theta, \quad \Delta d + d |\na d|^2 = d^\bot \Delta \theta,
\ea\ee
and
\be\la{zqbh9}\ba
\na \theta = \na \phi + \mathbf{k}, \quad \theta_t = \phi_t, \quad \Delta \theta = \Delta \phi.
\ea\ee
It follows from (\ref{zqbh8}) and (\ref{zqbh9}) that the system (\ref{nlckv}) becomes
\be\ba\la{zqbhfc}
\begin{cases}
\rho_t + \div(\rho u) = 0, \\
(\n u)_t + \div(\n u\otimes u) -\mu \Delta u 
-\na ( (\mu + \lm) \div u) + \na P = - (\na \phi + \mathbf{k}) \Delta \phi, \\
\phi_t + u \cdot (\na \phi + \mathbf{k}) = \Delta \phi.
\end{cases}
\ea\ee

Moreover, we set
\be\ba\la{a1}
A_1^2(t) \triangleq \int \left( (2\mu + \lam(\n)) (\div u)^2 + |\na u|^2
+ |\na^2 \phi|^2 + (\n+1)^{\ga-1} (\n-\ol{\n})^2 \right) dx,
\ea\ee
\be\ba\la{a2}
A_2^2(t) \triangleq \int \left( \rho |\dot{u}|^2 + |\na \Delta \phi|^2 + |\na \phi_t|^2 \right) dx,
\ea\ee
and
\be\ba\la{mdsj}
R_T \triangleq 1 + \sup_{0 \le t \le T} \| \n(t) \|_{L^\infty}.
\ea\ee

\subsection{A priori estimates (I): upper bound of the density}

This subsection is devoted to deriving the upper bound for the density.

We begin with the standard energy estimate for the transformed system.

\begin{lemma}\la{ppl1}
There exists a positive constant
$C$ depending only on $\mu$, $\gamma$, $\| \n_0 \|_{L^\infty}$, $\| u_0 \|_{H^1}$, and $\| \na d_0 \|_{L^2}$ such that
\be\ba\la{pp01}
& \sup_{0\leq t\leq T} \int \left( \rho |u|^2 + \n^\ga + |\na \phi|^2 \right) dx \\
& + \int_0^T \int \left( |\na u|^2 + (2\mu + \lam(\n)) (\div u)^2 + |\na^2 \phi|^2 \right) dx dt
\le C.
\ea\ee
\end{lemma}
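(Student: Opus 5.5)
The plan is to derive the basic energy identity for the transformed system \eqref{zqbhfc} and then upgrade the control of $\Delta\phi$ to control of $\na^2\phi$ using periodicity of $\phi$.

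\textbf{Energy identity.} Multiply $\eqref{zqbhfc}_2$ by $u$ and integrate over $\T^2$. The continuity equation gives the kinetic-energy derivative and the pressure term:
\begin{equation*}
\int \na P\cdot u\,dx=\frac{d}{dt}\int \frac{\n^\ga}{\ga-1}dx .
\end{equation*}
The viscous terms give $\mu\|\na u\|_{L^2}^2+\int(\mu+\lam)(\div u)^2dx$. Using $\int|\na u|^2=\int(\div u)^2+\int \o^2$ on $\T^2$, this is comparable to the dissipation in \eqref{pp01}. The coupling term is $-\int (\na\phi+\mathbf{k})\cdot u\,\Delta\phi\,dx$.

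Next, multiply $\eqref{zqbhfc}_3$ by $-\Delta\phi$ and integrate. Since $\phi$ is periodic (Lemma \ref{zqjzb}), integration by parts is legitimate and gives
\begin{equation*}
\frac12\frac{d}{dt}\|\na\phi\|_{L^2}^2+\|\Delta\phi\|_{L^2}^2=\int u\cdot(\na\phi+\mathbf{k})\Delta\phi\,dx .
\end{equation*}
Adding the two identities, the coupling terms cancel exactly. This is where the constant vector $\mathbf{k}$ is essential, and it is harmless because $\na\theta=\na\phi+\mathbf{k}$ appears in both equations.

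\textbf{Integration in time.} Integrating over $(0,T)$ yields the bound on
\begin{equation*}
\sup_t\int\Big(\n|u|^2+\n^\ga+|\na\phi|^2\Big)dx+\int_0^T\int\Big(|\na u|^2+(2\mu+\lam)(\div u)^2+|\Delta\phi|^2\Big)dx\,dt
\end{equation*}
in terms of the initial energy. The initial data are controlled as follows:
\begin{itemize}
\item $\int\n_0|u_0|^2\le\|\n_0\|_{L^\infty}\|u_0\|_{L^2}^2$;
\item $\int\n_0^\ga\le \|\n_0\|_{L^\infty}^{\ga}$, since $|\T^2|=1$;
\item $\|\na\phi_0\|_{L^2}\le\|\na\theta_0\|_{L^2}+|\mathbf{k}|$, where $\|\na\theta_0\|_{L^2}=\|\na d_0\|_{L^2}$ and $|k_i|\le\|\na d_0\|_{L^2(\T^2)}$ after averaging the definition \eqref{ped3} over the transversal variable. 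Since $k_i$ is independent of $x$, $k_i^2\le\int_{\T^2}|\na d_0|^2$ by Cauchy–Schwarz.
\end{itemize}

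\textbf{From $\Delta\phi$ to $\na^2\phi$.} On the torus, integrating by parts twice gives $\|\na^2\phi\|_{L^2}=\|\Delta\phi\|_{L^2}$ for periodic $\phi$, which yields the remaining term in \eqref{pp01}.

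\textbf{Main obstacle.} The delicate step is justifying the cancellation and the integrations by parts, that is, confirming that $\phi$ and $\phi_t$ are genuinely periodic with the regularity needed. I would use the time independence of $\mathbf{k}$ established in \eqref{zqbh6a}, together with $\theta_t=\phi_t$ and $|d_t|=|\theta_t|$, so that $\phi_t\in L^2$ and $\na\phi=\na\theta-\mathbf{k}$ inherits the regularity of $\na d$. The regularity class \eqref{pssol2} gives enough smoothness to perform these computations, or to approximate if needed.
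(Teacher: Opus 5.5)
Your proposal is correct and follows the paper's proof essentially step for step. You use the same two multipliers ($u$ for the momentum equation, $\pm\Delta\phi$ for the $\phi$-equation), the same exact cancellation of $\int u\cdot(\nabla\phi+\mathbf{k})\Delta\phi\,dx$, and periodicity to pass from $\Delta\phi$ to $\nabla^2\phi$; the only difference is cosmetic and concerns the initial data. The paper uses the orthogonal splitting $\int|\nabla\phi(0)+\mathbf{k}|^2dx=\int|\nabla\phi(0)|^2dx+|\mathbf{k}|^2$, valid because $\nabla\phi(0)$ has zero mean on $\mathbb{T}^2$, to get $\|\nabla\phi(0)\|_{L^2}\le\|\nabla d_0\|_{L^2}$ and $|\mathbf{k}|\le\|\nabla d_0\|_{L^2}$ at once, whereas your triangle-inequality-plus-averaging argument gives the same bounds up to a harmless constant.
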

\begin{proof}
First, multiplying $(\ref{zqbhfc})_2$ by $u$, integrating by parts over $\T^2$, and using $(\ref{zqbhfc})_1$, we derive
\be\la{3pp1}\ba
& \frac{d}{dt} \int \left( \frac{1}{2}\rho |u|^2 + \frac{P}{\ga-1} \right) dx
+ \int \left( \mu |\na u|^2 + (\mu + \lam(\n)) (\div u)^2 \right) dx \\
& = - \int u \cdot (\na \phi + \mathbf{k}) \Delta \phi dx.
\ea\ee
On the other hand, multiplying $(\ref{zqbhfc})_3$ by $\Delta \phi$ and integrating by parts over $\T^2$, we arrive at
\be\la{3pp2}\ba
\frac{d}{dt} \int \frac{1}{2} |\na \phi|^2 dx + \int |\Delta \phi|^2 dx
= \int u \cdot (\na \phi + \mathbf{k}) \Delta \phi dx.
\ea\ee
Adding (\ref{3pp1}) and (\ref{3pp2}) yields
\be\la{3pp3}\ba
& \frac{d}{dt} \int \left( \frac{1}{2}\rho |u|^2 + \frac{P}{\ga-1} + \frac{1}{2} |\na \phi|^2 \right) dx \\
& + \int \left( \mu |\na u|^2 + (\mu + \lam(\n)) (\div u)^2 + |\Delta \phi|^2 \right) dx = 0.
\ea\ee
Moreover, by (\ref{zqbh8}) and (\ref{zqbh9}), we have
\be\nonumber\ba
\int |\na d(0)|^2 dx = \int |\na \theta(0)|^2 dx = \int |\na \phi(0) + \mathbf{k}|^2 dx
= \int \left( |\na \phi(0)|^2 + |\mathbf{k}|^2 \right) dx,
\ea\ee
which gives
\be\la{3pp4}\ba
\|\na \phi(0)\|_{L^2} \le \| \na d_0 \|_{L^2}, \quad |\mathbf{k}| \le \| \na d_0 \|_{L^2}.
\ea\ee
Integrating (\ref{3pp3}) over $(0,T)$ and using (\ref{3pp4}), we obtain
\be\la{3pp5}\ba
& \sup_{0 \le t \le T} \int \left( \frac{1}{2}\rho |u|^2 + \frac{P}{\ga-1} + \frac{1}{2} |\na \phi|^2 \right) dx \\
& + \int_0^T \int \left( \mu |\na u|^2 + (\mu + \lam(\n)) (\div u)^2 + |\Delta \phi|^2 \right) dx dt \le C,
\ea\ee
which yields (\ref{pp01}) and completes the proof of Lemma \ref{ppl1}.
\end{proof}

The following $L^\infty(0,T;L^p)$ estimate for $\na \phi$ will be used frequently in the subsequent analysis.
\begin{lemma}\la{ppl2}
For any $2<p<\infty$, there exists a positive constant $C$ depending only on
$\mu$, $\ga$, $p$, $\| \n_0 \|_{L^\infty}$, $\| u_0 \|_{H^1}$, and $\| \na d_0 \|_{H^1}$ such that
\be\ba\la{pp02}
\sup_{0\leq t\leq T} \| \na \phi \|^p_{L^p} + \int_0^T \int |\na \phi|^{p-2} |\na^2 \phi|^2 dx dt \leq C.
\ea\ee
\end{lemma}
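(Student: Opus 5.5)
Test the phase equation $(\ref{zqbhfc})_3$ against $-\div(|\na\phi|^{p-2}\na\phi)$, the natural $L^p$ energy for $\na\phi$. Since $\phi$ is periodic, all integrations by parts on $\T^2$ are free of boundary terms. Differentiating $(\ref{zqbhfc})_3$ in $x_j$ and multiplying by $|\na\phi|^{p-2}\p_j\phi$ gives
\be\nonumber\ba
\frac{1}{p}\frac{d}{dt}\|\na\phi\|^p_{L^p} + \int |\na\phi|^{p-2}|\na^2\phi|^2dx + (p-2)\int|\na\phi|^{p-2}\big|\na|\na\phi|\big|^2dx
= -\int \p_j\big(u\cdot(\na\phi+\mathbf{k})\big)|\na\phi|^{p-2}\p_j\phi\, dx .
\ea\ee
Rather than placing a derivative on $u$, I integrate the right-hand side by parts back. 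It then equals $\int u\cdot(\na\phi+\mathbf{k})\,\div(|\na\phi|^{p-2}\na\phi)\,dx$, and this is bounded by
\be\nonumber
C(p)\int |u|\,(|\na\phi|+|\mathbf{k}|)\,|\na\phi|^{p-2}|\na^2\phi|\,dx .
\ee
By Cauchy--Schwarz, half of it is absorbed into the dissipation $\int|\na\phi|^{p-2}|\na^2\phi|^2dx$. What remains is
\be\nonumber
C\int|u|^2(|\na\phi|^2+|\mathbf{k}|^2)|\na\phi|^{p-2}dx \le C\|u\|_{L^{\infty-}}^2\big(\|\na\phi\|_{L^{p+1}}^{p}+1\big),
\ee
where $|\mathbf{k}|\le\|\na d_0\|_{L^2}$ by (\ref{3pp4}).

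The main obstacle is that $u$ itself is not controlled in $L^2$ by Lemma \ref{ppl1}: only $\sqrt\n u$ is bounded, and $\n$ may vanish. I would handle this with the standard Poincaré-type bound for $\int\n=\ol{\n}_0>0$ and $\|\n\|_{L^\gamma}\le C$, namely $\|u\|_{L^2}\le C(\|\sqrt\n u\|_{L^2}+\|\na u\|_{L^2})$. Lemma \ref{gn1} then gives $\|u\|_{L^r}^2\le C(1+\|\na u\|_{L^2}^2)$ for every finite $r$. The other factor involves $w\triangleq|\na\phi|^{p/2}$. Observe that $\|\na w\|_{L^2}^2\le C\int|\na\phi|^{p-2}|\na^2\phi|^2dx$, and that $\|w\|_{L^2}^2=\|\na\phi\|_{L^p}^p$. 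Applying Gagliardo--Nirenberg to $w$ (in $H^1(\T^2)$, not mean zero, so keep the $L^2$ part), I write $\|\na\phi\|_{L^{p+1}}^p\le\|w\|_{L^{r}}^{2-\epsilon}\cdot(\text{lower order})$. The goal is to reach the form
\be\nonumber
\frac{d}{dt}\|\na\phi\|^p_{L^p}+\int|\na\phi|^{p-2}|\na^2\phi|^2dx\le C\big(1+\|\na u\|^2_{L^2}\big)\big(1+\|\na\phi\|^p_{L^p}\big)
\ee
after Young's inequality. A cleaner route to the same form is to interpolate $\|\na\phi\|_{L^{2p}}$ using the known $L^\infty_tL^2_x$ and $L^2_tH^1_x$ bounds on $\na\phi$ from Lemma \ref{ppl1}.

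Gronwall together with $\int_0^T\|\na u\|^2_{L^2}dt\le C$ then closes the estimate, with the initial value $\|\na\phi(0)\|_{L^p}\le C\|\na d_0\|_{H^1}$ controlled by Sobolev embedding. This is uniform in $T$ because the Gronwall weight is time-integrable.

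If the power counting in the Gagliardo--Nirenberg step fails to be linear in $\|\na\phi\|^p_{L^p}$, I would fall back on raising the Gronwall weight to $(1+\|\na u\|^2_{L^2})(1+\|\na^2\phi\|^2_{L^2})$. This weight is still integrable on $(0,T)$ by (\ref{pp01}) and produces the same uniform bound.
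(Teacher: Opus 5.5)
\textbf{There is a genuine gap.} It comes from your choice to integrate the transport term back onto $\phi$. That leaves $u$ itself, rather than $\nabla u$, in the remainder $\int|u|^2(|\nabla\phi|^2+|\mathbf{k}|^2)|\nabla\phi|^{p-2}dx$, and two things then go wrong.

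(a) \emph{Loss of time uniformity.} You only have $\|u\|_{L^r}^2\le C(\|\sqrt{\rho}u\|_{L^2}^2+\|\nabla u\|_{L^2}^2)$. The first piece is bounded but not time-integrable, since momentum is conserved and $u$ does not tend to $0$. So your weight $1+\|\nabla u\|_{L^2}^2$ integrates to $T+C$ over $(0,T)$, and Gronwall gives a bound growing like $e^{CT}$, not the $T$-independent constant the lemma asserts. It also brings in a dependence on $\overline{\rho}_0$. This part can be repaired: constant velocities contribute exactly zero to $\int u\cdot(\nabla\phi+\mathbf{k})\,\div(|\nabla\phi|^{p-2}\nabla\phi)\,dx$, so you may replace $u$ by $u-\bar u$.

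(b) \emph{The power counting cannot be made linear.} This is fatal even for a $T$-dependent bound. Set $w=|\nabla\phi|^{p/2}$ and take any $r<\infty$. H\"older and Gagliardo--Nirenberg give $\int|u|^2w^2dx\le C\|u\|_{L^r}^2\|w\|_{L^2}^{2-4/r}\|w\|_{H^1}^{4/r}$. Absorbing $\|\nabla w\|_{L^2}$ by Young leaves the weight $\|u\|_{L^r}^{2r/(r-2)}$. Even after the replacement in (a), this is of size $\|\nabla u\|_{L^2}^{2r/(r-2)}$, with exponent strictly above $2$. Nothing available at this stage controls $\int_0^T\|\nabla u\|_{L^2}^{2+\delta}dt$; you would need $u\in L^2_tL^\infty_x$, which $H^1$ does not give in 2D.

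Your ``cleaner route'' and your fallback do not avoid this. Both produce $\|\nabla u\|_{L^2}^2\|\nabla^2\phi\|_{L^2}^2$, a product of two functions known only to lie in $L^1(0,T)$, and \eqref{pp01} does not bound its integral. The $L^\infty_t$ bounds on $\|\nabla u\|_{L^2}$ and $\|\nabla^2\phi\|_{L^2}$ appear only in Lemma~\ref{ppl8}, which itself relies on the present lemma.

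\textbf{The missing idea is to keep the derivative on $u$ and use the transport structure.} Expand $\nabla(u\cdot(\nabla\phi+\mathbf{k}))=\nabla u\cdot(\nabla\phi+\mathbf{k})+u^j\partial_j\nabla\phi$. The second piece, tested against $p|\nabla\phi|^{p-2}\nabla\phi$, satisfies $p\int|\nabla\phi|^{p-2}\partial_i\phi\,u^j\partial_{ij}\phi\,dx=\int u\cdot\nabla|\nabla\phi|^p\,dx=-\int\div u\,|\nabla\phi|^p\,dx$. Hence only $\nabla u$ appears, and the whole right side is bounded by $C\int|\nabla u|(|\nabla\phi|^p+|\nabla\phi|)\,dx\le C\|\nabla u\|_{L^2}\|w\|_{L^4}^2+C\|\nabla u\|_{L^2}\|\nabla\phi\|_{L^2}$.

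Ladyzhenskaya's inequality $\|w\|_{L^4}^2\le C\|w\|_{L^2}\|w\|_{H^1}$ is exactly balanced against the single power of $\|\nabla u\|_{L^2}$. Young then gives the linear weight $\|\nabla u\|_{L^2}^2\|\nabla\phi\|_{L^p}^p$.

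The lower-order pieces become time-integrable because $\nabla\phi$ has zero mean; this is why one works with the periodic $\phi$ rather than $\theta$. Indeed $\|\nabla\phi\|_{L^2}\le C\|\nabla^2\phi\|_{L^2}$ and $\|\nabla\phi\|_{L^p}^p\le C\|\nabla\phi\|_{L^p}^{p-2}\|\nabla^2\phi\|_{L^2}^2\le C\|\nabla^2\phi\|_{L^2}^2(1+\|\nabla\phi\|_{L^p}^p)$. This yields \eqref{pp25}, whose weight and forcing are $\|\nabla u\|_{L^2}^2+\|\nabla^2\phi\|_{L^2}^2$. Both are integrable on $(0,T)$ uniformly in $T$ by \eqref{pp01}, and Gronwall closes the estimate.
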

\begin{proof}
First, applying $\na$ to $\eqref{zqbhfc}_3$, we obtain
\be\la{pp21}\ba
\na \phi_t - \na \Delta \phi = - \na \left( u \cdot (\na \phi + \mathbf{k}) \right).
\ea\ee
Multiplying (\ref{pp21}) by $p |\na \phi|^{p-2} \na \phi$, integrating by parts over $\T^2$, and using (\ref{gn11}), (\ref{3pp4}), and Young's inequality, we derive
\be\la{pp22}\ba
& \frac{d}{dt} \int |\na \phi|^p dx + p \int |\na \phi|^{p-2} |\na^2 \phi|^2 dx
+ \frac{4(p-2)}{p} \int |\na (|\na \phi|^\frac{p}{2})|^2 dx \\
& \le -p \int |\na \phi|^{p-2} \p_i \phi u^j \p_{ij} \phi dx
+ C \int |\na \phi|^{p-1} |\na u| \left( |\na \phi| + 1 \right) dx \\
& \le C \int |\na u| |\na \phi|^p dx + C \int |\na u| |\na \phi| dx \\
& \le C \| \na u \|_{L^2} \| |\na \phi|^{\frac{p}{2}} \|^2_{L^4}
+ C \| \na u \|_{L^2} \| \na \phi \|_{L^2} \\
& \le C \| \na u \|_{L^2} \| |\na \phi|^{\frac{p}{2}} \|_{L^2} \| |\na \phi|^{\frac{p}{2}} \|_{H^1}
+ C \| \na u \|_{L^2} \| \na^2 \phi \|_{L^2} \\
& \le \frac{p}{2} \int |\na \phi|^{p-2} |\na^2 \phi|^2 dx + C \| \na \phi \|^p_{L^p}
+ C \| \na u \|^2_{L^2} \| \na \phi \|^p_{L^p} + C \| \na u \|_{L^2} \| \na^2 \phi \|_{L^2} \\
& \le \frac{p}{2} \int |\na \phi|^{p-2} |\na^2 \phi|^2 dx
+ C \| \na \phi \|^{p-2}_{L^{p}} \| \na^2 \phi \|^2_{L^2}
+ C \| \na u \|^2_{L^2} \| \na \phi \|^p_{L^p} + C \| \na u \|_{L^2} \| \na^2 \phi \|_{L^2} \\
& \le \frac{p}{2} \int |\na \phi|^{p-2} |\na^2 \phi|^2 dx
+ C \| \na \phi \|^p_{L^p} ( \| \na^2 \phi \|^2_{L^2} + \| \na u \|^2_{L^2} )
+ C \| \na u \|^2_{L^2} + C \| \na^2 \phi \|^2_{L^2},
\ea\ee
which yields
\be\la{pp25}\ba
& \frac{d}{dt} \int |\na \phi|^p dx + \frac{p}{2} \int |\na \phi|^{p-2} |\na^2 \phi|^2 dx \\
& \le C \| \na \phi \|^p_{L^p} ( \| \na^2 \phi \|^2_{L^2} + \| \na u \|^2_{L^2} )
+ C \| \na u \|^2_{L^2} + C \| \na^2 \phi \|^2_{L^2}.
\ea\ee
Applying Gr\"onwall's inequality to (\ref{pp25}) and using (\ref{pp01}), (\ref{zqbh8}), and (\ref{zqbh9}), we arrive at (\ref{pp02}) and complete the proof of Lemma \ref{ppl2}.
\end{proof}

By combining (\ref{pp01}) and (\ref{pp02}) with an adaptation of the arguments in \cite[Proposition 5.2]{FLW}, we can obtain the following estimates.

\begin{lemma}\la{ppl3}
Let $g_{+} \triangleq \max\{ g,0 \}$.
Then, for any $2 \le p <\infty$, there exist positive constants $C$ and $M_1$ depending only on
$p$, $\mu$, $\ga$, $\beta$, $\| \n_0 \|_{L^\infty}$, $\| u_0 \|_{H^1}$, and $\| \na d_0 \|_{H^1}$ such that
\be\la{pyzgj}\ba
\sup_{0\le t\le T} \| \n \|_{L^p} + \int_0^T \int (\n-M_1)^p_{+} dxdt \le C.
\ea\ee
\end{lemma}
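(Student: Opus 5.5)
Following \cite[Proposition 5.2]{FLW} (and \cite{HL3}), the plan is to work with the effective viscous flux in Lagrangian form. Set $\theta(\n)\triangleq 2\mu\log\n+\beta^{-1}\n^\beta$, so that $(\ref{zqbhfc})_1$ gives
\[
\frac{D}{Dt}\theta(\n) = -(2\mu+\lam)\div u = -G-(P-P(\ol\n)).
\]
On $\T^2$ the momentum equation gives $\Delta G = \div(\n\dot u + (\na\phi+\mathbf{k})\Delta\phi)$. The idea is to split $\n\dot u=(\n u)_t+\div(\n u\otimes u)$ and write
\[
G = (-\Delta)^{-1}\div(\n u)_t + R_iR_j(\n u^iu^j) - (-\Delta)^{-1}\div((\na\phi+\mathbf{k})\Delta\phi),
\]
up to the mean. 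Introduce $\psi\triangleq(-\Delta)^{-1}\div(\n u)$. Then along particle paths
\[
\frac{D}{Dt}(\theta(\n)+\psi) = -(P-P(\ol\n)) - [u^i,R_iR_j](\n u^j) - (-\Delta)^{-1}\div\big((\na\phi+\mathbf{k})\Delta\phi\big) + \ldots
\]

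Rather than the pointwise Zlotnik argument (which needs $L^\infty$ and is done later), for the $L^p$ bound I would multiply this transport identity by $\n(\theta(\n)+\psi-M)_+^{p-1}$-type weights. Equivalently, following \cite{FLW}, I would test the equation for $\n^{\beta}$-type quantities with $(\n-M_1)_+^{p}$, integrate over $\T^2$, and use $(\ref{zqbhfc})_1$ to absorb the transport. The pressure term $-(P-P(\ol\n))$ then gives the good dissipation $\int(\n-M_1)_+^{p+\ga-1}\,dx$-like terms, producing the time-integrated bound $\int_0^T\int(\n-M_1)_+^p$. The remaining terms must be bounded using the energy estimate (\ref{pp01}):
\begin{itemize}
\item $\psi$ is controlled by $\|\n u\|_{L^{r}}\le \|\n\|^{1/2}_{L^{s}}\|\sqrt\n u\|_{L^2}$, via Riesz boundedness and Sobolev embedding on $W^{1,r}$;
\item the commutator $[u^i,R_iR_j](\n u^j)$ is handled by (\ref{jhz1}) with $\|u\|_{\mathcal{BMO}}\le C\|\na u\|_{L^2}$, which is in $L^2(0,T)$ by (\ref{pp01});
\item the new liquid crystal term $(-\Delta)^{-1}\div((\na\phi+\mathbf{k})\Delta\phi)$ is bounded in $W^{1,r}$ by $\|(\na\phi+\mathbf{k})\Delta\phi\|_{L^r}\le C(\|\na\phi\|_{L^{2r/(2-r)}}+|\mathbf{k}|)\|\na^2\phi\|_{L^2}$ for $r<2$.
\end{itemize}
For the last term, Lemma \ref{ppl2} (any $p$) and (\ref{3pp4}) give a bound of the form $C\|\na^2\phi\|_{L^2}$, so this term is in $L^2(0,T)$ with a time-uniform bound by (\ref{pp01}). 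Using the embedding $W^{1,r}\hookrightarrow L^\infty$ fails at $r<2$, so I will keep these terms in $L^p$ and pair them with $(\n-M_1)_+^{p-1}$ via Hölder.

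The resulting differential inequality will have the schematic form
\[
\frac{d}{dt}\int \Phi_p(\n)\,dx + c\int(\n-M_1)_+^{p}\,dx \le C\big(\|\na u\|^2_{L^2}+\|\na^2\phi\|^2_{L^2}\big)\Big(1+\int\Phi_p(\n)\,dx\Big) + \frac{d}{dt}(\text{lower order}),
\]
where $\Phi_p\sim \n^{p}$ for large $\n$ and the lower-order term involves $\int \n^{p}\psi$, controlled by interpolation. Grönwall's inequality together with the time integrability from (\ref{pp01}) then gives both parts of (\ref{pyzgj}). I would take $\beta>4/3$ only for the higher estimates; here I expect any $\beta>1$ to suffice, as in \cite{FLW}.

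The main obstacle is closing the commutator and $\psi$ terms uniformly in time with only $\|\sqrt\n u\|_{L^2}$ control. This requires bounding $\|\n u\|_{L^r}$ by a power of $\|\n\|_{L^p}$ strictly smaller than needed for superlinear growth, so the dissipation $\int(\n-M_1)_+^{p+\ga-1}$ (or $\n^\beta$ weights) absorbs it. I would follow exactly the exponent bookkeeping of \cite[Proposition 5.2]{FLW}, checking that the extra $\phi$-terms only contribute $L^1(0,T)$ coefficients, which holds because of Lemma \ref{ppl2}.
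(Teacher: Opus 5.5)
Your plan follows essentially the same route as the paper, which gives no self-contained argument for this lemma and simply adapts \cite[Proposition 5.2]{FLW} using the energy bound (\ref{pp01}) and the $L^p$ bound (\ref{pp02}) for $\nabla\phi$; the only new term, $F_2=-(-\Delta)^{-1}\div((\nabla\phi+\mathbf{k})\Delta\phi)$, is controlled via (\ref{pp02}) and (\ref{3pp4}) exactly as you propose. One small sign slip: with $\psi=(-\Delta)^{-1}\div(\rho u)$ one has $G-\ol{G}=-\frac{D}{Dt}\psi+F_1+F_2$, so the transported quantity is $\theta(\rho)-\psi$ (cf. (\ref{pp86})), not $\theta(\rho)+\psi$.
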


\begin{lemma}\la{ppl4}
There exists a positive constant $C$ depending only on
$\mu$, $\ga$, $\beta$, $\ol{\n}_0$, $\| \n_0 \|_{L^\infty}$, $\| u_0 \|_{H^1}$, and $\| \na d_0 \|_{H^1}$ such that
\be\la{pp04}\ba
\int_0^T \int (\n+1)^{\ga-1} (\n-\ol{\n})^2 dx dt \le C.
\ea\ee
\end{lemma}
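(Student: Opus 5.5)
We need to prove Lemma \ref{ppl4}: a time-integrated bound on $\int (\n+1)^{\ga-1}(\n-\ol\n)^2$. The plan is the standard Bogovskii-type (here: inverse Laplacian on $\T^2$) test function argument. Note $\ol\n=\ol\n_0$ is conserved. We test the momentum equation $(\ref{zqbhfc})_2$ against $\mathcal{T}\triangleq\na\Delta^{-1}(\n-\ol\n)$, which is well defined on $\T^2$ since $\n-\ol\n$ has zero mean, and satisfies $\div\mathcal T=\n-\ol\n$ and $\|\mathcal T\|_{W^{1,p}}\le C\|\n-\ol\n\|_{L^p}$.

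Multiplying and integrating gives
\[
\int (P-P(\ol\n))(\n-\ol\n)\,dx = \frac{d}{dt}\int \n u\cdot\mathcal T\,dx - \int \n u\cdot\mathcal T_t\,dx - \int \n u\otimes u:\na\mathcal T\,dx + \int(2\mu+\lam)\div u\,(\n-\ol\n)\,dx + \int(\na\phi+\mathbf k)\Delta\phi\cdot\mathcal T\,dx,
\]
where the shear term is rewritten using $\div\mathcal T=\n-\ol\n$ and $\curl\mathcal T=0$, and we subtract the constant $P(\ol\n)$ since $\int(\n-\ol\n)=0$. The left side dominates $c\int(\n+1)^{\ga-1}(\n-\ol\n)^2$ by monotonicity of $\n^\ga$ (elementary: $(\n^\ga-\ol\n^\ga)(\n-\ol\n)\ge c(\ol\n)(\n+1)^{\ga-1}(\n-\ol\n)^2$, constant depending on $\ol\n_0$). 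For $\mathcal T_t$ use the mass equation: $\mathcal T_t=-\na\Delta^{-1}\div(\n u)$, so $\|\mathcal T_t\|_{L^p}\le C\|\n u\|_{L^p}$.

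Then bound each term using Lemmas \ref{ppl1}--\ref{ppl3}. The boundary term $\int\n u\cdot\mathcal T$ is bounded uniformly by $\|\sqrt\n u\|_{L^2}\|\n\|_{L^\infty L^2}^{1/2}\|\mathcal T\|_{L^4}$ via \eqref{pyzgj}. The terms $\int\n u\cdot\mathcal T_t$ and $\int\n|u|^2|\na\mathcal T|$ are bounded by $C\|\n\|_{L^4}^2\|u\|_{L^4}^2\le C\|u\|_{H^1}^2$; to control $\|u\|_{L^2}$ without a Poincaré for $u$, use the standard bound $\|u\|_{L^2}\le C(\|\sqrt\n u\|_{L^2}+\|\na u\|_{L^2})$ valid since $\ol\n_0>0$ and $\n$ is bounded in $L^{p}$ (this is where $\ol\n_0$ enters). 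These are integrable in time by \eqref{pp01}. The coupling term is bounded by $C(\|\na\phi\|_{L^4}+|\mathbf k|)\|\Delta\phi\|_{L^2}\|\mathcal T\|_{L^4}$, and using \eqref{pp02} one gets $\le C\|\na^2\phi\|_{L^2}\|\n-\ol\n\|_{L^4}$; Young gives $C\|\na^2\phi\|_{L^2}^2$ plus $C\|\n-\ol\n\|^2_{L^4}$—the latter is not obviously integrable, so instead split: since $\mathcal T$ is bounded in $L^\infty(0,T;L^\infty)$ in terms of $\|\n\|_{L^\infty L^4}$ and $\int\Delta\phi\,\na\phi\cdot\mathcal T$, $\int\mathbf k\Delta\phi\cdot\mathcal T=-\int \mathbf k\cdot\na\phi\,\ldots$ after integrating by parts; cleaner is $\int\mathbf k\Delta\phi\cdot\mathcal T=\int\na\phi\cdot\na(\mathbf k\cdot\mathcal T)$, and $\le C\|\na\phi\|_{L^2}\|\n-\ol\n\|_{L^2}$ while $\|\na\phi\|_{L^2}\le C\|\na^2\phi\|_{L^2}$ ($\phi$ periodic, mean-zero gradient). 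Then Young with a small multiple of $\|\n-\ol\n\|_{L^2}^2\le\int(\n+1)^{\ga-1}(\n-\ol\n)^2$ absorbs it, leaving $C\|\na^2\phi\|_{L^2}^2$, integrable by \eqref{pp01}.

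The main obstacle is the viscous term $\int(2\mu+\lam(\n))\div u\,(\n-\ol\n)$ with $\lam=\n^\beta$ unbounded. I would bound it by $\delta\int(\n-\ol\n)^2(\n+1)^{\ga-1}$-type absorption only for the $\mu$ part, and for the $\lam$ part use Cauchy–Schwarz: $\le \left(\int\lam(\div u)^2\right)^{1/2}\left(\int\n^\beta(\n-\ol\n)^2\right)^{1/2}$. The first factor is $L^2$ in time by \eqref{pp01}. For the second, split into $\n\le M_1$ where $\n^\beta\le C$ and the term is absorbed into the left side, and $\n>2M_1$ (say), where $\n^{\beta}(\n-\ol\n)^2\le C(\n-M_1)_+^{\beta+2}$, whose space-time integral is bounded by \eqref{pyzgj} with $p=\beta+2$. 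Integrating over $(0,T)$ and collecting yields \eqref{pp04}.
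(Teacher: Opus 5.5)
\textbf{Verdict: genuine gap.} The convective terms are not bounded uniformly in $T$, and the lemma's constant is $T$-independent.

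\textbf{What works.} Your identity from testing against $\mathcal{T}=\nabla\Delta^{-1}(\rho-\overline{\rho})$ is correct. The viscous term is also fine: Cauchy--Schwarz against the weighted dissipation $\int\lambda(\div u)^2$ from \eqref{pp01}, plus \eqref{pyzgj} with $p=\beta+2$ on $\{\rho>2M_1\}$, is a legitimate and slightly slicker alternative to the paper's Young-inequality split. The coupling term is fine too; your first attempt already works, since $\|\mathcal{T}\|_{L^4}\le C\|\nabla\mathcal{T}\|_{L^2}\le C\|\rho-\overline{\rho}\|_{L^2}$.

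\textbf{The gap.} It is in $-\int\rho u\cdot\mathcal{T}_t$ and $-\int\rho u\otimes u:\nabla\mathcal{T}$. You bound these by $C(\|\sqrt{\rho}u\|_{L^2}^2+\|\nabla u\|_{L^2}^2)$ and call this time-integrable by \eqref{pp01}. But \eqref{pp01} controls $\|\sqrt{\rho}u\|_{L^2}$ only in $L^\infty(0,T)$, so you get a bound $CT$. The lemma's constant is independent of $T$, and that uniformity is exactly what Lemma \ref{ppl8} and the decay estimates use.

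Sharper exponents cannot fix this. On $\T^2$ there is no Poincar\'e inequality for $u$; in fact $u\to m_0$, so $\|u\|_{L^2}^2\to|m_0|^2$. Moreover, writing $u=m_0+(u-m_0)$, each of your two terms contains the piece $\pm m_0^im_0^j\int(\rho-\overline{\rho})\,\partial_i\partial_j\Delta^{-1}(\rho-\overline{\rho})\,dx$. This is of order $|m_0|^2\|\rho-\overline{\rho}\|_{L^2}^2$, which cannot be absorbed when $m_0$ is large. The two pieces cancel exactly, but a term-by-term estimate never sees this.

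\textbf{The missing idea: work relative to the mean velocity.}
\begin{itemize}
\item Integrate the momentum equation over $\T^2$, using \eqref{pp41} to write the coupling term as a divergence. This gives $\int\rho u\,dx=\overline{\rho}_0m_0$ \eqref{pp42}, hence $\|u-m_0\|_{L^p}\le C\|\nabla u\|_{L^2}$ \eqref{pp45}.
\item Use the non-conservative form \eqref{pp46}, with $\rho(u-m_0)_t+\rho u\cdot\nabla u$. The time derivative then falls on $\int\rho(u-m_0)\cdot\mathcal{T}$.
\item The remaining terms are $\int\div(\rho u)\,(u-m_0)\cdot\mathcal{T}$, $\int\rho(u-m_0)\cdot\nabla\Delta^{-1}\div(\rho u)$ and $\int\rho u\cdot\nabla u\cdot\mathcal{T}$.
\item For the middle one, split $\rho=(\rho-\overline{\rho})+\overline{\rho}$ and $\rho u=\overline{\rho}u+(\rho-\overline{\rho})u$ as in \eqref{pp410}--\eqref{pp411}, using $\div(\overline{\rho}u)=\overline{\rho}\div u$.
\item Each term now carries a factor $\|\nabla u\|_{L^2}$, and either a factor $\|\rho-\overline{\rho}\|_{L^2}$ or a second $\|\nabla u\|_{L^2}$.
\item With $\|\rho u\|_{L^4}\le C(1+\|\nabla u\|_{L^2})$ and the uniform bound on $\|\rho-\overline{\rho}\|_{L^2}$ from \eqref{pyzgj}, Young's inequality yields $\varepsilon\|\rho-\overline{\rho}\|_{L^2}^2+C\|\nabla u\|_{L^2}^2$. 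This is integrable uniformly in $T$.
\end{itemize}

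Your cruder bound would suffice in the bounded-domain analogue, Lemma \ref{bpl4}, because there $u\cdot n=0$ gives $\|u\|_{L^p}\le C\|\nabla u\|_{L^2}$. On the torus the momentum-conservation step is essential.
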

\begin{proof}
First, integrating the mass equation $(\ref{zqbhfc})_1$ over $\T^2$, we arrive at
\be\la{pp401}\ba
\int \n dx = \int \n_0 dx = \ol{\n}_0.
\ea\ee
Observe that
\be\la{pp41}\ba
(\na \phi + \mathbf{k}) \Delta \phi
= \div \left( (\na \phi+\mathbf{k}) \otimes (\na \phi+\mathbf{k}) \right) - \frac{1}{2} \na \left( |\na \phi+\mathbf{k}|^2 \right).
\ea\ee
Integrating the momentum equation $(\ref{zqbhfc})_2$ over $\T^2$ and using (\ref{pp41}), we obtain the conservation of total momentum:
\be\la{pp42}\ba
\int \n u dx = \int \n_0 u_0 dx = \ol{\n}_0 m_0.
\ea\ee
It follows from (\ref{pp01}), (\ref{pp42}), and Poincar\'e's inequality that for any $2 \le p <\infty$,
\be\la{pp45}\ba
\| u - m_0 \|_{L^p}
& \le \| u - \ol{u} \|_{L^p} + |\ol{u}-m_0| \\
& \le C \| \na u \|_{L^2} + \frac{1}{\ol{\n}_0} \left| \int \n (\ol{u}-u) dx \right| \\
& \le C \| \na u \|_{L^2} + C \| \n \|_{L^\ga} \| u - \ol{u} \|_{L^{\frac{\ga}{\ga-1}}} \\
& \le C \| \na u \|_{L^2}.
\ea\ee
We rewrite the momentum equation $(\ref{zqbhfc})_2$ as
\be\la{pp46}\ba
\na(\n^\ga - \ol{\n}^{\ga}) = - \n (u - m_0)_t - \n u \cdot \na u + \mu \Delta u + \na( (\mu + \lam) \div u ) - (\na \phi + \mathbf{k}) \Delta \phi.
\ea\ee
Multiplying (\ref{pp46}) by $\na (-\Delta)^{-1}(\n-\ol{\n})$ and integrating by parts over $\T^2$, we arrive at
\be\la{pp47}\ba
& \int (\n^\ga - \ol{\n}^{\ga}) (\n-\ol{\n}) dx \\
& = - \int \n (u - m_0)_t \cdot \na (-\Delta)^{-1}(\n-\ol{\n}) dx
- \int \n u \cdot \na u \cdot \na (-\Delta)^{-1}(\n-\ol{\n}) dx \\
& \quad - \mu \int \na u : \na^2 (-\Delta)^{-1}(\n-\ol{\n}) dx
+ \int (\mu + \lam) \div u (\n-\ol{\n}) dx \\
& \quad - \int (\na \phi + \mathbf{k}) \Delta \phi \cdot \na (-\Delta)^{-1}(\n-\ol{\n}) dx
\triangleq \sum_{i=1}^{5} I_i.
\ea\ee
By $(\ref{zqbhfc})_1$, we have
\be\la{pp48}\ba
I_1 & = - \frac{d}{dt} \int \n (u - m_0) \cdot \na (-\Delta)^{-1}(\n-\ol{\n}) dx
+ \int \n_t (u - m_0) \cdot \na (-\Delta)^{-1}(\n-\ol{\n}) dx \\
& \quad + \int \n (u - m_0) \cdot \na (-\Delta)^{-1}\n_t dx \\
& = - \frac{d}{dt} \int \n (u - m_0) \cdot \na (-\Delta)^{-1}(\n-\ol{\n}) dx
- \int \div(\n u) (u - m_0) \cdot \na (-\Delta)^{-1}(\n-\ol{\n}) dx \\
& \quad - \int \n (u - m_0) \cdot \na (-\Delta)^{-1}\div(\n u) dx.
\ea\ee
Using (\ref{pp01}), (\ref{pyzgj}), (\ref{pp45}), and H\"older's inequality, we get
\be\la{pp49}\ba
& - \int \div(\n u) (u - m_0) \cdot \na (-\Delta)^{-1}(\n-\ol{\n}) dx \\
& = \int \n u \cdot \na( (u - m_0) \cdot \na (-\Delta)^{-1}(\n-\ol{\n}) ) dx \\
& \le C \| \n u \|_{L^4} \| \na u \|_{L^2} \| \n - \ol{\n} \|_{L^2} \\
& \le C \left( 1 + \| \na u \|_{L^2} \right) \| \na u \|_{L^2} \| \n - \ol{\n} \|_{L^2} \\
& \le \ep \| \n - \ol{\n} \|_{L^2}^2 + C(\ep) \| \na u \|_{L^2}^2.
\ea\ee
From (\ref{pp01}), (\ref{pyzgj}), (\ref{pp45}), and Young's inequality, we deduce that
\be\la{pp410}\ba
& - \int \n (u - m_0) \cdot \na (-\Delta)^{-1}\div(\n u) dx \\
& = - \int (\n-\ol{\n}) (u - m_0) \cdot \na (-\Delta)^{-1}\div(\n u) dx
- \ol{\n} \int (u - m_0) \cdot \na (-\Delta)^{-1}\div(\n u) dx \\
& \le C \| \n u \|_{L^4} \| \na u \|_{L^2} \| \n - \ol{\n} \|_{L^2}
- \ol{\n} \int (u - m_0) \cdot \na (-\Delta)^{-1}\div( \ol{\n} u) dx \\
& \quad - \ol{\n} \int (u - m_0) \cdot \na (-\Delta)^{-1}\div( (\n-\ol{\n}) u) dx \\
& \le C \left( 1 + \| \na u \|_{L^2} \right) \| \na u \|_{L^2} \| \n - \ol{\n} \|_{L^2}
+ C \| u - m_0 \|_{L^2} \| \na u \|_{L^2} \\
& \le \ep \| \n - \ol{\n} \|_{L^2}^2 + C(\ep) \| \na u \|_{L^2}^2,
\ea\ee
where in the third inequality we have used the following estimate
\be\la{pp411}\ba
& - \ol{\n} \int (u - m_0) \cdot \na (-\Delta)^{-1}\div( (\n-\ol{\n}) u) dx \\
& = \ol{\n} \int \div u (-\Delta)^{-1}\div( (\n-\ol{\n}) u) dx \\
& \le C \| \div u \|_{L^2} \| (-\Delta)^{-1}\div( (\n-\ol{\n}) u) \|_{L^2} \\
& \le C \| \na u \|_{L^2} \| (-\Delta)^{-1}\div( (\n-\ol{\n}) u) \|_{W^{1,\frac{4}{3}}} \\
& \le C \| \na u \|_{L^2} \| (\n-\ol{\n}) u \|_{L^\frac{4}{3}}
\le C \| \na u \|_{L^2} \| \n-\ol{\n} \|_{L^2} \| u \|_{L^4} \\
& \le C \left( 1 + \| \na u \|_{L^2} \right) \| \na u \|_{L^2} \| \n - \ol{\n} \|_{L^2}.
\ea\ee
Moreover, (\ref{pp01}), (\ref{3pp4}), (\ref{pp02}), (\ref{pyzgj}), (\ref{pp45}), and Young's inequality ensure that
\be\la{pp412}\ba
\sum_{i=2}^{5} I_i & \le C \| \n u \|_{L^4} \| \na u \|_{L^2} \| \n - \ol{\n} \|_{L^2}
+ C \| \na u \|_{L^2} \| \n - \ol{\n} \|_{L^2} \\
& \quad + C \int \left( (\n - M_1)^\beta_+ + M_1^\beta \right) |\na u| |\n-\ol{\n}| dx
+ C (1 + \| \na \phi \|_{L^4}) \| \na^2 \phi \|_{L^2} \| \n - \ol{\n} \|_{L^2} \\
& \le \ep \int (\n+1)^{\ga-1} (\n-\ol{\n})^2 dx
+ C(\ep) \int (\n -M_1)^{\frac{2\beta(\ga+1)}{\ga-1}}_+ dx \\
& \quad + \ep \| \n - \ol{\n} \|_{L^2}^2 + C(\ep) \left( \| \na u \|^2_{L^2} + \|\na^2 \phi\|^2_{L^2} \right).
\ea\ee
The combination of (\ref{pp47}), (\ref{pp48}), (\ref{pp49}), (\ref{pp410}), and (\ref{pp412}) gives
\be\la{pp413}\ba
\int (\n^\ga - \ol{\n}^{\ga}) (\n-\ol{\n}) dx
\le & - \frac{d}{dt} \int \n (u - m_0) \cdot \na (-\Delta)^{-1}(\n-\ol{\n}) dx \\
& + 4 \ep \int (\n+1)^{\ga-1} (\n-\ol{\n})^2 dx
+ C(\ep) \int (\n -M_1)^{\frac{2\beta(\ga+1)}{\ga-1}}_+ dx \\
& + C(\ep) \left( \| \na u \|^2_{L^2} + \|\na^2 \phi\|^2_{L^2} \right).
\ea\ee
In addition, by (\ref{pp01}) and (\ref{pyzgj}), we have
\be\la{pp414}\ba
\int (\n+1)^{\ga-1} (\n-\ol{\n})^2 dx \le C \int (\n^\ga - \ol{\n}^\ga) (\n-\ol{\n}) dx,
\ea\ee
and
\be\la{pp415}\ba
& \left| \int \n (u - m_0) \cdot \na (-\Delta)^{-1}(\n-\ol{\n}) dx \right| \\
& \le C \| \sqrt{\n} \|_{L^4} \| \sqrt{\n} (u-m_0) \|_{L^2} \| \n-\ol{\n} \|_{L^2}
\le C.
\ea\ee

Substituting (\ref{pp414}) into (\ref{pp413}), choosing $\ep$ sufficiently small, integrating over $(0,T)$, and using (\ref{pp01}), (\ref{pyzgj}), (\ref{pp414}), and (\ref{pp415}), we arrive at (\ref{pp04}).
This completes the proof of Lemma \ref{ppl4}.
\end{proof}

\begin{lemma}\la{ppl5}
For any $\ep \in (0,1)$, there exists a positive constant $C$ depending only on
$\ep$, $\mu$, $\ga$, $\beta$, $\ol{\n}_0$, $\| \n_0 \|_{L^\infty}$, $\| u_0 \|_{H^1}$, and $\| \na d_0 \|_{H^1}$ such that
\be\la{pp05}\ba
\sup_{0 \le t \le T} \log(e+A^2_1) + \int_0^T \frac{A^2_2}{e+A^2_1} dt
\le C R^{1+\ep}_T.
\ea\ee
\end{lemma}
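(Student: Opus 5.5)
Following Huang--Li \cite{HL3} and Fan--Li--Wang \cite{FLW}, I would prove \eqref{pp05} with a weighted $H^1$-type energy inequality of the form
\be\nonumber\ba
\frac{d}{dt}B(t) + A_2^2(t) \le C R_T^{1+\ep}\big(1+\|\na u\|_{L^2}^2+\|\na^2\phi\|_{L^2}^2\big)(e+A_1^2)\log(e+A_1^2) + \dots,
\ea\ee
where $B(t)$ is comparable to $A_1^2$. Dividing by $e+A_1^2$ and integrating in time then gives the claim. The time integrability comes from Lemma \ref{ppl1} and Lemma \ref{ppl4}, which give $\int_0^T(\|\na u\|_{L^2}^2+\|\na^2\phi\|_{L^2}^2+\|(\n+1)^{(\ga-1)/2}(\n-\ol\n)\|_{L^2}^2)dt\le C$.

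\textbf{Step 1 (velocity).} Multiply $(\ref{zqbhfc})_2$ by $\dot u$ and integrate over $\T^2$. The viscous terms produce $\frac{d}{dt}\int(\mu|\na u|^2/2+(\mu+\lm)(\div u)^2/2)dx$. The commutator terms are handled using the mass equation, which gives $(\lm)_t+\div(\lm u)=(\lm-\n\lm')\div u$. The pressure term is rewritten through $G$ and $P-P(\ol\n)$; this is where the term $(\n+1)^{\ga-1}(\n-\ol\n)^2$ in $A_1^2$ comes from.

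The hard terms are cubic, such as $\int(\lm+\mu)(\div u)^3dx$ and $\int \na u\,\na u\,\na u\,dx$. I would control them exactly as in \cite{HL3}: decompose $u$ via $G$ and $\o$, and apply the commutator estimates of Lemma \ref{jhzyl} to $\na(-\Delta)^{-1}\div(\n\dot u)$-type terms. This yields bounds on $\|G\|_{L^p}$, $\|\o\|_{L^p}$ and $\|\na u\|_{L^p}$ in terms of $R_T^{\beta/2}$ powers, $A_1$ and $\|\sqrt\n\dot u\|_{L^2}$. The new coupling term $-\int(\na\phi+\mathbf k)\Delta\phi\cdot\dot u\,dx$ is integrated by parts in time, using the divergence form \eqref{pp41}. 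The leftover terms are $\int \na\phi\,\na\phi_t\,\na u\,dx$ and $\int |\na\phi+\mathbf k|^2|\na u|^2dx$-type terms. These are bounded by $\|\na\phi\|_{L^p}$, which is uniformly bounded by Lemma \ref{ppl2}, together with $|\mathbf k|\le C$ from \eqref{3pp4}, and are absorbed into $\ep A_2^2$ plus $C\,A_1^2(\cdots)$.

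\textbf{Step 2 (director).} Apply $\na$ to $(\ref{zqbhfc})_3$ and multiply by $\na\Delta\phi$; separately, multiply by $\na\phi_t$. This gives $\frac{d}{dt}\|\na^2\phi\|_{L^2}^2+\|\na\Delta\phi\|_{L^2}^2+\|\na\phi_t\|_{L^2}^2\le$ terms like $\int|\na u||\na\phi+\mathbf k||\na\Delta\phi|dx$ and $\int |u||\na^2\phi||\na\Delta\phi|dx$. I would control these with Gagliardo--Nirenberg (Lemma \ref{gn1}) and \eqref{pp45}, which gives $\|u-m_0\|_{L^p}\le C\|\na u\|_{L^2}$. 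The constant part $m_0$ gives $\int m_0\cdot\na\na\phi\,\na\Delta\phi\,dx=0$ by periodicity, so only $u-m_0$ contributes. The outcome is a bound $\le \ep A_2^2+C(1+\|\na u\|_{L^2}^2)A_1^2$ after using $\|\na^2\phi\|_{L^4}^2\le C\|\na^2\phi\|_{L^2}\|\na\Delta\phi\|_{L^2}$.

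\textbf{Step 3 (closing).} Add the two estimates to get an inequality of the form $\frac{d}{dt}B+A_2^2\le C R_T^{1+\ep}\,h(t)(e+A_1^2)\log(e+A_1^2)+\dots$ with $\int_0^T h\,dt\le C$. Divide by $e+A_1^2$; the quantity $\log(e+A_1^2)$ then satisfies a Gr\"onwall-type inequality, and integrating over $(0,T)$ gives \eqref{pp05}. The initial value is bounded by $\|u_0\|_{H^1}$, $\|\na d_0\|_{H^1}$ and $\|\n_0\|_{L^\infty}$.

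\textbf{Main obstacle.} The hard part is the cubic term $\int(\lm+\mu)(\div u)^3dx$. The sharp exponent $R_T^{1+\ep}$ is needed later together with $\beta>4/3$, and obtaining it requires the delicate $L^p$ estimates of $G$ with a logarithmic loss from \cite{HL3}, including the $\ep$-losses from $\|\n\|_{L^p}$ bounds in Lemma \ref{ppl3}. Verifying that the director coupling does not worsen this power is what makes the uniform $L^p$ bound of Lemma \ref{ppl2} essential.
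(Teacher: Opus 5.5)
Your skeleton matches the paper's. You test the momentum equation against $\dot u$; the paper writes it as $\n\dot u=\na G+\mu\na^\bot\o-(\na\phi+\mathbf k)\Delta\phi$ and obtains $\frac{d}{dt}\int(\mu\o^2+\frac{G^2}{2\mu+\lam})dx$, which is equivalent to your Hoff-type bookkeeping. You integrate the coupling term by parts in time via \eqref{pp41}, estimate $\|\Delta\phi\|_{L^2}$ from the differentiated $\phi$-equation, add, and divide by $e+A_1^2$.

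The gap is in Step 3. With the right-hand side you propose, division gives $\frac{d}{dt}\log(e+B)\lesssim R_T^{1+\ep}h(t)\log(e+A_1^2)$. Gr\"onwall then yields only $\log(e+A_1^2)\le C\exp\bigl(CR_T^{1+\ep}\int_0^Th\,dt\bigr)$, which is exponential in $R_T^{1+\ep}$, not $CR_T^{1+\ep}$. Moreover, if $h$ contains the additive $1$ as in your first display, $\int_0^Th\,dt$ grows like $T$, so your claim $\int_0^Th\,dt\le C$ fails and the bound is not uniform in time. Either defect is fatal. In Lemma \ref{ppl8}, \eqref{pp05} feeds the constant $N_0$ in Zlotnik's lemma, and the step from $R_T^\beta\le CR_T^{\max\{1+\beta/4+\ep,\beta/\ga\}}$ to a density bound requires dependence on $R_T$ that is polynomial and independent of $T$.

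What you need, and what the estimates actually deliver, is
\[
\frac{d}{dt}A_3+\frac12A_2^2\le CR_T^{1+\ep}(1+A_1^2)A_1^2 .
\]
Here $A_3$ plays the role of your $B$: it is $\int(\mu\o^2+\frac{G^2}{2\mu+\lam}+|\Delta\phi|^2)dx$ minus the time-boundary term $\int(2(\na\phi+\mathbf k)\otimes(\na\phi+\mathbf k):\na u-|\na\phi|^2\div u)dx$ produced by the coupling. Every term on the right carries a factor $A_1^2$, and there is neither a logarithm nor a free additive constant.

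The individual bounds have this shape:
\begin{itemize}
\item the coupling terms give $CA_1A_2^{3/2}+CA_1A_2+C(1+A_1)A_1^{3/2}A_2^{1/2}$;
\item the director terms give $CA_1^2(1+A_2)+CA_1^2(A_1+A_2)$, e.g.\ $\|\na u\|_{L^2}^2\|\na\phi+\mathbf k\|_{L^\infty}^2\le CA_1^2(1+A_2)$;
\item the cubic velocity terms, treated as in \cite{FLW}, give analogous products with a factor $R_T^{1+\ep}$.
\end{itemize}
Young's inequality turns each of these into $\frac18A_2^2+CR_T^{1+\ep}(1+A_1^2)A_1^2$. The $\ep$-losses from Lemma \ref{ppl3} enter only the power of $R_T$, never as a logarithm of $A_1$.

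Next check that $\hat C_0+A_3\simeq\hat C_0+A_1^2$ for a suitable constant $\hat C_0>e$, using Lemma \ref{ppl2}, $|\mathbf k|\le C$, and $\int\mathbf k\otimes\mathbf k:\na u\,dx=0$. Dividing by $\hat C_0+A_3$ then gives $\frac{d}{dt}\log(\hat C_0+A_3)+\frac12\frac{A_2^2}{\hat C_0+A_3}\le CR_T^{1+\ep}A_1^2$. Now \eqref{pp05} follows by direct integration, since $\int_0^TA_1^2\,dt\le C$ by Lemmas \ref{ppl1} and \ref{ppl4}. No Gr\"onwall step is involved, and that is what keeps the bound linear in $R_T^{1+\ep}$ and uniform in $T$.
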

\begin{proof}
First, we use (\ref{gw}) to rewrite $(\ref{zqbhfc})_2$ as
\be\la{pp51}\ba
\n\dot{u} = \na G + \mu\na^{\bot}\o - (\na \phi + \mathbf{k}) \Delta \phi,
\ea\ee
which implies that $G$ and $\o$ satisfy
\be\ba\la{pp52}
\Delta G = \div(\n\dot{u} + (\na \phi + \mathbf{k}) \Delta \phi), \quad \mu \Delta \o = \na^{\bot} \cdot(\n \dot{u} + (\na \phi + \mathbf{k}) \Delta \phi).
\ea\ee
Applying the standard elliptic estimates, we deduce that for any $1<p<\infty$ and any integer $k \ge 1$, there exists a positive constant $C$ depending only on $k$, $p$, and $\mu$ such that
\be\la{pp53}
\|\na^k G\|_{L^p}+\|\na^k \o\|_{L^p} \leq C \left( \| \na ^{k-1}(\n \dot{u}) \|_{L^p} + \| \na ^{k-1}( (\na \phi + \mathbf{k}) \Delta \phi ) \|_{L^p} \right).
\ee
Multiplying (\ref{pp51}) by $2 \dot{u}$ and integrating by parts over $\T^2$, we derive
\be\la{pp54}\ba
& \frac{d}{dt} \int \left(\mu \o^2 + \frac{G^2}{2\mu + \lam}\right)dx
+ 2 \int \n |\dot{u}|^2 dx \\
& = - \mu \int \o^2 \div u dx + 4 \int G\nabla u^1 \cdot\nabla^{\perp}u^2 dx
-2 \int G (\div u)^2 dx \\
& \quad - \int \frac{ (\beta-1)\lam - 2\mu }{(2\mu + \lam)^2} G^2\div u dx
-2\beta \int \frac{ \lam (P-P(\ol{\n})) }{ (2\mu +\lam)^2 } G \div u dx \\
& \quad + 2\ga \int \frac{P}{2\mu +\lam} G \div u dx -2 \int \dot{u} \cdot (\na \phi + \mathbf{k}) \Delta \phi dx
= \sum_{i=1}^7 I_i,
\ea\ee
where we have used the identities
\be\ba\la{bp701}
\na^{\bot}\cdot \dot u= \frac{D}{Dt}\o - (\p_1 u\cdot\na) u^2
+ (\p_2 u \cdot \na) u^1
 = \frac{D}{Dt}\o + \o \div u,
\ea\ee
and
\be\ba\la{bp7001}
\div \dot u&=\frac{D}{Dt}\div u +(\p_1u\cdot\na) u^1
+(\p_2u\cdot\na)u^2 \\ 
& = \frac{D}{Dt} \left( \frac{G}{2\mu+\lam} \right)
+ \frac{D}{Dt} \left( \frac{P-P(\ol{\n})}{2\mu+\lam} \right)
- 2\nabla u^1\cdot\nabla^{\perp}u^2 + (\div u)^2.
\ea\ee
Using (\ref{pp01}) and (\ref{pyzgj}), and adapting the arguments in \cite[Proposition 3.6]{FLW}, we can obtain
\be\la{pp55}\ba
\sum_{i=1}^6 I_i \le \frac{1}{8} A^2_2
+ C R^{1+\ep}_T (1+A^2_1) A^2_1.
\ea\ee
It remains to estimate $I_7$.
Integrating by parts over $\T^2$ and using H\"older's inequality, we derive
\be\la{pp56}\ba
& -2 \int u_t \cdot (\na \phi + \mathbf{k}) \Delta \phi dx \\
& = 2 \int \p_j u^i_t (\p_i \phi + \mathbf{k}_i) (\p_j \phi + \mathbf{k}_j) dx
+ 2 \int u^i_t \p_i \p_j \phi (\p_j \phi + \mathbf{k}_j) dx \\
& = 2 \int ( (\na \phi + \mathbf{k}) \otimes (\na \phi + \mathbf{k}) ) : \na u_t dx
+ \int u_t \cdot \na ( |\na \phi + \mathbf{k}|^2 ) dx \\
& = 2 \frac{d}{dt} \int ( (\na \phi + \mathbf{k}) \otimes (\na \phi + \mathbf{k}) ) : \na u dx
- 2 \int ( \na \phi_t \otimes (\na \phi + \mathbf{k}) ) : \na u dx \\
& \quad - 2 \int ( (\na \phi + \mathbf{k}) \otimes \na \phi_t ):\na u dx
- \int \div u_t |\na \phi|^2 dx \\
& \le \frac{d}{dt} \int \left( 2 ( (\na \phi + \mathbf{k}) \otimes (\na \phi + \mathbf{k}) ):\na u - |\na \phi|^2 \div u \right) dx \\
& \quad + C \| \na \phi_t \|_{L^2} \| \na \phi+\mathbf{k} \|_{L^\infty} \| \na u \|_{L^2}.
\ea\ee
It follows from (\ref{pp56}), (\ref{gn11}), and Young's inequality that
\be\la{pp57}\ba
I_7 & = - 2 \int u_t \cdot (\na \phi + \mathbf{k}) \Delta \phi dx
- 2 \int u \cdot \na u \cdot (\na \phi + \mathbf{k}) \Delta \phi dx \\
& \le \frac{d}{dt} \int \left( 2 ( (\na \phi + \mathbf{k}) \otimes (\na \phi + \mathbf{k}) ):\na u - |\na \phi|^2 \div u \right) dx \\
& \quad + C \| \na \phi_t \|_{L^2} \| \na \phi+\mathbf{k} \|_{L^\infty} \| \na u \|_{L^2}
+ C \| u \|_{L^8} \| \na u \|_{L^2} \| \na \phi + \mathbf{k} \|_{L^8} \| \Delta \phi \|_{L^4} \\
& \le \frac{d}{dt} \int \left( 2 ( (\na \phi + \mathbf{k}) \otimes (\na \phi + \mathbf{k}) ):\na u - |\na \phi|^2 \div u \right) dx \\
& \quad + C \| \na \phi_t \|_{L^2} \| \na \phi \|^{\frac{1}{2}}_{L^2} \| \na \Delta \phi \|^{\frac{1}{2}}_{L^2} \| \na u \|_{L^2}
+ C \| \na \phi_t \|_{L^2} \| \na u \|_{L^2} \\
& \quad + C (1+\| \na u \|_{L^2}) \| \na u \|_{L^2} ( \| \na \phi \|_{L^8} + 1) \| \Delta \phi \|^{\frac{1}{2}}_{L^2} \| \na \Delta \phi \|^{\frac{1}{2}}_{L^2} \\
& \le \frac{d}{dt} \int \left( 2 ( (\na \phi + \mathbf{k}) \otimes (\na \phi + \mathbf{k}) ):\na u - |\na \phi|^2 \div u \right) dx \\
& \quad + C A_1 A_2^{\frac{3}{2}} + C A_1 A_2 + C (1+A_1) A_1^{\frac{3}{2}} A_2^{\frac{1}{2}} \\
& \le \frac{d}{dt} \int \left( 2 ( (\na \phi + \mathbf{k}) \otimes (\na \phi + \mathbf{k}) ):\na u - |\na \phi|^2 \div u \right) dx \\
& \quad + \frac{1}{4} A^2_2 + C (1+A^2_1) A^2_1.
\ea\ee
Furthermore, applying $\na$ to $\eqref{zqbhfc}_3$ gives
\be\la{pp58}\ba
\na \phi_t - \na \Delta \phi = - \na \left( u \cdot (\na \phi + \mathbf{k}) \right).
\ea\ee
Taking the $L^2$ inner product on both sides of (\ref{pp58}) and using (\ref{gn11}) and Young's inequality, we get
\be\la{pp59}\ba
& \frac{d}{dt} \| \Delta \phi \|^2_{L^2} + \| \na \phi_t \|^2_{L^2} + \| \na \Delta \phi \|^2_{L^2} \\
& \le C \int \left( |\na u|^2 |\na \phi + \mathbf{k}|^2 + |u|^2 |\na^2 \phi|^2 \right) dx \\
& \le C \| \na u \|^2_{L^2} \| \na \phi + \mathbf{k} \|^2_{L^\infty}
+ C \| u \|^2_{L^8} \| \na^2 \phi \|^2_{L^\frac{8}{3}} \\
& \le C \| \na u \|_{L^2}^2 ( \| \na \phi \|_{L^2} \| \na \Delta \phi \|_{L^2} + 1 )
+ C (1 + \| \na u \|^2_{L^2}) \| \na \phi\|_{L^4} \left( \| \na^2 \phi\|_{L^2} + \| \na^3 \phi\|_{L^2} \right) \\
& \le C A_1^2 (1 + A_2) + C A_1 (A_1+A_2) + C A_1^2 (A_1+A_2) \\
& \le \frac{1}{8} A^2_2 + C (1+A^2_1) A^2_1.
\ea\ee
Putting (\ref{pp55}) and (\ref{pp57}) into (\ref{pp54}), and adding the resulting inequality to (\ref{pp59}), we arrive at
\be\la{pp510}\ba
\frac{d}{dt} A_3 + \frac{1}{2} A^2_2
\le C R^{1+\ep}_T (1+A^2_1) A^2_1,
\ea\ee
where
\be\ba\nonumber
A_3 \triangleq \int \left( \mu \o^2 + \frac{G^2}{2\mu + \lam} + |\Delta \phi|^2
- 2 ( (\na \phi + \mathbf{k}) \otimes (\na \phi + \mathbf{k}) ):\na u + |\na \phi|^2 \div u \right) dx.
\ea\ee
In addition, by (\ref{pp01}), (\ref{3pp4}), (\ref{pp02}), and (\ref{pyzgj}), there exists a constant $\hat{C}_0>e$ such that
\be\la{pp511}\ba
\frac{1}{C} (\hat{C}_0+A_3) \le \hat{C}_0 + A^2_1 \le C(\hat{C}_0+A_3).
\ea\ee
Dividing (\ref{pp510}) by $\hat{C}_0+A_3$ and using (\ref{pp511}), we obtain
\be\la{pp512}\ba
\frac{d}{dt} \log (\hat{C}_0+A_3) + \frac{1}{2} \frac{A^2_2}{\hat{C}_0+A_3}
\le C R^{1+\ep}_T A^2_1.
\ea\ee
Integrating (\ref{pp512}) over $(0,T)$ and using (\ref{pp01}) and (\ref{pp04}), we arrive at (\ref{pp05}) and complete the proof of Lemma \ref{ppl5}.
\end{proof}

\begin{lemma}\la{ppl6}
For any $p>4$, there exists a positive constant $C$ depending only on
$p$, $\mu$, $\ga$, $\beta$, $\ol{\n}_0$, $\| \n_0 \|_{L^\infty}$, $\| u_0 \|_{H^1}$, and $\| \na d_0 \|_{H^1}$ such that
\be\la{pp06}\ba
\| \n |u - m_0| \|_{L^p} \le C R_T^{1+\beta/4} A_1^{1-2/p}.
\ea\ee
\end{lemma}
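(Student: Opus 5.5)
We bound $\n|u-m_0|$ by writing $u-m_0$ as a combination of $\div u$ and $\curl u$, which are in turn expressed through $G$, $\o$ and the pressure. This follows the argument of \cite[Lemma 3.5]{FLW} and Huang-Li \cite{HL3}. The only new ingredient is the director forcing in (\ref{pp51}), and Lemmas \ref{ppl1}--\ref{ppl2} control it.

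\textbf{Step 1 (splitting $u$).} Since $\T^2$ has no harmonic vector fields other than constants, we write
\be\nonumber
u-\ol{u}=-\na(-\Delta)^{-1}\div u-\na^\bot(-\Delta)^{-1}\o .
\ee
Next, by (\ref{gw}),
\be\nonumber
\div u=\frac{G+P-P(\ol\n)}{2\mu+\lam}.
\ee
The constant part $\ol{u}-m_0$ is handled exactly as in (\ref{pp45}): $|\ol u-m_0|\le C\|\na u\|_{L^2}\le CA_1$. Its contribution to $\|\n|u-m_0|\|_{L^p}$ is therefore at most $CR_T A_1$, which suffices after interpolating with the bound below.

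\textbf{Step 2 (the $\o$ and $G$ pieces).} We use $\|\n f\|_{L^p}\le R_T^{1-1/p}\|\n^{1/p}f\|_{L^p}$ only where necessary, and otherwise $\n\le R_T$. For the vorticity piece, Lemma \ref{gn1} combined with the elliptic estimate (\ref{pp53}) gives
\be\nonumber
\|\na^\bot(-\Delta)^{-1}\o\|_{L^p}\le C\|\o\|_{L^{2p/(p+2)}}\le C\|\na u\|_{L^2}^{1-2/p}\cdots .
\ee
A plain Sobolev bound only yields $\|\na(-\Delta)^{-1}\o\|_{L^p}\le C\|\o\|_{L^2}\le CA_1$. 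To get the sharper power $A_1^{1-2/p}$, we interpolate between the $L^2$ bound $\|\sqrt\n(u-m_0)\|_{L^2}\le C$ from Lemma \ref{ppl1} and the $H^1$-type bound $CA_1$. Concretely, we apply (\ref{gn11}) to
\be\nonumber
v\triangleq \na(-\Delta)^{-1}\Big(\div u-\frac{P-P(\ol\n)}{2\mu+\lam}\Big)+\na^\bot(-\Delta)^{-1}\o .
\ee
Its $L^2$ norm is bounded via the energy, using $\|\div u\|_{H^{-1}}\le C\|u-m_0\|_{L^2}$ together with the momentum bound. Its $\dot H^1$ norm is at most $C(\|G/(2\mu+\lam)\|_{L^2}+\|\o\|_{L^2})\le CA_1$. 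This gives $\|v\|_{L^p}\le CA_1^{1-2/p}$, and multiplying by $\n$ costs one factor $R_T$.

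\textbf{Step 3 (the pressure piece; the main obstacle).} The remaining term is
\be\nonumber
w\triangleq \na(-\Delta)^{-1}\Big(\frac{P-P(\ol\n)}{2\mu+\lam}\Big).
\ee
It does not vanish as $A_1\to0$, so it must be absorbed by $A_1^{1-2/p}$ using the term $\int(\n+1)^{\ga-1}(\n-\ol\n)^2dx$ contained in $A_1^2$. Since $|P-P(\ol\n)|/(2\mu+\lam)\le C(\n+1)^{\ga-1}|\n-\ol\n|$, Sobolev gives
\be\nonumber
\|w\|_{L^p}\le C\big\|(\n+1)^{\ga-1}(\n-\ol\n)\big\|_{L^{2p/(p+2)}}.
\ee
We split this norm via H\"older into $\|(\n+1)^{(\ga-1)/2}(\n-\ol\n)\|_{L^2}^{\theta}$ times powers of $R_T$ and of the uniformly bounded $L^q$ norms from Lemma \ref{ppl3}. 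The exponent bookkeeping, which produces the factor $R_T^{\beta/4}$, is the delicate part. This is where the weight $(2\mu+\lam)^{-1}\sim\n^{-\beta}$ on the high-density region is used, trading $\n^{\beta/4}$ for the missing integrability. I would first try $\theta=1-2/p$ with the leftover factor estimated by $R_T^{\beta/4}$. Collecting Steps 1--3 and multiplying by $\n\le R_T$ gives (\ref{pp06}).
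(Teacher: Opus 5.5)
\textbf{There is a genuine gap.} It is in Step 2 and in the constant part of Step 1, not in the pressure piece.

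\textbf{Step 2.} The interpolation needs a time-uniform $L^2$ bound on $v$, and none is available. On $\T^2$ the Helmholtz splitting is $L^2$-orthogonal, so $\|\na(-\Delta)^{-1}\div u\|_{L^2}^2+\|\na^\bot(-\Delta)^{-1}\o\|_{L^2}^2=\|u-\ol u\|_{L^2}^2$. Up to the uniformly bounded piece $w$, your $\|v\|_{L^2}$ is therefore just $\|u-\ol u\|_{L^2}$.

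The energy of Lemma \ref{ppl1} controls only $\sqrt\n u$, not $u$. All constants must be independent of $\inf\n$, since vacuum is recovered from $\n_0+\de$ with $\de\to0$. The momentum identity \eqref{pp42} fixes only $\int\n u\,dx$, not $\|u-m_0\|_{L^2}$. The only bound you actually have is \eqref{pp45}, $\|u-m_0\|_{L^2}\le C\|\na u\|_{L^2}\le CA_1$. That yields $A_1$ to the first power, not $A_1^{1-2/p}$.

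\textbf{Step 1.} The same objection applies. $|\ol u-m_0|\le CA_1$ is not dominated by $CR_T^{1+\beta/4}A_1^{1-2/p}$ when $A_1$ is large. There is also no uniform quantity to interpolate it against.

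\textbf{The $L^2$ weighted bound is not enough.} You could try interpolating $\|\n(u-m_0)\|_{L^p}$ between $\|\n(u-m_0)\|_{L^2}\le R_T^{1/2}\|\sqrt\n(u-m_0)\|_{L^2}\le CR_T^{1/2}$ and $\|\n(u-m_0)\|_{L^r}\le Cr^{1/2}R_T\|\na u\|_{L^2}$. This produces the power $\frac{1/2-1/p}{1/2-1/r}>1-2/p$ of $A_1$ for every finite $r$.

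\textbf{The missing ingredient.} You need a Hoff-type estimate with integrability slightly above $2$.

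\begin{enumerate}
\item Test the momentum equation, written as \eqref{pp62}, with $|u-m_0|^\nu(u-m_0)$.
\item Choose $\nu\sim R_T^{-\beta/2}$ so that $\nu\int(\mu+\lam)|\div u||u-m_0|^\nu|\na u|\,dx$ is absorbed by the dissipation; this requires $\nu^2(\mu+\lam)\le\mu$.
\item Control the pressure term with Lemmas \ref{ppl3}--\ref{ppl4}. After one integration by parts, control the director term with Lemma \ref{ppl2}.
\item This gives $\sup_t\int\n|u-m_0|^{2+\nu}dx\le C$.
\item Interpolate $L^p$ between $L^{2+\nu}$, with weight $2/p$, and $L^r$ with $r=(p-2)(2+\nu)/\nu$.
\end{enumerate}

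The finite choice of $r$ in the last step is exactly what makes the power of $A_1$ equal to $1-2/p$. The Gagliardo--Nirenberg constant in \eqref{gn11} is $r^{1/2}\sim\nu^{-1/2}\sim R_T^{\beta/4}$, and this is where the factor $R_T^{\beta/4}$ comes from.

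\textbf{The source of $R_T^{\beta/4}$ is misidentified.} You attribute it to the weight $(2\mu+\lam)^{-1}$ in the pressure piece, but $w$ is actually harmless. Apply H\"older with power $1-2/p$ on $\|(\n+1)^{(\ga-1)/2}(\n-\ol\n)\|_{L^2}\le A_1$, and put the remaining factor in $L^{p/2}$, which Lemma \ref{ppl3} bounds uniformly. This gives $\|w\|_{L^p}\le CA_1^{1-2/p}$ with no power of $R_T$ at all.
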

\begin{proof}
First, set
\be\la{pp61}\ba
\nu \triangleq \min\left\{ \left( \frac{\mu}{\mu+1} \right)^{1/2} R_T^{-\beta/2}, \frac{\ga-1}{2(\ga+1)} \right\} \in (0,1].
\ea\ee
We rewrite the momentum equation $(\ref{zqbhfc})_2$ as
\be\la{pp62}\ba
\n (u - m_0)_t + \n u \cdot \na (u - m_0) - \mu \Delta u - \na( (\mu+\lam) \div u ) + \na P = - (\na \phi + \mathbf{k}) \Delta \phi.
\ea\ee
Multiplying (\ref{pp62}) by $|u-m_0|^\nu (u-m_0)$ and integrating by parts over $\T^2$, we derive
\be\la{pp63}\ba 
& \frac{1}{(2+\nu)} \frac{d}{dt} \int \n |u-m_0|^{2+\nu} dx
+ \int |u-m_0|^\nu \left(\mu |\na u|^2 + (\mu+\lam) (\div u)^2 \right) dx \\
& \le \nu \int (\mu+\lam) |\div u| |u-m_0|^\nu |\na u| dx
+ C \int |\n^\ga - \ol{\n}^\ga| |u-m_0|^\nu |\na u|dx \\
& \quad - \int |u-m_0|^\nu (u-m_0) \cdot (\na \phi + \mathbf{k}) \Delta \phi dx \\
& \triangleq I_1+I_2+I_3.
\ea\ee
For $I_1$, Young's inequality together with (\ref{pp61}) gives
\be\la{pp64}\ba
I_1 & \le \frac{1}{2} \int (\mu+\lam) |u-m_0|^\nu (\div u)^2 dx
+ \frac{\nu^2}{2} \int (\mu+\lam) |u-m_0|^\nu |\na u|^2 dx \\
& \le \frac{1}{2} \int (\mu+\lam) |u-m_0|^\nu (\div u)^2 dx
+ \frac{\mu}{2} \int |u-m_0|^{\nu} |\na u|^2 dx.
\ea\ee
Since $\nu < \frac{\ga-1}{\ga+1}$, we have $\frac{1-\nu}{2}-\frac{1}{\ga+1} \in (0,1)$ and
\be\ba\nonumber
|\n-\ol{\n}|^{\frac{2}{1-\nu}} \le C(\n+1)^{\frac{2\nu}{1-\nu}} (\n-\ol{\n})^2
\le C(\n+1)^{\ga-1} (\n-\ol{\n})^2.
\ea\ee
Thus, for $s$ satisfying $\frac{1}{s}=\frac{1-\nu}{2}-\frac{1}{\ga+1}$, we use (\ref{pp45}) and Young's inequality to obtain
\be\la{pp65}\ba
I_2 & \le C \int (\n^{\ga-1}+1) |\n-\ol{\n}| |u-m_0|^\nu |\na u| dx \\
& \le C \int \left( (\n-M_1)_{+}^{\ga-1} + 1 \right) |\n-\ol{\n}|
|u-m_0|^\nu |\na u| dx \\
& \le C \left( \int (\n-M_1)_{+}^{s(\ga-1)} dx
+ \int \left( |\n-\ol{\n}|^{\ga+1} + |\n-\ol{\n}|^{\frac{2}{1-\nu}} \right) dx
+ \int |\na u|^2 dx \right) \\
& \le C \int (\n-M_1)_{+}^{s(\ga-1)} dx + C A^2_1.
\ea\ee
For $I_3$, integrating by parts over $\T^2$ and using (\ref{pp02}) and Poincar\'e's inequality, we get
\be\la{pp66}\ba
I_3 & = \int \p_j (|u-m_0|^{\nu} (u-m_0)^i) (\p_i \phi+\mathbf{k}_i) \p_j \phi dx
+ \int |u-m_0|^{\nu} (u-m_0)^i \p_i \p_j \phi \p_j \phi dx \\
& \le C \int |u-m_0|^\nu |\na u| |\na \phi|(1+|\na \phi|) dx
\le C \int (1 + |u|) |\na u| |\na \phi|(1+|\na \phi|) dx \\
& \le C (1+\| u \|_{L^4}) \| \na u \|_{L^2} \| \na \phi \|_{L^8} (1+\| \na \phi \|_{L^8}) \\
& \le C (1+\| \na u \|_{L^2}) \| \na u \|_{L^2} \| \na \phi \|_{L^8} \\
& \le C \| \na u \|^2_{L^2} + C \| \na^2 \phi \|^2_{L^2}.
\ea\ee
Substituting (\ref{pp64})--(\ref{pp66}) into (\ref{pp63}) leads to
\be\la{pp67}\ba
\frac{d}{dt} \int \n |u-m_0|^{2+\nu} dx
\le C \int (\n-M_1)_{+}^{s(\ga-1)} dx + C A^2_1.
\ea\ee
Integrating (\ref{pp67}) over $(0,T)$ and using (\ref{pp01}), (\ref{pyzgj}), and (\ref{pp04}), we obtain
\be\la{pp68}\ba
\sup_{0 \le t \le T} \int \n|u - m_0|^{2+\nu} dx \le C.
\ea\ee
For $p>4$, we choose $r=(p-2)(2+\nu)/\nu$ and use (\ref{gn11}), (\ref{pp45}), (\ref{pp61}), (\ref{pp68}), and H\"older's inequality to derive
\be\nonumber\ba
\| \n |u - m_0| \|_{L^p} & \le \| \n |u - m_0| \|_{L^{2+\nu}}^{2/p} \| \n |u - m_0| \|_{L^r}^{1-2/p} \\
& \le C R_T^{ (1+\nu)/p} \|\n^{1/(2+\nu)} |u - m_0|\|_{L^{2+\nu}}^{2/p}
\left( r^{1/2} R_T \|\na u\|_{L^2} \right)^{1-2/p} \\
& \le C R_T^{ (1+\nu)/p} \left( R_T^{1+\beta/4} \|\na u\|_{L^2} \right)^{1-2/p} \\
& \le C R_T^{1+\beta/4} A_1^{1-2/p},
\ea\ee
which gives (\ref{pp06}) and completes the proof of Lemma \ref{ppl6}.
\end{proof}

To obtain a time-uniform upper bound for the density, we need the following $L^\infty$ estimate for the commutator $F_1$ defined by
\be\la{f1}\ba
F_1 \triangleq \sum_{i,j=1}^{2} [u^i,R_iR_j](\n u^j).
\ea\ee

\begin{lemma}\la{ppl7}
For any $\ep \in (0,1)$,
there exists a positive constant $C$ depending only on
$\ep$, $\mu$, $\ga$, $\beta$, $\ol{\n}_0$, $\| \n_0 \|_{L^\infty}$, $\| u_0 \|_{H^1}$, and $\| \na d_0 \|_{H^1}$ such that
\be\la{pp07}\ba
\| F_1 \|_{L^\infty} \le C(\ep) \frac{A_2^2}{e+A_1^2} + C(\ep) R_T^{1 +\beta/4+\ep} A_1^2 + C R_T^\ep.
\ea\ee
\end{lemma}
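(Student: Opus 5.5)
Since $\sum_{i,j}[m_0^i,R_iR_j](\n u^j)=0$ for the constant vector $m_0$, we first rewrite
\[
F_1=\sum_{i,j}[u^i-m_0^i,R_iR_j](\n (u^j-m_0^j)) + \sum_{i,j}[u^i-m_0^i,R_iR_j](\n m_0^j).
\]
We then run the Huang--Li argument: Gagliardo--Nirenberg (or Morrey) on $\T^2$ gives, for some $\sigma\in(2,\infty)$ and $\delta\in(0,1)$,
\[
\|F_1\|_{L^\infty}\le C\|F_1\|_{L^{\sigma}}^{1-\delta}\|\na F_1\|_{L^{\sigma'}}^{\delta}+C\|F_1\|_{L^2},
\]
where $\sigma'>2$ is slightly larger than $2$. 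The $L^\sigma$ norm is controlled by \eqref{jhz1}: we take $\|u-m_0\|_{\mathcal{BMO}}\le C\|\na u\|_{L^2}\le CA_1$ and bound $\|\n(u-m_0)\|_{L^\sigma}$ by Lemma \ref{ppl6}, giving $CR_T^{1+\beta/4}A_1^{2-2/\sigma}$. The part with $\n m_0$ contributes $CA_1\|\n\|_{L^\sigma}\le CA_1$ by \eqref{pyzgj}. The gradient is controlled by \eqref{jhz2}: with $\frac{1}{\sigma'}=\frac1{p_1}+\frac1{q_1}$,
\[
\|\na F_1\|_{L^{\sigma'}}\le C\|\na u\|_{L^{p_1}}\bigl(\|\n(u-m_0)\|_{L^{q_1}}+\|\n\|_{L^{q_1}}\bigr),
\]
with $p_1$ close to $2$ and $q_1$ large.

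The key step is bounding $\|\na u\|_{L^{p_1}}$ through $G$ and $\o$. Since $\div u=(G+P-P(\ol\n))/(2\mu+\lam)$, we get $\|\na u\|_{L^{p_1}}\le C(\|G\|_{L^{p_1}}+\|\o\|_{L^{p_1}}+\|P-P(\ol\n)\|_{L^{p_1}})$. Interpolating with \eqref{pp53} and the elliptic estimates from \eqref{pp52} gives
\[
\|\na G\|_{L^2}+\|\na\o\|_{L^2}\le C\bigl(R_T^{1/2}A_2+\|(\na\phi+\mathbf k)\Delta\phi\|_{L^2}\bigr).
\]
The new liquid-crystal term is handled by Lemma \ref{ppl2} and \eqref{gn11}: $\|(\na\phi+\mathbf k)\Delta\phi\|_{L^2}\le C(1+\|\na\phi\|_{L^8})\|\Delta\phi\|_{L^{8/3}}\le CA_1^{3/4}A_2^{1/4}+CA_1$. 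The $L^2$ parts of $G,\o$ are bounded by $CR_T^{\beta/2}A_1$. Choosing $p_1$ close to $2$ keeps the power of $A_2$ as small as $\ep'$; the cost is a factor like $\log$ or $p_1$-dependent constants. The resulting overall bound on $\|F_1\|_{L^\infty}$ is a product of the form
\[
CR_T^{a}A_1^{b}\,(A_2+A_1+1)^{c},
\]
with $c$ small and $b+c\le 2$ up to $\ep$-losses.

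Finally we apply Young's inequality in the form
\[
R_T^aA_1^bA_2^c\le \frac{A_2^2}{e+A_1^2}+C(e+A_1^2)^{\frac{c}{2-c}}R_T^{\frac{2a}{2-c}}A_1^{\frac{2b}{2-c}},
\]
and check that the exponent of $A_1$ is at most $2$ and the exponent of $R_T$ is at most $1+\beta/4+\ep$. Any leftover small powers of $A_1$ are absorbed via $A_1^s\le A_1^2+1$, and the remaining lower-order term yields $CR_T^\ep$. The main obstacle is the exponent bookkeeping in this last step: we must select $\sigma,\sigma',p_1,q_1$ so that the $R_T$ power coming from Lemma \ref{ppl6} (namely $1+\beta/4$) is not exceeded, while the extra $R_T^{\beta/2}$ factors from $\|G\|_{L^2}$ enter only with exponent proportional to $\ep$. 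This is achieved by letting $p_1\to2^+$ and $q_1\to\infty$, mirroring \cite[Lemma 3.7]{FLW}, \cite{HL3}.
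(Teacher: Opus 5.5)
Your skeleton matches the paper's: Gagliardo--Nirenberg interpolation of $F_1$ between $L^\sigma$ and $W^{1,\sigma'}$, the commutator bounds \eqref{jhz1}--\eqref{jhz2}, Lemma \ref{ppl6} for $\rho(u-m_0)$, elliptic bounds for $G$ and $\omega$, then Young. Your treatment of the liquid-crystal term is also correct. The gap is in the step that actually carries the lemma: keeping the power of $R_T$ at $1+\beta/4+\varepsilon$ and the remainder at $CR_T^{\varepsilon}$. You attribute this to the wrong parameter, and the recipe you give fails.

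With your interpolation $\|G\|_{L^{p_1}}\le C\|G\|_{L^2}^{2/p_1}\|G\|_{H^1}^{1-2/p_1}$ and $\|G\|_{L^2}\le CR_T^{\beta/2}A_1$, sending $p_1\to2^+$ makes $\|\nabla u\|_{L^{p_1}}$ carry $R_T^{\beta/p_1}\to R_T^{\beta/2}$. That is the full loss, not an $\varepsilon$-fraction of it. Moreover, $p_1\to2^+$ and $q_1\to\infty$ force $\sigma'\to2^+$, and then the Gagliardo--Nirenberg weight
\[
\delta=\frac{1/\sigma}{1/2-1/\sigma'+1/\sigma}
\]
tends to $1$ for fixed $\sigma$, so this loss enters essentially undamped. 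Taken literally, your choices give an $R_T$-power close to $1+3\beta/4$. The Zlotnik step in Lemma \ref{ppl8} would then need $\beta>4$ instead of $\beta>4/3$.

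The missing point is that the weight $\delta$ on $\|\nabla F_1\|_{L^{\sigma'}}$ must itself be $O(\varepsilon)$. You get this by taking $\sigma$ large after $\sigma'$ is fixed; here $\sigma$ is the integrability of $F_1$ and of $\rho u$, which must exceed $4$ anyway for Lemma \ref{ppl6}. Every $R_T$-loss inside $\|\nabla u\|_{L^{p_1}}$ is then multiplied by $\delta$, and $p_1$ need not be close to $2$. The paper takes $p_1=4$, $q_1=\sigma=p$, $\sigma'=4p/(p+4)$, so $\delta=4/p$. Then \eqref{pp71} costs only $R_T^{(1+\beta)/p}$ beyond the $R_T^{1+\beta/4}$ of Lemma \ref{ppl6}.

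Large $\sigma$ is also what disposes of the inhomogeneous remainders, which your last step does not handle. The term $\|P-P(\overline{\rho})\|_{L^{p_1}}\le C$ in your bound for $\|\nabla u\|_{L^{p_1}}$ produces pieces $R_T^{1+\beta/4}A_1^{s}$ with $s=(1-\delta)(2-2/\sigma)+\delta(1-2/q_1)<2$; compare $R_T^{1+\beta/4}A_1^{2-6/p}$ in \eqref{pp72}. Your device $A_1^s\le A_1^2+1$ turns these into a constant $CR_T^{1+\beta/4}$, not $CR_T^{\varepsilon}$. After time integration this would enter $N_1$ in Lemma \ref{zli} and force an extra condition such as $\gamma>1+\beta/4$. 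One needs instead
\[
R_T^{a}A_1^{s}\le R_T^{a+\varepsilon}A_1^{2}+R_T^{a-\varepsilon s/(2-s)},
\]
with $2-s=2/\sigma+\delta(1-2/\sigma+2/q_1)$ small enough that the last exponent is nonpositive. This again requires $\sigma$ large and $\delta$ small.

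Finally, your homogeneity test should read $b+2c\le 2$. After Young the power of $A_1$ is $2(b+c)/(2-c)$, and no surplus can be absorbed. A direct computation shows this holds with equality for every admissible choice of exponents. So the entire difficulty is the $R_T$-bookkeeping, which is exactly what large $\sigma$ resolves.
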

\begin{proof}
First, it follows from (\ref{gw}), (\ref{gn11}), (\ref{pyzgj}), and (\ref{pp53}) that
\be\la{pp71}\ba
\| \na u \|_{L^4}
& \le C ( \|\div u\|_{L^4} + \|\omega\|_{L^4} ) \\
& \le C \left\| \frac{G}{2\mu+\lambda} \right\|_{L^4}
+ C \left\| \frac{P- P(\ol{\n})}{2\mu+\lambda} \right\|_{L^4} + C \| \o \|_{L^4} \\
& \le C \left\| G \right\|_{L^4} + C + C \| \o \|_{L^4} \\
& \le C \| G \|^{1/2}_{L^2} \| \na G \|^{1/2}_{L^2}
+ C + C \| \o \|^{1/2}_{L^2} \| \na \o \|^{1/2}_{L^2} \\
& \le C R^{\beta/4}_T A_1^{1/2}
\left( A_1 + R_T^{1/2} A_2 \right)^{1/2} + C.
\ea\ee
By (\ref{jhz1}), (\ref{jhz2}), (\ref{pyzgj}), (\ref{pp06}), (\ref{pp71}), and Young's inequality, we derive for $p>4$,
\be\la{pp72}\ba
\| F_1 \|_{L^\infty}
& \le C(p) \| F_1 \|_{L^p}^{1-4/p} \| \na F_1 \|_{L^{4p/(p+4)}}^{4/p} \\
& \le C(p) \left( \|\na u\|_{L^2} \| \n u \|_{L^p} \right)^{1-4/p}
\left(\|\na u\|_{L^4} \| \n u \|_{L^p} \right)^{4/p} \\
& \le C(p) \|\na u\|_{L^2}^{1-4/p} \|\na u\|_{L^4}^{4/p} \| \n u \|_{L^p} \\
& \le C(p) \|\na u\|_{L^2}^{1-4/p} \|\na u\|_{L^4}^{4/p} \left( \| \n |u - m_0| \|_{L^p} + \|\n m_0\|_{L^p} \right) \\
& \le C(p) R_T^{1 +\beta/4+(1+\beta)/p} A_1^{2-4/p} (A_1+A_2)^{2/p}
+ C(p) R_T^{1+\beta/4} A_1^{2-6/p} \\
& \quad + C(p) R_T^{(1+\beta)/p} A_1^{1-2/p} (A_1+A_2)^{2/p} + C A_1^{1-4/p} \\
& \le C(p) R_T^{1 +\beta/4+(1+\beta)/p} ( A_1^{2-2/p} + A_1^{2-6/p} )
+ C(p) R_T^{1 +\beta/4+(1+\beta)/p} A_1^{2-4/p} A_2^{2/p} \\
& \quad + C(p) R_T^{(1+\beta)/p} A_1 + C(p) R_T^{(1+\beta)/p} A_1^{1-2/p} A_2^{2/p} + C A_1^{1-4/p} \\
& \le C(p) R_T^{1 +\beta/4+(1+\beta)/p} ( A_1^{2-2/p} + A_1^{2-6/p} ) + C(p) R_T^{(1+\beta)/p} A_1 + C A_1^{1-4/p} \\
& \quad + C(p) R_T^{1 +\beta/4+(1+\beta)/p} A_1^{2-4/p} \left( e+A_1^2 \right)^{1/p} \left( \frac{A_2^2}{e+A_1^2} \right)^{1/p} \\
& \quad + C(p) R_T^{(1+\beta)/p} (e+A_1) \left( \frac{A_2^2}{e+A_1^2} \right)^{1/p} \\
& \le C(p) \frac{A_2^2}{e+A_1^2} + C(p) R_T^{p (1+\beta/4+(1+\beta)/p)/(p-3)} A_1^2 + C(p) R_T^{2(1+\beta)/(p-2)}.
\ea\ee
Choosing $p>4$ sufficiently large in (\ref{pp72}), we obtain (\ref{pp07}) and complete the proof of Lemma \ref{ppl7}.
\end{proof}

\begin{lemma}\la{ppl8}
There exists a positive constant $C$ depending only on
$\mu$, $\ga$, $\beta$, $\ol{\n}_0$, $\| \n_0 \|_{L^\infty}$, $\| u_0 \|_{H^1}$, and $\| \na d_0 \|_{H^1}$ such that
\be\ba\la{pp08}
& \sup_{0\leq t\leq T} \left( \|\n\|_{L^\infty} + \| \na u \|_{L^2} + \| \na \phi \|_{H^1} \right) \\
& + \int_0^T \left( \| \na u \|_{L^2}^2 + \| \sqrt{\n} \dot{u} \|^2_{L^2} + \| \na^2 \phi \|^2_{H^1} + \| \na \phi_t \|^2_{L^2} \right) dt
\le C.
\ea\ee
\end{lemma}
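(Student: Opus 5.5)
The plan follows Huang--Li \cite{HL3} and Fan--Li--Wang \cite{FLW}. First, the upper bound of $\n$ comes from a transport equation along particle paths. Second, this bound closes the estimate of Lemma \ref{ppl5}.

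\emph{Representing $\n$ via the commutator.} Set $\theta(\n)\triangleq 2\mu\log\n+\frac{1}{\beta}\n^\beta$. Then the mass equation gives
\begin{equation*}
\frac{D}{Dt}\theta(\n) = -(2\mu+\lam)\div u = -G-(P-P(\ol{\n})).
\end{equation*}
Next we invert the momentum equation $\eqref{zqbhfc}_2$. Write $\n\dot u=(\n u)_t+\div(\n u\otimes u)$ and apply $\div$ and $(-\Delta)^{-1}$. This gives
\begin{equation*}
G = -\frac{D}{Dt}\big((-\Delta)^{-1}\div(\n u)\big) + F_1 + (-\Delta)^{-1}\div\div\big((\na\phi+\mathbf{k})\otimes(\na\phi+\mathbf{k})\big) + \tfrac12|\na\phi+\mathbf{k}|^2 - \overline{(\cdots)},
\end{equation*}
where we used \eqref{pp41} to write the director term in divergence form. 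Since $\div\int\n\dot{u}=0$ is consistent with momentum conservation \eqref{pp42}, we may replace $u$ by $u-m_0$ inside $(-\Delta)^{-1}\div(\n u)$. Along a particle path this yields
\begin{equation*}
\frac{d}{dt}\big(\theta(\n)-\psi\big) + P(\n)-P(\ol{\n}) = F_1 + F_2, \qquad \psi\triangleq(-\Delta)^{-1}\div(\n(u-m_0)),
\end{equation*}
where $F_2$ collects the director terms.

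\emph{Applying Lemma \ref{zli} to $y=\theta(\n)-\psi$.} Here $g(y)$ comes from $-P$, so $g(\infty)=-\infty$. We need to bound the increments $h(t_2)-h(t_1)$, where $h$ collects $\psi$ and $\int(F_1+F_2)dt$.

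$\psi$: by Brezis--Wainger (Lemma \ref{BWI}) and Lemma \ref{ppl6}, $\|\psi\|_{L^\infty}\le C\|\n(u-m_0)\|_{L^p}$ up to logarithmic factors, so $\|\psi\|_{L^\infty}\le CR_T^{1+\beta/4}\log(e+A_1)\cdots$. Combined with Lemma \ref{ppl5} this is $\le CR_T^{1+\beta/4+\ep}$.

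$F_1$: Lemma \ref{ppl7} together with Lemmas \ref{ppl5} and \ref{ppl4} (note $\int_0^TA_1^2dt\le C$) gives
\begin{equation*}
\int_{t_1}^{t_2}\|F_1\|_{L^\infty}dt\le CR_T^{1+\beta/4+\ep}+CR_T^\ep(t_2-t_1).
\end{equation*}

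$F_2$: the Riesz-transform term is bounded in $L^\infty$ by Brezis--Wainger or by $W^{1,q}$ with $q>2$, using $\|\na\phi\|_{L^p}$ from Lemma \ref{ppl2}. Its time integral is controlled by $\int\|\na^2\phi\|_{H^1}$, which is bounded through $\int A_2^2/(e+A_1^2)$ and $\int A_1^2$. The $\frac12|\na\phi+\mathbf k|^2$ part cancels against its mean up to a bounded quantity. Altogether this contributes $CR_T^{\ep}$ times a constant plus linear growth.

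Conclusion: Lemma \ref{zli} gives $\theta(\n)\le CR_T^{1+\beta/4+\ep}$, hence $R_T^\beta\le CR_T^{1+\beta/4+\ep}$. Since $\beta>4/3$, we have $\frac{3\beta}{4}>1+\ep$ for small $\ep$, so $R_T\le C$. This exponent balance is the main obstacle. The crucial check is that the director terms in $F_2$ contribute no worse than $R_T^{1+\beta/4+\ep}$. I expect this to hold since $\phi$ carries no density weight.

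\emph{Remaining bounds.} With $R_T\le C$, Lemma \ref{ppl5} gives $\sup_t A_1\le C$, which yields the bounds on $\|\na u\|_{L^2}$ and $\|\na^2\phi\|_{L^2}$. For the time integral of $A_2^2$ I would not reuse the logarithmic form. Instead, integrate \eqref{pp510} directly: with $\sup A_1\le C$ and $\int_0^TA_1^2\le C$ (from Lemmas \ref{ppl1} and \ref{ppl4}), the right side $C(1+A_1^2)A_1^2$ is integrable uniformly in $T$. Together with \eqref{pp511} this gives $\int_0^TA_2^2dt\le C$, i.e.\ the bounds on $\sqrt\n\dot u$, $\na\Delta\phi$, and $\na\phi_t$. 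Finally, $\|\na^3\phi\|_{L^2}\le C\|\na\Delta\phi\|_{L^2}$ on $\T^2$, and the bound on $\|\na\phi\|_{L^2}$ follows from \eqref{pp01}. This proves \eqref{pp08}.
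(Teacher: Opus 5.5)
\paragraph{Verdict.} Your architecture is the paper's, and so is the $F_1$ bound: $\theta(\n)$ and the effective viscous flux, inversion of the momentum equation into $-\frac{D}{Dt}\psi+F_1+F_2$, Lemma \ref{zli} with $N_0\sim R_T^{1+\beta/4+\ep}$, and closing with $\beta>4/3$. Your last step is also fine, although more directly, once $R_T\le C$,
\[
\int_0^TA_2^2\,dt\le\sup_t(e+A_1^2)\int_0^T\frac{A_2^2}{e+A_1^2}\,dt\le C .
\]
However, your estimate of $\psi$ has a genuine gap, and it sits exactly where the exponent count decides everything.

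\paragraph{The gap in the $\psi$ estimate.} The term $\psi$ enters $h$ pointwise in time, so you need $\sup_t\|\psi\|_{L^\infty}$. The only time-uniform control on $A_1$ is logarithmic: \eqref{pp05} gives $\log(e+A_1^2)\le CR_T^{1+\ep}$, so $A_1$ may be as large as $e^{CR_T^{1+\ep}}$.
\begin{itemize}
\item If you bound $\|\psi\|_{L^\infty}\le C\|\n(u-m_0)\|_{L^p}$ with $p>2$ and then use Lemma \ref{ppl6}, you get $CR_T^{1+\beta/4}A_1^{1-2/p}$. This is exponential in $R_T$ and useless.
\item Even granting your form $CR_T^{1+\beta/4}\log(e+A_1)$, or with $\log^{1/2}$, Lemma \ref{ppl5} gives $R_T^{2+\beta/4+\ep}$, respectively $R_T^{3/2+\beta/4+\ep}$, not $R_T^{1+\beta/4+\ep}$. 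You have effectively used $\log(e+A_1^2)\lesssim R_T^{\ep}$ instead of $R_T^{1+\ep}$.
\item With that bound, $R_T^\beta\le CR_T^{3/2+\beta/4+\ep}$ closes only for $\beta>2$.
\end{itemize}

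\paragraph{The fix (this is \eqref{pp810}).}
\begin{itemize}
\item In Brezis--Wainger the prefactor must be the energy-level quantity
\[
\|\na\psi\|_{L^2}\le C\|\n u\|_{L^2}\le CR_T^{1/2}\|\sqrt{\n}u\|_{L^2}\le CR_T^{1/2}.
\]
\item Only the logarithm sees higher norms. By \eqref{pp45}, $\|\na\psi\|_{L^3}\le C\|\n u\|_{L^3}\le CR_T(1+\|\na u\|_{L^2})$, so \eqref{pp05} gives $\log^{1/2}(e+\|\na\psi\|_{L^3})\le CR_T^{(1+\ep)/2}$.
\item Together with $\|\psi\|_{L^2}\le C\|\n u\|_{L^{2\ga/(\ga+1)}}\le C$, this yields $\|\psi\|_{L^\infty}\le CR_T^{1+\ep}$, comfortably within budget.
\end{itemize}
Lemma \ref{ppl6} is needed only inside $F_1$.

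\paragraph{Smaller repairs.}
\begin{itemize}
\item Replacing $u$ by $u-m_0$ in $\psi$ is not free, because $(-\Delta)^{-1}\div(\n m_0)=m_0\cdot\na(-\Delta)^{-1}(\n-\ol{\n})\neq0$. If you shift, the commutator must become $[u^i,R_iR_j](\n(u^j-m_0^j))$. That is harmless but unnecessary.
\item You dropped $\ol{G}$. It needs $|\ol{G}|\le C(1+\|\na u\|_{L^2}^2)$, which is \eqref{pp89}.
\item Lemma \ref{zli} must be applied to $y=\theta(\n)$ with $\psi$ placed in $h$. The quantity $-P(\n)$ is not a function of $\theta(\n)-\psi$.
\item The term $\frac12|\na\phi+\mathbf{k}|^2$ does not cancel against its mean; it still needs an $L^\infty$ bound. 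One option is $\|\na\phi\|_{L^\infty}^2\le C(1+A_2)$, whose time integral is at most $C(t_2-t_1)+CR_T^{1+\ep}$, so it is harmless. The paper instead bounds all of $F_2$ at once:
\[
\|F_2\|_{L^\infty}\le C\|F_2\|_{L^4}^{1/2}\|\na F_2\|_{L^4}^{1/2}\le C(A_1+A_2)^{1/2}.
\]
\end{itemize}
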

\begin{proof}
First, we deduce from $(\ref{zqbhfc})_1$ and (\ref{gw}) that
\be\la{pp81}\ba
\frac{D}{Dt} \theta(\n) + (P-P(\ol{\n})) = -G,
\ea\ee
where
\be\la{pp82}\ba
\theta(\n) \triangleq 2 \mu \log \n + \beta^{-1} \n^\beta.
\ea\ee
Moreover, recall from (\ref{pp52}) that $G$ satisfies
\be\la{pp83}\ba
\Delta G = \div(\n \dot{u} + (\na \phi + \mathbf{k}) \Delta \phi),
\ea\ee
which implies
\be\la{pp84}\ba
G - \ol{G} + \frac{D}{Dt} \left( (- \Delta)^{-1} \div(\n u) \right)
= F_1 + F_2,
\ea\ee
where
\be\la{pp85}\ba
F_1 \triangleq \sum_{i,j=1}^{2} [u^i,R_iR_j](\n u^j), \quad
F_2 \triangleq - (- \Delta)^{-1} \div((\na \phi + \mathbf{k}) \Delta \phi).
\ea\ee
The combination of (\ref{pp81}) and (\ref{pp84}) gives
\be\la{pp86}\ba
\frac{D}{Dt} (\theta(\n)-\psi) + P - P(\ol{\n}) = - \ol{G} - F_1 - F_2,
\ea\ee
where
\be\la{pp87}\ba
\psi \triangleq (- \Delta)^{-1} \div(\n u).
\ea\ee
Since the function $y=\theta(\n)$ is strictly increasing on $(0,\infty)$ and maps $(0,\infty)$ onto $(-\infty,\infty)$, its inverse function $\n=\theta^{-1}(y)$ is well defined for all $y\in(-\infty,\infty)$.
We then rewrite (\ref{pp86}) as
\be\ba\nonumber
y'(t) = g(y) + h'(t),
\ea\ee
with
\be\la{pp88}\ba
y=\theta(\n), \quad g(y)=-P( \theta^{-1}(y) ), \quad h= \psi + \int_0^t \left( P(\ol{\n}) - \ol{G} - F_1 - F_2 \right) ds.
\ea\ee
Clearly, $g(\infty)=-\infty$.
Moreover, it follows from (\ref{gw}), (\ref{pyzgj}), and H\"older's inequality that
\be\la{pp89}\ba
|\ol{G}| & = \left| \int \left( \n^\beta \div u - (P-P(\ol{\n})) \right) dx \right| \\
& \le C \| \n^\beta \|_{L^2} \| \na u \|_{L^2} + C
\le C (1 + \| \na u \|_{L^2}^2).
\ea\ee
Using (\ref{bwi}), (\ref{pp01}), (\ref{pyzgj}), and (\ref{pp05}), we arrive at
\be\la{pp810}\ba
\| \psi \|_{L^\infty}
& \le C \| \na \psi \|_{L^2} \log^{1/2}(e+\| \na \psi \|_{L^3}) + C \| \psi \|_{L^2} + C \\
& \le C \| \n u \|_{L^2} \log^{1/2}(e+\| \n u \|_{L^3}) + C \| \psi \|_{W^{1,2\ga/(\ga+1)}} + C \\
& \le C R_T^{1/2} \| \sqrt{\n} u \|_{L^2} \log^{1/2}\left( e + R_T (1+\| \na u \|_{L^2}) \right) + C \| \n u \|_{L^{2\ga/(\ga+1)}} + C \\
& \le C R_T^{1/2} \log^{1/2}\left( e + R_T (1+\| \na u \|_{L^2}) \right) + C \| \sqrt{\n} \|_{L^{2\ga}} \| \sqrt{\n} u \|_{L^2} + C \\
& \le C R_T^{1/2} \log^{1/2}\left( e + \| \na u \|_{L^2}^2 \right) + C R_T \\
& \le C R_T.
\ea\ee
In view of (\ref{jhz1}), (\ref{jhz2}), (\ref{3pp4}), (\ref{pp41}), and Young's inequality, it holds that
\be\la{pp811}\ba
\| F_2 \|_{L^\infty}
& \le C \| F_2 \|^{1/2}_{L^4} \| \na F_2 \|^{1/2}_{L^4}
\le C \| |\na \phi+\mathbf{k}|^2 \|^{1/2}_{L^4} \| (\na \phi + \mathbf{k}) \Delta \phi \|^{1/2}_{L^4} \\
& \le C \| \na \phi+\mathbf{k} \|_{L^8} \| \na \phi+\mathbf{k} \|^{1/2}_{L^8} \| \Delta \phi \|^{1/2}_{L^8}
\le C \left( \| \Delta \phi \|_{L^2} + \| \na \Delta \phi \|_{L^2} \right)^{1/2} \\
& \le C A_1^{1/2} + C A_2^{1/2}
\le C A_1^{1/2} + C ( e+A_1^{1/2} ) \left( \frac{A_2^2}{e+A_1^2} \right)^{1/4} \\
& \le C + C A^2_1 + C \frac{A_2^2}{e+A_1^2}.
\ea\ee
Combining (\ref{pp07}), (\ref{pp811}), (\ref{pp01}), (\ref{pp04}), and (\ref{pp05}) yields, for any $0 \le t_1 < t_2 \le T$,
\be\la{pp812}\ba
\int_{t_1}^{t_2} \left( \| F_1 \|_{L^\infty} + \| F_2 \|_{L^\infty} \right) dt
\le C R_T^{1+\frac{\beta}{4}+\ep} + C R_T^\ep (t_2-t_1).
\ea\ee
By (\ref{pp88}), (\ref{pp89}), (\ref{pp810}), (\ref{pp812}), and (\ref{pp01}), we have for any $0 \le t_1 < t_2 \le T$,
\be\ba\nonumber
h(t_2)-h(t_1) \le C R^{1+\frac{\beta}{4}+\ep}_T + C R_T (t_2-t_1).
\ea\ee
Choose $N_0$, $N_1$, and $\zeta$ in Lemma \ref{zli} as follows:
\be\la{pp813}\ba
N_0 = C R^{1+\frac{\beta}{4}+\ep}_T, \quad N_1 = C R_T,\quad
\overline{\zeta} = \theta \left( \left( C R_T \right)^{1/\ga} \right),
\ea\ee
which together with (\ref{pp88}) gives
\be\ba\nonumber
g(\zeta)=-( \theta^{-1}(\zeta) )^\ga \le - N_1 = - C R_T
\quad \text{ for all } \zeta \ge \overline{\zeta}.
\ea\ee
In addition, $R_T \ge 1$ implies $\overline{\zeta} \le C R^{\frac{\beta}{\ga}}_T$.
From (\ref{pp813}) and Lemma \ref{zli}, we conclude that
\be\la{pp814}\ba
R^\beta_T \le C R_T^{ \max\{ 1+\frac{\beta}{4}+\ep,\frac{\beta}{\ga} \} }.
\ea\ee
Since $\beta>4/3$ and $\ga>1$, we can take $0<\ep<(3\beta-4)/4$ and obtain from (\ref{pp814}) that
\be\la{pp815}\ba
\sup_{0 \le t \le T} \| \n \|_{L^\infty} \le C.
\ea\ee
Finally, the combination of (\ref{pp01}), (\ref{pp05}), and (\ref{pp815}) gives (\ref{pp08}), thereby completing the proof of Lemma \ref{ppl8}.
\end{proof}

\subsection{A Priori Estimates (II): Higher Order Estimates}

The goal of this subsection is to establish the higher-order estimates needed to extend the local strong solution globally in time.

\begin{lemma}\la{ppgl1}
There exists a positive constant $C$ depending only on
$\mu$, $\ga$, $\beta$, $\ol{\n}_0$, $\| \n_0 \|_{L^\infty}$, $\| u_0 \|_{H^1}$, and $\| \na d_0 \|_{H^1}$ such that
\be\ba\la{ppg01}
\sup_{0\le t\le T}
\si \left( \| \sqrt{\n} \dot{u} \|^2_{L^2} + \| \na \phi_t \|^2_{L^2} + \| \na^3 \phi \|^2_{L^2} \right)
+ \int_0^{T} \si \left( \| \na \dot{u} \|^2_{L^2} + \| \na^2 \phi_t \|^2_{L^2} \right) dt \le C,
\ea\ee
with $\si \triangleq \min\{ 1,t \}$.
\end{lemma}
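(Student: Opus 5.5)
The plan is the standard Hoff-type weighted energy estimate for the material derivative, coupled with a time-differentiated estimate for the director equation in the $\phi$-formulation \eqref{zqbhfc}. Lemma \ref{ppl8} gives $\n\le C$, uniform bounds on $\|\na u\|_{L^2}$ and $\|\na\phi\|_{H^1}$, and integrability of $A_1^2,A_2^2$ in time. So all constants depending on $R_T$ become harmless, and the goal is a Gr\"onwall-type inequality for
\[
\Phi(t)\triangleq \|\sqrt{\n}\dot u\|_{L^2}^2+\|\na\phi_t\|_{L^2}^2
\]
with integrable coefficients.

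\textbf{Step 1 (estimate for $\dot u$).} Apply $\dot u^j[\p_t+\div(u\,\cdot)]$ to the $j$-th component of the momentum equation in the form \eqref{pp51}. Integrate over $\T^2$ and use $(\ref{zqbhfc})_1$ to get
\[
\frac12\frac{d}{dt}\|\sqrt{\n}\dot u\|_{L^2}^2+\mu\|\na\dot u\|_{L^2}^2+\|\div\dot u\|_{L^2}^2 \cdots .
\]
The terms from $G$ and $\o$ are treated as in \cite{HL3,FLW}. Write $\na G$ and $\na^\bot\o$, and commute the material derivative using $\dot G$, the mass equation, and $\lambda'(\n)\n$ being bounded. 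This bounds them by $\ep\|\na\dot u\|_{L^2}^2+C\|\na u\|_{L^4}^4+C\|\na u\|_{L^2}^2$. By \eqref{pp71} with $R_T\le C$, together with the bounds on $\|G\|_{L^2}$ and $\|\o\|_{L^2}$, we have $\|\na u\|_{L^4}^4\le C A_1^2(A_1^2+A_2^2)+C$, which is integrable against bounded $A_1$.

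The new term is
\[
-\int \dot u^j[\p_t+\div(u\,\cdot)]\bigl((\p_j\phi+k_j)\Delta\phi\bigr)dx .
\]
Rewrite it using \eqref{pp41} as a divergence of $(\na\phi+\mathbf k)\otimes(\na\phi+\mathbf k)-\frac12|\na\phi+\mathbf k|^2I$, and integrate by parts onto $\na\dot u$. This produces terms like
\[
\int|\na\dot u|\bigl(|\na\phi+\mathbf k|\,|\na\phi_t|+|\na u|\,|\na\phi+\mathbf k|^2+|u|\,|\na\phi+\mathbf k|\,|\na^2\phi|\bigr)dx,
\]
which are bounded by $\ep\|\na\dot u\|_{L^2}^2+C\|\na\phi+\mathbf k\|_{L^\infty}^2\|\na\phi_t\|_{L^2}^2+C(\ldots)$. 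Here $\|\na\phi\|_{L^\infty}^2\le C(1+\|\na^3\phi\|_{L^2})$ by Gagliardo–Nirenberg.

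\textbf{Step 2 (estimate for $\na\phi_t$).} Differentiate $(\ref{zqbhfc})_3$ in $t$:
\[
\phi_{tt}-\Delta\phi_t=-u_t\cdot(\na\phi+\mathbf k)-u\cdot\na\phi_t .
\]
Multiply by $-\Delta\phi_t$ to get
\[
\frac12\frac{d}{dt}\|\na\phi_t\|_{L^2}^2+\|\Delta\phi_t\|_{L^2}^2\le \int\bigl(|u_t|\,|\na\phi+\mathbf k|+|u|\,|\na\phi_t|\bigr)|\Delta\phi_t|\,dx .
\]
Write $u_t=\dot u-u\cdot\na u$. For the $\dot u$ part, avoid $\|\dot u\|_{L^2}$ (there is no lower bound on $\n$) by using
\[
\|\dot u\|_{L^4}\le C\|\sqrt{\n}\dot u\|_{L^2}+C\|\na\dot u\|_{L^2},
\]
which follows from a Poincaré-type inequality with $\int\n=\ol{\n}_0>0$, $\n\le C$, and $\|\n\|_{L^2}$ controlled. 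Absorb $\ep\|\na\dot u\|_{L^2}^2$ into Step 1 by weighting Step 2 by a small constant. Elliptic estimates for $(\ref{zqbhfc})_3$ give
\[
\|\na^3\phi\|_{L^2}\le C\|\na\phi_t\|_{L^2}+C\|\na(u\cdot(\na\phi+\mathbf k))\|_{L^2}\le C\|\na\phi_t\|_{L^2}+C\bigl(1+\|\na u\|_{L^4}\bigr)+\ldots .
\]
This yields both the $\si\|\na^3\phi\|_{L^2}^2$ bound and $\|\na^2\phi_t\|_{L^2}\le C\|\Delta\phi_t\|_{L^2}$ on $\T^2$.

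\textbf{Step 3 (closing).} Multiply the combined inequality by $\si$. This gives
\[
(\si\Phi)'+\si\bigl(\|\na\dot u\|_{L^2}^2+\|\na^2\phi_t\|_{L^2}^2\bigr)\le C\bigl(1+A_1^2+A_2^2\bigr)\si\Phi+C\bigl(A_1^2+A_2^2+\|\na u\|_{L^4}^4\bigr)+\si'\Phi .
\]
The term $\si'\Phi$ is integrable since $\int_0^T\Phi\,dt\le C$ by \eqref{pp08}. Grönwall's inequality, with $\int_0^T(1+A_1^2+A_2^2)\,dt$ controlled on unit intervals, gives \eqref{ppg01}. For time-uniformity, work on each $[t,t+1]$ using the time-integrability of $A_1^2,A_2^2$ from \eqref{pp04} and \eqref{pp08}.

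The main obstacle is the liquid-crystal commutator term in Step 1. The material derivative falling on $(\na\phi+\mathbf k)\Delta\phi$ generates $\na\phi_t$ and $\na^3\phi$-type terms. These must be controlled by quantities that Step 2 already dissipates; otherwise the $L^\infty$ norm of $\na\phi$ creates a superlinear loop. This works provided the $\|\na\phi\|_{L^\infty}^2\le C(1+\|\na^3\phi\|_{L^2})$ growth is linear in $\sqrt{\si}\,\|\na^3\phi\|_{L^2}$ after weighting.
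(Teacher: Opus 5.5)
Your overall scheme matches the paper's. You apply $\p_t+\div(u\,\cdot)$ to the momentum equation, differentiate the director equation in $t$, bound $\na^3\phi$ elliptically, and multiply by $\si$. The gap is time-uniformity, which is part of the statement: $C$ must be independent of $T$, including for $\int_0^T\si(\|\na\dot u\|_{L^2}^2+\|\na^2\phi_t\|_{L^2}^2)\,dt$.

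You take $\|\na u\|_{L^4}^4\le CA_1^2(A_1^2+A_2^2)+C$ from \eqref{pp71} and call it integrable. The additive constant is not integrable on $(0,\infty)$, so integrating your Step 3 inequality over $(0,T)$ only gives $C(1+T)$. Working on unit intervals does not fix this. The uniform Gronwall argument, using $\int_t^{t+1}\Phi\,ds\le C$, does give $\sup_t\si\Phi\le C$. But summing the dissipation over unit intervals just reproduces $CT$.

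What you need is the sharpened bound \eqref{ppg13}, available once $\n\le C$:
\begin{itemize}
\item $\|P-P(\ol{\n})\|_{L^4}^4\le C\|\n-\ol{\n}\|_{L^2}^2\le CA_1^2$;
\item by \eqref{pp53}, $\|G\|_{L^4}^4+\|\o\|_{L^4}^4\le C(\|G\|_{L^2}^2\|G\|_{H^1}^2+\|\o\|_{L^2}^2\|\o\|_{H^1}^2)\le C(A_1^2+A_2^2)$.
\end{itemize}
Hence $\|\na u\|_{L^4}^4\le C(A_1^2+A_2^2)$ with no additive constant. The same bookkeeping is needed for every forcing term. For example, in Step 2 you must split $u\cdot\na u\cdot\mathbf k=(u-m_0)\cdot\na u\cdot\mathbf k+m_0\cdot\na u\cdot\mathbf k$ and use $\|u-m_0\|_{L^4}\le C\|\na u\|_{L^2}$ from \eqref{pp45}. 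Then everything is bounded by $C(A_1^2+A_2^2)$, which is integrable on $(0,\infty)$ by \eqref{pp01}, \eqref{pp04}, \eqref{pp08}.

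Apart from this, the paper closes more cheaply and needs no Gronwall argument. It never uses $\|\na\phi+\mathbf k\|_{L^\infty}$.
\begin{itemize}
\item In the $\dot u$ estimate it bounds $\||\na\phi+\mathbf k||\na\phi_t|\|_{L^2}\le\|\na\phi+\mathbf k\|_{L^4}\|\na\phi_t\|_{L^4}$, with $\|\na\phi_t\|_{L^4}^2\le C\|\na\phi_t\|_{L^2}\|\na^2\phi_t\|_{L^2}$. The dissipation of the $\na\phi_t$ equation then absorbs $\|\na^2\phi_t\|_{L^2}^2$.
\item In the $\na\phi_t$ equation it integrates $\dot u\cdot(\na\phi+\mathbf k)\cdot\Delta\phi_t$ by parts, obtaining $\|\na\dot u\|_{L^2}\|\na\phi_t\|_{L^2}^{1/2}\|\na^2\phi_t\|_{L^2}^{1/2}$.
\end{itemize}
Both cross terms therefore carry small constants, and no weighting is needed. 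The outcome \eqref{ppg17} has right-hand side $C(A_1^2+A_2^2)$ with no factor of $\Phi$. Multiplying by $\si$ and integrating once gives the lemma, and the superlinear loop you flag as the main obstacle never arises.

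Your $L^\infty$--$L^2$ split does also close. The extra coefficient $\|\na^3\phi\|_{L^2}\le CA_2$ in front of $\si\Phi$ is square integrable in time, hence integrable on unit intervals. But it needs the uniform Gronwall lemma for the supremum, and it still needs the constant-free forcing bounds above for the integral.
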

\begin{proof}
First, applying the operator $\p_t + \div(u \cdot)$ to $(\ref{zqbhfc})^j_2$ and using (\ref{pp41}), we obtain
\be\la{ppg11}\ba
& (\n \dot u^j)_t + \div(\n u \dot u^j) - \mu \Delta \dot u^j - \p_j ((\mu+\lm) \div \dot u) \\
& = \mu \p_i ( -\p_i u \cdot \na u^j + \div u \p_i u^j) -\mu \div(\p_i u\p_i u^j) \\
& \quad -\p_j \left[ (\mu+\lambda) \pa_i u \cdot \na u^i - ( \mu+(1-\beta) \lam )(\div u)^2\right] \\
& \quad - \div (\p_j u (\mu+\lambda) \div u) + (\ga-1) \p_j (P\div u) + \text{div} (P\p_ju) \\
& \quad - \p_i ( (\p_j \phi+\mathbf{k}_j) (\p_i \phi+\mathbf{k}_i) )_t + \frac{1}{2} \p_j ( |\na \phi+\mathbf{k}|^2 )_t - \div (u (\na \phi + \mathbf{k}) \Delta \phi).
\ea\ee

Then, multiplying (\ref{ppg11}) by $\dot{u}$ and integrating by parts over $\T^2$,
we derive after using (\ref{pp08}) and Young's inequality that
\be\la{ppg12}\ba
& \frac{1}{2} \frac{d}{dt} \int \n |\dot u|^2 dx
+ \mu \int |\na \dot u|^2 dx + \int (\mu+\lambda) (\div \dot u)^2 dx \\
& \le \frac{\mu}{4} \| \na \dot{u} \|^2_{L^2}
+ C \left( \|\na u\|_{L^4}^4 + \|\nabla u\|^2_{L^2}
+ \| |\na \phi+\mathbf{k}| |\na \phi_t| \|^2_{L^2} + \| |u| |\na \phi+\mathbf{k}| |\Delta \phi| \|^2_{L^2} \right) \\
& \le \frac{\mu}{4} \| \na \dot{u} \|^2_{L^2}
+ C \left( A^2_1 + A^2_2 + \| \na \phi+\mathbf{k} \|^2_{L^4} \| \na \phi_t \|^2_{L^4} \right) + C \| \na \phi_t \|_{L^2} \| \na^2 \phi_t \|_{L^2} \\
& \quad + C \| u \|^2_{L^8} \| \na \phi+\mathbf{k} \|^2_{L^8} \| \Delta \phi \|^2_{L^{4}} \\
& \le \frac{\mu}{4} \| \na \dot{u} \|^2_{L^2}
+ C \left( A^2_1 + A^2_2 + \| \na \phi_t \|_{L^2} \| \na^2 \phi_t \|_{L^2} \right) + C \| \na \Delta \phi \|^2_{L^{2}} \\
& \le \frac{\mu}{4} \| \na \dot{u} \|^2_{L^2} + \frac{1}{4} \| \na^2 \phi_t \|^2_{L^2} + C (A^2_1 + A^2_2),
\ea\ee
where in the second inequality we have used
\be\la{ppg13}\ba
\| \na u \|^4_{L^4}
& \le C \left( \| \div u \|^4_{L^4} + \| \o \|^4_{L^4} \right) \\
& \le C \left( \| G \|^4_{L^4} + \| P-P(\ol{\n}) \|^4_{L^4} + \| \o \|^4_{L^4} \right) \\
& \le C \left( \| G \|^2_{L^2} \| G \|^2_{H^1} + \| P-P(\ol{\n}) \|^2_{L^2}
+ \| \o \|^2_{L^2} \| \o \|^2_{H^1} \right) \\
& \le C \left( \| G \|^2_{H^1} + \| \o \|^2_{H^1} + A^2_1 \right) \\
& \le C \left( A^2_1 + A^2_2 \right),
\ea\ee
which follows from (\ref{gw}), (\ref{gn11}), and (\ref{pp53}).

On the other hand, differentiating (\ref{pp58}) with respect to $t$ gives
\be\la{ppg14}\ba
\na \phi_{tt} - \na \Delta \phi_t = - \na \left( u \cdot (\na \phi + \mathbf{k}) \right)_t.
\ea\ee
Multiplying (\ref{ppg14}) by $\na \phi_t$, integrating by parts over $\T^2$ and using
(\ref{pp08}) and Young's inequality, we derive
\be\la{ppg15}\ba
& \frac{1}{2}\frac{d}{dt}\|\nabla \phi_{t}\|_{L^{2}}^{2} + \| \na^2 \phi_{t}\|_{L^{2}}^{2} \\
& = \int \left( u_t \cdot (\na \phi + \mathbf{k}) + u \cdot \na \phi_t \right) \cdot \Delta \phi_t dx \\
& = \int \left( \dot{u} \cdot (\na \phi + \mathbf{k}) - u \cdot \na u \cdot (\na \phi + \mathbf{k})
+ u \cdot \na \phi_t \right) \cdot \Delta \phi_t dx \\
& = - \int \left( \p_i \dot{u} \cdot (\na \phi + \mathbf{k}) + \dot{u} \cdot \na \p_i \phi \right) \cdot \p_i \phi_t dx
- \int \left( u \cdot \na u \cdot (\na \phi + \mathbf{k})
- u \cdot \na \phi_t \right) \cdot \Delta \phi_t dx \\
& \le C \left( \| \na \dot{u} \|_{L^2} \| \na \phi+\mathbf{k} \|_{L^4}
+ \| \dot{u} \|_{L^4} \| \na^2 \phi \|_{L^2} \right) \| \na \phi_t \|_{L^4} \\
& \quad + C \left( \| u \|_{L^8} \| \na u \|_{L^4} \| \na \phi \|_{L^8}
+ \| u - m_0 \|_{L^4} \| \na u \|_{L^4} + \| \na u \|_{L^2}
+ \| u \|_{L^4} \| \na \phi_t \|_{L^4} \right) \| \Delta \phi_t \|_{L^2} \\
& \le C \left( \| \na \dot{u} \|_{L^2} + \| \sqrt{\n} \dot{u} \|_{L^2} \right) \| \na \phi_t \|^{\frac{1}{2}}_{L^2} \| \na^2 \phi_t \|^{\frac{1}{2}}_{L^2} \\
& \quad + C \left( \| \na u \|^2_{L^4} + \| \na^2 \phi \|_{L^2}
+ \| \na u \|_{L^2} + \| \na \phi_t \|^{\frac{1}{2}}_{L^2} \| \na^2 \phi_t \|^{\frac{1}{2}}_{L^2} \right) \| \na^2 \phi_t \|_{L^2} \\
& \le \frac{1}{4} \| \na^2 \phi_t \|^2_{L^2} + \frac{\mu}{4} \| \na \dot{u} \|^2_{L^2}
+ C ( A^2_1 + A^2_2 ).
\ea\ee
Combining (\ref{ppg12}) and (\ref{ppg15}) leads to
\be\la{ppg17}\ba
\frac{d}{dt} \left( \| \sqrt{\n} \dot{u} \|^2_{L^2} + \|\nabla \phi_{t}\|_{L^{2}}^{2} \right)
+ \mu \| \na \dot{u} \|^2_{L^2} + \| \na^2 \phi_{t}\|_{L^{2}}^{2}
\le C ( A^2_1 + A^2_2 ).
\ea\ee

Multiplying (\ref{ppg17}) by $\si$, integrating over $(0,T)$, and using (\ref{pp08}), we arrive at
\be\la{ppg18}\ba
\sup_{0\le t\le T}
\si \left( \| \sqrt{\n} \dot{u} \|^2_{L^2} + \| \na \phi_t \|^2_{L^2} \right)
+ \int_0^{T} \si \left( \| \na \dot{u} \|^2_{L^2} + \| \na^2 \phi_t \|^2_{L^2} \right) dt \le C.
\ea\ee
In addition, it follows from (\ref{pp58}), (\ref{pp08}), (\ref{ppg13}), and Young's inequality that
\be\ba\nonumber
\| \nabla \Delta \phi \|_{L^{2}}^{2}
\leq & C \left( \|\nabla \phi_{t}\|_{L^{2}}^{2} + \| |\nabla u| |\na \phi + \mathbf{k}| \|_{L^{2}}^{2}
+ \| |u| |\nabla^{2} \phi| \|_{L^{2}}^{2} \right) \\
\leq & C \left( \|\nabla \phi_{t}\|_{L^{2}}^{2} + \|\nabla u\|_{L^{4}}^{2} \|\nabla \phi\|_{L^{4}}^{2}
+ \| \na u \|_{L^2}^2 + \| u \|_{L^{4}}^{2} \| \nabla^{2} \phi \|^2_{L^{4}} \right) \\
\leq & C \left( \|\nabla \phi_{t}\|_{L^{2}}^{2}
+ A_1 A_2 + A^2_1 + \| \nabla^{2} \phi \|_{L^{4}}^{2} \right) \\
\leq & C \|\nabla \phi_{t}\|_{L^{2}}^{2}
+ C A_1 \left( \| \sqrt{\n} \dot{u} \|_{L^2} + \| \na \phi_t \|_{L^2} + \| \na \Delta \phi \|_{L^2} \right) + C A^2_1 \\
\leq & \frac{1}{2} \| \nabla \Delta \phi \|_{L^{2}}^{2}
+ C \left( A^2_1 + \| \sqrt{\n} \dot{u} \|^2_{L^2} + \|\nabla \phi_{t}\|_{L^{2}}^{2} \right),
\ea\ee
which gives
\be\la{ppg19}\ba
\|\nabla^3 \phi\|_{L^{2}}^{2}
\le C \left( A^2_1 + \| \sqrt{\n} \dot{u} \|^2_{L^2} + \|\nabla \phi_{t}\|_{L^{2}}^{2} \right).
\ea\ee
This, together with (\ref{ppg18}), yields (\ref{ppg01}) and completes the proof of Lemma \ref{ppgl1}.
\end{proof}

We next establish the exponential decay of the strong solution by using the uniform estimates obtained in (\ref{pp01}) and (\ref{pp08}).
\begin{lemma}\la{ppel}
For any $p \in [1,\infty)$, there exist positive constants
$C$ and $\alpha_0$ depending only on
$p$, $\mu$, $\ga$, $\beta$, $\ol{\n}_0$, $\| \n_0 \|_{L^\infty}$, $\| u_0 \|_{H^1}$, and $\| \na d_0 \|_{H^1}$ such that for any $1 \le t <\infty$,
\be\la{ppe01}\ba
& \| \n-\ol{\n_0}\|_{L^p} + \| \na u \|_{L^p}
+ \| \sqrt{\n} \dot{u} \|^2_{L^2} \\
& + \| \na d - \mathbf{k} \otimes d^\bot \|^2_{H^2}
+ \| d_t + (m_0 \cdot \mathbf{k}) d^\bot \|^2_{H^1} \le C e^{-\alpha_0 t}.
\ea\ee
\end{lemma}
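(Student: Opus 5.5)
The plan is to show that a natural energy functional decays exponentially, then lift that decay to the higher norms using the time-uniform bounds (\ref{pp08}) and (\ref{ppg01}).

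\textbf{Step 1: the functional and its dissipation.} Define the relative energy
\[
E(t)\triangleq\int\Big(\tfrac12\n|u-m_0|^2+G_0(\n)+\tfrac12|\na\phi|^2\Big)dx,\qquad
G_0(\n)\triangleq\tfrac{1}{\ga-1}\big(\n^\ga-\ol{\n}_0^\ga-\ga\ol{\n}_0^{\ga-1}(\n-\ol{\n}_0)\big).
\]
By (\ref{pp401}) and (\ref{pp42}), $E$ differs from the energy in (\ref{3pp3}) only by constants. Hence
\[
E'(t)+\mu\|\na u\|_{L^2}^2+\|\Delta\phi\|_{L^2}^2\le 0 .
\]
Since $\n\le C$ by (\ref{pp08}), and since $\phi$ is periodic with $\ol{\na\phi}=0$ and (\ref{pp45}) holds, we get
\[
\int\Big(\tfrac12\n|u-m_0|^2+\tfrac12|\na\phi|^2\Big)dx\le C\big(\|\na u\|_{L^2}^2+\|\Delta\phi\|_{L^2}^2\big).
\]
The density part $\int G_0(\n)\,dx\sim\|\n-\ol{\n}_0\|_{L^2}^2$ is not dissipated directly. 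I would recover it as in Lemma \ref{ppl4}, by adding $\delta\int\n(u-m_0)\cdot\na(-\Delta)^{-1}(\n-\ol{\n})dx$ to $E$. Inequality (\ref{pp413}) gives this term's time derivative. With $\n$ bounded, the term $(\n-M_1)_+$ can be dropped by choosing $M_1\ge\sup\n$, so the inequality reads $\int(\n^\ga-\ol{\n}^\ga)(\n-\ol{\n})\le -\frac{d}{dt}(\cdots)+C(\|\na u\|_{L^2}^2+\|\na^2\phi\|_{L^2}^2)$. By (\ref{pp415}) the correction is bounded by $C\delta E$.

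\textbf{Step 2: exponential decay of the modified functional.} Choosing $\delta$ small, $\tilde E=E+\delta(\cdots)$ is equivalent to $E$ and satisfies $\tilde E'+c\tilde E\le0$. Thus $E(t)\le Ce^{-2\alpha t}$ and $\|\n-\ol{\n}_0\|_{L^2}\le Ce^{-\alpha t}$. Since $\n$ is uniformly bounded, interpolation gives decay of $\|\n-\ol{\n}_0\|_{L^p}$ for every $p<\infty$ (at a $p$-dependent rate, which is allowed).

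\textbf{Step 3: decay of $A_1$.} I would multiply (\ref{pp510}) by $e^{\alpha t}$. Once $\n$ is bounded, $R_T\le C$, and also $A_1\le C$ by (\ref{pp08}), so the right-hand side is $\le CA_1^2$. The corner terms in $A_3$ are $\int\big(2(\na\phi+\mathbf{k})\otimes(\na\phi+\mathbf{k})\big):\na u\,dx$ and $\int|\na\phi|^2\div u\,dx$. They are handled using $\int\mathbf{k}\otimes\mathbf{k}:\na u\,dx=0$ and $\int\mathbf{k}\otimes\na\phi:\na u\,dx$, bounded via $\|\na\phi\|_{L^2}\|\na u\|_{L^2}$. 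Then I would check that $A_3+C_0E\sim A_1^2$ on $[1,\infty)$. This lets me write $\frac{d}{dt}(A_3+KE)+cA_1^2+\frac12A_2^2\le0$ for large $K$, which gives $A_1^2+\int_t^\infty A_2^2\le Ce^{-\alpha t}$. Differentiating (\ref{ppg17}) in the same way, with the factor $\si=1$ for $t\ge1$, gives decay of $\|\sqrt\n\dot u\|_{L^2}^2+\|\na\phi_t\|_{L^2}^2$. Then (\ref{ppg19}) gives decay of $\|\na^3\phi\|_{L^2}$.

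\textbf{Step 4: translation back to $d$ and the $L^p$ decay of $\na u$.} Since $\na d=d^\bot\otimes\na\theta$ and $\na\theta=\na\phi+\mathbf{k}$, we have $\na d-\mathbf{k}\otimes d^\bot$ equal to $d^\bot\otimes\na\phi$ (up to the ordering convention). Its $H^2$ norm is therefore controlled by $\|\na\phi\|_{H^2}$ times norms of $d$, which are bounded. Similarly $d_t=d^\bot\phi_t$, and $\phi_t=\Delta\phi-u\cdot\na\phi-(u-m_0)\cdot\mathbf{k}-m_0\cdot\mathbf{k}$. This gives $d_t+(m_0\cdot\mathbf{k})d^\bot=d^\bot\big(\Delta\phi-u\cdot\na\phi-(u-m_0)\cdot\mathbf{k}\big)$, whose $H^1$ norm decays. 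For $\|\na u\|_{L^p}$, I would interpolate between the $L^2$ decay and a uniform $L^\infty$-type bound on $\na u$. That bound comes from (\ref{pp53}) together with (\ref{ppg01}), $G$, $\o\in H^1$ and the Brezis–Wainger/BKM-type estimates. If that is too weak, I would use $\|\na u\|_{L^p}\le C(\|G\|_{L^p}+\|\o\|_{L^p}+\|\n-\ol{\n}\|_{L^p})$ with Gagliardo–Nirenberg, since $\|G\|_{H^1}$ and $\|\o\|_{H^1}$ decay.

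\textbf{Main obstacle.} I expect the hardest step to be Step 3: showing that the non-coercive correction terms in $A_3$, involving $\mathbf{k}$, are absorbed, so that a linear decay inequality for $A_1^2$ really holds.
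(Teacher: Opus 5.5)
Your proposal is correct and follows essentially the paper's route: the relative energy with the $\na(-\Delta)^{-1}(\n-\ol{\n})$ correction gives $\frac{d}{dt}E_2+2\alpha_0E_2\le 0$, then exponentially weighted versions of (\ref{pp510}) and (\ref{ppg17}) together with (\ref{ppg19}) give the higher-order decay, and finally the polar identities (\ref{ppe19})--(\ref{ppe21}), (\ref{ppe30}) transfer the decay to $d$. Your single functional $A_3+K\tilde E$ is just a repackaging of the paper's use of (\ref{ppe15}) and (\ref{a31}); it should use $\tilde E$ rather than $E$, so that the density part of $A_1^2$ is also dissipated. One sub-route should be dropped: a time-uniform $L^\infty$ bound on $\na u$ via BKM is not available, since it needs $\|\na^2 u\|_{L^q}$ and hence $\|\na\n\|_{L^q}$, which is bounded only on finite intervals (Lemma \ref{ppgl2}) and grows exponentially under vacuum (Theorem \ref{thbp2}). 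Your fallback through $\|G\|_{H^1}$, $\|\o\|_{H^1}$ and $\|\n-\ol{\n}\|_{L^p}$ is exactly the paper's (\ref{ppe18}).
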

\begin{proof}
First, using (\ref{pp08}) and arguing as in the proof of Lemma \ref{ppl4}, we conclude that
\be\la{ppe1}\ba
\int (\n^\ga - \ol{\n}^{\ga}) (\n-\ol{\n}) dx
& \le - 2 \frac{d}{dt} \int \n (u - m_0) \cdot \na (-\Delta)^{-1}(\n-\ol{\n}) dx \\
& \quad + C \left( \| \na u \|^2_{L^2} + \|\na^2 \phi\|^2_{L^2} \right).
\ea\ee
Define
\be\la{ppe2}\ba
H(\n,\ol{\n}) \triangleq \n \int^\n_{\ol{\n}} \frac{P(s)-P(\ol{\n})}{s^2} ds,
\ea\ee
which satisfies
\be\la{ppe3}\ba
\frac{d}{dt} \int H(\n,\ol{\n}) dx = \frac{d}{dt} \int \frac{P}{\ga-1} dx.
\ea\ee
We rewrite the momentum equation $(\ref{zqbhfc})_2$ as
\be\la{reme}\ba
\n (u - m_0)_t + \n u \cdot \na (u - m_0) - \mu \Delta u - \na( (\mu+\lam) \div u ) + \na P = - (\na \phi + \mathbf{k}) \Delta \phi.
\ea\ee
Multiplying (\ref{reme}) and $(\ref{zqbhfc})_3$ by $u-m_0$ and $\Delta \phi$, respectively, integrating by parts over $\T^2$, adding the resulting identities, and using $(\ref{zqbhfc})_1$, (\ref{pp41}), and (\ref{ppe3}), we obtain
\be\la{ppe4}\ba
& \frac{d}{dt} E_1(t)
+ \int \left( \mu |\na u|^2 + (\mu + \lam(\n)) (\div u)^2 + |\Delta \phi|^2 \right) dx = 0,
\ea\ee
where
\be\la{ppe5}\ba
E_1(t) \triangleq \int \left( \frac{1}{2}\rho |u-m_0|^2 + H(\n,\ol{\n}) + \frac{1}{2} |\na \phi|^2 \right) dx.
\ea\ee
Thus, we have
\be\la{ppe6}\ba
& \frac{d}{dt} E_1(t)
+ \tilde{C_1} \left( \| \na u \|^2_{L^2} + \| \na^2 \phi \|^2_{L^2} \right) \le 0.
\ea\ee
On the other hand, by virtue of (\ref{ppe1}) and (\ref{ppe2}), we have
\be\la{ppe7}\ba
\tilde{C_2} \int H(\n,\ol{\n}) dx
\leq - 2 \frac{d}{dt} \int \n (u - m_0) \cdot \na (-\Delta)^{-1}(\n-\ol{\n}) dx
+ \tilde{C_3} \left( \| \na u \|^2_{L^2} + \| \na^2 \phi \|^2_{L^2} \right).
\ea\ee
Moreover, (\ref{pp08}) and Poincar\'e's inequality ensure
\be\la{ppe8}\ba
\left| \int \n (u - m_0) \cdot \na (-\Delta)^{-1}(\n-\ol{\n}) dx \right|
& \le C \| \sqrt{\n} |u-m_0| \|_{L^2} \| \n-\ol{\n} \|_{L^2} \\
& \le \tilde{C_4} \int \left( \frac{1}{2}\rho |u-m_0|^2 + H(\n,\ol{\n}) \right) dx.
\ea\ee
Choosing $\de = \min \left\{\frac{\tilde{C_1}}{2 \tilde{C_3}}, \frac{1}{4 \tilde{C_4}} \right\}$,
multiplying (\ref{ppe7}) by $\de$, and adding the result to (\ref{ppe6}), we derive
\be\la{ppe9}\ba
\frac{d}{dt}E_2(t) + \de \tilde{C_2} \int H(\n,\ol{\n}) dx
+ \frac{\tilde{C_1}}{2} \left( \| \na u \|^2_{L^2} + \| \na^2 \phi \|^2_{L^2} \right) \le 0,
\ea\ee
where
\be\la{ppe10}\ba
E_2(t) \triangleq E_1(t) + 2 \de \int \n (u - m_0) \cdot \na (-\Delta)^{-1}(\n-\ol{\n}) dx,
\ea\ee
which satisfies
\be\la{ppe11}\ba
\frac{1}{2} E_1(t) \le E_2(t) \le 2 E_1(t).
\ea\ee
In addition, by (\ref{pp45}), (\ref{pp08}), and Poincar\'e's inequality, we have
\be\la{ppe12}\ba
\| \sqrt{\n} |u-m_0| \|^2_{L^2} + \| \na \phi \|^2_{L^2}
& \le \tilde{C_5} \left( \| \na u \|^2_{L^2} + \| \Delta \phi \|^2_{L^2} \right).
\ea\ee
Combining this with (\ref{ppe9}) and (\ref{ppe11}), we arrive at for
$\alpha_0= \min \left\{\frac{\de \tilde{C_2}}{4}, \frac{\tilde{C_1}}{4 \tilde{C_5}} \right\}$,
\be\la{ppe13}\ba
\frac{d}{dt}E_2(t) + 2 \alpha_0 E_2(t) \le 0,
\ea\ee
which together with (\ref{ppe11}) and Gr\"onwall's inequality implies that
\be\la{ppe14}\ba
\| \sqrt{\n} |u-m_0| \|^2_{L^2} + \| \n-\ol{\n} \|^2_{L^2} + \| \na \phi \|^2_{L^2}
\le C e^{-2\alpha_0 t}.
\ea\ee
Multiplying (\ref{ppe6}) by $e^{\alpha_0 t}$, integrating over $(0,T)$, and using (\ref{ppe14}), we get
\be\la{ppe15}\ba
\int_0^T e^{\alpha_0 t} \left( \| \na u \|^2_{L^2} + \| \na^2 \phi \|^2_{L^2} \right) dt \le C.
\ea\ee

Furthermore, by (\ref{3pp4}), (\ref{pp08}), (\ref{ppe14}), and Poincar\'e's inequality, we have
\be\la{a31}\ba
& \int \left( \mu \o^2 + \frac{G^2}{2\mu + \lam} + |\Delta \phi|^2
- 2 ( (\na \phi + \mathbf{k}) \otimes (\na \phi + \mathbf{k}) ):\na u + |\na \phi|^2 \div u \right) dx \\
& \le C \left( \| \na u \|_{L^2}^2 + \| \n - \ol{\n} \|_{L^2}^2 + \| \na^2 \phi \|_{L^2}^2 \right) + C \| \na \phi \|_{L^4}^2 \| \na u \|_{L^2} + C \| \na \phi \|_{L^2} \| \na u \|_{L^2} \\
& \le C \| \na u \|_{L^2}^2 + C \| \na^2 \phi \|_{L^2}^2 + C e^{-2\alpha_0 t},
\ea\ee
where we have used the fact
\be\nonumber\ba
\int (\mathbf{k} \otimes \mathbf{k} : \na u) dx = 0.
\ea\ee
In view of (\ref{pp08}), (\ref{ppe14}), (\ref{ppe15}), and (\ref{a31}), we multiply (\ref{pp510}) by $e^{\alpha_0 t}$ to obtain
\be\la{ppe16}\ba
& \sup_{0 \le t \le T} e^{\alpha_0 t} \left( \| \na u \|^2_{L^2} + \| \na^2 \phi \|^2_{L^2} \right) \\
& + \int_0^T e^{\alpha_0 t} \left( \| \sqrt{\n} \dot{u} \|^2_{L^2} + \| \na \phi_t \|^2_{L^2} + \| \na^3 \phi \|^2_{L^2} \right) dt \le C.
\ea\ee

Multiplying (\ref{ppg17}) by $e^{\alpha_0 t}$ and integrating over $(1,T)$, and using (\ref{pp08}), (\ref{ppg19}), (\ref{ppe14}), (\ref{ppe15}), and (\ref{ppe16}), we deduce that for any $t \ge 1$,
\be\la{ppe17}\ba
\| \sqrt{\n} \dot{u} \|^2_{L^2} + \| \na \phi_t \|^2_{L^2} + \| \na^3 \phi \|^2_{L^2}
\le C e^{-\alpha_0 t}.
\ea\ee

It follows from (\ref{gw}), (\ref{pp53}), (\ref{pp08}), (\ref{ppe14}), (\ref{ppe16}), and Poincar\'e's inequality that, for any $2 \le p < \infty$ and $t \ge 1$,
\be\la{ppe18}\ba
\| \na u \|_{L^p}^2
& \le C \left( \| \div u \|_{L^p}^2 + \| \o \|_{L^p}^2 \right) \\
& \le C \left( \| G \|_{L^p}^2 + \| P - P(\ol{\n}) \|_{L^p}^2 + \| \o \|_{L^p}^2 \right) \\
& \le C \left( \| G \|_{H^1}^2 + \| \n - \ol{\n} \|_{L^p}^2 + \| \na \o \|_{L^2}^2 \right) \\
& \le C \left( \| \na u \|_{L^2}^2 + \| \n - \ol{\n} \|_{L^p}^2 + \| \sqrt{\n} \dot{u} \|_{L^2}^2 + \| \na^3 \phi \|_{L^2}^2 \right).
\ea\ee
This, together with (\ref{ppe14}), (\ref{ppe16}), and (\ref{ppe17}), gives (\ref{ppe01}).

On the other hand, using (\ref{zqbh8}) and (\ref{zqbh9}) we derive that for any $i,j,l=1,2$,
\be\la{ppe19}\ba
\p_i d - k_i d^\bot = d^\bot \p_i \phi,
\ea\ee
\be\la{ppe20}\ba
\p_j(\p_i d - k_i d^\bot) = d^\bot \p_{ij} \phi - d(\p_j \phi + k_j) \p_i \phi,
\ea\ee
and
\be\la{ppe21}\ba
\p_{lj}(\p_i d - k_i d^\bot)
& = d^\bot \p_{ijl} \phi - d(\p_l \phi + k_l) \p_{ij} \phi
- d^\bot (\p_l \phi + k_l) (\p_j \phi + k_j) \p_i \phi \\
& \quad - d \p_{jl} \phi \p_i \phi - d(\p_j \phi + k_j) \p_{il} \phi.
\ea\ee
By (\ref{pp08}), (\ref{ppe19})--(\ref{ppe21}), and Poincar\'e's inequality, we arrive at
\be\la{ppe22}\ba
\| \na d - \mathbf{k} \otimes d^\bot \|^2_{H^1}
\le C \left( \| \na \phi \|^2_{L^2} + \| \na^2 \phi \|^2_{L^2} + \| \na \phi \|^4_{L^4} \right)
\le C \| \na^2 \phi \|^2_{L^2},
\ea\ee
and
\be\la{ppe23}\ba
& \| \na^2( \na d - \mathbf{k} \otimes d^\bot ) \|^2_{L^2} \\
& \le C \left( \| \na^3 \phi \|^2_{L^2} + \| \na^2 \phi \|^2_{L^4} \| \na \phi \|^2_{L^4} + \| \na^2 \phi \|^2_{L^2}
+ \| \na \phi \|^6_{L^6} + \| \na \phi \|^2_{L^2} \right) \\
& \le C \| \na^3 \phi \|^2_{L^2}.
\ea\ee
From (\ref{ppe17}) and Poincar\'e's inequality, we deduce that for any $1 \le t <\infty$,
\be\la{ppe26}\ba
\| \na d - \mathbf{k} \otimes d^\bot \|^2_{H^2} \le C e^{-\alpha_0 t}.
\ea\ee
On the other hand, integrating $(\ref{zqbhfc})_3$ over $\T^2$, we obtain
\be\la{ppe27}\ba
\int \phi_t dx = - \int u \cdot (\na \phi + \mathbf{k}) dx
= - \int (u-m_0) \cdot (\na \phi + \mathbf{k}) dx - m_0 \cdot \mathbf{k},
\ea\ee
which together with (\ref{3pp4}), (\ref{pp45}), (\ref{pp08}), and H\"older's inequality yields
\be\la{ppe28}\ba
\left| \int (\phi_t + m_0 \cdot \mathbf{k}) dx \right|
\le \| u - m_0 \|_{L^2} (\|\na \phi \|_{L^2}+1)
\le C \| \na u \|_{L^2}.
\ea\ee
The combination of (\ref{ppe28}) and Poincar\'e's inequality gives
\be\la{ppe29}\ba
\| \phi_t + m_0 \cdot \mathbf{k} \|^2_{H^1}
& \le C \left| \int (\phi_t + m_0 \cdot \mathbf{k}) dx \right|^2 + C \| \na \phi_t \|^2_{L^2} \\
& \le C \left( \| \na u \|^2_{L^2} + \| \na \phi_t \|^2_{L^2} \right).
\ea\ee
Using (\ref{zqbh8}) and (\ref{zqbh9}), we get for any $i=1,2$,
\be\la{ppe30}\ba
d_t + (m_0 \cdot \mathbf{k}) d^\bot
= \left( \phi_t + (m_0 \cdot \mathbf{k}) \right) d^\bot,
\ea\ee
and
\be\la{ppe31}\ba
\p_i \left( d_t + (m_0 \cdot \mathbf{k}) d^\bot \right)
= \p_i \phi_t d^\bot - \left( \phi_t + (m_0 \cdot \mathbf{k}) \right) (\p_i \phi + k_i) d.
\ea\ee
It follows from (\ref{ppe29}) that
\be\la{ppe32}\ba
\| d_t + (m_0 \cdot \mathbf{k}) d^\bot \|^2_{H^1}
& \le C \| \phi_t + m_0 \cdot \mathbf{k} \|^2_{L^2}
+ C \| \na \phi_t \|^2_{L^2} \\
& \quad + C \| \phi_t + m_0 \cdot \mathbf{k} \|^2_{L^4} \| \na \phi + \mathbf{k} \|^2_{L^4} \\
& \le C \left( \| \na u \|^2_{L^2} + \| \na \phi_t \|^2_{L^2} \right).
\ea\ee
This, along with (\ref{ppe16}) and (\ref{ppe17}), yields for any $t \ge 1$,
\be\la{ppe34}\ba
\| d_t + (m_0 \cdot \mathbf{k}) d^\bot \|^2_{H^1} \le C e^{-\alpha_0 t}.
\ea\ee
Combining (\ref{ppe14}), (\ref{ppe16}), (\ref{ppe17}), (\ref{ppe18}), (\ref{ppe26}), and (\ref{ppe34}), we obtain (\ref{ppe01}) and complete the proof of Lemma \ref{ppel}.
\end{proof}

\begin{lemma}\la{ppgl2}
There exists a positive constant $C$ depending only on 
$T$, $q$, $\mu$, $\ga$, $\beta$, $\ol{\n}_0$, $\| \rho_0 \|_{W^{1,q}}$, $\| u_0 \|_{H^1}$, and $\| \na d_0 \|_{H^1}$ such that
\be\la{ppg02}\ba
&\sup_{0\le t\le T} \left( \| \n \|_{W^{1,q}} + t \| u \|^2_{H^2} + \| \na d \|^2_{H^1} + t \| \na^3 d \|^2_{L^2} + t \| \na d_t \|^2_{L^2} \right) \\
& + \int_0^T \left( \|\nabla^2 u\|^{(q+1)/q}_{L^q}
+ t \|\nabla^2 u\|_{L^q}^2+t\| u_t\|_{H^1}^2 + t \| \na^2 d_t \|^2_{L^2} + t \| \na^4 d \|^2_{L^2} \right) dt\le C.
\ea\ee
\end{lemma}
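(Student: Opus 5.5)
The argument follows the standard Beale--Kato--Majda scheme for $\n$, combined with parabolic regularity for $\phi$, using the uniform bounds \eqref{pp08} and \eqref{ppg01}. Since $\na d = d^\bot(\na\phi+\mathbf{k})$ by \eqref{zqbh8}--\eqref{zqbh9} and $|\mathbf{k}|\le C$, bounds on $\na^k\phi$ and $\na^k\phi_t$ transfer to $\na^{k}d$ and $\na^k d_t$ via \eqref{ppe19}--\eqref{ppe21}-type formulas. Lower-order products are controlled by \eqref{pp08}. In particular $\sup_t\|\na d\|_{H^1}\le C$ follows directly from \eqref{pp08}, and $t\|\na^3 d\|^2_{L^2}+t\|\na d_t\|_{L^2}^2$ from \eqref{ppg01}, with $t\le T\si$ on $[0,T]$. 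Hence the real work is the $W^{1,q}$ bound on $\n$, the $H^2$ and $W^{2,q}$ bounds on $u$, and the $\na^4 d$, $\na^2 d_t$ integrals.

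\textbf{Step 1: gradient of the density.} Differentiate $(\ref{zqbhfc})_1$ and test with $|\na\n|^{q-2}\na\n$ to get
\[
\frac{d}{dt}\|\na\n\|_{L^q}\le C(1+\|\na u\|_{L^\infty})\|\na\n\|_{L^q}+C\|\na^2u\|_{L^q}.
\]
The key is to bound $\|\na^2 u\|_{L^q}$ through $G$ and $\o$. From $\div u=(G+P-P(\ol\n))/(2\mu+\lam)$, \eqref{pp08} and \eqref{pp53} give
\[
\|\na^2u\|_{L^q}\le C(\|\na G\|_{L^q}+\|\na\o\|_{L^q})(1+\|\na\n\|_{L^q})+C\|\na\n\|_{L^q},
\]
where $\|\na G\|_{L^q}+\|\na\o\|_{L^q}\le C(\|\n\dot u\|_{L^q}+\|(\na\phi+\mathbf{k})\Delta\phi\|_{L^q})$. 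By Gagliardo--Nirenberg, $\|\n\dot u\|_{L^q}\le C\|\sqrt\n\dot u\|_{L^2}^{2/q}\|\na\dot u\|_{L^2}^{1-2/q}+C\|\sqrt\n\dot u\|_{L^2}$. The director term is bounded by $C\|\na^2\phi\|_{H^1}$. Lemma \ref{bkm} then gives
\[
\|\na u\|_{L^\infty}\le C(\|\div u\|_{L^\infty}+\|\o\|_{L^\infty})\log(e+\|\na^2u\|_{L^q})+C,
\]
with $\|\div u\|_{L^\infty}+\|\o\|_{L^\infty}\le C(1+\|\na G\|_{L^q}+\|\na\o\|_{L^q})$. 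Setting $f=e+\|\na\n\|_{L^q}$, this yields $(\log f)'\le C(1+\|\na G\|_{L^q}+\|\na\o\|_{L^q})\log f$ plus lower-order terms. Gr\"onwall closes provided $\int_0^T(\|\na G\|_{L^q}+\|\na\o\|_{L^q})dt\le C$. That integrability follows from \eqref{pp08} and \eqref{ppg01}: one has $\int_0^T\|\na\dot u\|_{L^2}^{1-2/q}dt\le C\int_0^T t^{-(1-2/q)/2}(t\|\na\dot u\|_{L^2}^2)^{(1-2/q)/2}dt$, and a Hölder estimate of the same type gives $\int_0^T(\|\na G\|_{L^q}+\|\na\o\|_{L^q})^{(q+1)/q}dt\le C$. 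That exponent is exactly what produces $\int_0^T\|\na^2u\|^{(q+1)/q}_{L^q}dt\le C$ once $\|\na\n\|_{L^q}$ is bounded. The term $t\|\na^2u\|^2_{L^q}$ is handled the same way, using $\sup_t\si\|\sqrt\n\dot u\|^2_{L^2}$ and $\int_0^T\si\|\na\dot u\|^2_{L^2}dt$.

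\textbf{Step 2: remaining bounds on $u$ and $d$.} Once $\|\n\|_{W^{1,q}}$ is bounded, the $H^2$ estimate
\[
t\|\na^2 u\|^2_{L^2}\le Ct(\|\sqrt\n\dot u\|^2_{L^2}+\|\na^2\phi\|^2_{H^1}+\|\na\n\|_{L^q}^2+1)
\]
follows from the elliptic system for $G$ and $\o$, so $\sup_t t\|u\|^2_{H^2}\le C$. Writing $u_t=\dot u-u\cdot\na u$ gives $\int_0^T t\|u_t\|^2_{H^1}dt\le C$. For the director, the identity $\na\Delta\phi_t$-free relation $\na\Delta\phi=\na\phi_t+\na(u\cdot(\na\phi+\mathbf{k}))$ is differentiated once more. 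This gives
\[
\|\na^4\phi\|_{L^2}\le C\left(\|\na^2\phi_t\|_{L^2}+\|\na^2 u\|_{L^2}+\|\na u\|_{L^4}\|\na^2\phi\|_{L^4}+\|\na^3\phi\|_{L^2}\right),
\]
and the time-weighted integral of the right side is bounded by \eqref{ppg01}, \eqref{ppg13} and Step 1. Translating to $d$ via \eqref{ppe21} and one more derivative yields the $\na^4 d$ and $\na^2 d_t$ terms.

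\textbf{Main obstacle.} The hard part is Step 1: closing the logarithmic Gr\"onwall argument with only the time-weighted control \eqref{ppg01} near $t=0$. Both the $t^{-1/2}$-type integrability and the coupling term $(\na\phi+\mathbf{k})\Delta\phi$ in $\|\na G\|_{L^q}$ must be checked to be time-integrable; the latter needs $\int_0^T\|\na^3\phi\|_{L^2}dt\le C$, which is available from \eqref{pp08}.
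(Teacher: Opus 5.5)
Your scheme follows the paper in outline. You use a transport estimate for the density gradient, closed by Lemma \ref{bkm} and a logarithmic Gr\"onwall argument, with $X\triangleq\|\na G\|_{L^q}+\|\na\o\|_{L^q}$ controlled through \eqref{pp53}, \eqref{pp08}, \eqref{ppg01}. The one structural difference is the quantity you transport. The paper estimates $\Phi=(2\mu+\lam(\n))\na\n$, whose transport equation \eqref{bpg21} has source $\n\na G+\n\na P$ and contains no $\na^2u$. You instead differentiate the mass equation directly and feed $\|\na^2u\|_{L^q}\le C(1+X)(1+\|\na\n\|_{L^q})$ back into the ODE. Since $\lam'(\n)=\beta\n^{\beta-1}$ is bounded once $\n$ is, that bound is linear in $\|\na\n\|_{L^q}$, so Gr\"onwall still closes. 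The extra term $(1+X)\log(1+X)$ from the logarithm in Lemma \ref{bkm} is integrable because $X\in L^{(q+1)/q}(0,T)$; the paper absorbs it instead through the sublinear power $\frac{q}{2(q-1)}$ in \eqref{bpg23}. This variation is harmless.

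The step that is wrong as written is your weighted Gagliardo--Nirenberg bound $\|\n\dot u\|_{L^q}\le C\|\sqrt\n\dot u\|_{L^2}^{2/q}\|\na\dot u\|_{L^2}^{1-2/q}+C\|\sqrt\n\dot u\|_{L^2}$. It fails for constants depending only on $\sup\n$ and $\int\n$. On $\T^2$ let $\n=\mathbf{1}_{E_0}+\mathbf{1}_{B_r(x_0)}$, with $E_0$ a fixed set disjoint from $B_{1/8}(x_0)$, and let $v=h\min\{1,\log\frac{1}{8|x-x_0|}/\log\frac{1}{8r}\}_+$.
\begin{itemize}
\item Then $\|\n v\|_{L^q}=h(\pi r^2)^{1/q}$, $\|\sqrt\n v\|_{L^2}=h\sqrt\pi\,r$, and $\|\na v\|_{L^2}^2=2\pi h^2/\log\frac{1}{8r}$.
\item So the right side is of order $hr^{2/q}(\log\frac1r)^{-(\frac12-\frac1q)}+hr$, which is much smaller than the left side as $r\to0$.
\end{itemize}
The exponent $2/q$ is the endpoint of interpolating $L^2$ with $L^\infty$, which $H^1$ does not reach in two dimensions.

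Replace it, as the paper does just before \eqref{bpg29}, by H\"older interpolation between $L^2$ and $L^{q^2}$: $\|\n\dot u\|_{L^q}\le C\|\sqrt\n\dot u\|_{L^2}^{\alpha}\|\dot u\|_{H^1}^{1-\alpha}$ with $\alpha=\frac{2(q-1)}{q^2-2}$. Combine this with $\|\dot u\|_{L^2}\le C(\|\sqrt\n\dot u\|_{L^2}+\|\na\dot u\|_{L^2})$, which uses $\ol{\n}_0>0$. In $\|\n\dot u\|_{L^q}^{(q+1)/q}$ the power of $\|\na\dot u\|_{L^2}$ is then $\frac{(q-2)(q+1)}{q^2-2}<1$. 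H\"older in time against $\int_0^T\si\|\na\dot u\|_{L^2}^2dt$ and $\int_0^T\|\sqrt\n\dot u\|_{L^2}^2dt$ then yields \eqref{bpg29}, and the rest of your Step 1 goes through.

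Some positive power of $\|\sqrt\n\dot u\|_{L^2}$ is indispensable here. The crude bound $\|\n\dot u\|_{L^q}\le C\|\dot u\|_{H^1}$ fails, because $\int_0^T\si\|\na\dot u\|_{L^2}^2dt\le C$ does not imply $\int_0^1\|\na\dot u\|_{L^2}dt<\infty$.

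A smaller slip of the same kind occurs in your $\na^4\phi$ bound. The terms $\na^2u\cdot(\na\phi+\mathbf{k})$ and $u\cdot\na^3\phi$ are not controlled by $C\|\na^2u\|_{L^2}$ and $C\|\na^3\phi\|_{L^2}$, since neither $\na\phi$ nor $u$ is uniformly bounded in $L^\infty$. For the first, use $\|\na^2u\|_{L^q}\|\na\phi+\mathbf{k}\|_{L^{2q/(q-2)}}$. For the second, bound it by $\|u\|_{L^4}\|\na^3\phi\|_{L^4}$ and absorb by interpolation, as in \eqref{np510}. The resulting $\int_0^Tt\|\na^2u\|_{L^q}^2dt$ is exactly what your Step 1 supplies.
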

\begin{proof}
First, it follows from (\ref{pp01}), (\ref{pp08}), (\ref{ppg01}), (\ref{ppe19}), (\ref{ppe22})--(\ref{ppe23}), and (\ref{ppe32}) that
\be\la{ppg21}\ba
\sup_{0 \le t \le T} \left( \| u \|^2_{H^1} + \| \na d \|^2_{H^1} + t \| \na^3 d \|^2_{L^2} + t \| \na d_t \|^2_{L^2} \right) \le C.
\ea\ee
In addition, we denote $\Phi = (\Phi^1,\Phi^2)$
with $\Phi^i \triangleq (2\mu+\lam(\n)) \p_i \n$ $(i=1,2)$.
From $(\ref{nlckv})_1$, we deduce that $\Phi^{i}$ satisfies
\be\la{bpg21}\ba
\p_t \Phi^i + (u \cdot \na) \Phi^i
+ (2\mu+\lam(\n)) \na \n \cdot \p_i u + \n \p_i G + \n \p_i P
+ \Phi^i \div u = 0.
\ea\ee

Multiplying (\ref{bpg21}) by $|\Phi|^{q-2} \Phi^i$,
integrating by parts over $\T^2$, and applying H\"older's inequality, we obtain
\be\la{bpg22}\ba
\frac{d}{dt} \| \Phi \|_{L^q}
& \le C ( 1 + \| \na u \|_{L^\infty} ) \| \Phi \|_{L^q}
+ C \| \na G \|_{L^q}.
\ea\ee
Using (\ref{pp53}), (\ref{pp08}), and H\"older's inequality, we have
\be\la{bpg202}\ba
\| \na G \|_{L^q} + \| \na \o \|_{L^q} & \le C \left( \| \n \dot{u}\|_{L^q}
+ \| \na d \cdot \Delta d \|_{L^q} \right) \\
& \le C \left( \| \n \dot{u}\|_{L^q} + A_2 \right).
\ea\ee
Combining this with (\ref{pp08}) and the Gagliardo-Nirenberg inequality yields
\be\la{bpg23}\ba
& \| \div u \|_{L^\infty}+\| \o \|_{L^\infty} \\
& \le C \left( \| G \|_{L^\infty} + \| P-P(\ol{\n}) \|_{L^\infty} \right)
+ \| \o \|_{L^\infty} \\
& \le C \left( 1 + \| G \|_{L^2}^{\frac{q-2}{2(q-1)}}
\| \na G \|_{L^q}^{\frac{q}{2(q-1)}}
+ \| \o \|_{L^2}^{\frac{q-2}{2(q-1)}} \| \na \o \|_{L^q}^{\frac{q}{2(q-1)}} \right) \\
& \le C \left( 1 + A_2 + \| \n \dot{u} \|_{L^q} \right)^{\frac{q}{2(q-1)}}.
\ea\ee
Furthermore, we conclude from (\ref{pp08}), (\ref{bpg202}), and (\ref{bpg23}) that
\be\la{bpg25}\ba
\|\na^2 u\|_{L^q}
& \le C \left( \| \div u \|_{W^{1,q}} + \| \o \|_{W^{1,q}} \right) \\
& \le C \left( \| \na u \|_{L^q}
+ \| \na \div u \|_{L^q}
+ \| \na \o \|_{L^q} \right) \\
& \le C \left(1 + \| \na ( (2\mu+\lam) \div u ) \|_{L^q}
+ \| \div u \|_{L^\infty} \| \na \n \|_{L^q}
+ \| \n \dot{u} \|_{L^q} + A_2 \right) \\
& \le C \left( 1 + \| \n \dot{u} \|_{L^q} + A_2 \right)
\left( e + \| \na \n \|_{L^q} \right).
\ea\ee
The combination of (\ref{bpg23}), (\ref{pp08}), (\ref{bpg25}), and Lemma \ref{bkm} gives
\be\ba\la{bpg24}
\|\na u\|_{L^\infty} 
& \le C \left( \|\div u \|_{L^\infty}
+ \|\o\|_{L^\infty} \right) \log \left(e+ \|\na^2 u\|_{L^q} \right)+ C\|\na u\|_{L^2}+C \\
& \le C \left( 1 + A_2 + \| \n \dot{u} \|_{L^q} \right)
\log \left(e + \| \na \n \|_{L^q} \right).
\ea\ee
Note that the definition of $\Phi$ and (\ref{pp08}) imply
\be\la{bpg26}\ba
2\mu \| \na \n \|_{L^q} \le \| \Phi \|_{L^q} \le C \| \na \n \|_{L^q},
\ea\ee
which along with (\ref{bpg22}) and (\ref{bpg24}) leads to
\be\la{bpg27}\ba
\frac{d}{dt} \log( e + \| \Phi \|_{L^q} )
& \le C \left( 1 + A_2 + \| \n \dot{u} \|_{L^q} \right)
\log \left(e + \| \Phi \|_{L^q} \right).
\ea\ee
Furthermore, it follows from (\ref{pp08}) and Poincar\'e's inequality that
\be\ba\nonumber
\| \rho \dot{u} \|_{L^q} 
& \le C\| \rho \dot{u} \|_{L^2}^{2(q-1)/(q^2-2)}
\| \dot{u} \|_{L^{q^2}}^{q(q-2)/(q^2-2)} \\
& \le C\| \rho \dot{u} \|_{L^2}^{2(q-1)/(q^2-2)}
\| \dot{u} \|_{H^1}^{q(q-2)/(q^2-2)} \\
& \le C + C \| \rho \dot{u} \|_{L^2}
+ C\| \rho \dot{u} \|_{L^2}^{2(q-1)/(q^2-2)}
\| \na \dot{u} \|_{L^2}^{q(q-2)/(q^2-2)},
\ea\ee
which together with (\ref{ppg01}) yields
\be\la{bpg29}\ba
&\int_0^T \left(\| \rho \dot{u} \|^{1+1 /q}_{L^q}
+ t \| \dot{u} \|^2_{H^1} \right) dt \le C.
\ea\ee
Consequently, using (\ref{bpg26}), (\ref{bpg27}), (\ref{bpg29}), and Gr\"onwall's inequality, we arrive at
\be\la{bpg210}\ba
\sup_{0 \le t \le T} \| \n \|_{W^{1,q}} \le C.
\ea\ee
Combining this with (\ref{ppg01}), (\ref{bpg25}), and (\ref{bpg29}) shows that
\be\la{bpg211}\ba
\sup_{0\le t\le T} t \| \na^2 u \|^2_{L^2}
+ \int_0^T \left( \| \nabla^2 u \|^{(q+1)/q}_{L^q}
+ t \|\nabla^2 u \|_{L^q}^2 \right) dt
\le C.
\ea\ee
In addition, from (\ref{pp08}), (\ref{ppg01}), (\ref{ppg13}), (\ref{bpg211}), and H\"older's inequality, we have
\be\la{bpg212}\ba
\int_0^T t \| u_t \|^2_{H^1} dt
& \le C \int_0^T t \left( \| \dot{u} \|^2_{H^1}
+ \| u \cdot \na u \|^2_{H^1} \right) dt \\
& \le C \int_0^T t \left( 1 + \| \na \dot{u} \|^2_{L^2}
+ \| \nabla u \|_{L^4}^4
+ \| u \|_{L^{2q/(q-2)}}^2\|\nabla^2 u \|_{L^q}^2 \right) dt \\
& \le C.
\ea\ee
Finally, applying the $\na^2$ operator to $(\ref{nlckv})_3$ gives
\be\la{np59}\ba
\nabla^2 \Delta d = \nabla^2( d_t + u \cdot \na d - |\nabla d|^2 d).
\ea\ee
Using (\ref{gn11}), (\ref{pp08}), (\ref{ppg21}), and Young's inequality, we derive
\be\la{np510}\ba
\|\nabla^2 \Delta d\|_{L^2}
& \leq C \left( \|\nabla^2 d_t\|_{L^2} + \|\nabla^2 (u \cdot \nabla d)\|_{L^2}
+ \|\nabla^2 (|\nabla d|^2 d) \|_{L^2} \right) \\
& \leq C \|\nabla^2 d_t\|_{L^2} + C \|\nabla d\|_{L^{\frac{2q}{q-2}}} \|\nabla^2 u\|_{L^q}
+ C \|\nabla u\|_{L^4} \|\nabla^2 d\|_{L^4} + C \|u\|_{L^4} \|\nabla^3 d\|_{L^4} \\
& \quad + C \|\nabla^2 d\|_{L^4}^2 + C \|\nabla d\|_{L^4} \|\nabla^3 d\|_{L^4}
+ C \|\nabla d\|_{L^8}^2 \|\nabla^2 d\|_{L^4} \\
& \leq C \|\nabla^2 d_t\|_{L^2} + C \|\nabla^2 u\|_{L^q}
+ C \|\nabla u\|_{L^4} \|\nabla^2 d\|_{L^2}^{\frac{1}{2}} \|\nabla^3 d\|_{L^2}^{\frac{1}{2}} \\
& \quad + C \| \na u \|_{L^4} + C \|\nabla^3 d\|_{L^2}^{\frac{1}{2}} \|\nabla^4 d\|_{L^2}^{\frac{1}{2}} + C \| \na^2 d \|_{H^1} \\
& \leq \frac{1}{2} \| \nabla^2 \Delta d \|_{L^2}
+ C \left( \|\nabla^2 d_t\|_{L^2}
+ \|\nabla^2 u\|_{L^q} + \| \na^2 d \|_{H^1} + \| \na u \|_{H^1} \right),
\ea\ee
which yields
\be\la{np511}\ba
\|\nabla^2 \Delta d\|_{L^2} \leq C \left( \|\nabla^2 d_t\|_{L^2}
+ \|\nabla^2 u\|_{L^q} + \| \na^2 d \|_{H^1} + \| \na u \|_{H^1} \right).
\ea\ee
By (\ref{np511}), (\ref{pp08}), (\ref{ppg01}), and (\ref{bpg211}), we have
\be\la{np512}\ba
\int_0^T t \| \na^4 d \|^2_{L^2} dt
& \le C \int_0^T t \left( \| \na^2 \Delta d \|^2_{L^2} + \| \na^2 d \|^2_{H^1} \right) dt \le C.
\ea\ee
This, combined with (\ref{ppg21}), (\ref{bpg210}), (\ref{bpg211}), and (\ref{bpg212}), implies (\ref{ppg02}) and completes the proof of Lemma \ref{ppgl2}.
\end{proof}

\section{A priori estimates on bounded domains}
In this section, under the hypotheses of Theorem \ref{thpb} and assuming further that $\n_0>0$, we let $(\n,u,d)$ be the strong solution to (\ref{nlckv})--(\ref{i30}), (\ref{yjybjtj}) on $\OM \times (0,T]$, whose existence is guaranteed by Lemma \ref{lct}.

We set
\be\ba\la{b1}
B_1^2(t) \triangleq \int \left( (2\mu + \lam(\n)) (\div u)^2 + |\na u|^2
+ |\na^2 d|^2 + (\n+1)^{\ga-1} (\n-\ol{\n})^2 \right) dx,
\ea\ee
and
\be\ba\la{b2}
B_2^2(t) \triangleq \int \left( \rho |\dot{u}|^2 + |\na \Delta d|^2 + |\na d_t|^2 \right) dx.
\ea\ee

\subsection{A priori estimates (I): upper bound of the density}
This subsection is dedicated to deriving the time-uniform upper bound for the density.

We first establish the standard energy estimate.
\begin{lemma}\la{bpl1}
There exists a positive constant
$C$ depending only on $\mu$, $\gamma$, $\| \n_0 \|_{L^\infty}$, $\| u_0 \|_{H^1}$, $\| \na d_0 \|_{L^2}$, and $\OM$ such that
\be\ba\la{bp01}
& \sup_{0\leq t\leq T} \int \left( \frac{1}{2}\rho |u|^2 + \frac{P}{\ga-1} + |\na d|^2 \right) dx \\
& + \int_0^T \int \left( (2\mu + \lam(\n))  (\div u)^2 + |\na u|^2 + |\na^2 d|^2 \right) dx dt
\le C.
\ea\ee
\end{lemma}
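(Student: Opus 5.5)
We will mirror the proof of Lemma \ref{ppl1}, but in the bounded domain we work directly with $d$ (or with a global angle $\theta$) instead of $\phi$. Since $\OM$ is simply connected, Lemma \ref{jzb} gives, for each fixed $t$, a function $\theta(\cdot,t)$ with $d=(\cos\theta,\sin\theta)$. Its gradient is $\na\theta=-d^2\na d^1+d^1\na d^2$, so $\na\theta$ depends only on $d$ and is therefore well defined. The identities (\ref{zqbh8}) then hold:
\[
|\na d|=|\na\theta|,\qquad \Delta d+|\na d|^2d=d^\bot\Delta\theta .
\]
The Neumann condition gives $\p\theta/\p n=0$ on $\p\OM$.

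\textbf{Basic energy identity.} First multiply the momentum equation $(\ref{nlckv})_2$ by $u$ and integrate by parts. The boundary conditions $u\cdot n=0$ and $\curl u=-Au\cdot n^\bot$ are handled through the identity
\[
-\Delta u=-\na\div u+\na^\bot\curl u .
\]
This gives
\[
\frac{d}{dt}\int\Big(\tfrac12\n|u|^2+\tfrac{P}{\ga-1}\Big)dx+\int\big((2\mu+\lam)(\div u)^2+\mu\,\o^2\big)dx+\mu\int_{\p\OM}A|u|^2dS=-\int u\cdot\na d\cdot\Delta d\,dx .
\]
Next multiply $(\ref{nlckv})_3$ by $-(\Delta d+|\na d|^2d)$. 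Using $d\cdot d_t=0$ and $\p d/\p n=0$, this gives
\[
\frac{d}{dt}\int\tfrac12|\na d|^2dx+\int|\Delta d+|\na d|^2d|^2dx=\int (u\cdot\na d)\cdot\Delta d\,dx ,
\]
where we used $d\cdot(u\cdot\na d)=0$. Adding the two identities cancels the coupling term. Integrating in time gives the $\sup_t$ bounds in (\ref{bp01}), together with $\int_0^T\!\int\big((2\mu+\lam)(\div u)^2+\o^2+|\Delta\theta|^2\big)\le C$. Since $A\ge0$, the boundary term has a good sign. The div–curl estimate, Lemma \ref{dc}, with $u\cdot n=0$ then yields $\int_0^T\|\na u\|_{L^2}^2\le C$.

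\textbf{Recovering $\na^2 d$.} This is the main obstacle. From $\p\theta/\p n=0$ and Lemma \ref{tygj},
\[
\|\na^2\theta\|_{L^2}\le C\|\Delta\theta\|_{L^2}.
\]
Differentiating $\p_i d=d^\bot\p_i\theta$ gives
\[
|\na^2 d|\le C\big(|\na^2\theta|+|\na\theta|^2\big).
\]
Hence it suffices to bound $\int_0^T\|\na\theta\|_{L^4}^4\,dt$. Because $\na\theta\cdot n=0$ on $\p\OM$, Lemma \ref{gn1} applies to $f=\na\theta$ in the form with $\|\na f\|_{L^2}$ on the right:
\[
\|\na\theta\|_{L^4}^4\le C\|\na\theta\|_{L^2}^2\|\na^2\theta\|_{L^2}^2\le C\|\Delta\theta\|_{L^2}^2 ,
\]
where we used $\|\na\theta\|_{L^2}=\|\na d\|_{L^2}\le C$ uniformly in time. (If one prefers not to rely on the tangential form of Lemma \ref{gn1}, use $\|f\|_{H^1}$ and absorb the lower-order term $\|\na\theta\|_{L^2}^2\|\na\theta\|_{L^2}^2$. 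That term is only bounded by $CT$, so the version with $\|\na f\|_{L^2}$ is the one needed for a time-uniform constant.) Consequently
\[
\int_0^T\|\na^2 d\|_{L^2}^2\,dt\le C\int_0^T\|\Delta\theta\|_{L^2}^2\,dt\le C ,
\]
which completes (\ref{bp01}).

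The delicate points to verify are three. First, the regularity of $\theta$ must suffice for Lemma \ref{tygj}; by density it is enough that $\theta\in H^2$. Second, $\na\theta$ must be tangential on the boundary, which follows from $\p_n d=0$. Third, the sign of the boundary integral from the Navier-slip condition must be correct; this is a standard computation as in \cite{CL}.
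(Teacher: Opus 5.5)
Your proposal is correct and follows essentially the same route as the paper. Both test with $u$ and $-(\Delta d+|\nabla d|^2 d)$, pass to the polar angle $\theta$ with $\partial\theta/\partial n=0$, and then combine the Neumann estimate $\|\nabla^2\theta\|_{L^2}\le C\|\Delta\theta\|_{L^2}$, the tangential Gagliardo--Nirenberg bound $\|\nabla\theta\|_{L^4}^4\le C\|\nabla\theta\|_{L^2}^2\|\nabla^2\theta\|_{L^2}^2$, and the div--curl estimate. One cosmetic slip: with the paper's conventions ($\nabla^\bot=(\partial_2,-\partial_1)$, $\curl u=\omega$) one has $\Delta u=\nabla\,\mathrm{div}\,u+\nabla^\bot\omega$, so the curl term in your auxiliary identity has the wrong sign; your resulting energy identity, including the boundary term $\mu\int_{\partial\Omega}A|u|^2\,dS$, is nevertheless correct.
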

\begin{proof}
First, multiplying $(\ref{nlckv})_2$ and $(\ref{nlckv})_3$ by $u$ and $-(\Delta d + |\na d|^2 d)$, respectively, integrating by parts, and summing the results, we obtain, by using $(\ref{nlckv})_1$, $(\ref{nlckv})_4$, and the boundary condition (\ref{yjybjtj}),
\be\la{bp11}\ba
& \frac{d}{dt} \int \left( \frac{1}{2}\rho |u|^2 + \frac{P}{\ga-1} + \frac{1}{2} |\na d|^2 \right) dx \\
& + \int \left( (2\mu + \lam(\n)) (\div u)^2 + \mu \o^2 + |\Delta d + |\na d|^2 d|^2 \right) dx
+ \mu \int_{\p \OM} A |u|^2 ds = 0.
\ea\ee
Integrating (\ref{bp11}) over $(0,T)$ yields
\be\la{bp12}\ba
& \sup_{0\le t \le T} \int \left( \frac{1}{2}\rho |u|^2 + \frac{P}{\ga-1} + \frac{1}{2} |\na d|^2 \right) dx \\
& + \int_0^T \int \left( (2\mu + \lam(\n)) (\div u)^2 + \mu \o^2 + |\Delta d + |\na d|^2 d|^2 \right) dx dt \le C.
\ea\ee

We next estimate the $L^2(\OM \times (0,T))$-norm of $\na^2 d$.
To this end, we exploit the geometric constraint $|d|=1$ by using the polar representation of $d$.

By Lemma \ref{jzb}, there exists a function $\theta \in C^2(\OM)$ such that
\be\la{bp13}\ba
d=(\cos \theta, \sin \theta ).
\ea\ee
A direct calculation gives
\be\la{bp19}\ba
\na \theta = - d^2 \na d^1  + d^1 \na d^2, \quad |\Delta d + |\na d|^2 d|^2 = |\Delta \theta|^2,
\ea\ee
and
\be\la{bp190}\ba
|\na d| = |\na \theta|, \quad |\na^2 d| \le C \left( |\na \theta|^2 + |\na^2 \theta| \right).
\ea\ee
In view of (\ref{bp12}) and (\ref{bp19}), we arrive at
\be\la{bp110}\ba
\int_0^T \| \Delta \theta \|^2_{L^2} dt \leq C.
\ea\ee
Moreover, (\ref{bp19}) together with the boundary conditions $n \cdot \na d = 0$ on $\p \OM$
gives $n \cdot \na \theta = 0$ on $\p \OM$.
Then, the elliptic estimate (\ref{tygj1}) ensures that
\be\la{bp113}\ba
\| \na^2 \theta \|^2_{L^2} \le C \| \Delta \theta \|^2_{L^2}.
\ea\ee
Hence, from (\ref{gn11}), (\ref{bp12}), (\ref{bp190}), and (\ref{bp113}), we deduce that
\be\la{bp115}\ba
\| \na^2 d \|^2_{L^2} & \le C \left( \| \na \theta \|^4_{L^4} + \| \na^2 \theta \|^2_{L^2} \right) \\
& \le C \left( \| \na^2 \theta \|^2_{L^2} \| \na \theta \|^2_{L^2} + \| \na^2 \theta \|^2_{L^2} \right) \\
& \le C \| \na^2 \theta \|^2_{L^2} \le C \| \Delta \theta \|^2_{L^2},
\ea\ee
which together with (\ref{bp110}) leads to
\be\la{bp116}\ba
\int_0^T \| \na^2 d \|^2_{L^2} dt
\leq C \int_0^T \| \Delta \theta \|^2_{L^2} dt \leq C.
\ea\ee

Combining (\ref{bp116}), (\ref{bp12}), and (\ref{dc1}) yields (\ref{bp01}), thereby completing the proof of Lemma \ref{bpl1}.
\end{proof}

The following $L^\infty(0,T;L^p)$ estimate for $\na d$ will play an important role in the subsequent analysis.

\begin{lemma}\la{bpl2}
For any $2<p<\infty$, there exists a positive constant $C$ depending only on $\mu$, $\ga$, $p$,
$\| \n_0 \|_{L^\infty}$, $\| u_0 \|_{H^1}$, $\| \na d_0 \|_{H^1}$, and $\OM$ such that
\be\ba\la{bp02}
\sup_{0\leq t\leq T} \| \na d \|^p_{L^p} + \int_0^T \int |\na d|^{p-2} |\na^2 d|^2 dx dt \leq C.
\ea\ee
\end{lemma}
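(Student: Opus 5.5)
We follow the proof of Lemma \ref{ppl2}, now working directly with $d$ (or with $\theta$, since $|\na d|=|\na\theta|$) on $\OM$. The new feature is a boundary term, which we handle with the Neumann condition. Applying $\na$ to $(\ref{nlckv})_3$ gives
\be\nonumber\ba
\na d_t - \na \Delta d = -\na(u\cdot\na d) + \na(|\na d|^2 d).
\ea\ee
We multiply this by $p|\na d|^{p-2}\na d$ and integrate over $\OM$. Integrating the Laplacian term by parts produces the good terms
\be\nonumber\ba
p\int|\na d|^{p-2}|\na^2 d|^2dx + \frac{4(p-2)}{p}\int|\na(|\na d|^{p/2})|^2dx,
\ea\ee
together with the boundary integral $p\int_{\p\OM}|\na d|^{p-2}\p_k d^j\,n_i\p_i\p_k d^j\,ds$. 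We write $n_i\p_i\p_k d^j = \p_k(n\cdot\na d^j) - \p_k n_i\,\p_i d^j$. The first piece pairs with $\na d^j$ to give $\na d^j\cdot\na(n\cdot\na d^j)$. Since $n\cdot\na d^j=0$ on $\p\OM$, this derivative is purely tangential there, so that pairing vanishes (this is the identity recorded in the introduction). What remains of the boundary term is bounded by $C\int_{\p\OM}|\na d|^p ds$. By the trace inequality and Young's inequality, this is controlled by $\ep\|\na(|\na d|^{p/2})\|_{L^2}^2 + C\| |\na d|^{p/2}\|_{L^2}^2$.

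For the interior terms we argue as follows.

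\emph{Transport term.} Using $u\cdot n=0$ on $\p\OM$, integration by parts reduces it to $C\int|\na u||\na d|^p dx$.

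\emph{Nonlinear term.} Here $|d|=1$ gives $d\cdot\na d=0$, so $\na d\cdot\na(|\na d|^2 d) = |\na d|^4$. This is the term with no analogue in the $\phi$-formulation. We bound
\be\nonumber\ba
\int|\na d|^{p+2}dx \le \| |\na d|^{p/2}\|_{L^4}^2\,\|\na d\|_{L^4}^2 \le C\| |\na d|^{p/2}\|_{L^2}\| |\na d|^{p/2}\|_{H^1}\,\|\na d\|_{L^2}\|\na d\|_{H^1},
\ea\ee
using Lemma \ref{gn1}. Alternatively, in $\theta$-variables the equation reads $\theta_t+u\cdot\na\theta=\Delta\theta$, which has no cubic term at all. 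Since $|\na d|=|\na\theta|$ and $\p\theta/\p n=0$, running the whole argument on $\na\theta$ is cleaner, and this is the route I would take first.

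\emph{Closing the estimate.} Both routes lead, after Young's inequality and absorption, to
\be\nonumber\ba
\frac{d}{dt}\|\na\theta\|_{L^p}^p + \frac p2\int|\na\theta|^{p-2}|\na^2\theta|^2dx \le C\|\na\theta\|_{L^p}^p\left(1+\|\na u\|_{L^2}^2+\|\na^2\theta\|_{L^2}^2\right) + C\left(\|\na u\|_{L^2}^2+\|\na^2\theta\|_{L^2}^2\right).
\ea\ee
The transport term is treated exactly as in (\ref{pp22}), via $\|\na u\|_{L^2}\||\na\theta|^{p/2}\|_{L^4}^2$. For the lower-order factor $\|\na\theta\|_{L^p}^p$ we use the Gagliardo--Nirenberg interpolation $\|\na\theta\|_{L^p}^p\le C\|\na\theta\|_{L^2}^2\|\na\theta\|_{H^1}^{p-2}$, so that it is also integrable in time. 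We then apply Gr\"onwall's inequality. The integrability in time of $\|\na u\|_{L^2}^2$ and $\|\na^2\theta\|_{L^2}^2$ comes from (\ref{bp01}), (\ref{bp110}), and (\ref{bp113}). The initial value is bounded by $\|\na d_0\|_{H^1}$ via Sobolev embedding. Finally, we transfer back to $d$ using (\ref{bp190}), which gives $|\na d|^{p-2}|\na^2 d|^2\le C|\na\theta|^{p-2}(|\na\theta|^4+|\na^2\theta|^2)$. The extra term $\int|\na\theta|^{p+2}$ is controlled by the same interpolation as above.

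\emph{Main obstacle.} The step I expect to be hardest is the boundary term. One must check that the tangential cancellation really removes all second derivatives, leaving only curvature terms of order $|\na\theta|^p$. One must also check that the trace bound is absorbed without a constant that grows in time. The additive term $C\||\na\theta|^{p/2}\|_{L^2}^2$ is harmless, since it is bounded by $\|\na\theta\|_{L^p}^p$ and is therefore integrable in time by the interpolation above.
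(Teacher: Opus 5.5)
You have a genuine gap, and it sits in the only non-routine part of the lemma: the requirement that $C$ be independent of $T$. Up to the Gr\"onwall step your plan works. That includes working with $\na\theta$, killing the second-order boundary term through $\na\theta\cdot\na(n\cdot\na\theta)=0$, and using an $L^2$ trace on $|\na\theta|^{p/2}$. Your closing inequality, however, has Gr\"onwall coefficient $1+\|\na u\|_{L^2}^2+\|\na^2\theta\|_{L^2}^2$. The bare $1$ comes from two places: the trace remainder $C\||\na\theta|^{p/2}\|_{L^2}^2$, and the lower-order part of $\||\na\theta|^{p/2}\|_{H^1}$ in the transport estimate. Applying Gr\"onwall with that coefficient produces a factor $e^{CT}$.

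Your remedy is to claim that $\|\na\theta\|_{L^p}^p\le C\|\na\theta\|_{L^2}^2\|\na\theta\|_{H^1}^{p-2}$ is integrable in time. That claim does not hold with what is known at this stage, namely $\sup_t\|\na\theta\|_{L^2}\le C$ and $\int_0^T\|\na^2\theta\|_{L^2}^2dt\le C$. The $L^2$ part of the $H^1$ norm is only bounded, so it contributes $O(T)$. Even after a Poincar\'e inequality, the right side is $C\|\na^2\theta\|_{L^2}^{p-2}$, which is not controlled in $L^1(0,T)$ once $p>4$. For $2<p\le4$ the claim does work, but only with a Poincar\'e inequality, which you never invoke.

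The missing idea is the one the paper uses in the last two lines of \eqref{bp025}. Since $n\cdot\na\theta=0$ on $\p\OM$, $\na\theta$ is a tangential field. The second part of Lemma \ref{gn1}, together with Poincar\'e's inequality for tangential fields, gives $\|\na\theta\|_{L^p}\le C\|\na\theta\|_{L^2}^{2/p}\|\na^2\theta\|_{L^2}^{1-2/p}\le C\|\na^2\theta\|_{L^2}$. Hence
\[
\|\na\theta\|_{L^p}^p=\|\na\theta\|_{L^p}^{p-2}\|\na\theta\|_{L^p}^{2}\le C\|\na\theta\|_{L^p}^{p-2}\|\na^2\theta\|_{L^2}^2\le C\|\na^2\theta\|_{L^2}^2\big(1+\|\na\theta\|_{L^p}^p\big).
\]
This replaces the bare $1$ by $\|\na^2\theta\|_{L^2}^2$, which is integrable on $(0,T)$ uniformly in $T$ by \eqref{bp116}. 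Gr\"onwall then gives a $T$-independent constant. The same inequality is what makes your transfer-back term time-integrable: $\int_0^T\int|\na\theta|^{p+2}\,dx\,dt\le C\sup_t\|\na\theta\|_{L^p}^p\int_0^T\|\na^2\theta\|_{L^2}^2\,dt$.

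With this fix, your route differs from the paper's only in minor ways. The paper stays with $d$ and absorbs the cubic contribution as $C\|\na d\|_{L^p}^p\|\na^2 d\|_{L^2}^2$, using the same tangential interpolation. It also uses a $W^{1,1}$ trace on $|\na d|^p$ rather than your $L^2$ trace. Your $\theta$ formulation removes the cubic term, at the cost of the transfer through \eqref{bp190}. Only the gauge-invariant equation for $\na\theta$ is needed, so the time-dependent additive constant in $\theta$ causes no trouble.
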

\begin{proof}
First, applying $\na$ to $\eqref{nlckv}_3$, we obtain
\be\la{bp21}\ba
\na d_t-\Delta \na d=-\na (u\cdot\na d)+\na(|\na d|^2 d).
\ea\ee
Then, multiplying (\ref{bp21}) by $p |\na d|^{p-2} \na d$, integrating by parts over $\OM$, and using $|d|=1$, we derive
\be\la{bp22}\ba
& \frac{d}{dt} \int |\na d|^p dx
- p \int |\na d|^{p-2} \p_i d^j \p_k\p_k \p_i d^j dx \\
& \le -p \int |\na d|^{p-2} \p_i d^j \p_k \p_i d^j u^k dx
+ C \int \left( |\na d|^p |\na u| + |\na d|^{p+2} \right) dx \\
& \quad + p \int |\na d|^{p-2} \p_i d^j \p_i(|\na d|^2) d^j dx \\
& \le C \| \na d \|^{p+2}_{L^{p+2}} + C \| |\na d|^{\frac{p}{2}} \|^2_{L^4} \| \na u \|_{L^2}.
\ea\ee
Moreover, integrating by parts over $\OM$ yields
\be\la{bp23}\ba
& - p \int |\na d|^{p-2} \p_i d^j \p_k\p_k \p_i d^j dx \\
& = -p \int_{\p \OM} |\na d|^{p-2} \p_i d^j n_k \p_k\p_i d^j ds
+ p \int \p_k( |\na d|^{p-2} \p_i d^j ) \p_k (\p_i d^j) dx \\
& = p \int_{\p \OM} |\na d|^{p-2} \p_i d^j \p_i n_k \p_k d^j ds
+ p \int |\na d|^{p-2} |\na^2 d|^2 dx
+ \frac{4(p-2)}{p} \int |\na (|\na d|^\frac{p}{2})|^2 dx,
\ea\ee
where in the second equality we have used the identity:
\be\la{bp24}\ba
\p_i d^j \p_i n_k \p_k d^j + \p_i d^j n_k \p_k\p_i d^j 
= \na d^j \cdot \na (n \cdot \na d^j) = 0 \quad \text{ on } \p \OM,
\ea\ee
due to $n \cdot \na d = 0$ on $\p \OM$.

Consequently, we conclude from (\ref{bp22}), (\ref{bp23}), (\ref{gn11}), (\ref{bp01}), and Young's inequality that
\be\la{bp025}\ba
& \frac{d}{dt} \int |\na d|^p dx + p \int |\na d|^{p-2} |\na^2 d|^2 dx \\
& \le C \| |\na d|^p \|_{L^1(\p \OM)}
+ C \| \na d \|^{p+2}_{L^{p+2}} + C \| |\na d|^{\frac{p}{2}} \|^2_{L^4} \| \na u \|_{L^2} \\
& \le C \| |\na d|^p \|_{W^{1,1}(\OM)} + C \| \na d \|^p_{L^p} \| \na^2 d \|^2_{L^2}
+ C \| |\na d|^{\frac{p}{2}} \|_{L^2} \| |\na d|^{\frac{p}{2}} \|_{H^1} \| \na u \|_{L^2} \\
& \le C \| \na d \|^p_{L^p} + C \int |\na d|^{p-1} |\na^2 d| dx
+ C \| \na d \|^p_{L^p} \| \na^2 d \|^2_{L^2} \\
& \quad + C \| \na d \|^p_{L^p} \| \na u \|^2_{L^2}
+ \frac{p}{4} \int |\na d|^{p-2} |\na^2 d|^2 dx \\
& \le \frac{p}{2} \int |\na d|^{p-2} |\na^2 d|^2 dx + C \| \na d \|^p_{L^p}
+ C \| \na d \|^p_{L^p} ( \| \na^2 d \|^2_{L^2} + \| \na u \|^2_{L^2} ) \\
& \le \frac{p}{2} \int |\na d|^{p-2} |\na^2 d|^2 dx
+ C \| \na d \|^{p-2}_{L^{p}} \| \na^2 d \|^2_{L^2}
+ C \| \na d \|^p_{L^p} ( \| \na^2 d \|^2_{L^2} + \| \na u \|^2_{L^2} ) \\
& \le \frac{p}{2} \int |\na d|^{p-2} |\na^2 d|^2 dx
+ C \| \na^2 d \|^2_{L^2} + C \| \na d \|^p_{L^p} ( \| \na^2 d \|^2_{L^2} + \| \na u \|^2_{L^2} ),
\ea\ee
which implies
\be\la{bp25}\ba
\frac{d}{dt} \int |\na d|^p dx + \frac{p}{2} \int |\na d|^{p-2} |\na^2 d|^2 dx
\le C \| \na^2 d \|^2_{L^2} + C \| \na d \|^p_{L^p} ( \| \na^2 d \|^2_{L^2} + \| \na u \|^2_{L^2} ).
\ea\ee
This, combined with (\ref{bp01}) and Gr\"onwall's inequality, yields (\ref{bp02}) and completes the proof of Lemma \ref{bpl2}.
\end{proof}

By virtue of the energy estimates (\ref{bp01}) and (\ref{bp02}),
along with arguments analogous to those in \cite[Corollary 3.1 and Proposition 3.3]{FLW}, we can obtain the following time-uniform estimates.
\begin{lemma}\la{bpl3}
Let $g_{+} \triangleq \max\{ g,0 \}$.
For any $2 \le p <\infty$, there exist positive constants $C$ and $\tilde{M}_1$ depending only on
$p$, $\mu$, $\ga$, $\beta$, $\| \n_0 \|_{L^\infty}$, $\| u_0 \|_{H^1}$, $\| \na d_0 \|_{H^1}$, $\OM$, and $A$ such that
\be\la{yzgj}\ba
\sup_{0\le t\le T} \| \n \|_{L^p} + \int_0^T \int_{\OM} (\n-\tilde{M}_1)^p_{+} dxdt \le C.
\ea\ee
\end{lemma}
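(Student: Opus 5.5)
The bound \eqref{yzgj} follows the scheme of \cite[Corollary 3.1, Proposition 3.3]{FLW}. Set $\psi\triangleq\mathcal{B}(\n-\ol{\n})$ with the operator of Lemma \ref{iod}. Rather than working with $\psi$ directly, I test the momentum equation $(\ref{nlckv})_2$ against $\mathcal{B}\big((\n-\ol{\n})^{\alpha}\text{-type renormalizations}\big)$. For the $L^p$ bound it is simplest to use the transport form of the mass equation, written via the effective viscous flux:
\[
\frac{D}{Dt}\theta(\n) + P-P(\ol{\n}) = -G, \qquad \theta(\n)=2\mu\log\n+\beta^{-1}\n^\beta .
\]
Multiplying by $\n^{p}$ (or by $(\n-M)_+^{p-1}$) and integrating, $G$ is handled through the elliptic problem coming from
\[
\n\dot u=\na G+\mu\na^\bot\o-\na d\cdot\Delta d .
\]
Since $u\cdot n=0$, the term $\int \n^{p} G$ is rewritten using $\n\dot u=(\n u)_t+\div(\n u\otimes u)$. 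One then writes
\[
\int \n^{p}(G-\ol G)\,dx=-\int \na G\cdot\mathcal{B}\big(\n^p-\ol{\n^p}\big)\,dx ,
\]
substitutes for $\na G$ from the momentum equation, and moves the time derivative onto the term $\int \n u\cdot\mathcal{B}(\n^p-\ol{\n^p})\,dx$. The resulting $\n_t$ is replaced by $-\div(\n u)$, which Lemma \ref{iod}(2) controls in $L^r$ by $\|\n^p u\|_{L^r}$.

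The key inputs are the following.
\begin{itemize}
\item The energy bound \eqref{bp01}, which gives $\int_0^T\|\na u\|_{L^2}^2\le C$ and $\int_0^T\|\na^2 d\|_{L^2}^2\le C$.
\item The estimate \eqref{bp02}, which gives $\sup_t\|\na d\|_{L^p}\le C$. This is what controls the liquid-crystal stress. Writing
\[
\na d\cdot\Delta d=\div(\na d\odot\na d)-\tfrac12\na|\na d|^2,
\]
its contribution becomes $\int (\na d\odot\na d):\na\mathcal{B}(\cdot)$. Using Lemma \ref{iod}(1), this is bounded by $C\|\na d\|_{L^{2r}}^2\|\n\|^{p}_{L^{pr'}}$, i.e.\ by a time-bounded factor times a power of $\n$.
\item The viscous term $\int \lambda(\n)\div u\,\n^{p}$. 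I absorb it using the structure $(2\mu+\lambda)\div u=G+P-P(\ol\n)$. This yields the good term $\int P\n^{p}$ minus lower-order pieces.
\item The boundary terms. These vanish because $\mathcal{B}(\cdot)\in W^{1,r}_0$. The Navier-slip contribution $\mu\int_{\p\OM}A|u|^2$ has a good sign and is already in \eqref{bp11}.
\end{itemize}

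Following \cite{FLW}, I will first prove that
\[
\int_0^T\!\!\int (\n-\tilde M_1)_+^{p+\ga}\,dx\,dt \le C+ C\sup_t\|\n\|_{L^p}^{\text{lower}}
\]
holds for large $\tilde M_1$. The important point is that all time-integrated quantities on the right are controlled by $\int_0^T(\|\na u\|_{L^2}^2+\|\na^2 d\|_{L^2}^2)dt$. Time integrals of $\|\n-\ol\n\|$-type terms are not available on the right, which is exactly why the cutoff at $\tilde M_1$ is used. Combining this with the $\sup_t$ estimate from the transport identity gives
\[
\sup_t\|\n\|_{L^p}^p\le C+C\,\epsilon\sup_t\|\n\|_{L^p}^p ,
\]
and a bootstrap/induction in $p$ starting from $p=\ga$ (the energy level) closes it.

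The main obstacle is making the bound time-independent. Every nonlinear term must be dominated by (a constant)$\times$(an $L^1_t$ quantity) or absorbed by $\int_0^T\!\int(\n-\tilde M_1)_+^{p+\ga}$. For the convective term $\int\n u\otimes u:\na\mathcal B(\n^p)$ I will use $\|u\|_{L^q}\le C\|\na u\|_{L^2}$, which holds by Lemma \ref{gn1} and Poincaré with $u\cdot n=0$ together with $\sup\|\sqrt\n u\|_{L^2}$. This bounds the term by $C\|\na u\|_{L^2}^2\|\n\|_{L^{\tilde p}}^{p+1}$, so a Gronwall-type argument in time is needed, using $\|\na u\|_{L^2}^2\in L^1(0,T)$ uniformly. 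The director terms are harmless because of \eqref{bp02}, and otherwise the argument is identical to \cite{FLW}.
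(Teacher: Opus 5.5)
Your outline (adapting \cite[Corollary 3.1, Proposition 3.3]{FLW} by testing the momentum equation with $\mathcal{B}$ applied to cut-off powers of $\rho$, with \eqref{bp01} and \eqref{bp02} as the new inputs) is what the paper invokes, and the paper gives no argument beyond that citation. The gap is in the one step that is genuinely new relative to \cite{FLW}, the director stress: as you treat it, the constant cannot be made independent of $T$. You bound its contribution by $C\|\nabla d\|_{L^{2r}}^2\|\rho\|_{L^{pr'}}^p$ and call $\|\nabla d\|_{L^{2r}}^2$ a time-bounded factor, ``harmless because of \eqref{bp02}''. A coefficient that is only in $L^\infty(0,T)$ contributes $O(T)$ after integration in time. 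Even against the cut-off pressure term, Young's inequality leaves a remainder $C(\epsilon)\|\nabla d\|_{L^{2r}}^{2s}$, which \eqref{bp02} bounds pointwise in $t$ but not in $L^1(0,\infty)$. This breaks your own rule that every term must carry an $L^1_t$ coefficient or be absorbed. Your sketch also never uses $\int_0^T\|\nabla^2 d\|_{L^2}^2\,dt\le C$ from \eqref{bp01}, which is the new estimate of this paper (obtained in Lemma \ref{bpl1} through the polar representation) and the reason \eqref{bp01} is listed as an input.

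The repair uses both inputs together. Since $\nabla d^j\cdot n=0$ on $\partial\Omega$, Lemma \ref{gn1} (which also yields Poincar\'e's inequality for such fields) gives $\|\nabla d\|_{L^{2r}}^2\le C\|\nabla^2 d\|_{L^2}^2$. So the director factor is uniformly bounded by \eqref{bp02} and integrable in time, uniformly in $T$, by \eqref{bp01}. With $f$ the cut-off test function, you get $\|\nabla d\|_{L^{2r}}^2\|f\|_{L^{r'}}\le \epsilon\|f\|_{L^{r'}}^{s'}+C(\epsilon)\|\nabla d\|_{L^{2r}}^{2s}\le \epsilon\|f\|_{L^{r'}}^{s'}+C\|\nabla^2 d\|_{L^2}^2$. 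For $s'$, $r'$ matched to the degree of the pressure term, the first piece is absorbed; this is the same mechanism as in \eqref{bp41}. The same bookkeeping is missing elsewhere in your sketch:
\begin{itemize}
\item The vorticity term $\mu\int\omega\,\nabla^\bot\cdot\mathcal{B}[f-\overline{f}]\,dx$ and the mean term $\overline{f}\int\lambda(\rho)\,\mathrm{div}\,u\,dx$ are linear in $\|\nabla u\|_{L^2}$ or $\|\sqrt{\lambda}\,\mathrm{div}\,u\|_{L^2}$. These are only $L^2_t$ quantities, so the terms must be split by Young and absorbed by the cut-off pressure term, not Gronwalled.
\item The piece $(p-1)\rho^p\,\mathrm{div}\,u$ of $(\rho^p)_t$, which you dropped, is not a divergence. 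It needs Lemma \ref{iod}(1) together with the weighted dissipation $\int_0^T\!\int\lambda(\rho)(\mathrm{div}\,u)^2$.
\item The Gronwall step for the convective term closes only because the multiplier controls $\int\rho^{p+\beta}$, and $\beta>1$ makes $\|\rho\|_{L^{p+\beta}}^{p+1}$ at most linear in it. Measured against $\|\rho\|_{L^p}^p$, as your final display suggests, it is superlinear and does not close for large data.
\item The closing inequality $\sup_t\|\rho\|_{L^p}^p\le C+C\epsilon\sup_t\|\rho\|_{L^p}^p$ is asserted without saying where the small factor comes from.
\end{itemize}
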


\begin{lemma}\la{bpl4}
There exists a positive constant $C$ depending only on
$\mu$, $\ga$, $\beta$, $\| \n_0 \|_{L^\infty}$, $\| u_0 \|_{H^1}$, $\| \na d_0 \|_{H^1}$, $\OM$, and $A$ such that
\be\la{bp04}\ba
\int_0^T \int_{\OM} (\n+1)^{\ga-1} (\n-\ol{\n})^2 dx dt \le C.
\ea\ee
\end{lemma}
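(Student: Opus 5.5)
We prove Lemma \ref{bpl4} by following the proof of Lemma \ref{ppl4}, replacing the periodic inverse Laplacian with the Bogovskii operator $\mathcal{B}$ of Lemma \ref{iod}. In a bounded domain with $u\cdot n=0$, total momentum is not conserved, so the role of $m_0$ is taken over by the Poincar\'e-type bound
\be\nonumber
\| u \|_{L^p} \le C \| \na u \|_{L^2}, \quad 2 \le p < \infty .
\ee
This bound follows from Lemma \ref{gn1} (since $u\cdot n|_{\p\OM}=0$) together with Lemma \ref{dc}. Integrating the mass equation gives $\ol{\n}=\ol{\n}_0$, so $\n-\ol{\n}$ has zero mean and $\mathcal{B}(\n-\ol{\n})\in W^{1,p}_0$ is well defined, with
\be\nonumber
\|\mathcal{B}(\n-\ol{\n})\|_{W^{1,p}}\le C\|\n-\ol{\n}\|_{L^p}.
\ee

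We write the momentum equation as
\be\nonumber
\na(\n^\ga-\ol{\n}^\ga) = -(\n u)_t - \div(\n u\otimes u) + \mu\Delta u + \na((\mu+\lam)\div u) - \na d\cdot\Delta d .
\ee
We test it against $\mathcal{B}(\n-\ol{\n})$. The vanishing trace of $\mathcal{B}(\n-\ol{\n})$ makes every integration by parts free of boundary terms, and $\div \mathcal{B}(\n-\ol{\n})=\n-\ol{\n}$. The left side becomes $\int(\n^\ga-\ol{\n}^\ga)(\n-\ol{\n})dx$, and the right side splits into terms $I_1,\dots,I_5$.

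For the time-derivative term we pull out $-\frac{d}{dt}\int \n u\cdot\mathcal{B}(\n-\ol{\n})dx$. The remainder is $\int \n u\cdot\mathcal{B}(\div(\n u))dx$, using $\n_t=-\div(\n u)$. By Lemma \ref{iod}(2) with $h=\n u$, which satisfies $h\cdot n=0$,
\be\nonumber
\|\mathcal{B}(\div(\n u))\|_{L^p}\le C\|\n u\|_{L^p}\le C\|\na u\|_{L^2},
\ee
where the last step uses Lemma \ref{bpl3}. So this remainder is bounded by $C\|\na u\|_{L^2}^2$. The convection term is bounded by
\be\nonumber
C\|\n u\|_{L^4}\|u\|_{L^4}\|\n-\ol{\n}\|_{L^2}\le C\|\na u\|_{L^2}^2\|\n-\ol{\n}\|_{L^2},
\ee
and since $\|\na u\|_{L^2}$ is not known to be bounded uniformly in time, we absorb more carefully. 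Mimicking (\ref{pp49}), we write
\be\nonumber
\|\n u\|_{L^4}\le C\|\n\|_{L^8}\|u\|_{L^8}\le C\|\na u\|_{L^2},
\ee
and then control $\|\na u\|_{L^2}^2\|\n-\ol{\n}\|_{L^2}$ in the same way as in the periodic case, by splitting the term as there, to get $\ep\|\n-\ol{\n}\|_{L^2}^2 + C\|\na u\|_{L^2}^2$. The viscous term gives $C\|\na u\|_{L^2}\|\n-\ol{\n}\|_{L^2}$. The $\lam$ term is handled exactly as in (\ref{pp412}) by splitting $\n^\beta\le C((\n-\tilde M_1)_+^\beta+\tilde M_1^\beta)$ and using Lemma \ref{bpl3}. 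For the director term, Lemmas \ref{bpl1} and \ref{bpl2} give
\be\nonumber
\|\na d\cdot\Delta d\|_{L^{4/3}}\le C\|\na d\|_{L^4}\|\na^2 d\|_{L^2},
\ee
and Sobolev embedding of $\mathcal{B}(\n-\ol{\n})\in W^{1,2}$ bounds this term by $\ep\|\n-\ol{\n}\|_{L^2}^2+C\|\na^2 d\|_{L^2}^2$.

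Collecting the bounds and using (\ref{pp414}), we obtain
\be\nonumber
\begin{aligned}
\int(\n+1)^{\ga-1}(\n-\ol{\n})^2dx \le{}& -C\frac{d}{dt}\int\n u\cdot\mathcal{B}(\n-\ol{\n})dx \\
& + C\int(\n-\tilde M_1)_+^{2\beta(\ga+1)/(\ga-1)}dx + C\left(\|\na u\|^2_{L^2}+\|\na^2 d\|^2_{L^2}\right).
\end{aligned}
\ee
The boundary-in-time term is bounded by $C\|\sqrt\n\|_{L^4}\|\sqrt\n u\|_{L^2}\|\n-\ol{\n}\|_{L^2}\le C$. Integrating over $(0,T)$ and applying (\ref{bp01}) and (\ref{yzgj}) yields (\ref{bp04}).

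The main obstacle is the term $\int\n u\cdot\mathcal{B}(\div(\n u))dx$ together with the convection term. Without momentum conservation we cannot subtract a mean velocity, and we must ensure that only $\|\na u\|_{L^2}^2$, which is time-integrable, appears with constants uniform in $T$. The tools for this are Lemma \ref{iod}(2) and the boundary condition $u\cdot n=0$.
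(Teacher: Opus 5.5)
Your proposal is correct and is essentially the paper's proof: you test the momentum equation against $\mathcal{B}[\rho-\overline{\rho}]$, pull out $\frac{d}{dt}\int\rho u\cdot\mathcal{B}[\rho-\overline{\rho}]\,dx$, control the remainder by Lemma \ref{iod}(2) with $h=\rho u$, split $\rho^\beta$ using $\tilde M_1$, bound the director term with Lemma \ref{bpl2}, and absorb via (\ref{bp42}). The one unclear step is the convection term: no careful absorption is needed there, and a pure Young splitting would leave a factor $\|\nabla u\|_{L^2}^4$ not yet known to be time-integrable; instead, Lemma \ref{bpl3} gives $\sup_t\|\rho-\overline{\rho}\|_{L^4}\le C$, so $\int\rho u\otimes u:\nabla\mathcal{B}[\rho-\overline{\rho}]\,dx\le C\|\rho\|_{L^4}\|u\|_{L^4}^2\|\rho-\overline{\rho}\|_{L^4}\le C\|\nabla u\|_{L^2}^2$ directly, exactly as in the paper.
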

\begin{proof}
Multiplying $(\ref{nlckv})_2$ by $\mathcal{B}[\n-\ol{\n} ]$, integrating over $\Omega$,
and applying $(\ref{nlckv})_1$, (\ref{bp01}), (\ref{bp02}), (\ref{yzgj}), and Lemma \ref{iod}, we derive
\be\la{bp41}\ba
& \int (\n^\ga - \ol{\n}^\ga) (\n-\ol{\n}) dx \\
&= \left(\int\rho u\cdot \mathcal{B}[\n-\ol{\n}] dx\right)_t- \int\rho u\cdot\mathcal{B}[ (\n-\ol{\n})_t] dx
-\int\rho u\cdot\nabla\mathcal{B}[\n-\ol{\n}]\cdot u dx \\
& \quad + \mu \int \p_i u \cdot \p_i \mathcal{B}[\n-\ol{\n}] dx
+ \int (\mu+\lambda)\div u (\n-\ol{\n}) dx
+ \int \mathcal{B}[\n-\ol{\n}] \cdot \na d \cdot \Delta d dx \\
& \le \left(\int\rho u\cdot\mathcal{B}[\n-\ol{\n}] dx\right)_t
+ C \|\n  u\|^2_{L^2} + C \| \n \|_{L^4} \| u \|^2_{L^4} \| \na \mathcal{B}[\n-\ol{\n}] \|_{L^4} \\
& \quad + C \| \na \mathcal{B}[\n-\ol{\n}] \|_{L^2} \| \na u \|_{L^2}
+ C \int \left( (\n - \tilde{M}_1)^\beta_+ + \tilde{M}_1^\beta \right) |\na u| |\n-\ol{\n}| dx \\
& \quad + C \| \mathcal{B}[\n-\ol{\n}] \|_{L^4} \| \na d \|_{L^4} \| \na^2 d \|_{L^2} \\
& \leq \left(\int\rho u\cdot\mathcal{B}[\n-\ol{\n}] dx\right)_t
+ \ep \int (\n+1)^{\ga-1} (\n-\ol{\n})^2 dx + C(\ep) \| \na u \|^2_{L^2} \\
& \quad + C(\ep) \int (\n -\tilde{M}_1)^{\frac{2\beta(\ga+1)}{\ga-1}}_+ dx + C \|\na^2 d\|^2_{L^2}.
\ea\ee
Moreover, it follows from (\ref{bp01}) and (\ref{yzgj}) that
\be\la{bp42}\ba
\int (\n+1)^{\ga-1} (\n-\ol{\n})^2 dx \le C \int (\n^\ga - \ol{\n}^\ga) (\n-\ol{\n}) dx,
\ea\ee
and
\be\la{bp43}\ba
\left| \int\rho u\cdot\mathcal{B}[\n-\ol{\n}] dx \right|
\le C \| \sqrt{\n} \|_{L^4} \| \sqrt{\n} u \|_{L^2} \| \n-\ol{\n} \|_{L^4}
\le C.
\ea\ee

Substituting (\ref{bp42}) into (\ref{bp41}), choosing $\ep$ sufficiently small, integrating over $(0,T)$, and uinge (\ref{bp01}), (\ref{yzgj}),
(\ref{bp42}) and (\ref{bp43}) to obtain (\ref{bp04}).
This completes the proof of Lemma \ref{bpl4}.
\end{proof}

\begin{lemma}\la{bpl5}
There exists a positive constant $C$ depending only on
$\mu$, $\ga$, $\beta$, $\| \n_0 \|_{L^\infty}$, $\| u_0 \|_{H^1}$, $\| \na d_0 \|_{H^1}$, $\OM$, and $A$ such that
\be\la{bp05}\ba
\sup_{0\le t\le T}\int \n |u|^{2+\nu} dx \le C,
\ea\ee
where
\be\la{bp005}\ba
\nu \triangleq R_T^{-\frac{\beta}{2}} \nu_0,
\ea\ee
for some suitably small generic constant $\nu_0 \in (0,1)$ depending only on $\mu$, $\ga$, and $\OM$.
\end{lemma}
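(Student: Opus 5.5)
We follow the periodic analogue, Lemma \ref{ppl6}, with $m_0$ replaced by $0$. The replacement is possible because on a bounded domain the Navier-slip condition $u\cdot n=0$ gives Poincar\'e/Korn type control $\|u\|_{L^p}\le C\|\na u\|_{L^2}$ via Lemma \ref{dc} and \eqref{bp01}; in particular $\|u\|_{H^1}\le C\|\na u\|_{L^2}$.

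We multiply $(\ref{nlckv})_2$ by $(2+\nu)|u|^\nu u$ and integrate over $\OM$, using $(\ref{nlckv})_1$ to produce $\frac{d}{dt}\int\n|u|^{2+\nu}dx$. For the viscous terms we write $-\mu\Delta u=\mu\na^\bot\o-\mu\na\div u$, so the operator becomes $-\na((2\mu+\lam)\div u)+\mu\na^\bot\o$. Integrating by parts with $u\cdot n=0$ and $\o=-Au\cdot n^\bot$ on $\p\OM$ gives the good terms
\[
\int|u|^\nu\big((2\mu+\lam)(\div u)^2+\mu\o^2\big)dx+\mu\int_{\p\OM}A|u|^{2+\nu}ds .
\]
It also produces error terms bounded by $C\nu\int(2\mu+\lam)|u|^\nu|\div u||\na u|+C\nu\mu\int|u|^\nu|\o||\na u|$. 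The difficulty is that $\int|u|^\nu(\div u^2+\o^2)$ does not directly dominate $\int|u|^\nu|\na u|^2$. We handle this by applying the div-curl estimate \eqref{dc1} to the field $|u|^{\nu/2}u$, which is tangential on the boundary:
\[
\int|u|^\nu|\na u|^2\le C\int|u|^\nu\big((\div u)^2+\o^2\big)+C\nu^2\int|u|^\nu|\na u|^2 .
\]
For $\nu\le\nu_0$ small, the last term is absorbed. The factor $R_T^{-\beta/2}$ in \eqref{bp005} is used, exactly as in \eqref{pp61}, so that $\nu^2(\mu+\lam)\le\nu_0^2R_T^{-\beta}(\mu+R_T^\beta)$ is small. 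This lets the cross term $\nu\int(\mu+\lam)|u|^\nu|\div u||\na u|$ be absorbed by Young's inequality, giving
\[
\frac{d}{dt}\int\n|u|^{2+\nu}+c\int|u|^\nu|\na u|^2\le I_2+I_3 .
\]

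The pressure term is
\[
I_2=C\int|\n^\ga-\ol{\n}^\ga||u|^\nu|\na u|dx ,
\]
where the subtracted $\ol{\n}^\ga$ costs nothing since $\int\na\ol\n^\ga\cdot(\cdot)=0$. We treat it exactly as in \eqref{pp65}: split $\n$ at $\tilde M_1$, use Young with the exponent $s$ defined there, and bound $\|u\|_{L^p}\le C\|\na u\|_{L^2}$. This yields
\[
I_2\le C\int(\n-\tilde M_1)_+^{s(\ga-1)}dx+CB_1^2 .
\]
The director term $I_3=-\int|u|^\nu u\cdot(\na d\cdot\Delta d)$ we do not integrate by parts, which avoids the boundary terms that the Neumann condition would otherwise generate. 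We bound it directly by Hölder:
\[
I_3\le\|u\|_{L^8}^{1+\nu}\|\na d\|_{L^4}\|\na^2d\|_{L^2}\le C(1+\|\na u\|_{L^2}^2)+C\|\na^2d\|_{L^2}^2 ,
\]
using \eqref{bp02} and $\nu<1$.

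Finally we integrate in time. The right-hand side is integrable over $(0,T)$ uniformly in $T$: the $B_1^2$, $\|\na u\|_{L^2}^2$ and $\|\na^2 d\|_{L^2}^2$ terms are controlled by \eqref{bp01} and \eqref{bp04}, and the $(\n-\tilde M_1)_+$ term by \eqref{yzgj}. The initial term $\int\n_0|u_0|^{2+\nu}$ is bounded by $\|u_0\|_{H^1}$. This gives \eqref{bp05}. We expect the main obstacle to be the boundary/div-curl step: making sure the boundary integrals from the $\o$ formulation have a good sign or are controllable by $\|u\|_{H^1}$ (via trace and the smoothness of $A$), and absorbing them with the smallness of $\nu_0$.
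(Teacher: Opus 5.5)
\textbf{There is a genuine gap: your estimate of the director term $I_3$ fails, because you chose not to integrate it by parts.} The rest agrees with the paper: the viscous terms, the choice $\nu=\nu_0R_T^{-\beta/2}$, and the pressure term $I_2$. Obtaining \eqref{wdc1} by applying \eqref{dc1} to $|u|^{\nu/2}u$ is a valid substitute for citing it.

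Two small points first. With the paper's conventions $\Delta u=\na\div u+\na^\bot\o$, so the sign of $\mu\na^\bot\o$ in your decomposition is flipped, although the good terms you list are correct. Also, the boundary term $\mu\int_{\p\OM}A|u|^{2+\nu}ds$ is nonnegative since $A\ge0$, so the step you flag as the main obstacle is harmless.

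Without integration by parts, $I_3$ is of degree $1+\nu$ in undifferentiated $u$, against one factor of $\na^2 d$. After $\|u\|_{L^8}\le C\|\na u\|_{L^2}$ and $\|\na d\|_{L^4}\le C$ you are left with $C\|\na u\|_{L^2}^{1+\nu}\|\na^2 d\|_{L^2}$. The inequality
\[
\|\na u\|_{L^2}^{1+\nu}\|\na^2 d\|_{L^2}\le C\big(1+\|\na u\|_{L^2}^2\big)+C\|\na^2 d\|_{L^2}^2
\]
is false: take both norms of size $\lambda\to\infty$. Using $\nu<1$ to replace $\|u\|_{L^8}^{1+\nu}$ by $1+\|u\|_{L^8}^2$ only produces the cubic product $\|\na u\|_{L^2}^2\|\na^2 d\|_{L^2}$.

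The quantities you can integrate in time with constants independent of $T$ and $R_T$ are $\|\na u\|_{L^2}^2$, $\|\na^2 d\|_{L^2}^2$, $B_1^2$, and the dissipation $\int|u|^\nu|\na u|^2dx$. All are of degree at most $2+\nu$ in $u$, so $\|\na u\|_{L^2}^{2+2\nu}$ is out of reach. The only pointwise-in-time control of $\|\na u\|_{L^2}$ before the density bound is $\log(e+B_1^2)\le CR_T^{1+\ep}$ from Lemma~\ref{bpl7}. Using it would make the constant in \eqref{bp05} exponentially large in $R_T$. That would destroy the bound on $\psi$ in Lemma~\ref{bpl9}, and hence the Zlotnik argument in Lemma~\ref{bpl10}. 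Separately, the additive constant in $C(1+\|\na u\|_{L^2}^2)$ integrates to $CT$, so the bound would not be uniform in $T$ either.

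\textbf{The fix is to integrate by parts, as \eqref{pp66} does in the periodic case you set out to follow.} The boundary terms you wanted to avoid vanish identically. Integrating by parts in $x_k$ gives the boundary term $-\int_{\p\OM}|u|^\nu u^i\p_i d^j\,(n\cdot\na d^j)\,ds=0$, by the Neumann condition. The remaining piece is $\int|u|^\nu u^i\p_i\p_k d^j\,\p_k d^j\,dx=\frac12\int|u|^\nu u\cdot\na|\na d|^2dx=-\frac12\int\div(|u|^\nu u)|\na d|^2dx$. Its boundary term vanishes because $u\cdot n=0$. Hence
\[
|I_3|\le C\int(1+|u|)|\na u||\na d|^2dx\le C\|\na u\|_{L^2}^2+C\|\na d\|_{L^4}^4+C\|u\|_{L^4}\|\na u\|_{L^2}\|\na d\|_{L^8}^2\le C\|\na u\|_{L^2}^2+C\|\na^2 d\|_{L^2}^2 .
\]
Here \eqref{bp02} gives $\|\na d\|_{L^8}\le C$. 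Lemma~\ref{gn1} applied to $\na d^j$ (which satisfies $\na d^j\cdot n=0$), together with \eqref{bp01}, gives $\|\na d\|_{L^4}^4\le C\|\na d\|_{L^2}^2\|\na^2 d\|_{L^2}^2\le C\|\na^2 d\|_{L^2}^2$.

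Moving the derivative from $\Delta d$ onto $u$ makes the bound quadratic in time-integrable quantities. All $\na d$ factors are absorbed by the uniform $L^\infty(0,T;L^p)$ bound, and no additive constant appears. With this replacement, the rest of your argument goes through.
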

\begin{proof}
We first recall the following weighted div-curl estimates from \cite[Lemma 3.4]{FLL}:
there exist positive constants $C$ and $\hat{\nu}$ depending only on $\OM$ such that
\be\ba\la{wdc1}
\int_\OM |u|^{\nu} |\na u|^2 dx \le C \int_\OM |u|^{\nu} \left( (\div u)^2 +\o^2  \right)dx,
\ea\ee
for any $\nu \in (0,\hat{\nu})$.

Multiplying $(\ref{nlckv})_2$ by $|u|^\nu u$ and integrating over $ \OM$, we obtain
\be\la{bp51}\ba 
& \frac{1}{(2+\nu)} \frac{d}{dt}\int \n |u|^{2+\nu} dx
+ \int |u|^\nu \left(\mu \o^2
+ (2\mu+\lam) (\div u)^2 \right) dx \\
& \le C \nu  \int  \left( (2\mu+\lam) |\div u|+\mu |\o| \right)  |u|^\nu |\na u| dx
+ C \int |\n^\ga - \ol{\n}^\ga| |u|^\nu |\na u|dx \\
& \quad - \int |u|^{\nu} u \cdot \na d \cdot \Delta d dx \\
& \triangleq I_1+I_2+I_3.
\ea\ee
For $I_1$, we deduce from (\ref{bp005}), (\ref{wdc1}), and Young's inequality that
\be\la{bp52}\ba
I_1 & \le \frac{1}{2} \int |u|^\nu \left(\mu \o^2
+ (2\mu+\lam) (\div u)^2 \right) dx
+ \frac{C\nu_0^2}{2} \int |u|^\nu |\na u|^2 dx \\
& \le \frac{1+\hat{C}\nu_0^2}{2} \int |u|^\nu \left(\mu \o^2
+ (2\mu+\lam) (\div u)^2 \right) dx,
\ea\ee
provided $\nu \in (0,\hat{\nu})$, where $\hat{C}>0$ depends only on $\mu$ and $\OM$.

Note that for $\nu < \frac{\ga-1}{\ga+1}$, we have $\frac{1-\nu}{2}-\frac{1}{\ga+1} \in (0,1)$ and
\be\ba\nonumber
|\n-\ol{\n}|^{\frac{2}{1-\nu}} \le C(\n+1)^{\frac{2\nu}{1-\nu}} (\n-\ol{\n})^2
\le C(\n+1)^{\ga-1} (\n-\ol{\n})^2.
\ea\ee
Hence, for $s$ satisfying
$\frac{1}{s}=\frac{1-\nu}{2}-\frac{1}{\ga+1}$, an application of
Young's and Poincar\'e's inequalities yields
\be\la{bp53}\ba
I_2 & \le C \int (\n^{\ga-1}+1) |\n-\ol{\n}| |u|^\nu |\na u| dx \\
& \le C \int \left( (\n-\tilde{M}_1)_{+}^{\ga-1} + 1 \right) |\n-\ol{\n}|
|u|^\nu |\na u| dx \\
& \le C \left( \int (\n-\tilde{M}_1)_{+}^{s(\ga-1)} dx
+ \int \left( |\n-\ol{\n}|^{\ga+1} + |\n-\ol{\n}|^{\frac{2}{1-\nu}} \right) dx
+ \int |\na u|^2 dx \right) \\
& \le C \int (\n-\tilde{M}_1)_{+}^{s(\ga-1)} dx + C B^2_1,
\ea\ee
provided $\nu < \frac{\ga-1}{\ga+1}$.

Integrating by parts over $\OM$ and applying the boundary conditions
$(n \cdot \na d)|_{\p \OM}=0$ and $(u \cdot n)|_{\p \OM}=0$, we derive
\be\la{bp504}\ba
I_3 & = \int \p_k (|u|^{\nu} u^i)  \p_i d^j \p_k d^j dx
+ \int |u|^{\nu} u^i  \p_i \p_k d^j \p_k d^j dx \\
& \le C \int |u|^\nu |\na u| |\na d|^2 dx
\le C \int (1 + |u|) |\na u| |\na d|^2 dx \\
& \le C \| \na u \|^2_{L^2} + C \| \na d \|^4_{L^4}
+ C \| u \|_{L^4} \| \na u \|_{L^2} \| \na d \|^2_{L^8} \\
& \le C \| \na u \|^2_{L^2} + C \| \na^2 d \|^2_{L^2}.
\ea\ee
Substituting (\ref{bp52})--(\ref{bp504}) into (\ref{bp51}) and choosing
$\nu_0 < \min \left\{\hat{\nu},\frac{1}{\sqrt{2\hat{C}}},\frac{\ga-1}{\ga+1} \right\}$, we obtain
\be\la{bp54}\ba 
\frac{d}{dt}\int \n |u|^{2+\nu}dx
\le C \int (\n-\tilde{M}_1)_{+}^{s(\ga-1)} dx + C B^2_1.
\ea\ee

Integrating (\ref{bp54}) with respect to $t$ and using (\ref{bp01}),
(\ref{yzgj}) and (\ref{bp04}), we obtain (\ref{bp05}) and complete the proof of Lemma \ref{bpl5}.
\end{proof}

\begin{lemma}\la{bpl6}
For any $2 < p<\infty$ and $\ep \in (0,1)$, there exists a positive constant $C$ depending only on $\mu$, $\ga$, $\ep$, $p$, $\beta$, $\OM$, and $A$ such that
\be\la{bp06}\ba
\| \nabla u \|_{L^{p}}
& \le C R^{\frac{1}{2}-\frac{1}{p}+\ep}_T (1+B_1)^{\frac{2}{p}}
(1+B_1+B_2)^{1-\frac{2}{p}},
\ea\ee
where $B_1$, $B_2$, and $R_T$ are defined in \eqref{b1}, \eqref{b2}, and \eqref{mdsj}, respectively.

Moreover, when $p<\frac{2(\ga+1)}{\ga}$ and $\ga<2\beta$, it holds that
\be\la{bp006}\ba
\| \nabla u \|_{L^{p}}
& \le C R^{\frac{1}{2}-\frac{1}{p}+\ep}_T B_1^{\frac{2}{p}} (1+B_1+B_2)^{1-\frac{2}{p}}.
\ea\ee
\end{lemma}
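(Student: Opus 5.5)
We argue through the div-curl estimate (\ref{dc1}) with $k=0$:
\[
\|\na u\|_{L^p}\le C(\|\div u\|_{L^p}+\|\o\|_{L^p}),
\]
and then write $\div u=\frac{G}{2\mu+\lam}+\frac{P-P(\ol\n)}{2\mu+\lam}$. Since $2\mu+\lam\ge 2\mu$, it suffices to bound $\|G\|_{L^p}$, $\|\o\|_{L^p}$ and $\|P-P(\ol\n)\|_{L^p}$. On $\OM$ the momentum equation reads $\n\dot u=\na G+\mu\na^\bot\o-\na d\cdot\Delta d$. The boundary condition gives $\o=-Au\cdot n^\bot$ on $\p\OM$. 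Using the elliptic theory of \cite{CL,FLL}, with a Neumann problem for $G$ (whose boundary datum comes from $\n\dot u\cdot n$ and the $\o$-term) and a Dirichlet-type problem for $\o+Au\cdot n^\bot$, we expect
\[
\|\na G\|_{L^2}+\|\na\o\|_{L^2}\le C\big(\|\n\dot u\|_{L^2}+\|\na d\cdot\Delta d\|_{L^2}+\|\na u\|_{L^2}\big).
\]
Here $\|\n\dot u\|_{L^2}\le R_T^{1/2}B_2$. For the director term, Lemma~\ref{bpl2} together with the Gagliardo--Nirenberg inequality and Lemma~\ref{tygj} (applied to $\na d$, using $n\cdot\na d=0$) gives $\|\na d\|_{L^4}\le C$ and $\|\Delta d\|_{L^4}\le C\|\na^2 d\|_{L^2}^{1/2}\|\na^2 d\|_{H^1}^{1/2}\le C(B_1+B_2)$. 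Hence $\|\na G\|_{L^2}+\|\na\o\|_{L^2}\le CR_T^{1/2}(1+B_1+B_2)$.

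Next we apply (\ref{gn11}) to $G-\ol G$ and $\o$:
\[
\|G\|_{L^p}\le C\|G\|_{L^2}^{2/p}\|G\|_{H^1}^{1-2/p}+C|\ol G|.
\]
For the $L^2$ factor we use $\|G\|_{L^2}\le C\|(2\mu+\lam)^{1/2}\div u\|_{L^2}\,\|2\mu+\lam\|_{L^\infty}^{1/2}+\|P-P(\ol\n)\|_{L^2}\le CR_T^{\beta/2}(1+B_1)$. This crude weight $R_T^{\beta/2}$ would spoil the stated exponent. To fix it, we plan to interpolate $\|G\|_{L^2}$ between $L^{2-\delta}$-type quantities. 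Following \cite[Lemma 3.5]{FLW}, the idea is to write $\|G\|_{L^2}^2\le\int (2\mu+\lam)^{-1}G^2\,\|2\mu+\lam\|_{L^{s}}$-type bounds and to exploit the uniform $L^s$ bound on $\n$ from Lemma~\ref{bpl3}. This shows $\|G\|_{L^2}\le C(\ep)R_T^{\ep}(1+B_1)$, because $\lam^{1/2}\div u\in L^2$ and $\lam\in L^s$ for all $s$ uniformly. Combining, we get
\[
\|G\|_{L^p}+\|\o\|_{L^p}\le CR_T^{\ep}(1+B_1)^{2/p}\big(R_T^{1/2}(1+B_1+B_2)\big)^{1-2/p},
\]
which yields the exponent $\frac12-\frac1p+\ep$ after relabeling $\ep$. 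The pressure term satisfies $\|P-P(\ol\n)\|_{L^p}\le C$ by Lemma~\ref{bpl3}, and $C\le C(1+B_1)^{2/p}(1+B_1+B_2)^{1-2/p}$. This proves (\ref{bp06}). The main obstacle is this $L^2$ control of $G$ without losing the full power $R_T^{\beta/2}$. We would handle it by Hölder with weights $(2\mu+\lam)^{\pm}$, placing the large weight into a high $L^s$ norm that Lemma~\ref{bpl3} bounds independently of $R_T$, and an $R_T^\ep$ remainder through interpolation of $\lam^{1/2}$ between $L^s$ and $L^\infty$.

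For (\ref{bp006}) the additive constants $1$ must be removed from the $(1+B_1)^{2/p}$ factor. The constant came from $\|P-P(\ol\n)\|_{L^p}$ and $|\ol G|$. When $p<\frac{2(\ga+1)}{\ga}$, Hölder gives $\|P-P(\ol\n)\|_{L^p}\le C\|(\n+1)^{(\ga-1)/2}(\n-\ol\n)\|_{L^2}^{\theta}\cdot(\text{bounded }L^s\text{ norms})\le CB_1^{\theta'}$, with the power adjustable to be at least comparable to $B_1^{2/p}(1+B_1+B_2)^{1-2/p}$. This uses $|P-P(\ol\n)|\le C(\n+1)^{\ga-1}|\n-\ol\n|$ and the fact that the integrability exponent required is below $\ga+1$ precisely when $p<2(\ga+1)/\ga$. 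The mean satisfies $|\ol G|\le C\int\lam|\div u|+C\|P-P(\ol\n)\|_{L^1}$; the first term is bounded by $C\|\lam^{1/2}\|_{L^2}B_1$, and the second by $CB_1$. The condition $\ga<2\beta$ is used so that the weight $(2\mu+\lam)^{-1}(P-P(\ol\n))$ appearing in $\div u$ is controlled by $(\n+1)^{(\ga-1)/2}|\n-\ol\n|$ times a bounded factor. The remaining terms are already proportional to $B_1$, which gives (\ref{bp006}).
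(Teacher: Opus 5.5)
\textbf{There is a genuine gap, at exactly the step you flag as the main obstacle.} The rest of your plan matches the paper: the div--curl estimate, splitting $\div u$ into $G/(2\mu+\lam)$ plus the pressure part, the elliptic bound for $\na G,\na\o$, Gagliardo--Nirenberg for $\o$ with $\|\o\|_{L^2}\le CB_1$, and $|\ol G|\le CB_1$.

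\textbf{Why the step fails.} Passing from $\|G/(2\mu+\lam)\|_{L^p}$ to $\|G\|_{L^p}$ throws away the weight. Your proposed repair, $\|G\|_{L^2}\le C(\ep)R_T^{\ep}(1+B_1)$, does not follow. Write $\|(2\mu+\lam)\div u\|_{L^2}^2=\int(2\mu+\lam)\,[(2\mu+\lam)(\div u)^2]\,dx$. Here $B_1^2$ controls only the $L^1$ norm of the bracket. H\"older against $\|\lam\|_{L^s}$ would therefore need the bracket in $L^{s'}$ with $s'>1$, i.e.\ extra integrability of $\div u$, which is what you are trying to estimate.

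A scaling check shows the bound cannot hold in general. Take $\n\sim R_T$ on a disc of radius $\ell$, which is compatible with the uniform $L^s$ bounds of Lemma \ref{bpl3} when $\ell$ is small. Let $G$ be a bump of height $h$ (large) there, with $\div u\approx G/(2\mu+\lam)$ and $d$ constant. Then $B_1\sim h\ell R_T^{-\beta/2}$ and $B_2\sim hR_T^{-1/2}$, so $\|G\|_{L^2}\sim R_T^{\beta/2}B_1$. Moreover $\|G\|_{L^p}\sim h\ell^{2/p}$ exceeds the right-hand side of \eqref{bp06} by a factor $R_T^{\beta/p-\ep}$. So no estimate of the unweighted $\|G\|_{L^p}$ can give \eqref{bp06}.

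\textbf{The missing idea: keep the weight and lose only an $\ep$-power of $\|G\|_{L^2}$.} Since $G/(2\mu+\lam)=\div u-(P-P(\ol\n))/(2\mu+\lam)$, you have $\|G/(2\mu+\lam)\|_{L^2}\le C(1+B_1)$ with no power of $R_T$. The paper interpolates
\[
\left\|\frac{G}{2\mu+\lam}\right\|_{L^p}\le C\left\|\frac{G}{2\mu+\lam}\right\|_{L^2}^{\frac2p-\ep}\|G\|_{L^r}^{1-\frac2p+\ep},\qquad r=\frac{2(1+\ep)p-4}{p\ep},
\]
and then applies \eqref{gn11} to $\|G\|_{L^r}$. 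With this choice of $r$ the result is $C(1+B_1)^{\frac2p-\ep}\|G\|_{L^2}^{\ep}\|G\|_{H^1}^{1-\frac2p}$. Now the crude bound $\|G\|_{L^2}\le CR_T^{\beta/2}(1+B_1)$ costs only $R_T^{\beta\ep/2}$. Together with $\|G\|_{H^1}+\|\o\|_{H^1}\le CR_T^{1/2}B_2+CB_1$ and your treatment of $\o$, this gives \eqref{bp06}.

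\textbf{The second estimate \eqref{bp006}.} The same weighted argument is needed, now with $\|G/(2\mu+\lam)\|_{L^2}\le CB_1$. The pressure term must also keep its weight:
\[
\left\|\frac{P-P(\ol\n)}{2\mu+\lam}\right\|_{L^p}^p\le C\int(\n+1)^{p(\ga-\beta)-2}(\n-\ol\n)^2\,dx\le CB_1^2 .
\]
This holds because $p<2(\ga+1)/\ga$ and $\ga<2\beta$ give $p(\ga-\beta)-2<\ga-1$. The case $p=2$ gives the $L^2$ bound for $G/(2\mu+\lam)$ just used.

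Your unweighted bound on $\|P-P(\ol\n)\|_{L^p}$ does not close. It would require $p\ga-2\le\ga-1$, which is impossible for $p>2$. Interpolating against the uniform $L^s$ bounds instead yields only $B_1^{\theta'}$ with $\theta'<2/p$, which does not produce the factor $B_1^{2/p}$ when $B_1$ is small.
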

\begin{proof}
First, we use (\ref{gw}) to rewrite $(\ref{nlckv})_2$ as
\be\la{bp61}\ba
\n\dot{u}= \na G + \mu \na^\bot \o - \na d \cdot \Delta d.
\ea\ee
By virtue of the boundary conditions (\ref{yjybjtj}), we deduce that $G$ and $\o$ satisfy the following elliptic equations respectively:
\be\la{bp62}\ba
\begin{cases}
\Delta G=\div \left( \rho \dot{u} + \na d \cdot \Delta d \right) & \mathrm{in}\, \,  \OM, \\
\frac {\p G}{\p n}= \left( \rho \dot{u} + \na d \cdot \Delta d \right) \cdot n
- \mu n^\bot \cdot \na (A u \cdot n^\bot) &\mathrm{on}\, \,  \p \OM,
\end{cases}
\ea\ee
and
\be\la{bp63}\ba
\begin{cases}
\mu \Delta \o =\na^\bot \cdot \left( \rho \dot{u} + \na d \cdot \Delta d \right) & \mathrm{in}\, \,  \OM, \\
\o = -A u \cdot n^\bot &\mathrm{on}\, \,  \p \OM.
\end{cases}
\ea\ee
The standard $L^p$ estimate of elliptic equations
(see \cite{GT}, \cite[Lemma 4.27]{NS}) shows that for any integer $k \ge 0$ and $1<p<\infty$,
\be\la{bp64}\ba
\| \na G \|_{W^{k,p}} + \| \na \o \|_{W^{k,p}} \le C \left( \| \n \dot{u}\|_{W^{k,p}}
+ \| \na d \cdot \Delta d \|_{W^{k,p}} + \| \na u \|_{W^{k,p}} \right).
\ea\ee
This, combined with Poincar\'e's inequality, gives
\be\la{bp66}\ba
\| G \|_{H^1} + \| \o \|_{H^1}
& \le C \left( \| \rho \dot{u} \|_{L^2} + \| \na u \|_{L^2} + \| \na d \cdot \Delta d \|_{L^2} \right) + C |\ol{G}| \\
& \le C R^{1/2}_T B_2 + C B_1 + C \| \na d \|_{L^4} \| \Delta d \|_{L^4} \\
& \le C R^{1/2}_T B_2 + C B_1,
\ea\ee
where in the second inequality we have used the following estimates:
\be\nonumber\ba
\left| \int_\OM G dx \right|
= \left| \int_\OM \left( \lam(\n) \div u - (\n^\ga - \ol{\n}^\ga) \right) dx \right|
\le C B_1,
\ea\ee
due to (\ref{yzgj}).

In addition, it follows from (\ref{yzgj}) and H\"older's inequality that
\be\la{bp67}\ba
\| G \|^2_{L^2} \le C (1+R^\beta_T B^2_1), \quad
\left\| \frac{G}{2\mu+\lam} \right\|^2_{L^2} \le C(1+B^2_1).
\ea\ee
Combining (\ref{gn11}), (\ref{dc1}) and (\ref{bp67}), we obtain
\be\la{bp68}\ba
\| \na u \|_{L^p}
& \le C \left(\| \div u \|_{L^p} + \| \o \|_{L^p} \right) \\
& \le C \left\| \frac{G}{2\mu+\lam} \right\|_{L^p}
+ C \left\| \frac{P-P(\ol{\n})}{2\mu+\lam} \right\|_{L^p}
+ C \| \o \|^{\frac{2}{p}}_{L^2} \| \o \|^{1-\frac{2}{p}}_{H^1} \\
& \le C \left\| \frac{G}{2\mu+\lam} \right\|^{\frac{2}{p}-\ep}_{L^2}
\| G \|_{L^{\frac{2(1+\ep)p-4}{p\ep}}}^{-\frac{2}{p}+1+\ep}
+ C \left( B^{\frac{2}{p}}_1 \| \o \|^{1-\frac{2}{p}}_{H^1}
+ B_1 + 1 \right) \\
& \le C (1+B_1)^{\frac{2}{p}-\ep} \| G \|^{\ep}_{L^2} \| G \|^{1-\frac{2}{p}}_{H^1}
+ C \left( B^{\frac{2}{p}}_1 \| \o \|^{1-\frac{2}{p}}_{H^1}
+ B_1 + 1 \right) \\
& \le C R^{\frac{\beta \ep}{2}}_T (1+B_1)^{\frac{2}{p}}
\left( \| G \|_{H^1} + \| \o \|_{H^1} \right)^{1-\frac{2}{p}}
+ C (1+B_1),
\ea\ee
which together with (\ref{bp66}) yields (\ref{bp06}).

We now prove (\ref{bp006}).
Note that when $p<\frac{2(\ga+1)}{\ga}$ and $\ga<2\beta$, it holds that $p(\ga-\beta)-2<(\ga-1)$.
We thus obtain
\be\la{bp69}\ba
\left\| \frac{P-P(\ol{\n})}{2\mu+\lam} \right\|^p_{L^p}
\le C \int (\n+1)^{p(\ga-\beta)-2} (\n-\ol{\n})^2 dx \le C B^2_1.
\ea\ee
Using (\ref{gw}) and choosing $p=2$ in (\ref{bp69}) leads to
\be\la{bp610}\ba
\left\| \frac{G}{2\mu+\lam} \right\|^2_{L^2}
\le C B^2_1 + \left\| \frac{P-P(\ol{\n})}{2\mu+\lam} \right\|^2_{L^2}
\le C B^2_1.
\ea\ee
Arguing as in the derivation of (\ref{bp68}), we conclude from (\ref{bp69}) and (\ref{bp610}) that
\be\ba\nonumber
\| \na u \|_{L^p}
& \le C \left\| \frac{G}{2\mu+\lam} \right\|_{L^p}
+ C \left\| \frac{P-P(\ol{\n})}{2\mu+\lam} \right\|_{L^p}
+ C \| \o \|^{\frac{2}{p}}_{L^2} \| \o \|^{1-\frac{2}{p}}_{H^1} \\
& \le C B_1^{\frac{2}{p}-\ep} \| G \|^{\ep}_{L^2} \| G \|^{1-\frac{2}{p}}_{H^1}
+ C \left( B^{\frac{2}{p}}_1 \| \o \|^{1-\frac{2}{p}}_{H^1}
+ B^{\frac{2}{p}}_1 \right) \\
& \le C R^{\beta \ep}_T B_1^{\frac{2}{p}}
\left( \| G \|_{H^1} + \| \o \|_{H^1} \right)^{1-\frac{2}{p}}
+ C B^{\frac{2}{p}}_1,
\ea\ee
which together with (\ref{bp66}) gives (\ref{bp006}) and completes the proof of Lemma \ref{bpl6}.
\end{proof}

\begin{lemma}\la{bpl7}
For any $\ep \in (0,1)$,
there exists a positive constant $C$ depending only on
$\ep$, $\mu$, $\ga$, $\beta$, $\| \n_0 \|_{L^\infty}$, $\| u_0 \|_{H^1}$, $\| \na d_0 \|_{H^1}$, $\OM$, and $A$ such that
\be\la{bp07}\ba
\sup_{0 \le t \le T} \log(e+B^2_1) + \int_0^T \frac{B^2_2}{e+B^2_1} dt
\le C R^{1+\ep}_T.
\ea\ee
\end{lemma}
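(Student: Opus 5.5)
The plan is to mimic the proof of Lemma \ref{ppl5}, now in the bounded domain with the Navier-slip and Neumann conditions, following \cite[Proposition 3.6]{FLW} and \cite{FLL}. The target is a differential inequality of the form
\be\nonumber\ba
\frac{d}{dt} B_3 + \frac{1}{2} B_2^2 \le C R_T^{1+\ep}(1+B_1^2) B_1^2,
\ea\ee
with $B_3$ equivalent to $\hat C_0 + B_1^2$. Dividing by $\hat C_0+B_3$ and integrating in time then gives \eqref{bp07}, using \eqref{bp01} and \eqref{bp04} to bound $\int_0^T B_1^2\,dt$.

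\textbf{Step 1: the momentum part.} Multiply \eqref{bp61} by $2\dot u$ and integrate over $\OM$. Use \eqref{bp701}--\eqref{bp7001} and the boundary identities \eqref{bjds}. This produces $\frac{d}{dt}\int(\mu\o^2+G^2/(2\mu+\lam))dx + 2\|\sqrt\n\dot u\|_{L^2}^2$ together with the interior terms $I_1$--$I_6$ of \eqref{pp54}. In addition there are boundary terms coming from $\o=-Au\cdot n^\bot$ and $u\cdot n=0$. The boundary term $\int_{\p\OM} A\o\,\dot u\cdot n^\bot$ is handled by writing it as a time derivative of $\int_{\p\OM}A|u|^2$-type quantities plus lower-order terms. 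The latter are controlled through trace inequalities, \eqref{bp64}, \eqref{bp66}, and $(u\cdot\na)u\cdot n=-(u\cdot\na)n\cdot u$, as in \cite{FLL,FLW}. The terms $I_1$--$I_6$ are bounded exactly as in \cite{FLW} using Lemma \ref{bpl6}, in particular \eqref{bp006}, which yields the $R_T^{1+\ep}$ loss.

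\textbf{Step 2: the coupling term.} For $-2\int\dot u\cdot(\na d\cdot\Delta d)\,dx$, split $\dot u=u_t+u\cdot\na u$. In the $u_t$ part, write $\na d\cdot\Delta d=\div(\na d\odot\na d)-\frac12\na|\na d|^2$ and integrate by parts. The boundary terms vanish because $n\cdot\na d=0$ on $\p\OM$: the tensor term produces $(\na d\odot\na d)n=0$, and the gradient term contributes $|\na d|^2\,u_t\cdot n=0$. Then move the time derivative off $u_t$. This leaves $\frac{d}{dt}\int(2\na d\odot\na d:\na u-|\na d|^2\div u)\,dx$ plus terms bounded by $\|\na d_t\|_{L^2}\|\na d\|_{L^\infty}\|\na u\|_{L^2}$. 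These are estimated with \eqref{gn11}, \eqref{bp02} and Lemma \ref{tygj}, giving $\frac14 B_2^2+C(1+B_1^2)B_1^2$ as in \eqref{pp57}.

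\textbf{Step 3: the director part.} Differentiate \eqref{bp21} and test against $-\na\Delta d$, or equivalently test $(\ref{nlckv})_3$ by $\Delta^2 d$ appropriately. Because $\p_n d=0$, the boundary integrals reduce to curvature terms of lower order, controlled by traces of $|\na d||\na^2 d|$. This gives
\be\nonumber\ba
\frac{d}{dt}\|\Delta d\|_{L^2}^2+\|\na d_t\|_{L^2}^2+\|\na\Delta d\|_{L^2}^2\le \frac18B_2^2+C(1+B_1^2)B_1^2.
\ea\ee
The extra nonlinearity $|\na d|^2 d$ is handled with \eqref{bp02} and Gagliardo--Nirenberg. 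Lemma \ref{tygj} converts between $\|\Delta d\|_{L^2}$ and $\|\na^2 d\|_{L^2}$.

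\textbf{Step 4: the equivalence.} Show $\frac1C(\hat C_0+B_3)\le\hat C_0+B_1^2\le C(\hat C_0+B_3)$, using \eqref{bp01}, \eqref{bp02}, \eqref{yzgj}, and \eqref{bp67}, where $\|G\|^2_{L^2}/(2\mu+\lam)$ controls $(2\mu+\lam)(\div u)^2$ up to $(\n+1)^{\ga-1}(\n-\ol\n)^2$. Add the boundary functional $\mu\int_{\p\OM}A|u|^2$ if needed; it is nonnegative.

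The main obstacle is expected to be the boundary terms in Step 1 and Step 3, since the Neumann condition on $d$ must be commuted with $\Delta$. If the $\Delta^2 d$ test produces an uncontrollable term, I would instead use $\na d_t$ estimates, testing \eqref{bp21} by $-\na\Delta d$ only in the interior form and integrating by parts once. The identity \eqref{bp24} then handles the remaining boundary term.
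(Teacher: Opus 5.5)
Your proposal is correct and follows essentially the paper's route: the $2\dot u$ identity \eqref{bp71} with $I_1$--$I_8$ treated as in \cite{FLW}, the integration by parts turning $-2\int u_t\cdot\na d\cdot\Delta d\,dx$ into a time derivative, the $L^2$ estimate of \eqref{bp75}, and the functional $B_3$ including $\mu\int_{\p\OM}A|u|^2ds$, closed by dividing by $\hat C_1+B_3$ and using \eqref{bp01}, \eqref{bp04}; note that the paper uses the unconditional bound \eqref{bp06} for $I_1$--$I_8$, not \eqref{bp006}, which only holds when $\ga<2\beta$. Your Step 3 boundary worries, and the fallback via \eqref{bp24}, are unnecessary: squaring \eqref{bp75} in $L^2$ produces only the boundary integral $\int_{\p\OM}\p_n d_t\cdot\Delta d\,ds$, which vanishes because $\p_n d_t=0$, and your $\|\na d\|_{L^\infty}\|\na u\|_{L^2}$ split in Step 2 is a harmless variant of the paper's $\|\na d\|_{L^4}\|\na u\|_{L^4}$ combined with \eqref{bp06}.
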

\begin{proof}
Multiplying (\ref{bp61}) by $2 \dot{u}$ and integrating by parts over $\OM$, we derive
\be\la{bp71}\ba
& \frac{d}{dt} \int \left(\mu \o^2 + \frac{G^2}{2\mu + \lam}\right)dx
+ 2 \int \n |\dot{u}|^2 dx \\
& = - \mu \int \o^2 \div u dx + 4 \int G\nabla u^1 \cdot\nabla^{\perp}u^2 dx
-2 \int G (\div u)^2 dx \\
& \quad - \int \frac{ (\beta-1)\lam - 2\mu }{(2\mu + \lam)^2} G^2\div u dx
-2\beta \int \frac{ \lam (P-P(\ol{\n})) }{ (2\mu +\lam)^2 } G \div u dx
+ 2\ga \int \frac{P}{2\mu +\lam} G \div u dx \\
& \quad + 2 \int_{\p\OM} G u \cdot \na u \cdot n ds
+ 2 \mu \int_{\p \OM} \o (\dot{u} \cdot n^\bot) ds
-2 \int \dot{u} \cdot \na d \cdot \Delta d dx
= \sum_{i=1}^9 I_i,
\ea\ee
where we have used (\ref{bp701}) and (\ref{bp7001}).

Using the energy estimates (\ref{bp01}), (\ref{yzgj}), and (\ref{bp06}), and following the arguments in \cite[Proposition 3.6]{FLW}, we arrive at
\be\la{bp72}\ba
\sum_{i=1}^8 I_i \le - \mu \frac{d}{dt} \int_{\p \OM} A |u|^2 ds
+ \frac{1}{8} B^2_2
+ C R^{1+\ep}_T (1+B^2_1) B^2_1.
\ea\ee
Next, we estimate $I_9$.
First, by virtue of the boundary conditions $(n \cdot \na d)|_{\p \OM}=0$ and $(u \cdot n)|_{\p \OM}=0$,
we integrate by parts over $\OM$ and use H\"older's inequality to derive
\be\la{bp73}\ba
& -2 \int u_t \cdot \na d \cdot \Delta d dx \\
& = -2 \int u^i_t \p_i d^j \p_k \p_k d^j dx \\
& = 2 \int \p_k u^i_t \p_i d^j \p_k d^j dx
+ 2 \int u^i_t \p_i \p_k d^j \p_k d^j dx \\
& = 2 \int (\na d \odot \na d) \cdot \na u_t dx
+ \int u_t \cdot \na ( |\na d|^2 ) dx \\
& = 2 \frac{d}{dt} \int (\na d \odot \na d) \cdot \na u dx
- 2 \int (\na d_t \odot \na d)\cdot\na u dx \\
& \quad - 2 \int(\na d \odot \na d_t)\cdot\na u dx
- \int \div u_t |\na d|^2 dx \\
& \le \frac{d}{dt} \int \left( 2 (\na d \odot \na d) \cdot \na u - |\na d|^2 \div u \right) dx
+ C \| \nabla d_t \|_{L^2} \| \na d \|_{L^4} \| \na u \|_{L^4}.
\ea\ee
Consequently, we deduce from (\ref{bp73}), (\ref{gn11}), (\ref{bp02}), (\ref{bp06}), and Young's inequality that
\be\la{bp74}\ba
I_9 & = - 2 \int u_t \cdot \na d \cdot \Delta d dx
- 2 \int u \cdot \na u \cdot \na d \cdot \Delta d dx \\
& \le \frac{d}{dt} \int \left( 2 (\na d \odot \na d) \cdot \na u - |\na d|^2 \div u \right) dx
+ C \| \nabla d_t \|_{L^2} \| \na d \|_{L^4} \| \na u \|_{L^4} \\
& \quad + C \| u \|_{L^8} \| \na u \|_{L^4} \| \na d \|_{L^8} \| \na^2 d \|_{L^2} \\
& \le \frac{d}{dt} \int \left( 2 (\na d \odot \na d) \cdot \na u - |\na d|^2 \div u \right) dx
+ \frac{1}{4} B^2_2 + C R^{1+\ep}_T (1+B^2_1) B^2_1.
\ea\ee
Moreover, applying $\na$ to $\eqref{nlckv}_3$ yields
\be\la{bp75}\ba
\na d_t-\Delta \na d=-\na (u\cdot\na d)+\na(|\na d|^2 d).
\ea\ee
Taking the $L^2$ inner product on both sides of (\ref{bp75}) and using (\ref{gn11}), (\ref{bp02}), (\ref{bp06}), and Young's inequality, we arrive at
\be\la{bp76}\ba
& \frac{d}{dt} \| \Delta d \|^2_{L^2} + \| \na d_t \|^2_{L^2} + \| \na \Delta d \|^2_{L^2} \\
& \le C \int \left( |\na (u \cdot \na d)|^2 + |\na (|\na d|^2 d)|^2 \right) dx \\
& \le C \int \left( |\na u|^2 |\na d|^2 + |u|^2 |\na^2 d|^2
+ |\na^2 d|^2 |\na d|^2 + |\na d|^6 \right) dx \\
& \le C \| \na u \|^2_{L^4} \| \na d \|^2_{L^4}
+ C \| u \|^2_{L^8} \| \na^2 d \|^2_{L^\frac{8}{3}}
+ C \| \na^2 d \|^2_{L^4} \| \na d \|^2_{L^4} + C \| \na d \|^6_{L^6} \\
& \le C \| \na u \|^2_{L^4} \| \na d \|^2_{L^4}
+ C \| \na u \|^2_{L^2} \| \na d\|_{L^4} \left( \| \na^2 d\|_{L^2} + \| \na^3 d\|_{L^2} \right) \\
& \quad + C \| \na^2 d \|_{L^2} \left( \| \na^2 d \|_{L^2} + \| \na^3 d \|_{L^2} \right)
+ C \| \na d \|^4_{L^4} \| \na^2 d \|^2_{L^2} \\
& \le \frac{1}{8} B^2_2 + C R^{1+\ep}_T (1+B^2_1) B^2_1.
\ea\ee

Substituting (\ref{bp72}) and (\ref{bp74}) into (\ref{bp71}) and adding the result to (\ref{bp76}), we get
\be\la{bp77}\ba
\frac{d}{dt} B_3 + \frac{1}{2} B^2_2
\le C R^{1+\ep}_T (1+B^2_1) B^2_1,
\ea\ee
where
\be\ba\nonumber
B_3 \triangleq \int \left( \mu \o^2 + \frac{G^2}{2\mu + \lam} + |\Delta d|^2
- 2 (\na d \odot \na d) \cdot \na u + |\na d|^2 \div u \right) dx
+ \mu \int_{\p \OM} A |u|^2 ds.
\ea\ee
Furthermore, from (\ref{bp01}) and (\ref{yzgj}), we can find a constant $\hat{C}_1>e$ such that
\be\la{bp78}\ba
\frac{1}{C} (\hat{C}_1+B_3) \le \hat{C}_1 + B^2_1 \le C(\hat{C}_1+B_3).
\ea\ee
Dividing (\ref{bp77}) by $\hat{C}_1+B_3$ and applying (\ref{bp78}), we obtain
\be\la{bp79}\ba
\frac{d}{dt} \log (\hat{C}_1+B_3) + \frac{1}{2} \frac{B^2_2}{\hat{C}_1+B_3}
\le C R^{1+\ep}_T B^2_1.
\ea\ee
Integrating (\ref{bp79}) over $(0,T)$ and using
(\ref{bp01}) and (\ref{bp04}), we derive (\ref{bp07}) and finish the proof of Lemma \ref{bpl7}.
\end{proof}

We next use the preceding a priori estimates to establish the upper bound for $\n$.
More precisely, combining the continuity equation $(\ref{nlckv})_1$ with the definition of the effective viscous flux in (\ref{gw}), we obtain
\be\la{evf0}\ba
\frac{D}{Dt} \theta(\n) + (P-P(\ol{\n})) = -G,
\ea\ee
where $\theta(\n)=2\mu \log \n + \frac{1}{\beta} \n^\beta$.
Hence, to derive the upper bound for $\n$, it is crucial to obtain the $L^\infty$ estimate for $G$.
For this purpose, we follow the approach in \cite{FLL} to derive a pointwise representation of $G$.

Recall from (\ref{bp62}) that for any $t\in [0,T]$, $G$ satisfies the Neumann problem
\be\la{evf}\ba
\begin{cases}
\Delta G=\div \left( \rho \dot{u} + \na d \cdot \Delta d \right) & \mathrm{in}\, \,  \OM, \\
\frac {\p G}{\p n}= \left( \rho \dot{u} + \na d \cdot \Delta d \right) \cdot n
- \mu n^\bot \cdot \na (A u \cdot n^\bot) &\mathrm{on}\, \,  \p \OM.
\end{cases}
\ea\ee
The Green's function $N(x,y)$ for the Neumann problem on the unit disc $\mathbb{D}$ (see \cite{STT}) is given by
\be\la{glhs}\ba
N(x,y)=-\frac{1}{2\pi}\bigg(\log|x-y|+\log\left||x|y-\frac{x}{|x|}\right|\bigg).
\ea\ee
Furthermore, by the Riemann mapping theorem (see \cite{SES}), there exists a conformal mapping
$\varphi=(\varphi_1, \varphi_2):\overline{\Omega}\rightarrow\overline{\mathbb{D}}$.
We define the pull back Green's function $\widetilde{N}(x,y)$ on $\Omega$ by
\be\la{pglhs}\ba
\widetilde{N}(x,\, y)=N\big(\varphi(x),\varphi(y)\big) \ \ \mathrm{for}\ x,y\in\Omega.
\ea\ee
Using this pull back Green's function $\widetilde{N}$, we can obtain the following pointwise representation of $G$; the proof is similar to that of \cite[Lemma 3.7]{FLL}.

\begin{lemma}\label{bpl8}
Assume that $G\in C\big([0,T];C^1(\ol{\Omega})\cap C^2(\Omega)\big)$ satisfies the boundary value problem $(\ref{evf})$.
Then, for any $x\in\Omega$ and $t \in [0,T]$,
\be\la{bp08}\ba
G(x,t)
=&-\int_\Omega \nabla_y\widetilde{N}(x,\, y) \cdot \left( \rho \dot{u} + \na d \cdot \Delta d \right) (y)\, dy
+\int_{\partial\Omega} \frac{\partial \widetilde{N}}{\partial n}(x,\, y) G(y) dS_y \\
& \quad - \mu \int_{\partial\Omega} \widetilde{N}(x,\, y) \left( n^\bot \cdot \nabla \o \right) dS_y \\
=&-\frac{D}{Dt}\psi(x,t)+K_1(x,t)+K_2(x,t),
\ea\ee
where
\be\la{bp008}\ba
\psi(x,t)&\triangleq\int_\Omega\nabla_y\widetilde{N}\big(x,y\big)\cdot \rho u(y,t) dy,\\
K_1(x,t) & \triangleq
- \int_\Omega \nabla_y\widetilde{N}(x,y) \cdot \na d \cdot \Delta d \, dy +
\int_{\partial\Omega} \left[ \frac{\partial \widetilde{N}}{\partial n} G(y)
- \mu \widetilde{N}(x,y) \left( n^\bot \cdot \nabla \o \right) \right] dS_y, \\
K_2(x,t) & \triangleq \sum_{i,j=1}^2 \int_\Omega \left[ \p_{x_i} \p_{y_j}\widetilde{N}(x,y)\cdot u^i(x)+\p_{y_i} \p_{y_j}\widetilde{N}(x,y)\cdot u^i(y) \right]\rho u^j(y)dy.
\ea\ee
\end{lemma}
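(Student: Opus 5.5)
The plan is to apply Green's representation formula with the pulled-back Neumann function $\widetilde N$, and then split the volume term coming from $\rho\dot u$ into a material derivative plus a commutator-type remainder, following \cite[Lemma 3.7]{FLL}.

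\textbf{Step 1: Green's representation.} Since $\varphi$ is conformal, $\widetilde N(x,\cdot)$ is harmonic in $\Omega\setminus\{x\}$ with a logarithmic singularity $-\frac{1}{2\pi}\log|x-y|$ at $y=x$. Its normal derivative on $\partial\Omega$ is a conformal-factor multiple of the constant normal derivative on $\partial\mathbb D$, as in \cite{FLL}. Applying Green's second identity on $\Omega\setminus B_\varepsilon(x)$ to $G$ and $\widetilde N(x,\cdot)$ and letting $\varepsilon\to0$ gives
\[
G(x)=-\int_\Omega \widetilde N\,\Delta G\,dy+\int_{\partial\Omega}\Big(\frac{\partial\widetilde N}{\partial n}G-\widetilde N\frac{\partial G}{\partial n}\Big)dS_y .
\]
Any additive constant arising from the normalization of $\widetilde N$ is absorbed, exactly as in \cite{FLL}. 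Next substitute $\Delta G=\div(\rho\dot u+\nabla d\cdot\Delta d)$ and integrate by parts in $y$. The resulting boundary term $\int_{\partial\Omega}\widetilde N(\rho\dot u+\nabla d\cdot\Delta d)\cdot n$ cancels against the first part of $\partial G/\partial n$ in \eqref{evf}. What remains of the Neumann data is $-\mu n^\bot\cdot\nabla(Au\cdot n^\bot)$. Since $\omega=-Au\cdot n^\bot$ on $\partial\Omega$ and $n^\bot\cdot\nabla$ is tangential, this equals $\mu n^\bot\cdot\nabla\omega$, which produces the term $-\mu\int_{\partial\Omega}\widetilde N\,(n^\bot\cdot\nabla\omega)$. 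This yields the first equality in \eqref{bp08}.

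\textbf{Step 2: extracting the material derivative.} Write $\rho\dot u=(\rho u)_t+\div(\rho u\otimes u)$ using $(\ref{nlckv})_1$. The time part gives
\[
-\int_\Omega\nabla_y\widetilde N\cdot(\rho u)_t\,dy=-\psi_t .
\]
For the convective part, integrate by parts in $y$. No boundary term appears because $u\cdot n=0$ on $\partial\Omega$. This gives $\int_\Omega\partial_{y_i}\partial_{y_j}\widetilde N\,u^i(y)\rho u^j(y)\,dy$. Then add and subtract $u(x)\cdot\nabla_x\psi=\int_\Omega \partial_{x_i}\partial_{y_j}\widetilde N\,u^i(x)\rho u^j(y)\,dy$. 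This rewrites the whole volume term as $-\frac{D}{Dt}\psi+K_2$. The $\nabla d\cdot\Delta d$ term together with the two boundary integrals forms $K_1$.

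\textbf{Main obstacle.} The delicate part is justifying these manipulations in the presence of the singularities of $\nabla_y\widetilde N$ and $\nabla^2\widetilde N$. The kernel $\partial_{y_i}\partial_{y_j}\widetilde N$ is not integrable, so the integrals in $K_2$ must be understood as a principal value, or better, as the combination in which the singular parts cancel: $u^i(x)-u^i(y)=O(|x-y|)$ near the diagonal. To handle this, I would excise $B_\varepsilon(x)$ and track the boundary contributions on $\partial B_\varepsilon(x)$. Using the regularity $G\in C^1(\overline\Omega)\cap C^2(\Omega)$ and $\rho u\in C^1$ (valid for the smooth approximate solutions with $\rho_0>0$), I would check that these contributions vanish as $\varepsilon\to0$, up to the symmetric cancellation of the leading $\log$-kernel terms. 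Near the boundary, the conformal structure of $\varphi$ ensures that $\widetilde N$ inherits the reflection structure of $N$ on $\mathbb D$, so the boundary terms behave as in \cite[Lemma 3.7]{FLL}. The director term adds no new difficulty, since $\nabla d\cdot\Delta d$ only enters linearly through $K_1$.
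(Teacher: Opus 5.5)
Your route is the paper's: it defers to \cite[Lemma 3.7]{FLL}, which uses Green's second identity with $\widetilde N(x,\cdot)$, inserts the Neumann data from \eqref{evf}, writes $\rho\dot u=(\rho u)_t+\mathrm{div}(\rho u\otimes u)$, and adds and subtracts $u(x)\cdot\nabla_x\psi$. However, Step 1 contains a sign error, and with your formula the cancellation you claim does not happen.

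\textbf{The sign error in Step 1.} With $N$ as in \eqref{glhs}, one has $\Delta_y\widetilde N(x,\cdot)=-\delta_x$ in $\Omega$. This is consistent with \eqref{bp94}, since $\int_{\partial\Omega}\partial_n\widetilde N\,dS_y=-\frac{1}{2\pi}\int_{\partial\Omega}|\nabla\varphi_1|\,dS=-1$. Excising $B_\varepsilon(x)$ therefore gives
\[
G(x)=-\int_\Omega\widetilde N\,\Delta G\,dy-\int_{\partial\Omega}\Big(\frac{\partial\widetilde N}{\partial n}G-\widetilde N\frac{\partial G}{\partial n}\Big)dS_y .
\]
Your version already fails for $G\equiv 1$, where it returns $-1$.

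With your sign, write $F=\rho\dot u+\nabla d\cdot\Delta d$. Integrating $\widetilde N\,\mathrm{div}F$ by parts produces $-\int_{\partial\Omega}\widetilde N\,F\cdot n\,dS_y$. Your formula then adds a second copy of this term, so it doubles instead of cancelling. With the correct sign the two terms do cancel, but the result is
\[
G(x)=\int_\Omega\nabla_y\widetilde N\cdot F\,dy-\int_{\partial\Omega}\frac{\partial\widetilde N}{\partial n}G\,dS_y+\mu\int_{\partial\Omega}\widetilde N\,(n^\bot\cdot\nabla\omega)\,dS_y .
\]
This is minus the first line of \eqref{bp08}. The printed formula corresponds to the opposite normalization $\Delta_y\widetilde N=\delta_x$.

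The discrepancy is harmless downstream. The second equality in \eqref{bp08} is linear in $\widetilde N$, and Lemma \ref{bpl9} only uses $|\psi|$, $\|K_1\|_{L^\infty}$ and $\|K_2\|_{L^\infty}$. Still, you should fix one convention and carry it through, rather than adjust signs to match the display.

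\textbf{The singular integrals in Step 2.} The $\partial B_\varepsilon(x)$ contributions do not vanish. Integrating the convective term by parts on $\Omega\setminus B_\varepsilon(x)$ leaves a circle integral. For $\widetilde N\sim-\frac{1}{2\pi}\log|x-y|$, that integral tends to $-\frac12\rho|u|^2(x)$.

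Likewise, $\nabla_x\psi$ cannot be obtained by differentiating under the integral. The correct formula is
\[
\partial_{x_i}\psi=\mathrm{p.v.}\int_\Omega\partial_{x_i}\partial_{y_j}\widetilde N\,\rho u^j\,dy+\tfrac12\rho u^i(x).
\]
The extra term comes from the local part $\frac12\delta_{ij}\delta_0$ of $\partial_i\partial_j\big(\frac{1}{2\pi}\log|z|\big)$.

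These two local terms cancel exactly when you form $u\cdot\nabla_x\psi$ plus the convective term. The resulting kernel of $K_2$ can be written as
\[
(\partial_{x_i}+\partial_{y_i})\partial_{y_j}\widetilde N\,u^i(x)+\partial_{y_i}\partial_{y_j}\widetilde N\,\big(u^i(y)-u^i(x)\big).
\]
For interior $x$, the factor $(\partial_{x_i}+\partial_{y_i})\partial_{y_j}\widetilde N$ is smooth near the diagonal, because $(\partial_{x_i}+\partial_{y_i})\log|x-y|=0$. The second term is $O(|x-y|^{-1})$ when $u$ is Lipschitz. So $K_2$ is an absolutely convergent integral, and no principal value remains in the final formula.

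With these two corrections your argument goes through.
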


\begin{lemma}\la{bpl9}
For any $\ep>0$ and $0 \le t_1 < t_2 \le T$,
there exists a positive constant $C$ depending only on
$\ep$, $\mu$, $\ga$, $\beta$, $\| \n_0 \|_{L^\infty}$, $\| u_0 \|_{H^1}$, $\| \na d_0 \|_{H^1}$, $\OM$, and $A$
such that when $\ga<2\beta$, it holds that
\be\ba\la{bp09}
\int_{t_1}^{t_2} - G(x(t),t) dt
\le C R^{1+\ep}_T (t_2-t_1) + C R^{1+\frac{\beta}{4}+3\ep}_T
+ C R_T^{\frac{2+\beta}{3}},
\ea\ee
when $\ga \ge 2\beta$, we have
\be\ba\la{bp009}
\int_{t_1}^{t_2} - G(x(t),t) dt
\le C R^{1+\frac{\beta}{4}+2\ep}_T (t_2-t_1+1) + C R_T^{\frac{2+\beta}{3}},
\ea\ee
where $x(t)$ is the flow line determined by $x(t)'=u(x(t),t)$.
\end{lemma}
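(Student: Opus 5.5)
Using the pointwise representation of Lemma \ref{bpl8} along the flow line $x(t)$, we write
\begin{equation*}
\int_{t_1}^{t_2} -G(x(t),t)\,dt = \psi(x(t_2),t_2)-\psi(x(t_1),t_1) - \int_{t_1}^{t_2} \left( K_1 + K_2 \right)(x(t),t)\,dt,
\end{equation*}
since $\frac{D}{Dt}\psi(x(t),t) = \frac{d}{dt}\psi(x(t),t)$. The plan is to estimate the three pieces separately, following \cite[Lemma 3.8]{FLL} and \cite[Proposition 3.7]{FLW}, with the only new ingredient being the director contributions inside $K_1$.

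\emph{Step 1 ($\psi$).} Since $|\nabla_y \widetilde N(x,y)| \le C|x-y|^{-1}$, we have $|\psi| \le C\int |x-y|^{-1}\rho|u|\,dy$. By H\"older, this is controlled by $\|\rho u\|_{L^{2+\nu'}}$ for any exponent above $2$. Interpolating with Lemma \ref{bpl5} ($\int\rho|u|^{2+\nu}\le C$ with $\nu = \nu_0 R_T^{-\beta/2}$) and with $\|\rho\|_{L^\infty}\le R_T$, I expect a bound $\|\psi\|_{L^\infty} \le C R_T^{(2+\beta)/3}$. This accounts for the last term in both \eqref{bp09} and \eqref{bp009}. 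The exponent should come from balancing $R_T^{(1+\nu)/(2+\nu)}$ against the loss $\nu^{-1/2}\sim R_T^{\beta/4}$ in the $L^{(2+\nu)/\nu}$-type integrability of $|x-y|^{-1}$. I would redo the FLW computation and check that it gives $(2+\beta)/3$.

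\emph{Step 2 ($K_2$).} $K_2$ is a Calder\'on--Zygmund commutator, the bounded-domain analogue of $F_1$. Following FLL, split $\|K_2\|_{L^\infty}$ into $L^p$ and $W^{1,r}$ pieces. The bound is $\|\nabla u\|_{L^p}$-type times $\|\rho u\|_{L^p}$, and these factors are controlled by Lemma \ref{bpl6} and Lemma \ref{bpl5}. The outcome, as in Lemma \ref{ppl7}, should be
\begin{equation*}
\|K_2\|_{L^\infty} \le C\frac{B_2^2}{e+B_1^2} + C R_T^{1+\beta/4+2\ep}B_1^2 + C R_T^{\ep}.
\end{equation*}
Integrating in time with Lemma \ref{bpl7}, \eqref{bp01} and \eqref{bp04} gives $C R_T^{1+\beta/4+3\ep} + C R_T^{\ep}(t_2-t_1)$.

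The case distinction enters here. When $\ga<2\beta$, the sharper form \eqref{bp006} (with $B_1^{2/p}$ rather than $(1+B_1)^{2/p}$) is available, so the constant term is only $R_T^{\ep}$. When $\ga\ge 2\beta$, only \eqref{bp06} applies, and the factor $R_T^{1+\beta/4+2\ep}$ multiplies $(t_2-t_1+1)$, as in \eqref{bp009}.

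\emph{Step 3 ($K_1$).} For the boundary integrals in $K_1$, I follow FLL. Rewrite $G$ and $n^\bot\cdot\nabla\o$ on $\p\OM$ via the boundary condition $\o=-Au\cdot n^\bot$, which turns $n^\bot\cdot\nabla\o$ into a tangential derivative of lower-order terms. Then integrate by parts along $\p\OM$ against the (log-singular) kernel, and bound the result by $\|G\|_{H^1}$, $\|\o\|_{H^1}$ and $\|\nabla u\|_{L^p}$ to small powers. After Young's inequality, this yields a bound of the form $C\frac{B_2^2}{e+B_1^2}+CR_T^{1+\ep}(1+B_1^2)$.

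The director term $\int\nabla_y\widetilde N\cdot\nabla d\cdot\Delta d\,dy$ is the new part. I bound it by $C\|\nabla d\|_{L^8}\|\Delta d\|_{L^4}$. Using \eqref{bp02} and Gagliardo--Nirenberg, this is at most $C\|\nabla^2 d\|_{L^2}^{1/2}\|\nabla^3 d\|_{L^2}^{1/2} + C \le C + CB_1^2 + C\frac{B_2^2}{e+B_1^2}$, exactly as in \eqref{pp811}. Its time integral is therefore $\le C R_T^{1+\ep}$ by Lemma \ref{bpl7}. Summing Steps 1--3 gives \eqref{bp09} and \eqref{bp009}.

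I expect the main obstacle to be Step 2 together with the $\psi$ bound. The bookkeeping of $R_T$-powers in the commutator $K_2$ on a domain, with the conformally pulled-back kernel, must land exactly on $1+\beta/4+3\ep$, since it is this exponent that later closes the argument for $\beta>4/3$. I would import those estimates from \cite{FLL,FLW} essentially verbatim. The only modifications are to account for $\nabla d\cdot\Delta d$ where it enters through $\|G\|_{H^1}$ via \eqref{bp66}, and where it enters through Lemma \ref{bpl6}.
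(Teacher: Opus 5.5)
Your Step 1 and your treatment of the director term in $K_1$ essentially match the paper; bounding the director term by $C\|\nabla d\|_{L^8}\|\Delta d\|_{L^4}$ is fine. Step 2, however, has a genuine gap. You propose to bound $\|K_2\|_{L^\infty}$ by interpolating between $L^p$ and $W^{1,r}$ norms of $K_2$, as was done for $F_1$ in Lemma \ref{ppl7}. That argument rests on the Coifman--Rochberg--Weiss and Coifman--Meyer estimates of Lemma \ref{jhzyl}, which are stated only for Riesz-transform commutators on $\mathbb{T}^2$.

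$K_2$ is instead built from second derivatives of the pulled-back Neumann function $\widetilde N$. Its reflected part $\log\bigl||\varphi(x)|\varphi(y)-\varphi(x)/|\varphi(x)|\bigr|$ is itself singular when $x$ and $y$ approach a common boundary point. The paper supplies no $L^p$ or gradient commutator bound for this kernel on bounded domains. What it imports from \cite[Proposition 3.2]{FLL} is the pointwise bound \eqref{bp98}. The first piece there is harmless: $\sup_x\int\rho|u|^2|x-y|^{-1}dy\le C\|\nabla u\|_{L^2}^2$. The second is handled by \eqref{bp911}, namely $\int\frac{|u(x)-u(y)|}{|x-y|^2}\rho|u|(y)\,dy\le C\|\nabla u\|_{L^p}R_T^{1+\beta/4}B_1^{2/p}$, with $p>2$ taken close to $2$ so that the $R_T$-exponent stays near $1+\beta/4$.

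This is also where the case split really comes from, and your explanation of it does not follow from your own mechanism. In a commutator bound of the form $\|\nabla u\|_{L^2}^{1-4/p}\|\nabla u\|_{L^4}^{4/p}\|\rho u\|_{L^p}$, every term carries the factor $B_1^{2-6/p}$. Young's inequality would then always move the $R_T$-power onto the time-integrable $B_1^2$, and no distinction between $\gamma<2\beta$ and $\gamma\ge 2\beta$ would arise, just as in the periodic case. In \eqref{bp911}, by contrast, the factor $\rho u$ supplies only $B_1^{2/p}\approx B_1$.
\begin{itemize}
\item With \eqref{bp006} one has $R_T^{1+\beta/4+2\varepsilon}B_1^{4/p}(1+B_1+B_2)^{1-2/p}\le CR_T^{1+\beta/4+3\varepsilon}B_1^2+C(1+B_1+B_2)$. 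After time integration this contributes $C(t_2-t_1)+CR_T^{1+\beta/4+3\varepsilon}$, which is \eqref{bp09}.
\item With only \eqref{bp06}, the term $R_T^{1+\beta/4+2\varepsilon}B_1^{2/p}$ cannot be shifted onto $B_1^2$ without roughly doubling the exponent. The large power must therefore multiply $1+B_1^2$, which produces the $(t_2-t_1+1)$ in \eqref{bp009}.
\end{itemize}
You should replace Step 2 by this route.

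There is also a smaller issue in Step 3. Integrating the $\omega$-boundary term by parts along $\partial\Omega$ puts a tangential derivative on $\widetilde N(x,\cdot)$. That gives a kernel of size $|x-y|^{-1}$ on a curve, which is not uniformly integrable as $x\to\partial\Omega$. The paper avoids this by using $\omega=-Au\cdot n^\bot$ and the divergence theorem to rewrite the term as $\int_\Omega\nabla^\bot(Au\cdot n^\bot)\cdot\nabla_y\widetilde N\,dy$. It treats the $G$-boundary term via $\partial_n\widetilde N=-\frac{1}{2\pi}|\nabla\varphi_1|$ and the trace inequality.
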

\begin{proof}
First, we deduce from (\ref{bp08}) that
\be\la{bp91}\ba
-G(x(t),t) & = \frac{d}{dt} \psi(t) - K_1(x(t),t) - K_2(x(t),t) \\
& \le \frac{d}{dt} \psi(t) + \| K_1(\cdot,t) \|_{L^\infty}
+ \| K_2(\cdot,t) \|_{L^\infty},
\ea\ee
where we denote $\psi(x(t),t)$ by $\psi(t)$.

For $\nu$ as in Lemma \ref{bpl5}, it follows from (\ref{bp05}), (\ref{bp005}) and H\"older's inequality that
\be\ba\nonumber
| \psi(t) | \le C
\left| \int_{\OM} \nabla_y \widetilde{N}\big(x,y\big)\cdot \rho u(y,t) dy \right|
& \le C \int_\OM |x-y|^{-1} \n |u| dy \\
& \le C \left( \int_{\OM} |x-y|^{-\frac{2+\nu}{1+\nu}} dy \right)^{\frac{1+\nu}{2+\nu}}
\left(\int_{\OM}\n^{2+\nu} |u|^{2+\nu} dy \right)^{\frac{1}{2+\nu}} \\
& \le C \nu^{-\frac{1+\nu}{2+\nu}} R_T^{\frac{1+\nu}{2+\nu}} \\
& \le C R_T^{\frac{2+\beta}{3}},
\ea\ee
which yields
\be\la{bp92}\ba
\int_{t_1}^{t_2} \frac{d}{dt} \psi(t) dt = \psi(t_2) - \psi(t_1) \le C R_T^{\frac{2+\beta}{3}}.
\ea\ee
For the first term in $K_1$, (\ref{pp02}) and the standard elliptic estimates give
\be\la{bp93}\ba
\left| \int_\OM  \nabla_y\widetilde{N}(x,y) \cdot \na d \cdot \Delta d dy \right|
& \le \int_\OM  |x-y|^{-1} | \na d | |\na^2 d| dy \\
& \le \| |x-y|^{-1} \|_{L^{\frac{3}{2}}} \| \na d \|_{L^6} \| \na^2 d \|_{L^6} \\
& \le C (B_1 + B_2).
\ea\ee
Recall from \cite[Lemma 3.6]{FLL} that for any $x\in \OM$, $y\in \p \OM$,
\be\la{bp94}\ba
\frac{\partial \widetilde{N}}{\partial n}(x,\, y)=-\frac{1}{2 \pi} |\na \varphi_1 (y)|,
\ea\ee
which together with (\ref{bp66}) and Sobolev embedding shows that
\be\la{bp95}\ba
\int_{\partial\Omega}\left|\frac{\partial \widetilde{N}}{\partial n}(x,\, y) G(y) \right|dS_y
\leq C\int_{\partial\Omega}\big|G\big|\, dS\leq C\big\|G\big\|_{H^1}
\leq C R^{1/2}_T B_2 + C B_1.
\ea\ee
Moreover, we deduce from (\ref{bp06}) and H\"older's inequality that
\be\la{bp96}\ba
\left| \int_{\partial\Omega} - \mu \widetilde{N}(x,\, y) \left( n^\bot \cdot \nabla \o \right) dS_y \right|
& = \mu \left| \int _{\partial \Omega }{\widetilde{N}}(x, y) n^\bot \cdot \nabla (Au\cdot n^\bot )\mathrm{d}S_y\right| \\
& = \mu \left| \int _{\Omega }\mathrm {div}(\nabla ^\bot (Au\cdot n^\bot ){\widetilde{N}}(x, y))d y\right| \\
&\leq C\int _{\Omega }\vert \nabla {\widetilde{N}}(x, y)\vert (\vert \nabla u\vert +\vert u\vert ) dy \\
&\leq C\Vert \nabla {\widetilde{N}}\Vert _{L^\frac{4}{3}} \Vert \nabla u\Vert _{L^4} \\
&\le C R^{1/2}_T \left(1 + B_1 + B_2 \right).
\ea\ee
Hence, combining (\ref{bp93}), (\ref{bp95}), and (\ref{bp96}), and applying Young's inequality, we obtain
\be\la{bp97}\ba
\int_{t_1}^{t_2} \| K_1(\cdot,t) \|_{L^\infty} dt
& \le C  R^{1/2}_T \int_{t_1}^{t_2} \left(1 + B_1 + B_2 \right) dt \\
& \le C R_T \int_{t_1}^{t_2} \left(1 + B^2_1 \right) dt
+ C \int_{t_1}^{t_2} \frac{B^2_2}{e+B^2_1} dt \\
& \le C R_T (t_2 - t_1) + C R_T^{1+\ep}.
\ea\ee
For $K_2$, we recall from \cite[Proposition 3.2]{FLL} that
\be\la{bp98}\ba
\|K_2(\cdot, t)\|_{L^\infty(\Omega)}
& \leq C \left( \sup_{ x \in \overline{\Omega} } \int_\Omega \frac{\rho|u|^2(y)}{|x-y|} dy
+\sup_{x\in\overline{\Omega}}\int_\Omega \frac{|u(x)-u(y)|}{|x-y|^2} \rho |u|(y) dy \right).
\ea\ee
We now estimate each term on the right-hand side of (\ref{bp98}) separately.

Using (\ref{yzgj}) and Poincar\'e's inequality, we derive
\be\la{bp99}\ba
\int_\Omega\frac{\rho|u|^2(y)}{|x-y|}dy
\leq C \left(\int_\Omega |x-y|^{-3/2}dy\right)^{2/3}
\| \n \|_{L^6} \| u \|_{L^{12}}^2
\leq C \|\nabla u\|_{L^2}^2,
\ea\ee
which ensures that
\be\la{bp910}\ba
\int_{t_1}^{t_2} \sup_{x\in\overline{\Omega}}\int_\Omega\frac{\rho|u|^2(y)}{|x-y|} dy dt
\le C \int_0^T \| \na u \|^2_{L^2} dt \le C.
\ea\ee

On the other hand, following the arguments in \cite[Lemma 4.2]{FLW}, we conclude that for any $2<p<6$
\be\la{bp911}\ba
\int_{\OM} \frac{\left|u(x)-u(y) \right|}{|x-y|^2}
\n |u| (y) dy
\le C \| \na u \|_{L^p} R^{1+\frac{\beta}{4}}_T B_1^{\frac{2}{p}}.
\ea\ee

Then, using Lemma \ref{bpl6}, we bound (\ref{bp911}) by considering the following two separate cases:

$Case \ 1: \ga<2\beta.$
For $\ep \in (0,\frac{1}{2})$, choose $2<p<6$ sufficiently close to $2$ such that
\be\ba\nonumber
\frac{p}{2}< \min \left\{ \frac{\ga+1}{\ga},\ 
\frac{1+\beta/4+3\ep}{1+\beta/4+2\ep},\ \frac{1}{1-2\ep} \right\}.
\ea\ee
This, combined with (\ref{bp006}), gives
\be\ba\la{bp9012}
\| \nabla u \|_{L^{p}}
& \le C R^{\frac{1}{2}-\frac{1}{p}+\ep}_T B_1^{\frac{2}{p}} (1+B_1+B_2)^{1-\frac{2}{p}}
\le C R^{2\ep}_T B_1^{\frac{2}{p}} (1+B_1+B_2)^{1-\frac{2}{p}}.
\ea\ee
Inserting (\ref{bp9012}) into (\ref{bp911}) and using Young's inequality, we arrive at
\be\la{bp912}\ba
& \int_{\OM} \frac{\left|u(x)-u(y) \right|}{|x-y|^2}
\n |u| (y) dy \\
& \le C R^{1+\frac{\beta}{4}+2\ep}_T B_1^{\frac{4}{p}} (1+B_1+B_2)^{1-\frac{2}{p}} \\
& \le C R^{ (1+\frac{\beta}{4}+2\ep) \frac{p}{2} }_T B^2_1
+ C (1+B_1+B_2) \\
& \le C\left( 1+R^{1+\frac{\beta}{4}+3\ep}_T B^2_1+\frac{B^2_2}{e+B^2_1} \right).
\ea\ee
Integrating (\ref{bp912}) over $(t_1,t_2)$ and applying (\ref{bp01}), (\ref{bp04}) and (\ref{bp07}) yields
\be\la{bp913}\ba
\int_{t_1}^{t_2} \sup_{x \in \overline{\OM}}
\left( \int_{\OM} \frac{\left|u(x)-u(y) \right|}{|x-y|^2}
\n |u| (y) dy \right) dt
\le C(t_2-t_1) + C R^{1+\frac{\beta}{4}+3\ep}_T.
\ea\ee

$Case \ 2: \ga \ge 2\beta.$
From (\ref{bp06}) and (\ref{bp911}), we deduce
\be\la{bp914}\ba
& \int_{\OM} \frac{\left|u(x)-u(y) \right|}{|x-y|^2}
\n |u| (y) dy \\
& \le C R^{ \frac{3}{2}-\frac{1}{p}+\frac{\beta}{4}+\ep }_T
( B_1^{\frac{2}{p}} + B_1^{\frac{4}{p}} )
(1+B_1+B_2)^{1-\frac{2}{p}} \\
& \le C R^{ (\frac{3}{2}-\frac{1}{p}+\frac{\beta}{4}+\ep) \frac{p}{2} }_T (1+B^2_1)
+ C (1+B_1+B_2) \\
& \le C\left( R^{1+\frac{\beta}{4}+2\ep}_T (1+B^2_1) + \frac{B^2_2}{e+B^2_1} \right),
\ea\ee
provided $2<p<6$ is sufficiently close to $2$ and satisfies
$\frac{p}{2} \le \frac{3/2+\beta/4+2\ep}{3/2+\beta/4+\ep}$.

Integrating (\ref{bp914}) over $(t_1,t_2)$, we obtain after using (\ref{bp01}), (\ref{bp04}) and (\ref{bp07}) that
\be\la{bp915}\ba
\int_{t_1}^{t_2} \sup_{x \in \overline{\OM}}
\left( \int_{\OM} \frac{\left|u(x)-u(y) \right|}{|x-y|^2}
\n |u| (y) dy \right) dt
\le C R^{1+\frac{\beta}{4}+2\ep}_T (t_2-t_1+1).
\ea\ee
The combination of (\ref{bp91}), (\ref{bp92}), (\ref{bp97}), (\ref{bp98}), (\ref{bp910}), (\ref{bp913}), and (\ref{bp915}) gives (\ref{bp09}) and (\ref{bp009}), thus completing the proof of Lemma \ref{bpl9}.
\end{proof}

\begin{lemma}\la{bpl10}
There exists a positive constant $C$ depending only on
$\mu$, $\ga$, $\beta$, $\| \n_0 \|_{L^\infty}$, $\| u_0 \|_{H^1}$, $\| \na d_0 \|_{H^1}$, $\OM$, and $A$ such that
\be\ba\la{bp010}
& \sup_{0\leq t\leq T} \left( \|\n\|_{L^\infty} + \| u \|_{H^1} + \| \na d \|_{H^1} \right) \\
& + \int_0^T \left( \| u \|^2_{H^1} + \| \sqrt{\n} \dot{u} \|^2_{L^2} + \| \na^2 d \|^2_{H^1} + \| \na d_t \|^2_{L^2} \right) dt
\le C.
\ea\ee
\end{lemma}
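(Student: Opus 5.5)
We argue exactly as in the proof of Lemma \ref{ppl8}, now using the pointwise representation of $G$ from Lemma \ref{bpl8} in place of the commutator decomposition. Along a flow line $x(t)$ with $x'(t)=u(x(t),t)$, identity (\ref{evf0}) becomes
\[
\frac{d}{dt}\theta(\n(x(t),t)) + P - P(\ol{\n}) = -G(x(t),t).
\]
We apply Lemma \ref{zli} with $y=\theta(\n)$, $g(y)=-P(\theta^{-1}(y))+P(\ol{\n})$ and $h(t)=\int_0^t -G(x(s),s)\,ds$; clearly $g(\infty)=-\infty$, and $\ol{\n}$ is bounded by (\ref{yzgj}). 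Lemma \ref{bpl9} then supplies $N_0$ and $N_1$. If $\ga<2\beta$, we take $N_1=CR_T^{1+\ep}$ and $N_0=CR_T^{1+\beta/4+3\ep}+CR_T^{(2+\beta)/3}$. If $\ga\ge 2\beta$, we take $N_1=N_0=CR_T^{1+\beta/4+2\ep}+CR_T^{(2+\beta)/3}$.

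We choose $\overline{\zeta}=\theta\big((CN_1)^{1/\ga}\big)$, so that $\overline{\zeta}\le C N_1^{\beta/\ga}$. Since $\theta(\n)\ge \beta^{-1}\n^\beta - C$ for $\n\ge1$, Lemma \ref{zli} yields
\[
R_T^\beta \le C + C\big(N_0 + N_1^{\beta/\ga}\big).
\]
The exponents then close as follows.
\begin{itemize}
\item In Case 1 the exponents on the right are $1+\beta/4+3\ep$, $(2+\beta)/3$ and $(1+\ep)\beta/\ga$. Each is strictly less than $\beta$ once $\beta>4/3$ and $\ep$ is small: $(2+\beta)/3<\beta$ is equivalent to $\beta>1$, and $\ga>1$ handles the third.
\item In Case 2 we have $\ga\ge2\beta$, so $\beta/\ga\le 1/2$. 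The exponent $(1+\beta/4+2\ep)\beta/\ga$ is then below $\beta$, and $1+\beta/4+2\ep<\beta$ for $\ep<(3\beta-4)/8$.
\end{itemize}
Hence $R_T^\beta\le CR_T^{\beta-\delta}$ for some $\delta>0$, and Young's inequality gives $R_T\le C$. This bookkeeping of powers of $R_T$ is the main point, and it is where $\beta>4/3$ enters, through the $R_T^{1+\beta/4}$ loss coming from (\ref{bp911}).

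With $R_T\le C$, Lemma \ref{bpl7} gives $\sup_t B_1^2 + \int_0^T B_2^2/(e+B_1^2)\,dt\le C$. Because $B_1$ is bounded, this means $\int_0^T B_2^2\,dt\le C$, which controls $\|\sqrt{\n}\dot u\|_{L^2}$, $\|\na d_t\|_{L^2}$ and $\|\na\Delta d\|_{L^2}$ in $L^2(0,T)$. The bound on $\sup_t\|u\|_{H^1}$ follows from $\na u\in L^\infty(0,T;L^2)$, Poincaré's inequality (using $u\cdot n=0$) and Lemma \ref{dc}. For $\sup_t\|\na d\|_{H^1}$ we combine $\|\na^2 d\|_{L^2}\le B_1$ with (\ref{bp01}). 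Finally, $\int_0^T\|\na^2 d\|_{H^1}^2\,dt$ is controlled by $\int_0^T\big(\|\na^2 d\|^2_{L^2}+\|\na\Delta d\|^2_{L^2}\big)dt$ through higher-order Neumann elliptic estimates applied to $\na d$.

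The one delicate point is that $\na d$ does not itself satisfy a homogeneous Neumann condition. We therefore estimate $\|\na^3 d\|_{L^2}$ by $C(\|\na\Delta d\|_{L^2}+\|\na^2 d\|_{L^2})$ together with lower-order boundary terms, handled through $\p_n d=0$, the $H^3$ estimate for the Neumann problem satisfied by each $d^j$, and the bound (\ref{bp02}). Integrating $\|u\|_{H^1}^2$ via (\ref{bp01}) completes (\ref{bp010}).
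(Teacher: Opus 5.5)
Your proposal is correct and follows the paper's proof essentially step for step. You apply Zlotnik's inequality (Lemma \ref{zli}) along particle paths using the bounds of Lemma \ref{bpl9}, split into the same cases $\gamma<2\beta$ and $\gamma\ge 2\beta$ with the same exponent comparison, and then invoke Lemma \ref{bpl7} and Poincar\'e's inequality for the remaining bounds; placing $P(\bar\rho)$ in $g$ rather than in $h$, and the slightly larger $N_1$ in Case 2, are immaterial, while your remark that $\int_0^T\|\nabla^3 d\|_{L^2}^2\,dt$ needs the $H^3$ Neumann estimate $\|\nabla^3 d\|_{L^2}\le C(\|\nabla\Delta d\|_{L^2}+\|\Delta d\|_{L^2})$ for each $d^j$ (via $\partial_n d=0$) fills in a step the paper leaves implicit.
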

\begin{proof}
First, recall from (\ref{evf0}) that
\be\la{bp0101}\ba
\frac{d}{dt} \theta(\n) + P = -G + P(\ol{\n}).
\ea\ee
The function $y=\theta(\n)$ is strictly increasing on $(0,\infty)$,
guaranteeing the existence of its inverse function $\n=\theta^{-1}(y)$ for all $y\in(-\infty,\infty)$.
We may rewrite (\ref{bp0101}) as
\be\ba\nonumber
y'(t) = g(y) + h'(t),
\ea\ee
with
\be\la{bp0102}\ba
y=\theta(\n), \quad g(y)=-P( \theta^{-1}(y) ), \quad h=\int_0^t \left( P(\ol{\n}) - G \right) ds.
\ea\ee
Clearly, $g(\infty)=-\infty$.
Next, we apply Lemma \ref{bpl9} to estimate $h$ in two separate cases.

$Case \ 1: \ga<2\beta.$
From (\ref{bp01}) and (\ref{bp09}), we conclude that
\be\ba\nonumber
h(t_2)-h(t_1) \le C \left( R^{1+\frac{\beta}{4}+3\ep}_T + R_T^{\frac{2+\beta}{3}} \right)
+ C R^{1+\ep}_T (t_2-t_1).
\ea\ee
Choose $N_0$, $N_1$, and $\overline{\zeta}$ in Lemma \ref{zli} as follows:
\be\la{bp0103}\ba
N_0 = C \left( R^{1+\frac{\beta}{4}+3\ep}_T + R_T^{\frac{2+\beta}{3}} \right),\quad
N_1 = C R^{1+\ep}_T,\quad
\overline{\zeta} = \theta \left( \left( C R^{1+\ep}_T \right)^{1/\ga} \right),
\ea\ee
which together with (\ref{bp0102}) yields
\be\ba\nonumber
g(\zeta)=-( \theta^{-1}(\zeta) )^\ga \le - N_1 = - C R^{1+\ep}_T
\quad \text{ for all } \zeta \ge \overline{\zeta}.
\ea\ee
Furthermore, $R_T \ge 1$ implies $\overline{\zeta} \le C R^{ (1+\ep)\frac{\beta}{\ga} }_T$.
Combining this with (\ref{bp0103}) and Lemma \ref{zli} leads to
\be\la{bp0104}\ba
R^\beta_T \le C R_T^{ \max\{ 1+\frac{\beta}{4}+3\ep,\frac{2+\beta}{3},
(1+\ep)\frac{\beta}{\ga} \} }.
\ea\ee
Since $\beta>4/3$ and $\ga>1$,
we take $0<\ep<\min\{ (3\beta-4)/12,\ga-1 \}$ and deduce from (\ref{bp0104}) that
\be\la{bp0105}\ba
\sup_{0 \le t \le T} \| \n \|_{L^\infty} \le C.
\ea\ee

$Case \ 2: \ga \ge 2\beta.$
In view of (\ref{bp01}) and (\ref{bp009}), we have
\be\ba\la{bp01050}
h(t_2)-h(t_1) \le C \left( R^{1+\frac{\beta}{4}+2\ep}_T + R_T^{\frac{2+\beta}{3}} \right)
+ C R^{1+\frac{\beta}{4}+2\ep}_T (t_2-t_1).
\ea\ee
Arguing as in $Case ~1$ and applying Lemma \ref{zli}, we obtain
\be\la{bp0106}\ba
R^\beta_T \le C R_T^{ \max\{ 1+\frac{\beta}{4}+2\ep,\frac{2+\beta}{3},
(1+\frac{\beta}{4}+2\ep)\frac{\beta}{\ga} \} }.
\ea\ee
Moreover, $\ga \ge 2\beta$ implies that
$(1+\frac{\beta}{4}+2\ep)\frac{\beta}{\ga} \le 1+\frac{\beta}{4}+2\ep$,
which together with (\ref{bp0106}) leads to
\be\la{bp0107}\ba
R^\beta_T \le C R_T^{ \max\{ 1+\frac{\beta}{4}+2\ep,\frac{2+\beta}{3} \} }.
\ea\ee
For $\beta>4/3$,
we choose $0<\ep<(3\beta-4)/8$ and conclude from (\ref{bp0107}) that
\be\la{bp0108}\ba
\sup_{0 \le t \le T} \| \n \|_{L^\infty} \le C.
\ea\ee
Combining (\ref{bp0105}), (\ref{bp0108}), (\ref{bp01}), (\ref{bp07}), and 
Poincar\'e's inequality, we arrive at (\ref{bp010}) and complete the proof of Lemma \ref{bpl10}.
\end{proof}

\subsection{A priori estimates (II): higher order estimates}

The goal of this subsection is to establish some necessary higher-order estimates, which ensure that the local strong solution can be extended globally in time.

\begin{lemma}\la{bpgl1}
There exists a positive constant $C$ depending only on
$\mu$, $\ga$, $\beta$, $\| \n_0 \|_{L^\infty}$, $\| u_0 \|_{H^1}$, $\| \na d_0 \|_{H^1}$, $\OM$, and $A$ such that
\be\ba\la{bpg01}
\sup_{0\le t\le T}
\si \left( \| \sqrt{\n} \dot{u} \|^2_{L^2} + \| \na d_t \|^2_{L^2} + \| \na^3 d \|^2_{L^2} \right)
+ \int_0^{T} \si \left( \| \na \dot{u} \|^2_{L^2} + \| \na^2 d_t \|^2_{L^2} \right) dt \le C,
\ea\ee
with $\si \triangleq \min\{ 1,t \}$.
Moreover, for any $p\in [1,\infty)$, there is a positive constant $C$ depending only on $p$, $\mu$, $\ga$, $\beta$, $\| \n_0 \|_{L^\infty}$, $\| u_0 \|_{H^1}$, $\| \na d_0 \|_{H^1}$, $\OM$, and $A$ such that
\be\ba\la{bpg001}
\sup_{0 \le t \le T} \si \| \na u \|^2_{L^p} \le C.
\ea\ee
\end{lemma}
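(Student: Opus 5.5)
We follow the proof of Lemma \ref{ppgl1}, now working directly with $d$ and accounting for boundary terms. First we apply the operator $\p_t+\div(u\,\cdot)$ to $(\ref{nlckv})_2$, multiply by $\dot u$, and integrate over $\OM$. The resulting identity has the same structure as (\ref{ppg11}), with the director contributions $-\p_i(\p_i d\cdot\p_j d)_t+\frac12\p_j(|\na d|^2)_t-\div(u\,\na d\cdot\Delta d)$.

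The boundary terms come from $\mu\Delta\dot u$ and $\na((\mu+\lam)\div\dot u)$. We rewrite $\Delta\dot u=\na\div\dot u-\na^\bot\curl\dot u$ and use $\curl u=-Au\cdot n^\bot$ on $\p\OM$ together with (\ref{bjds}), i.e. $\dot u\cdot n=-(u\cdot\na)n\cdot u$. As in \cite{CL,FLW}, these boundary integrals are bounded by $C\|\na u\|^2_{L^2}\|\na\dot u\|_{L^2}+C\|\na u\|^2_{L^4}\|\na\dot u\|_{L^2}+C\|\na u\|_{L^4}^4$, up to terms of order $\|\na u\|_{L^2}^2$, via trace inequalities.

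The interior terms are handled as in (\ref{ppg12}). We use Lemma \ref{bpl10} (so $\n\le C$ and $\|\na d\|_{H^1}\le C$), the estimate (\ref{bp02}) for $\|\na d\|_{L^p}$, and the bound $\|\na u\|_{L^4}^4\le C(B_1^2+B_2^2)$, which follows from (\ref{bp06}) with $R_T\le C$. The net result is
\[
\frac{d}{dt}\|\sqrt\n\dot u\|^2_{L^2}+\mu\|\na\dot u\|^2_{L^2}\le \tfrac14\|\na^2 d_t\|^2_{L^2}+C(B_1^2+B_2^2).
\]
Here $\|\dot u\|_{L^4}$ is controlled through $\|\dot u\|_{H^1}\le C(\|\sqrt\n\dot u\|_{L^2}+\|\na\dot u\|_{L^2}+\|\na u\|^2_{L^2})$, a Poincaré-type inequality valid for $\dot u\cdot n=-(u\cdot\na)n\cdot u$ on the boundary; this is the analogue of the argument in \cite{CL}.

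Next we differentiate (\ref{bp75}) in $t$ and test with $\na d_t$. Since $\p_n d_t=0$ on $\p\OM$, the term $-\int\na\Delta d_t\cdot\na d_t$ equals $\int|\na^2 d_t|^2$ plus a boundary term of the form $\int_{\p\OM}\na d_t\cdot\na n\cdot\na d_t$, exactly as in (\ref{bp23})--(\ref{bp24}). This boundary term is bounded by $\|\na d_t\|_{L^2}\|\na d_t\|_{H^1}$ and absorbed. The new nonlinear piece $\na(|\na d|^2d)_t$ is bounded by $|\na d||\na d_t||\na d|+|\na d|^2|d_t|$ plus lower order terms. These are controlled by Gagliardo--Nirenberg, (\ref{bp02}), and $\|d_t\|_{L^4}\le C\|d_t\|_{H^1}$, using $|d_t|\le |\Delta d|+|u||\na d|+|\na d|^2$. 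Adding the two inequalities gives the analogue of (\ref{ppg17}):
\[
\frac{d}{dt}\big(\|\sqrt\n\dot u\|^2_{L^2}+\|\na d_t\|^2_{L^2}\big)+\mu\|\na\dot u\|^2_{L^2}+\|\na^2 d_t\|^2_{L^2}\le C(1+B_1^2+B_2^2).
\]
We multiply by $\si$, integrate in time, and use (\ref{bp010}), which gives $\int_0^T(B_1^2+B_2^2)dt\le C$ (this needs $\int_0^T\|\na^2d\|_{L^2}^2dt\le C$ and (\ref{bp04})). The constant term appears only on $(0,1)$ because $\si\equiv1$ beyond that, or it is removed by keeping the estimate homogeneous. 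This yields the $\dot u$ and $d_t$ parts of (\ref{bpg01}).

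For $\|\na^3 d\|_{L^2}$ we argue as in (\ref{ppg19}). From (\ref{bp75}), $\|\na\Delta d\|_{L^2}\le C(\|\na d_t\|_{L^2}+\|\na(u\cdot\na d)\|_{L^2}+\|\na(|\na d|^2d)\|_{L^2})$. The Neumann condition on $d$ gives $\|\na^3 d\|_{L^2}\le C(\|\na\Delta d\|_{L^2}+\|\na d\|_{H^1})$, and the nonlinear terms are absorbed as before. Finally, (\ref{bpg001}) follows from (\ref{bp06}) with $R_T\le C$: $\|\na u\|_{L^p}\le C(1+B_1+B_2)$, and $\si B_2^2\le C$ by what was just proved, using $\|\na\Delta d\|_{L^2}\le C\|\na^3 d\|_{L^2}$.

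The main obstacle is the set of boundary terms in the $\dot u$ estimate, in particular $\int_{\p\OM}(\mu+\lam)\div\dot u\,\dot u\cdot n$ and $\mu\int_{\p\OM}\curl\dot u\,\dot u\cdot n^\bot$. The plan is to use $\dot u\cdot n=-u\cdot\na n\cdot u$, write $\div\dot u$ through $G$, and convert the boundary integrals into interior ones via the divergence theorem, following \cite{CL,FLW}. The director terms add nothing new on the boundary because $\p_n d=0$.
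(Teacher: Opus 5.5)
Your outer structure ($\sigma$-weighted estimates for $\dot u$ and $\nabla d_t$, elliptic recovery of $\nabla^3 d$, and (\ref{bpg001}) from (\ref{bp06})) matches the paper. But the boundary terms in the $\dot u$ estimate, which are the real difficulty in a bounded domain, cannot be handled as you describe.

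\textbf{Why your treatment of the boundary terms fails.} Suppose you apply $\partial_t+\div(u\,\cdot)$ to the conservative momentum equation. You then integrate by parts $-\nabla((\mu+\lambda)\div\dot u)\cdot\dot u$, $-\mu\Delta\dot u\cdot\dot u$ and the divergence-form commutators of (\ref{ppg11}). The boundary integrals then contain traces of $(\mu+\lambda)\div\dot u$, of $\curl\dot u$ (or $\partial_n\dot u$), and of quadratic quantities such as $(\mu+\lambda)|\nabla u|^2$. These are multiplied by $\dot u$ or by $\dot u\cdot n=-u\cdot\nabla n\cdot u$.

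None of these traces is controlled by $\|\nabla\dot u\|_{L^2}$ and $\|\nabla u\|_{L^4}$. A trace inequality, or your proposed conversion to interior integrals by the divergence theorem, brings in $\nabla^2\dot u$ or $\nabla^2 u$. The latter contains $\nabla\rho$, which is not available at this stage: $\|\nabla\rho\|_{L^q}$ is only obtained in Lemma \ref{bpgl2}, using the present lemma and with a $T$-dependent constant. So the bound $C\|\nabla u\|_{L^2}^2\|\nabla\dot u\|_{L^2}+\cdots$ you assert for the boundary integrals is not correct.

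\textbf{The missing idea.} The paper takes it from \cite[Proposition 4.3]{FLW} and \cite{CL}.

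\begin{enumerate}
\item Apply the operator to the form (\ref{bp61}), $\rho\dot u=\nabla G+\mu\nabla^\bot\omega-\nabla d\cdot\Delta d$. Then only $G$ and $\omega$ appear on $\partial\Omega$, and these have $H^1$ bounds from (\ref{bp64}), (\ref{bp66}) without $\nabla\rho$.
\item Even then, $\int_{\partial\Omega}G_t\,\dot u\cdot n\,ds=-\int_{\partial\Omega}G_t\,(u\cdot\nabla n\cdot u)\,ds$ cannot be estimated directly. You must extract a total time derivative, writing it as $-\frac{d}{dt}\int_{\partial\Omega}G\,(u\cdot\nabla n\cdot u)\,ds+\int_{\partial\Omega}G\,(u\cdot\nabla n\cdot u)_t\,ds$. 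The second piece only involves $u_t=\dot u-u\cdot\nabla u$ and is harmless.
\item The total derivative is kept in (\ref{bpg111}). After multiplying by $\sigma$ and integrating in time, it is controlled pointwise by (\ref{bpg114}): $C\|G\|_{H^1}\|\nabla u\|_{L^2}^2\le\frac14(\|\sqrt{\rho}\dot u\|_{L^2}^2+\|\nabla d_t\|_{L^2}^2)+CB_1^2$. Here (\ref{bpg113}) trades the $\|\nabla\Delta d\|_{L^2}$ part of $\|G\|_{H^1}$ for quantities that can be absorbed on the left.
\item The remaining terms produce $\|\sqrt{\rho}\dot u\|_{L^2}^4$, so one closes by Gr\"onwall rather than by a linear inequality.
\item The left side only gives $\|\div\dot u\|_{L^2}^2+\|\curl\dot u\|_{L^2}^2$, which must be upgraded to $\|\nabla\dot u\|_{L^2}^2$ via (\ref{bpg110}).
\end{enumerate}

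\textbf{The constant term.} A constant on the right is not harmless ``because $\sigma\equiv1$ beyond $t=1$''. Beyond $t=1$ it is integrated with weight one and gives a bound growing in $T$. The constant in (\ref{bpg01}) must be $T$-independent, since it feeds Lemma \ref{bpel}. So the inequality has to stay homogeneous in $B_1^2$, $B_2^2$ and $\|\sqrt{\rho}\dot u\|^4_{L^2}$, as in the paper.
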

\begin{proof}
First, we adapt the idea from \cite{CL,H1,FLL} to prove (\ref{bpg01}).
Applying the operator $\dot{u}^j[\frac{\pa}{\pa t}+\div(u\cdot)]$ to
$(\ref{bp61})^j,$ summing with respect to $j,$
and integrating over $\OM$, we obtain after integration by parts and using the boundary condition (\ref{yjybjtj}) that
\be\la{bpg11}\ba
\frac{d}{dt}\left(\frac{1}{2}\int\rho|\dot{u}|^2dx \right)
&=\int \bigg( {\dot{u}}\cdot \nabla G_t + {\dot{u}}^j\mathrm {div}(u \partial _jG) \bigg) dx\\
&\quad + \mu \int \bigg( {\dot{u}} \cdot \na^\bot \o_t
+ {\dot{u}}^j\partial _k( u^k ( \na^\bot \o )^j ) \bigg) dx \\
& \quad - \int \bigg( {\dot{u}} \cdot (\na d \cdot \Delta d)_t
+ {\dot{u}}^j\partial _k( u^k \p_j d \cdot \Delta d ) \bigg) dx \\
&=I_1+I_2+I_3.
\ea\ee
Following the arguments in \cite[Proposition 4.3]{FLW}, we can get for any $\ep>0$,
\be\la{bpg12}\ba
I_1 + I_2 & \le -\frac{d}{dt} \int_{\partial \Omega } G ( u \cdot \nabla n \cdot u ) ds
- \mu \| \div \dot{u} \|^2_{L^2} - \mu \| \curl \dot{u} \|^2_{L^2} \\
& \quad + \ep \| \na \dot{u} \|^2_{L^2} + C(\ep) ( B^2_1 + \| \sqrt{\n} \dot{u} \|^4_{L^2} ).
\ea\ee

Moreover, for any $1\le p<\infty$, the boundary condition $u \cdot n|_{\p \OM}=0$ implies the
following div-curl type estimates (see \cite{CL,FLL}):
\be\la{bpg110}\ba
\| \dot{u} \|_{L^p} & \le C \left( \| \na \dot{u} \|_{L^2} + \| \na u \|^2_{L^2} \right), \\
\| \na \dot{u} \|_{L^2} & \le C \left( \| \div \dot{u} \|_{L^2} + \| \curl \dot{u} \|_{L^2} + \| \na u \|^2_{L^4} \right).
\ea\ee

We now estimate $I_3$.
Integrating by parts over $\OM$ and applying (\ref{gn11}), (\ref{tygj1}), (\ref{bp010}), and Young's inequality, we derive
\be\la{bpg14}\ba
I_3 & = \int \left( \p_k \dot{u}^j (\p_j d^{l} \p_k d^{l})_{t}
+ \dot{u}^j (\p_j \p_k d^{l} \p_k d^{l})_{t}
+ \p_k \dot{u}^j u^k \p_j d \cdot \Delta d \right) dx \\
& \le \frac{1}{2} \int \dot{u} \cdot \na ( |\na d|^2 )_t dx
+ C \| \na \dot{u} \|_{L^2} \left( \| \na d \|_{L^4} \| \na d_t \|_{L^4}
+ \| u \|_{L^8} \| \na d \|_{L^8} \| \na^2 d \|_{L^4} \right) \\
& \le \frac{1}{2} \int \dot{u} \cdot \na ( |\na d|^2 )_t dx
+ \ep \| \na \dot{u} \|^2_{L^2} + \frac{1}{8} \| \Delta d_t \|^2_{L^2}
+ C(\ep) ( B^2_1 + B^2_2 ) \\
& \le 2 \ep \| \na \dot{u} \|^2_{L^2} + \frac{1}{4} \| \Delta d_t \|^2_{L^2}
+ C(\ep) ( B^2_1 + B^2_2 ),
\ea\ee
where in the last inequality we have used
\be\la{bpg15}\ba
\int \dot{u} \cdot \na ( |\na d|^2 )_t dx
& = \int_{\p \OM} (u \cdot \na u \cdot n) ( |\na d|^2 )_t ds
- \int \div \dot{u} ( |\na d|^2 )_t dx \\
& \le - \int_{\p \OM} (u \cdot \na n \cdot u) ( |\na d|^2 )_t ds
+ C \| \na \dot{u} \|_{L^2} \| \na d \|_{L^4} \| \na d_t \|_{L^4} \\
& \le C \| u \|^2_{H^1} \| \na d \|_{H^1} \| \na d_t \|_{H^1}
+ C \| \na \dot{u} \|_{L^2} \| \na d \|_{L^4} \| \na d_t \|_{L^4} \\
& \le \ep \| \na \dot{u} \|^2_{L^2} + \frac{1}{8} \| \Delta d_t \|^2_{L^2}
+ C(\ep) ( B^2_1 + B^2_2 ),
\ea\ee
due to (\ref{bjds}), (\ref{gn11}), (\ref{tygj1}), (\ref{bp010}), and Young's inequality.

Substituting (\ref{bpg12}) and (\ref{bpg14}) into (\ref{bpg11}) yields
\be\la{bpg16}\ba
& \frac{1}{2} \frac{d}{dt} \| \sqrt{\n} \dot{u} \|^2_{L^2}
+ \mu \| \div \dot{u} \|^2_{L^2} + \mu \| \curl \dot{u} \|^2_{L^2} \\
& \le -\frac{d}{dt} \int_{\partial \Omega } G ( u \cdot \nabla n \cdot u ) ds
+ \frac{1}{4} \| \Delta d_t \|^2_{L^2} + 3 \ep \| \na \dot{u} \|^2_{L^2}
+ C(\ep) ( B^2_1 + \| \sqrt{\n} \dot{u} \|^4_{L^2} + B^2_2 ).
\ea\ee

On the other hand, differentiating (\ref{bp75}) with respect to $t$ gives
\be\la{bpg17}\ba
\nabla d_{tt}-\Delta\nabla d_{t} =-\nabla (u\cdot \nabla d)_{t}+\nabla (|\nabla d|^{2}d)_{t}.
\ea\ee
Multiplying (\ref{bpg17}) by $\na d_t$, integrating over $\OM$, and applying
(\ref{gn11}), (\ref{tygj1}), (\ref{bp010}), (\ref{bpg110}), and Young's inequality, we obtain
\be\la{bpg18}\ba
& \frac{1}{2}\frac{d}{dt}\|\nabla d_{t}\|_{L^{2}}^{2} + \|\Delta d_{t}\|_{L^{2}}^{2} \\
& = \int \left( u_t \cdot \na d + u \cdot \na d_t - (|\nabla d|^{2}d)_{t} \right) \cdot \Delta d_t dx \\
& = \int \left( \dot{u} \cdot \na d - u \cdot \na u \cdot \na d
+ u \cdot \na d_t - (|\nabla d|^{2}d)_{t} \right) \cdot \Delta d_t dx \\
& = - \int \left( \p_i \dot{u} \cdot \na d + \dot{u} \cdot \na \p_i d \right) \cdot \p_i d_t dx
- \int \left( u \cdot \na u \cdot \na d
- u \cdot \na d_t + (|\nabla d|^{2}d)_{t} \right) \cdot \Delta d_t dx \\
& \le C \left( \| \na \dot{u} \|_{L^2} \| \na d \|_{L^4}
+ \| \dot{u} \|_{L^4} \| \na^2 d \|_{L^2} \right) \| \na d_t \|_{L^4}
+ C \| u \|_{L^8} \| \na u \|_{L^4} \| \na d \|_{L^8} \| \Delta d_t \|_{L^2} \\
& \quad + C \left( \| u \|_{L^4} \| \na d_t \|_{L^4} + \| \na d \|^2_{L^8} \| d_t \|_{L^4}
+ \| \na d \|_{L^4} \| \na d_t \|_{L^4}  \right) \| \Delta d_t \|_{L^2} \\
& \le \frac{1}{4} \| \Delta d_t \|^2_{L^2} + \ep \| \na \dot{u} \|^2_{L^2}
+ C(\ep) ( B^2_1 + B^2_2 ),
\ea\ee
where in the last inequality we have used the following estimate:
\be\la{bpg19}\ba
\| d_t \|^2_{L^4} & \le C \left( \| u \cdot \na d \|^2_{L^4}
+ \| \na^2 d \|^2_{L^4} + \| |\na d|^2 \|^2_{L^4} \right) \\
& \le C \left( \| u \|^2_{L^8} \| \na d \|^2_{L^8} + \| \na^2 d \|^2_{H^1}
+ \| \na d \|^4_{L^8} \right)
\le C ( B^2_1 + B^2_2 ),
\ea\ee
due to (\ref{gn11}) and (\ref{bp010}).

Combining (\ref{bpg16}), (\ref{bpg18}), and (\ref{bpg110}), and choosing $\ep$ sufficiently small, we arrive at
\be\la{bpg111}\ba
& \frac{1}{2} \frac{d}{dt} \left( \| \sqrt{\n} \dot{u} \|^2_{L^2} + \| \na d_t \|^2_{L^2} \right)
+ \frac{\mu}{2} \| \div \dot{u} \|^2_{L^2} + \frac{\mu}{2} \| \curl \dot{u} \|^2_{L^2}
+ \frac{1}{2} \| \Delta d_t \|^2_{L^2} \\
& \le -\frac{d}{dt} \int_{\partial \Omega } G ( u \cdot \nabla n \cdot u ) ds + C ( B^2_1 + \| \sqrt{\n} \dot{u} \|^4_{L^2} + B^2_2 ),
\ea\ee
where we have used the following estimate:
\be\la{bpg112}\ba
\| \na u \|^4_{L^4}
& \le C \left( \| \div u \|^4_{L^4} + \| \o \|^4_{L^4} \right) \\
& \le C \left( \| G \|^4_{L^4} + \| P-P(\ol{\n}) \|^4_{L^4} + \| \o \|^4_{L^4} \right) \\
& \le C \left( \| G \|^2_{L^2} \| G \|^2_{H^1} + \| P-P(\ol{\n}) \|^2_{L^2}
+ \| \o \|^2_{L^2} \| \o \|^2_{H^1} \right) \\
& \le C \left( \| G \|^2_{H^1} + \| \o \|^2_{H^1} + B^2_1 \right) \\
& \le C \left( B^2_1 + B^2_2 \right),
\ea\ee
owing to (\ref{dc1}), (\ref{gn11}) and (\ref{bp010}).

In addition, in view of (\ref{bp75}), (\ref{bp010}), (\ref{bpg112}), and Young's inequality, we have
\be\ba\nonumber
\| \nabla \Delta d \|_{L^{2}}^{2}
\leq & C \left( \|\nabla d_{t}\|_{L^{2}}^{2} + \||\nabla u||\nabla d|\|_{L^{2}}^{2}
+ \||u||\nabla^{2}d|\|_{L^{2}}^{2} + \||\nabla d|^{3}\|_{L^{2}}^{2}
+ \||\nabla^{2}d||\nabla d|\|_{L^{2}}^{2} \right) \\
\leq & C \left( \|\nabla d_{t}\|_{L^{2}}^{2} + \|\nabla u\|_{L^{4}}^{2} \|\nabla d\|_{L^{4}}^{2}
+ \| u \|_{L^{4}}^{2} \| \nabla^{2} d \|^2_{L^{4}}
+ \|\nabla d\|_{L^{6}}^{6} + \| \na d \|^2_{L^4} \| \na^2 d \|^2_{L^4} \right) \\
\leq & C \left( \|\nabla d_{t}\|_{L^{2}}^{2}
+ B_1 B_2 + B^2_1 + \|\nabla^{2} d\|_{L^{4}}^{2} \right) \\
\leq & C \|\nabla d_{t}\|_{L^{2}}^{2}
+ C B_1 \left( \| \sqrt{\n} \dot{u} \|_{L^2} + \| \na d_t \|_{L^2} + \| \na \Delta d \|_{L^2} \right) + C B^2_1 \\
\leq & \frac{1}{2} \| \nabla \Delta d \|_{L^{2}}^{2}
+ C \left( B^2_1 + \| \sqrt{\n} \dot{u} \|^2_{L^2} + \|\nabla d_{t}\|_{L^{2}}^{2} \right),
\ea\ee
which implies
\be\la{bpg113}\ba
\|\nabla \Delta d\|_{L^{2}}^{2}
\le C \left( B^2_1 + \| \sqrt{\n} \dot{u} \|^2_{L^2} + \|\nabla d_{t}\|_{L^{2}}^{2} \right).
\ea\ee
Furthermore, it follows from (\ref{bp66}), (\ref{bp010}), (\ref{bpg113}), and Young's inequality that
\be\la{bpg114}\ba
\left| \int _{\partial \Omega } G (u\cdot \nabla n\cdot u) ds \right|
& \le C \| G \|_{H^1} \| \na u\|^2_{L^2} \le C B_1 B_2 + C B^2_1 \\
& \le C B_1 \left( \| \sqrt{\n} \dot{u} \|_{L^2} + \| \na d_t \|_{L^2} + \| \na \Delta d \|_{L^2} \right) + C B^2_1 \\
& \le C B_1 \left( \| \sqrt{\n} \dot{u} \|_{L^2} + \| \na d_t \|_{L^2} \right) + C B^2_1 \\
& \le \frac{1}{4} \left( \| \sqrt{\n} \dot{u} \|^2_{L^2} + \| \na d_t \|^2_{L^2} \right) + C B^2_1.
\ea\ee

Multiplying (\ref{bpg111}) by $\si$ and using Gr\"onwall's inequality, we obtain
(\ref{bpg01}) after using (\ref{bp010}), (\ref{bpg113}), and (\ref{bpg114}).

Finally, (\ref{bp06}) and (\ref{bpg01}) imply (\ref{bpg001}), which finishes the proof of Lemma \ref{bpgl1}.
\end{proof}

Next, using the uniform estimates (\ref{bp010}) and (\ref{bpg001}), we can derive the following exponential decay estimate.
The proof follows from a slight modification of the argument used in Lemma \ref{ppel}.
\begin{lemma}\la{bpel}
For any $p \in [1,\infty)$, there exist positive constants
$C$ and $\alpha_0$ depending only on
$p$, $\mu$, $\ga$, $\beta$, $\| \n_0 \|_{L^\infty}$, $\| u_0 \|_{H^1}$, $\| \na d_0 \|_{H^1}$, $\OM$, and $A$ such that for any $1 \le t <\infty$,
\be\la{bpe01}\ba
\| \n-\ol{\n_0}\|_{L^p} + \| \na u \|_{L^p}
+ \| \sqrt{\n} \dot{u} \|^2_{L^2} + \| \na d \|^2_{H^2} + \| \na d_t \|^2_{L^2} \le C e^{-\alpha_0 t}.
\ea\ee
\end{lemma}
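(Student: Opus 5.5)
We will mirror the proof of Lemma \ref{ppel}, using the uniform bounds (\ref{bp010}), (\ref{bpg01}) and (\ref{bpg001}) in place of (\ref{pp08}) and (\ref{ppg01}). The main changes are three. The Bogovskii operator $\mathcal{B}[\n-\ol{\n}]$ of Lemma \ref{iod} replaces $\na(-\Delta)^{-1}(\n-\ol{\n})$. There is no momentum conservation, so $m_0$ is dropped. And $\na d$ itself decays, rather than $\na d-\mathbf{k}\otimes d^\bot$.

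\textbf{Step 1: zeroth-order decay.} As in (\ref{bp41}), but now with $\|\n\|_{L^\infty}\le C$ available, we obtain
\be\nonumber
\int(\n^\ga-\ol{\n}^\ga)(\n-\ol{\n})dx\le 2\frac{d}{dt}\int\n u\cdot\mathcal{B}[\n-\ol{\n}]dx+C\left(\|\na u\|^2_{L^2}+\|\na^2 d\|^2_{L^2}\right).
\ee
The energy identity (\ref{bp11}), with $P/(\ga-1)$ replaced by $H(\n,\ol{\n})$, gives $\frac{d}{dt}E_1+\int\big((2\mu+\lam)(\div u)^2+\mu\o^2+|\Delta\theta|^2\big)dx+\mu\int_{\p\OM}A|u|^2ds=0$, where $E_1=\int\big(\frac12\n|u|^2+H+\frac12|\na d|^2\big)dx$. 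By (\ref{bp115}) and (\ref{dc1}), the dissipation dominates $\|\na u\|^2_{L^2}+\|\na^2 d\|^2_{L^2}$. For the coercivity replacing (\ref{ppe12}) we need two facts. First, $\|\n u\|_{L^2}^2\le C\|u\|^2_{L^2}\le C\|\na u\|^2_{L^2}$, which holds by the Poincaré-type inequality for $u\cdot n=0$ on a simply connected domain (Lemma \ref{dc}); this is what lets us avoid subtracting a constant. Second, $\|\na d\|^2_{L^2}\le C\|\na^2 d\|^2_{L^2}$. Here $\na d$ does not have zero mean. Instead we use $|\na d|=|\na\theta|$ and note that $\int_\OM\na\theta\,dx=\int_{\p\OM}\theta n\,ds$ need not vanish. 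So we instead use $\|\na\theta\|_{L^2}\le C\|\Delta\theta\|_{L^2}$, valid for $\p_n\theta=0$ (expand in Neumann eigenfunctions: $\na\theta$ is orthogonal to constants in the relevant sense, since $\|\na\theta\|^2_{L^2}=-\int\theta\Delta\theta=-\int(\theta-\ol\theta)\Delta\theta\le C\|\na\theta\|_{L^2}\|\Delta\theta\|_{L^2}$). Adding $\de$ times the Bogovskii inequality to the energy identity and using $|\int\n u\cdot\mathcal{B}|\le C E_1$ yields $E_2'+2\alpha_0E_2\le0$ with $E_2\sim E_1$. Hence $\|\sqrt\n u\|^2_{L^2}+\|\n-\ol{\n}\|^2_{L^2}+\|\na d\|^2_{L^2}\le Ce^{-2\alpha_0t}$, and then $\int_0^Te^{\alpha_0t}(\|\na u\|^2_{L^2}+\|\na^2d\|^2_{L^2})dt\le C$.

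\textbf{Step 2: first- and second-order decay.} We multiply (\ref{bp77}) by $e^{\alpha_0t}$. Since $\n$ is bounded, the right-hand side is $\le CB_1^2$ (absorbing $B_1^2$ via (\ref{bp010})). We need $B_3\le C(\|\na u\|^2_{L^2}+\|\na^2 d\|^2_{L^2})+Ce^{-2\alpha_0t}$. The term $G^2/(2\mu+\lam)$ gives $\|\n-\ol\n\|^2_{L^2}$. The boundary term is $\int_{\p\OM}A|u|^2\le C\|u\|^2_{H^1}$. The cross terms are bounded by $\|\na d\|^2_{L^4}\|\na u\|_{L^2}\le C\|\na d\|_{L^2}\|\na d\|_{H^1}\|\na u\|_{L^2}$. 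The last bound is where the absence of $\mathbf{k}$ helps: no $\int\mathbf{k}\otimes\mathbf{k}:\na u$ term survives. This yields $\sup e^{\alpha_0t}(\|\na u\|^2_{L^2}+\|\na^2d\|^2_{L^2})+\int_0^Te^{\alpha_0t}B_2^2\le C$.

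Next we multiply (\ref{bpg111}) by $e^{\alpha_0t}$ on $(1,T)$. The boundary term $\int_{\p\OM}G(u\cdot\na n\cdot u)$ is handled by (\ref{bpg114}). The quartic term $\|\sqrt\n\dot u\|^4_{L^2}\le C\|\sqrt\n\dot u\|^2_{L^2}$ by (\ref{bpg01}) for $t\ge1$. Together with (\ref{bpg113}) this gives $\|\sqrt\n\dot u\|^2_{L^2}+\|\na d_t\|^2_{L^2}+\|\na^3 d\|^2_{L^2}\le Ce^{-\alpha_0t}$.

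\textbf{Step 3: $L^p$ bounds.} As in (\ref{ppe18}), we use (\ref{dc1}), (\ref{bp64}) and Sobolev embedding to get $\|\na u\|_{L^p}^2\le C(\|\na u\|^2_{L^2}+\|\n-\ol\n\|^2_{L^p}+B_2^2)$. Finally, $\|\n-\ol\n\|_{L^p}\le C\|\n-\ol\n\|_{L^2}^{2/p}$ by boundedness of $\n$, with $\ol\n=\ol{\n_0}$ by mass conservation.

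The main obstacle is the coercivity in Step 1, specifically $\|\na d\|_{L^2}\lesssim\|\Delta\theta\|_{L^2}$ in the absence of any mean-zero normalization. We handle it through the Neumann Poincaré argument for $\theta-\ol\theta$ described above. A secondary point is making sure the exponent lost in moving from $L^2$ to $L^p$ (the factor $2/p$) is absorbed into a $p$-dependent $\alpha_0$.
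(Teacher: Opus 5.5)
Your proposal is correct and follows the route the paper intends. The paper only says the proof is a slight modification of Lemma \ref{ppel}, and your changes are exactly what that modification requires: the operator $\mathcal{B}$ in place of $\nabla(-\Delta)^{-1}$; dropping $m_0$ via the Poincar\'e inequality for $u\cdot n=0$; the Neumann condition $\partial_n\theta=0$ giving $\|\nabla d\|_{L^2}=\|\nabla\theta\|_{L^2}\le C\|\Delta\theta\|_{L^2}$, so that $\nabla d$ itself decays; and handling the boundary term by (\ref{bpg114}) and the quartic term by (\ref{bpg01}). The one point you leave implicit is the lower bound $B_3\ge c\,(\|\nabla u\|_{L^2}^2+\|\nabla^2 d\|_{L^2}^2)-Ce^{-2\alpha_0 t}$, which is needed to turn the weighted integral of (\ref{bp77}) into a weighted sup bound; it follows directly from (\ref{dc1}), (\ref{tygj1}), and the already established decay of $\|\rho-\bar\rho\|_{L^2}$ and $\|\nabla d\|_{L^2}$.
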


By combining (\ref{bp010}), (\ref{bpg01}), and (\ref{bpg110}) with the argument used in the proof of Lemma \ref{ppgl2}, we can obtain the following estimate.

\begin{lemma}\la{bpgl2}
There exists a positive constant $C$ depending only on 
$T$, $q$, $\mu$, $\ga$, $\beta$, $\| \rho_0 \|_{W^{1,q}}$, $\| u_0 \|_{H^1}$, $\| \na d_0 \|_{H^1}$, $\OM$, and $A$ such that
\be\la{bpg02}\ba
&\sup_{0\le t\le T} \left( \| \n \|_{W^{1,q}} + t \| u \|^2_{H^2} \right) \\
& + \int_0^T \left( \|\nabla^2 u\|^{(q+1)/q}_{L^q}
+ t \|\nabla^2 u\|_{L^q}^2+t\| u_t\|_{H^1}^2 + t \| \na^4 d \|^2_{L^2} \right) dt\le C.
\ea\ee
\end{lemma}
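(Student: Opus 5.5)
The plan is to follow the proof of Lemma \ref{ppgl2} line by line, with the periodic ingredients replaced by their bounded-domain counterparts. The inputs are the uniform bounds (\ref{bp010}), the weighted estimates (\ref{bpg01})--(\ref{bpg001}), the div--curl estimates (\ref{dc1}) and (\ref{bpg110}), the Neumann elliptic estimate (\ref{tygj1}), and the elliptic estimates (\ref{bp64}) for $G$ and $\o$.

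\textbf{Step 1: bound $\|\na\n\|_{L^q}$.} I would set $\Phi^i=(2\mu+\lam(\n))\p_i\n$. Since $\Phi$ satisfies (\ref{bpg21}) and $u\cdot n=0$ on $\p\OM$, the transport term produces no boundary contribution. Testing with $|\Phi|^{q-2}\Phi^i$ then gives (\ref{bpg22}). By (\ref{bp64}) with $k=0$ and (\ref{bp010}),
\[
\|\na G\|_{L^q}+\|\na\o\|_{L^q}\le C\left(\|\n\dot u\|_{L^q}+\|\na d\cdot\Delta d\|_{L^q}+\|\na u\|_{L^q}\right).
\]
Here $\|\na d\cdot\Delta d\|_{L^q}\le C\|\na d\|_{H^2}$ by Sobolev embedding. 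The term $\|\na u\|_{L^q}$ is controlled by (\ref{bp06}), and is bounded by $C\si^{-1/2}$ through (\ref{bpg001}).

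\textbf{Step 2: $\|\na^2u\|_{L^q}$ and $\|\na u\|_{L^\infty}$.} Gagliardo--Nirenberg then bounds $\|\div u\|_{L^\infty}+\|\o\|_{L^\infty}$ as in (\ref{bpg23}). Next I would apply (\ref{dc1}) with $k=1$ to get $\|\na^2u\|_{L^q}\le C(\|\div u\|_{W^{1,q}}+\|\o\|_{W^{1,q}})$, which yields the analogue of (\ref{bpg25}): $\|\na^2 u\|_{L^q}\le C(1+\|\n\dot u\|_{L^q}+\|\na^3 d\|_{L^2}+\|\na u\|_{L^q})(e+\|\na\n\|_{L^q})$. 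Lemma \ref{bkm} applies because $u$ satisfies the Navier-slip condition, and gives the logarithmic bound (\ref{bpg24}).

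\textbf{Step 3: close the Gr\"onwall argument.} The resulting estimate for $\frac{d}{dt}\log(e+\|\Phi\|_{L^q})$ needs the coefficient to lie in $L^1(0,T)$. For this I need $\int_0^T\|\n\dot u\|_{L^q}^{(q+1)/q}dt\le C$. I would obtain it by interpolating, as in (\ref{bpg29}), and replacing Poincar\'e's inequality with the first estimate of (\ref{bpg110}); combined with (\ref{bpg01}) this gives the bound. I also need $\int_0^T\|\na^3d\|_{L^2}dt\le C$ and $\int_0^T\|\na u\|_{L^q}dt\le C$, which follow from (\ref{bp010}) and (\ref{bpg001}) respectively, since $\si^{-1/2}$ is integrable. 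This gives $\sup_t\|\n\|_{W^{1,q}}\le C$. The bounds for $\int_0^T\|\na^2u\|_{L^q}^{(q+1)/q}dt$, $\int_0^T t\|\na^2 u\|_{L^q}^2dt$, and $\sup_t t\|u\|_{H^2}^2$ then follow from Step 2 together with (\ref{bp010}) and (\ref{bpg01}). The estimate for $\int_0^T t\|u_t\|_{H^1}^2dt$ follows from $u_t=\dot u-u\cdot\na u$, (\ref{bpg110}) and (\ref{bpg112}), as in (\ref{bpg212}).

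\textbf{Step 4: $\int_0^T t\|\na^4 d\|_{L^2}^2dt$.} On a bounded domain I cannot simply commute $\na^2$ through $\Delta$ as in (\ref{np510}), so I expect this step to be the main obstacle. My first attempt would use the higher-order Neumann estimate for the vector field $\na d$: write $\|\na^4d\|_{L^2}\le C(\|\na^2\Delta d\|_{L^2}+\|\na d\|_{H^2})$. The boundary data this requires comes from $\p_n d=0$, which gives $\p_n\Delta d=\p_n(d_t+u\cdot\na d-|\na d|^2d)$, a quantity that can be estimated on $\p\OM$ by trace inequalities. Alternatively, I would treat $\Delta d=d_t+u\cdot\na d-|\na d|^2d\triangleq f$ as a Neumann problem $\Delta d=f$, $\p_nd=0$, and apply $W^{4,2}$ regularity, $\|\na^4d\|_{L^2}\le C(\|f\|_{H^2}+\|\na d\|_{L^2})$. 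The right-hand side is then bounded exactly as in (\ref{np510}), using $\|\na^2d_t\|_{L^2}$ from (\ref{bpg01}) and $\|\na^2u\|_{L^q}$ from Step 3. Absorbing the lower-order terms and integrating with weight $t$ completes the proof of (\ref{bpg02}).
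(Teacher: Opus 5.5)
Your proposal is correct and matches the paper's argument: the paper proves this lemma by rerunning the proof of Lemma \ref{ppgl2} with (\ref{bp010}), (\ref{bpg01}), and (\ref{bpg110}) replacing the periodic ingredients, which is what your Steps 1--3 do, including handling the extra $\|\nabla u\|_{L^q}$ term from (\ref{bp64}) via (\ref{bpg001}). In Step 4, your second option is the right one and needs no boundary information on $\Delta d$: since $\nabla d^j\cdot n=0$ on $\partial\Omega$ and $\curl \nabla d^j=0$, Lemma \ref{dc} with $k=2$ (or, equivalently, Neumann $H^4$ regularity) gives $\|\nabla^2 d\|_{H^2}\le C\|\Delta d\|_{H^2}$ directly, so the trace detour in your first option is unnecessary.
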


\section{Proofs of Theorems \ref{thpp}--\ref{thbp2}}
Having established the required a priori estimates in Sections 3 and 4, we are now in a position to prove the main results.
The proofs of Theorems \ref{thpp}--\ref{thbp2} follow standard arguments, and thus we only briefly outline the main steps here and refer to \cite{HL2,HL3,VK,WX} for further details.

\noindent\textbf{Proof of Theorem \ref{thpp}}.
Let $(\n_0,u_0,d_0)$ be the initial data in Theorem \ref{thpp}, satisfying (\ref{pssol1}).
For any $\de \in (0,1)$, define $\n^\de_0=\n_0 + \de$.
According to Lemma \ref{lct}, the problem (\ref{nlckv})--(\ref{zqbjtj}) with the initial data $(\n_0^\de,u_0,d_0)$ has a unique local strong solution $(\n^\de,u^\de,d^\de)$ on $\OM \times (0,T^\de]$ for some $T^\de>0$.
Moreover, the a priori estimates established in Lemmas \ref{ppl8}, \ref{ppgl1}, and \ref{ppgl2} ensure that this solution $(\n^\de,u^\de,d^\de)$ can be extended to $\OM \times (0,T]$ for any $T>0$ and satisfies all the estimates in Lemmas \ref{ppl8}, \ref{ppgl1}, and \ref{ppgl2} uniformly in $\de$.
Letting $\de \to 0$ and using standard compactness arguments as in \cite{HL2,L2,P,VK}, we conclude that the problem (\ref{nlckv})--(\ref{zqbjtj}) admits a global strong solution $(\n,u,d)$ satisfying (\ref{pssol2}).

Moreover, (\ref{pp01}), (\ref{pp08}), and (\ref{ppe01}) imply the uniform density upper bound (\ref{pup}) and the exponential decay estimate (\ref{ped}).

Finally, the uniqueness of the strong solution $(\n,u,d)$ satisfying (\ref{pssol2}) follows from the arguments in \cite{LZLL}.
This completes the proof of Theorem \ref{thpp}.

\noindent\textbf{Proof of Theorem \ref{thpb}}.
Using the a priori estimates established in Section 4, we can prove Theorem \ref{thpb} by an argument similar to that used in the periodic case.

\noindent\textbf{Proof of Theorem \ref{thbp2}}.
In view of the decay estimates (\ref{ped}) and (\ref{ed}), the proof of Theorem \ref{thbp2} is similar to that of \cite[Theorem 1.2]{CL}.
We therefore omit the details.

\begin {thebibliography} {99}

\bibitem{ADN} S. Agmon, A. Douglis and L. Nirenberg,
Estimates near the boundary for solutions of elliptic partial differential equations satisfying general boundary conditions. II,
Comm. Pure Appl. Math. {\bf 17} (1964), 35--92.

\bibitem{AJ} J. Aramaki, 
$L^p$ theory for the div-curl system,
Int. J. Math. Anal. (Ruse) {\bf 8} (2014), no.~5-8, 259--271.

\bibitem{BKM} J.~T. Beale, T. Kato and A.~J. Majda,
Remarks on the breakdown of smooth solutions for the $3$-D Euler equations,
Comm. Math. Phys. {\bf 94} (1984), no.~1, 61--66.

\bibitem{BW} H.~R. Brezis and S. Wainger,
A note on limiting cases of Sobolev embeddings and convolution inequalities,
Comm. Partial Differential Equations. {\bf 5} (1980), no.~7, 773--789.

\bibitem{CL} G.~C. Cai and J. Li,
Existence and exponential growth of global classical solutions to the compressible Navier-Stokes equations with slip boundary conditions in 3D bounded domains,
Indiana Univ. Math. J. {\bf 72} (2023), no.~6, 2491--2546.


\bibitem{CRW} R.~R. Coifman, R. Rochberg and G.~L. Weiss,
Factorization theorems for Hardy spaces in several variables,
Ann. of Math. (2) {\bf 103} (1976), no.~3, 611--635.

\bibitem{CM} R.~R. Coifman and Y.~F. Meyer,
On commutators of singular integrals and bilinear singular integrals,
Trans. Amer. Math. Soc. {\bf 212} (1975), 315--331.

\bibitem{EJL} J.~L. Ericksen,
Conservation laws for liquid crystals,
Trans. Soc. Rheol. {\bf 5} (1961), 23--34.

\bibitem{E} H. Engler,
An alternative proof of the Brezis-Wainger inequality,
Comm. Partial Differential Equations {\bf 14} (1989), no.~4, 541--544.


\bibitem{F}  E. Feireisl,
Dynamics of Viscous Compressible Fluids,
Oxford Lecture Series in Mathematics and its Applications vol. 26, Oxford University Press, Oxford, 2004.

\bibitem{FNP} E. Feireisl, A. Novotn\'y{} and H. Petzeltov\'a,
On the existence of globally defined weak solutions to the Navier-Stokes equations,
J. Math. Fluid Mech. {\bf 3} (2001), no.~4, 358--392.


\bibitem{FLL} X. Fan, J. X. Li and J. Li,
Global existence of strong and weak solutions to 2D compressible Navier-Stokes system in bounded domains with large data and vacuum,
Arch. Ration. Mech. Anal. {\bf 245} (2022), no.~1, 239--278.

\bibitem{FLW} X. Fan, J. Li and X. Wang,
Large-Time Behavior of the 2D Compressible Navier-Stokes System in Bounded Domains with Large Data and Vacuum,
arXiv:2310.15520.

\bibitem{GT}  D. Gilbarg and N.~S. Trudinger,
Elliptic partial differential equations of second order, Springer, 2001.

\bibitem{GTY} J. Gao, Q. Tao and Z. Yao,
Long-time behavior of solution for the compressible nematic liquid crystal flows in $\mathbb{R}^3$,
J. Differential Equations {\bf 261} (2016), no.~4, 2334--2383.

\bibitem{H1} D. Hoff,
Global solutions of the Navier-Stokes equations for multidimensional compressible flow with discontinuous initial data,
J. Differential Equations {\bf 120} (1995), no.~1, 215--254.

\bibitem{H3} D. Hoff,
Compressible flow in a half-space with Navier boundary conditions,
J. Math. Fluid Mech. {\bf 7} (2005), no.~3, 315--338.

\bibitem{HW} X. Hu and H. Wu,
Global solution to the three-dimensional compressible flow of liquid crystals,
SIAM J. Math. Anal. {\bf 45} (2013), no.~5, 2678--2699.

\bibitem{HL2} X.-D. Huang and J. Li,
Existence and blowup behavior of global strong solutions to the two-dimensional barotrpic compressible Navier-Stokes system with vacuum and large initial data,
J. Math. Pures Appl. (9) {\bf 106} (2016), no.~1, 123--154.

\bibitem{HL3} X.-D. Huang and J. Li,
Global well-posedness of classical solutions to the Cauchy problem of two-dimensional barotropic compressible Navier-Stokes system with vacuum and large initial data,
SIAM J. Math. Anal. {\bf 54} (2022), no.~3, 3192--3214.

\bibitem{HL} X.-D. Huang and J. Li,
Global classical and weak solutions to the three-dimensional full compressible Navier-Stokes system with vacuum and large oscillations,
Arch. Ration. Mech. Anal. {\bf 227} (2018), no.~3, 995--1059.

\bibitem{HLX2} X.-D. Huang, J. Li and Z. Xin,
Global well-posedness of classical solutions with large oscillations and vacuum to the three-dimensional isentropic compressible Navier-Stokes equations,
Comm. Pure Appl. Math. {\bf 65} (2012), no.~4, 549--585.

\bibitem{HWW} T. Huang, C.~Y. Wang and H. Wen,
Strong solutions of the compressible nematic liquid crystal flow,
J. Differential Equations {\bf 252} (2012), no.~3, 2222--2265.


\bibitem{JJW1} F. Jiang, S. Jiang and D. Wang,
On multi-dimensional compressible flows of nematic liquid crystals with large initial energy in a bounded domain,
J. Funct. Anal. {\bf 265} (2013), no.~12, 3369--3397.

\bibitem{JJW2} F. Jiang, S. Jiang and D. Wang,
Global weak solutions to the equations of compressible flow of nematic liquid crystals in two dimensions,
Arch. Ration. Mech. Anal. {\bf 214} (2014), no.~2, 403--451.

\bibitem{JWX1} Q. Jiu, Y. Wang and Z. Xin,
Global well-posedness of 2D compressible Navier-Stokes equations with large data and vacuum,
J. Math. Fluid Mech. {\bf 16} (2014), no.~3, 483--521.

\bibitem{JWX2} Q. Jiu, Y. Wang and Z. Xin,
Global classical solution to two-dimensional compressible Navier-Stokes equations with large data in $\mathbb{R}^2$,
Phys. D {\bf 376/377} (2018), 180--194.

\bibitem{K} T. Kato,
Remarks on the Euler and Navier-Stokes equations in ${\bf R}^2$,
Proc. Sympos. Pure Math., {\bf 45}, (1986),1--7.

\bibitem{LFM} F.~M. Leslie,
Some constitutive equations for liquid crystals,
Arch. Rational Mech. Anal. {\bf 28} (1968), no.~4, 265--283.

\bibitem{LLL} J. Li, Z. Liang,
On local classical solutions to the Cauchy problem of the two-dimensional barotropic compressible Navier-Stokes equations with vacuum,
J. Math. Pures Appl. (9) {\bf 102} (2014), no.~4, 640--671.

\bibitem{LLW} J. Lin, B. Lai and C.~Y. Wang,
Global finite energy weak solutions to the compressible nematic liquid crystal flow in dimension three,
SIAM J. Math. Anal. {\bf 47} (2015), no.~4, 2952--2983.

\bibitem{LX} J. Li and Z. Xin,
Some uniform estimates and blowup behavior of global strong solutions to the Stokes approximation equations for two-dimensional compressible flows,
J. Differential Equations {\bf 221} (2006), no.~2, 275--308.

\bibitem{LX2} J. Li and Z. Xin,
Global well-posedness and large time asymptotic behavior of classical solutions to the compressible Navier-Stokes equations with vacuum,
Ann. PDE {\bf 5} (2019), no.~1, Paper No. 7, 37 pp.

\bibitem{LXZ} J. Li, Z.~H. Xu and J.~W. Zhang,
Global existence of classical solutions with large oscillations and vacuum to the three-dimensional compressible nematic liquid crystal flows,
J. Math. Fluid Mech. {\bf 20} (2018), no.~4, 2105--2145.

\bibitem{LZ} Y. Liu and X. Zhong,
Global existence of strong solutions with large oscillations and vacuum to the compressible nematic liquid crystal flows in 3D bounded domains,
Discrete Contin. Dyn. Syst. Ser. B {\bf 29} (2024), no.~5, 2158--2191.

\bibitem{LZLL} Y. Liu et al.,
Strong solutions to Cauchy problem of 2D compressible nematic liquid crystal flows,
Discrete Contin. Dyn. Syst. {\bf 37} (2017), no.~7, 3921--3938.

\bibitem{LZZ} J. Li, J.~W. Zhang and J.~N. Zhao,
On the global motion of viscous compressible barotropic flows subject to large external potential forces and vacuum,
SIAM J. Math. Anal. {\bf 47} (2015), no.~2, 1121--1153.


\bibitem{L2}  P.L. Lions,
Mathematical Topics in Fluid Mechanics. Vol. 2: Compressible Models,
Oxford University Press, New York, 1998.

\bibitem{MN1} A. Matsumura, T. Nishida,  The initial value problem for the equations of motion of viscous and heat-conductive gases,
J. Math. Kyoto Univ. {\bf 20}(1) (1980), 67--104.

\bibitem{MD} D.~I.~R. Mitrea,
Integral equation methods for div-curl problems for planar vector fields in nonsmooth domains,
Differential Integral Equations {\bf 18} (2005), no.~9, 1039--1054.

\bibitem{NI} L. Nirenberg,
On elliptic partial differential equations,
Ann. Scuola Norm. Sup. Pisa Cl. Sci. (3) {\bf 13} (1959), 115--162.

\bibitem{NS}  A. Novotn\'y{} and I. Stra\v skraba,
Introduction to the mathematical theory of compressible flow,
Oxford Lecture Series in Mathematics and its Applications, 27, Oxford Univ. Press, Oxford, 2004.

\bibitem{P} M. Perepelitsa,
On the global existence of weak solutions for the Navier-Stokes equations of compressible fluid flows,
SIAM J. Math. Anal. {\bf 38} (2006), no.~4, 1126--1153.

\bibitem{RW}W. Rudin,
Principles of mathematical analysis,
third edition, International Series in Pure and Applied Mathematics, McGraw-Hill, New York-Auckland-D\"usseldorf, 1976.


\bibitem{SES} E.~M. Stein and R. Shakarchi,
Complex analysis, Princeton Univ. Press, Princeton, NJ, 2003.

\bibitem{STT} M.~A. Sadybekov, B.~T. Torebek and B.~K. Turmetov,
Representation of Green's function of the Neumann problem for a multi-dimensional ball,
Complex Var. Elliptic Equ. {\bf 61} (2016), no.~1, 104--123.

\bibitem{TG} G.~G. Talenti,
Best constant in Sobolev inequality,
Ann. Mat. Pura Appl. (4) {\bf 110} (1976), 353--372.

\bibitem{VK} V.~A. Vaigant and A.~V. Kazhikhov,
On existence of global solutions to the two-dimensional Navier–Stokes equations for a compressible viscous fluid,
Sib. Math. J. 36 (6) (1995) 1283–1316.

\bibitem{WT} T. Wang,
Global existence and large time behavior of strong solutions to the 2-D compressible nematic liquid crystal flows with vacuum,
J. Math. Fluid Mech. {\bf 18} (2016), no.~3, 539--569.

\bibitem{WWV} W. von~Wahl,
Estimating $\nabla u$ by ${\rm div}\, u$ and ${\rm curl}\, u$,
Math. Methods Appl. Sci. {\bf 15} (1992), no.~2, 123--143.

\bibitem{WX} X. Wang and X.~J. Xu,
Global existence of strong solutions to the compressible magnetohydrodynamic equations with large initial data and vacuum in $\mathbb R^2$,
J. Differential Equations {\bf 415} (2025), 722--763.

\bibitem{ZZ} X. Zhong and X. Zhou,
Global well-posedness to the Cauchy problem of 2D compressible nematic liquid crystal flows with large initial data and vacuum,
Math. Ann. {\bf 390} (2024), no.~1, 1541--1581.

\bibitem{ZZ2} X. Zhong and X. Zhou,
Global well-posedness to the 2D compressible nematic liquid crystal flows in bounded domains with large initial data and vacuum, J. Math. Pures Appl. (9) {\bf 212} (2026), Paper No. 103915, 37 pp.

\bibitem{ZAA} A.~A. Zlotnik,
Uniform estimates and the stabilization of symmetric solutions of a system of quasilinear equations,
Differ. Equ. {\bf 36} (2000), no.~5, 701--716.

\end {thebibliography}
\end{document}